\documentclass[a4,12pt,epsfig]{article}
\usepackage{graphicx}
\usepackage{subfig}
\usepackage{epsfig}
\usepackage{amsmath}
\usepackage{amssymb}
\usepackage{amsfonts}
\usepackage{euscript}
\textheight9in
\textwidth6.25in
\hoffset-0.5in
\voffset-1in
\parskip7pt

\newcommand{\bfm}[1]{{\mbox{\boldmath{$#1$}}}}
\begin{document}
\bibliographystyle{plain}
\newcommand{\bea}{\begin{eqnarray}}
\newcommand{\eea}{\end{eqnarray}}
\newcommand{\bfmN}{{\mbox{\boldmath{$N$}}}}
\newcommand{\bfmx}{{\mbox{\boldmath{$x$}}}}
\newcommand{\bfmv}{{\mbox{\boldmath{$v$}}}}
\newcommand{\se}{\setcounter{equation}{0}}
\newtheorem{corollary}{Corollary}[section]
\newtheorem{example}{Example}[section]
\newtheorem{definition}{Definition}[section]
\newtheorem{theorem}{Theorem}[section]
\newtheorem{proposition}{Proposition}[section]
\newtheorem{lemma}{Lemma}[section]
\newtheorem{remark}{Remark}[section]
\newtheorem{result}{Result}[section]
\newcommand{\vtwo}{\vskip 4ex}
\newcommand{\vthree}{\vskip 6ex}
\newcommand{\vfour}{\vspace*{8ex}}
\newcommand{\hone}{\mbox{\hspace{1em}}}
\newcommand{\hon}{\mbox{\hspace{1em}}}
\newcommand{\htwo}{\mbox{\hspace{2em}}}
\newcommand{\hthree}{\mbox{\hspace{3em}}}
\newcommand{\hfour}{\mbox{\hspace{4em}}}
\newcommand{\von}{\vskip 1ex}
\newcommand{\vone}{\vskip 2ex}
\newcommand{\n}{\mathfrak{n} }
\newcommand{\m}{\mathfrak{m} }
\newcommand{\q}{\mathfrak{q} }
\newcommand{\aF}{\mathfrak{a} }

\newcommand{\kl}{\mathcal{K}}
\newcommand{\p}{\mathcal{P}}
\newcommand{\Lt}{\mathcal{L}}
\newcommand{\bv}{{\mbox{\boldmath{$v$}}}}
\newcommand{\bc}{{\mbox{\boldmath{$c$}}}}
\newcommand{\bx}{{\mbox{\boldmath{$x$}}}}
\newcommand{\br}{{\mbox{\boldmath{$r$}}}}
\newcommand{\bs}{{\mbox{\boldmath{$s$}}}}
\newcommand{\bb}{{\mbox{\boldmath{$b$}}}}
\newcommand{\ba}{{\mbox{\boldmath{$a$}}}}
\newcommand{\bn}{{\mbox{\boldmath{$n$}}}}
\newcommand{\bp}{{\mbox{\boldmath{$p$}}}}
\newcommand{\by}{{\mbox{\boldmath{$y$}}}}
\newcommand{\bz}{{\mbox{\boldmath{$z$}}}}
\newcommand{\be}{{\mbox{\boldmath{$e$}}}}
\newcommand{\proof}{\noindent {\sc Proof :} \par }
\newcommand{\bP}{{\mbox{\boldmath{$P$}}}}

\newcommand{\M}{\mathcal{M}}
\newcommand{\R}{\mathbb{R}}
\newcommand{\Q}{\mathbb{Q}}
\newcommand{\Z}{\mathbb{Z}}
\newcommand{\N}{\mathbb{N}}
\newcommand{\C}{\mathbb{C}}
\newcommand{\xar}{\longrightarrow}
\newcommand{\ov}{\overline}
 \newcommand{\rt}{\rightarrow}
 \newcommand{\om}{\omega}
 \newcommand{\wh}{\widehat }
 \newcommand{\wt}{\widetilde }
 \newcommand{\g}{\Gamma}
 \newcommand{\lm}{\lambda}

\newcommand{\eN}{\EuScript{N}}
\newcommand{\ncom}{\newcommand}
\newcommand{\norm}{\|\;\;\|}
\newcommand{\inp}[2]{\langle{#1},\,{#2} \rangle}
\newcommand{\nrm}[1]{\parallel {#1} \parallel}
\newcommand{\nrms}[1]{\parallel {#1} \parallel^2}
\title{Quasilinear viscous approximations to scalar conservation laws}
\author{ Ramesh Mondal\footnote{ramesh@math.iitb.ac.in}, S. Sivaji Ganesh\footnote{siva@math.iitb.ac.in} and S. Baskar\footnote{baskar@math.iitb.ac.in}}
\maketitle{}
\begin{abstract}
We prove the convergence of quasilinear parabolic viscous approximations to the entropy solution (in the sense of Bardos-Leroux-Nedelec) of a scalar conservation law, considered on a  bounded domain in $\R^d$. 
\end{abstract}

\section{Introduction}
Let $\Omega$ be a bounded domain in $\mathbb{R}^{d}$ with smooth boundary $\partial \Omega$. For $T >0$, denote $\Omega_{T}:= \Omega\times(0,T)$. The aim of this article
is to prove that the sequence of solutions (called quasilinear viscous approximations) of the generalized viscosity problems  
\begin{subequations}\label{ibvp.parab}
\begin{eqnarray}
 u_{t} + \nabla \cdot f(u) = \varepsilon\,\nabla\cdot\left(B(u)\,\nabla u\right)&\mbox{in }\Omega_{T},\label{ibvp.parab.a} \\
    u(x,t)= 0&\,\,\,\,\mbox{on}\,\, \partial \Omega\times(0,T),\label{ibvp.parab.b}\\
u(x,0) = u_0(x)& x\in \Omega,\label{ibvp.parab.c}
\end{eqnarray}
\end{subequations}
indexed by $\varepsilon>0$, converges to the entropy solution (in the sense of \cite{MR542510}) of the  initial-boundary value problem (IBVP) for the scalar conservation laws given by

\begin{subequations}\label{ivp.cl}
\begin{eqnarray}
  u_t + \nabla\cdot f(u) =0& \mbox{in }\Omega_T,\label{ivp.cl.a}\\
u(x,t)= 0&\mbox{on}\,\,\partial \Omega\times(0,T),\label{ivp.cl.b}\\
  u(x,0) = u_0(x)& x\in \Omega.\label{ivp.cl.c}
  \end{eqnarray}
\end{subequations}
as $\varepsilon\rightarrow0$.  In this context, we have three results (Theorem~\ref{theorem1}, Theorem~\ref{theorem2} and 
Theorem~\ref{theorem3}) depending on the regularity of the data. First we establish the convergence under more regular 
hypothesis on the data (see Hypothesis A), then use it to prove the result for data with lesser regularity (see Hypothesis B) 
and finally, we prove the result for data with lesser regularity and initial data without having compact essential support in $\Omega$ (see Hypotheses C).
We now state the hypotheses, and then the main results of this article.
 
{\bf Hypothesis A}
\begin{enumerate}
 \item Let $f\in \left(C^4(\R)\right)^d$, $f^\prime\in \left(L^\infty(\R)\right)^d$, and denote 
 $$\|f^\prime\|_{\left(L^\infty(\R)\right)^d}:=\max_{1\leq j\leq d}\,\sup_{y\in\R}|f^\prime_j(y)|.$$
 \item Let $B\in C^3(\R)\cap L^\infty(\R)$, and there exists an $r>0$ such that $B\geq r$.
 \item  Let $0<\beta<1$, and $u_0\in C^{4+\beta}_{c}({\Omega})$, where 
 $$C^{4+\beta}_{c}({\Omega}):=\left\{u_{0}\in C^{4+\beta}({\Omega})\,;\,\,\mbox{supp}(u_{0})\,\,\mbox{is compact in }\,\,\Omega\right\},$$
 and we denote $I=[-\|u_0\|_{\infty},\|u_0\|_{\infty}]$.
 \end{enumerate}
For definitions of H\"{o}lder spaces used in this work, we refer the reader to \cite{lad-etal_68a}.

\noindent{\bf Hypothesis B}
\begin{enumerate}
 \item Let $f\in \left(C^1(\R)\right)^d$, $f^\prime\in \left(L^\infty(\R)\right)^d$, and denote 
 $$\|f^\prime\|_{\left(L^\infty(\R)\right)^d}:=\max_{1\leq j\leq d}\,\sup_{y\in\R}|f^\prime_j(y)|.$$
 \item Let $B\in C^3(\R)\cap L^\infty(\R)$, and there exists an $r>0$ such that $B\geq r$.
 \item  Let $u_0$ belong to the space $W^{1,\infty}_c({\Omega})$ consisting of those elements of $W^{1,\infty}({\Omega})$ whose essential support is a compact subset of $\Omega$, and we denote $I=[-\|u_0\|_{\infty},\|u_0\|_{\infty}]$.
\end{enumerate}
\noindent{\bf Hypothesis C}
\begin{enumerate}
 \item Let $f\in \left(C^1(\R)\right)^d$, $f^\prime\in \left(L^\infty(\R)\right)^d$, and denote 
 $$\|f^\prime\|_{\left(L^\infty(\R)\right)^d}:=\max_{1\leq j\leq d}\,\sup_{y\in\R}|f^\prime_j(y)|.$$
 \item Let $B\in C^3(\R)\cap L^\infty(\R)$, and there exists an $r>0$ such that $B\geq r$.
 \item Let $u_{0}\in H^{1}_{0}(\Omega)\cap C(\overline{\Omega})$. Let $u_{0\varepsilon}$ be in $\mathcal{D}(\Omega)$ such that 
 for all $\varepsilon>0$, there exist constants $A, C>0$ such that $\|u_{0\varepsilon}\|_{L^{\infty}(\Omega)}\leq A$,
 $\|\Delta u_{0\varepsilon}\|_{L^{1}(\Omega)}\leq C$ and $u_{0\varepsilon}\to u_0$  in $H^{1}_{0}(\Omega)$ as $\varepsilon\to 0$.
 Denote $I:=\left[-A, A\right]$.
\end{enumerate}
\begin{theorem}\label{theorem1}
Let $f,\,u_{0},\, B$ satisfy { Hypothesis A}. Then the sequence of solutions to \eqref{ibvp.parab} converges {\it a.e.} to the unique entropy solution of \eqref{ivp.cl} as $\varepsilon\rightarrow0$.
\end{theorem}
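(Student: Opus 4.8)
The plan is to treat \eqref{ibvp.parab} as a uniformly parabolic quasilinear problem and run a vanishing-viscosity argument, the only genuinely new feature compared with the classical Laplacian viscosity $\varepsilon\,\Delta u$ being the solution-dependent diffusion $B(u)$. Since $B\geq r>0$ the problem is uniformly parabolic, so the standard machinery survives with extra lower-order terms that will be absorbed by the entropy dissipation. First I would establish existence, uniqueness and regularity of the viscous solution $u_\varepsilon$. Under Hypothesis A the coefficients are smooth, and because $u_0\in C^{4+\beta}_c(\Omega)$ vanishes in a neighbourhood of $\partial\Omega$, the compatibility conditions at the parabolic corner $\partial\Omega\times\{0\}$ hold to the required order; quasilinear parabolic theory (\cite{lad-etal_68a}) then yields a unique classical solution $u_\varepsilon\in C^{2+\beta,1+\beta/2}(\overline{\Omega_T})$.

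The next step is a set of a priori bounds uniform in $\varepsilon$. The maximum principle, together with the homogeneous boundary data, gives $\|u_\varepsilon\|_{L^\infty(\Omega_T)}\leq\|u_0\|_{L^\infty(\Omega)}$, so that $u_\varepsilon$ takes values in the compact interval $I$ for every $\varepsilon$. Multiplying \eqref{ibvp.parab.a} by $u_\varepsilon$ and integrating by parts (the convective flux term drops out because $u_\varepsilon=0$ on $\partial\Omega$) produces the basic energy/dissipation estimate
\[
\varepsilon\int_{\Omega_T}B(u_\varepsilon)\,|\nabla u_\varepsilon|^2\,dx\,dt\;\leq\;\tfrac12\|u_0\|_{L^2(\Omega)}^2,
\]
with the right-hand side independent of $\varepsilon$; since $B\geq r$ this controls $\sqrt{\varepsilon}\,\nabla u_\varepsilon$ in $L^2(\Omega_T)$ and shows that $\varepsilon\,\nabla\cdot\bigl(B(u_\varepsilon)\nabla u_\varepsilon\bigr)\to 0$ in $H^{-1}$.

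I would then derive the viscous entropy inequalities. For a convex entropy $\eta$ with entropy flux $q$ (so that $q'=\eta' f'$), multiplying \eqref{ibvp.parab.a} by $\eta'(u_\varepsilon)$ gives
\[
\eta(u_\varepsilon)_t+\nabla\cdot q(u_\varepsilon)=\varepsilon\,\nabla\cdot\bigl(\eta'(u_\varepsilon)B(u_\varepsilon)\nabla u_\varepsilon\bigr)-\varepsilon\,\eta''(u_\varepsilon)B(u_\varepsilon)\,|\nabla u_\varepsilon|^2,
\]
where convexity of $\eta$ and positivity of $B$ make the last term nonpositive: this is precisely the mechanism by which the quasilinear viscosity selects the entropy solution. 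Using the $L^\infty$ bound I would extract, along a subsequence, a Young measure $\nu_{x,t}$ representing the weak-$*$ limits of $u_\varepsilon$ and of its nonlinear functions, and show, with the dissipation estimate killing the viscous right-hand side, that $\nu$ is a measure-valued entropy solution of \eqref{ivp.cl} attaining the initial datum $\delta_{u_0(x)}$ and satisfying the Bardos--Leroux--Nedelec boundary condition on $\partial\Omega\times(0,T)$ in the sense of \cite{MR542510}. Invoking uniqueness of measure-valued entropy solutions then forces $\nu_{x,t}=\delta_{u(x,t)}$ for the unique entropy solution $u$; a Dirac Young measure yields $u_\varepsilon\to u$ a.e., and uniqueness of the limit upgrades this from subsequential to full-sequence convergence. (Since $\Omega\subset\R^d$ with $d$ arbitrary, the div--curl lemma is insufficient, which is why I favour the measure-valued route over direct BV compactness.)

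The main obstacle I anticipate is the recovery of the boundary condition. In the interior the entropy inequalities pass to the limit routinely once the dissipation estimate is in hand, but on $\partial\Omega$ the viscous solution develops a boundary layer across which $\nabla u_\varepsilon$ is of size $1/\varepsilon$, so the boundary entropy-flux terms arising from integration by parts must be handled delicately. The energy estimate supplies exactly the $O(1)$ control on $\varepsilon\,B(u_\varepsilon)|\nabla u_\varepsilon|^2$ needed to extract the correct BLN inequality, but quantifying the trace of $u_\varepsilon$ on $\partial\Omega$ and its interaction with the nonlinear diffusion $B(u_\varepsilon)$ is the technically demanding step. The high regularity assumed in Hypothesis A is what makes these classical-solution estimates available, and the resulting convergence is the foundation on which Theorems~\ref{theorem2} and \ref{theorem3} are later built by approximation of the data.
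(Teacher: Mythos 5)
Your proposal diverges from the paper at the decisive point: compactness. The paper's entire strategy rests on a uniform-in-$\varepsilon$ BV estimate, namely $\|u^\varepsilon_t\|_{L^1(\Omega_T)}+\|\nabla u^\varepsilon\|_{(L^1(\Omega_T))^d}\leq C$, proved in Section 4 by multiplying the differentiated equation by $sg_n(u^\varepsilon_t)$ and $sg_n(\partial u^\varepsilon/\partial x_i)$; the authors stress that the Bardos--Leroux--Nedelec multipliers fail for non-constant $B$ and that a new multiplier argument is needed for the gradient bound. This estimate is the paper's technical core: it gives an a.e.\ convergent subsequence via the compact embedding of $BV(\Omega_T)\cap L^1(\Omega_T)$ into $L^1(\Omega_T)$, it guarantees the limit is in $BV(\Omega_T)$ (so that the trace $\gamma(u)$ in the BLN inequality makes sense), and it is used quantitatively in the boundary-term passage (the bound $\|\nabla u^\varepsilon\|_{L^1}\leq\|\nabla u_0\|_{L^1}$ controls the Kruzhkov cutoff terms with $|\nabla\rho_\delta|\leq C/\delta$). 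Your a priori bounds --- the maximum principle and the energy estimate $\sqrt{\varepsilon}\|\nabla u^\varepsilon\|_{L^2(\Omega_T)}\leq C$ --- are both in the paper (Theorems \ref{chap3thm1} and \ref{Compactness.lemma.1}), but they are strictly weaker and do not by themselves yield any strong compactness.

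The gap is then concentrated in the step you yourself flag as ``the main obstacle.'' You replace BV compactness by a Young-measure argument and invoke ``uniqueness of measure-valued entropy solutions'' for the IBVP with BLN boundary condition. Two problems. First, the BLN entropy formulation \eqref{B.entropysolution.eqn1} is defined only for $u\in BV(\Omega_T)$, since it involves the boundary trace $\gamma(u)$; a Young-measure limit obtained from an $L^\infty$ bound alone is not known to be BV, so the object you produce cannot even be plugged into the uniqueness theorem of \cite{MR542510} that the theorem statement refers to. You would instead need a measure-valued well-posedness theory for bounded domains (in the spirit of Szepessy's work, which the paper neither uses nor cites), plus a proof that its solution coincides with the BLN solution. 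Second, to apply any such theory you must verify that the Young measure satisfies the boundary entropy condition and attains the initial datum $\delta_{u_0}$ strongly; both verifications require precisely the boundary-layer and time-continuity control that you defer. Your claim that the $O(1)$ bound on $\varepsilon B(u^\varepsilon)|\nabla u^\varepsilon|^2$ ``supplies exactly'' the needed control is not substantiated --- that quantity controls the sign of the dissipation term, but the extraction of the BLN boundary inequality in the paper (Step 2B(iv) of Section 5) is carried out with the uniform $L^1$ gradient bound, not the energy estimate. So as written the proposal is not a proof: it omits the paper's key lemma and replaces it with an appeal to machinery whose hypotheses are left unchecked at the one point where the problem is genuinely hard.
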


In the literature, convergence of viscous approximations where $B\equiv 1$ are obtained under the assumption that the initial condition $u_0$ is a function of bounded variation. We are able to generalize this result to the case of a non-constant $B$ only when $u_0$ belongs to the space $W^{1,\infty}_c({\Omega})$, and is the content of the next result.

\begin{theorem}\label{theorem2}
Let $f,\,u_{0},\, B$ satisfy { Hypothesis B}. Then the sequence of solutions to \eqref{regularized.IBVP} converges {\it a.e.} to
the unique entropy solution of \eqref{ivp.cl} as $\varepsilon\rightarrow0$. 
 \end{theorem}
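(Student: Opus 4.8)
The plan is to deduce Theorem~\ref{theorem2} from Theorem~\ref{theorem1} by approximating the rough data of Hypothesis~B with data satisfying Hypothesis~A and then passing to the limit through $\varepsilon$-uniform stability estimates. Writing $A(s):=\int_0^s B(\tau)\,d\tau$, the diffusion term becomes $\nabla\cdot(B(u)\nabla u)=\Delta A(u)$ with $A'=B\ge r>0$, so on the fixed range $I$ the problem \eqref{ibvp.parab} is a uniformly parabolic convection--diffusion equation. First I would regularize the data: set $f_n:=f*\rho_n$ and $u_{0,n}:=u_0*\rho_n$ for a standard mollifier $\rho_n$. Then $f_n\in(C^4(\R))^d$ with $\|f_n'\|_{(L^\infty(\R))^d}\le\|f'\|_{(L^\infty(\R))^d}$, and since $f\in(C^1(\R))^d$ makes $f'$ uniformly continuous on the compact interval $I$, one gets $\|f'-f_n'\|_{(L^\infty(I))^d}\to0$; similarly, for $n$ large the support of $u_{0,n}$ stays compact in $\Omega$, $u_{0,n}\in C^{4+\beta}_{c}(\Omega)$, $\|u_{0,n}\|_\infty\le\|u_0\|_\infty$, and $u_{0,n}\to u_0$ in $L^1(\Omega)$. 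Thus $(f_n,B,u_{0,n})$ satisfies Hypothesis~A for every $n$, with all relevant bounds uniform in $n$.

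Let $u_\varepsilon$ solve \eqref{ibvp.parab} for the data $(f,B,u_0)$ and $u_\varepsilon^n$ for $(f_n,B,u_{0,n})$, and let $u,u^n$ be the corresponding entropy solutions of \eqref{ivp.cl} in the sense of \cite{MR542510}. By Theorem~\ref{theorem1}, for each fixed $n$ we have $u_\varepsilon^n\to u^n$ a.e., hence (by the uniform $L^\infty$ bound and dominated convergence) in $L^1(\Omega_{T})$, as $\varepsilon\to0$. The argument is then the three-term splitting
\begin{equation}\label{eq:split}
\|u_\varepsilon-u\|_{L^1(\Omega_{T})}\le \|u_\varepsilon-u_\varepsilon^n\|_{L^1(\Omega_{T})}+\|u_\varepsilon^n-u^n\|_{L^1(\Omega_{T})}+\|u^n-u\|_{L^1(\Omega_{T})},
\end{equation}
where, given $\eta>0$, I would first choose $n$ so large that the first and third terms are below $\eta/3$ \emph{uniformly in $\varepsilon$}, and then send $\varepsilon\to0$ to kill the middle term via Theorem~\ref{theorem1}. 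This gives $L^1(\Omega_{T})$ convergence of $u_\varepsilon$ to $u$, hence a.e. convergence along a subsequence; uniqueness of the entropy solution identifies the limit and upgrades this to the stated a.e. convergence.

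The whole content therefore lies in two uniform stability estimates. For the first term I would prove an $L^1$-stability estimate for \eqref{ibvp.parab}, uniform in $\varepsilon$,
\begin{equation}\label{eq:parabstab}
\sup_{t\in[0,T]}\|u_\varepsilon(t)-u_\varepsilon^n(t)\|_{L^1(\Omega)}\le \|u_0-u_{0,n}\|_{L^1(\Omega)}+C\,T\,\|f'-f_n'\|_{(L^\infty(I))^d}\,\sup_{0\le t\le T}TV\big(u_\varepsilon^n(t)\big).
\end{equation}
Setting $w=u_\varepsilon-u_\varepsilon^n$, one multiplies the equation for $w$ by a smooth approximation $\mathrm{sgn}_\delta(w)$ of the sign and integrates over $\Omega$. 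Because $A$ is strictly increasing, $A(u_\varepsilon)-A(u_\varepsilon^n)$ carries the sign of $w$, so the diffusion term has a favourable sign after integration by parts (a Kruzhkov/Carrillo-type structure), while the convection term splits into a part Lipschitz in $w$ and a flux-error part estimated by $\|f'-f_n'\|_{(L^\infty(I))^d}$ times $TV(u_\varepsilon^n)$; the Dirichlet condition is harmless since $w=0$ and $A(u_\varepsilon)-A(u_\varepsilon^n)=0$ on $\partial\Omega$, so $\mathrm{sgn}_\delta(w)$ annihilates the boundary flux. For the third term, the analogous Kruzhkov continuous-dependence estimate for Bardos--Leroux--N\'ed\'elec solutions of \eqref{ivp.cl} gives $\|u^n-u\|_{L^1(\Omega_{T})}\le C\big(\|u_{0,n}-u_0\|_{L^1(\Omega)}+T\,\|f_n'-f'\|_{(L^\infty(I))^d}\,TV(u_0)\big)\to0$.

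The hard part will be estimate \eqref{eq:parabstab} together with the $\varepsilon$-uniform bound $\sup_{0\le t\le T}TV\big(u_\varepsilon^n(t)\big)\le TV(u_{0,n})+C\,T$ that it invokes. The nonlinear diffusion $B(u)$ on a bounded domain with Dirichlet data prevents a direct appeal to linear parabolic theory, so one must run a doubling-of-variables/entropy argument that controls convection and nonlinear diffusion simultaneously, while tracking that every constant is independent of $\varepsilon$; the $BV$ bound itself I would obtain from an $L^1$-contraction of $u_\varepsilon^n$ against its spatial translates, again using that the Dirichlet data and compact support make the boundary contributions vanish. Once these two uniform estimates are in hand, \eqref{eq:split} closes the argument routinely.
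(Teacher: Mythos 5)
Your proposal takes a genuinely different route from the paper, so let me first record the difference. The paper's Theorem~\ref{theorem2} is a statement about \eqref{regularized.IBVP}, in which the flux and the initial datum are mollified at the scale of the viscosity parameter $\varepsilon$ itself; its proof re-runs the BV machinery of Theorem~\ref{BVEstimate.thm1} with $f_\varepsilon,u_{0\varepsilon}$ in place of $f,u_0$, the crucial point being Lemma~\ref{regularized.lem1}, $\|\Delta u_{0\varepsilon}\|_{L^{1}(\Omega)}\leq \frac{C}{\varepsilon}TV_{\Omega}(u_{0})$, so that the factor $\varepsilon$ multiplying the diffusion exactly absorbs the blow-up of the mollified datum; BV compactness then yields an a.e.\ convergent subsequence, the limit is identified as an entropy solution by passing to the limit in the entropy inequalities, and BLN uniqueness finishes. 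No comparison between two solutions is ever made. You instead work with the rough-data solutions of \eqref{ibvp.parab}, approximate by Hypothesis-A problems, and close via a three-term splitting and two continuous-dependence estimates. Your first pillar (the $\varepsilon$-uniform parabolic $L^1$-stability) is sound in outline: writing $B(u_\varepsilon)\nabla u_\varepsilon-B(u^n_\varepsilon)\nabla u^n_\varepsilon=\tilde a\,\nabla w+\tilde b\,w$ with $\tilde a\geq r$ (exactly as in the paper's uniqueness section), the sign-multiplier argument goes through in the $W(0,T)$ setting, Dirichlet data kill the boundary terms, and the flux-error term is controlled by $\|f'-f_n'\|_{(L^\infty(I))^d}$ times the gradient bound \eqref{B.BVestimate27} for the Hypothesis-A solution $u^n_\varepsilon$, which is uniform in $\varepsilon$. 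Also, the mismatch of target (you prove a statement about \eqref{ibvp.parab}, while the theorem concerns \eqref{regularized.IBVP}) is repairable inside your framework, since the same stability estimate can compare the solution of \eqref{regularized.IBVP} with $u^n_\varepsilon$.

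The genuine gap is the third term of your splitting. You invoke ``the analogous Kruzhkov continuous-dependence estimate'' $\|u^n-u\|_{L^1}\leq C\bigl(\|u_{0,n}-u_0\|_{L^1}+T\|f_n'-f'\|_{(L^\infty(I))^d}TV(u_0)\bigr)$ for Bardos--Leroux--N\'ed\'elec solutions as if it were off the shelf. It is not: \cite{MR542510} proves uniqueness and $L^1$-contraction for a \emph{fixed} flux only. On a bounded domain, dependence on the flux is a different and much more delicate matter, because the boundary term in the entropy inequality \eqref{B.entropysolution.eqn1} involves $f(\gamma(u))$, i.e.\ the admissibility condition at the boundary changes when $f$ is changed to $f_n$. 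Establishing such an estimate requires a Kuznetsov-type doubling argument up to the boundary together with control of the traces $\gamma(u^n)$, and traces of BV functions are not stable under the $L^1$ convergence you have; this is a substantial piece of analysis, arguably harder than the paper's own Steps 1--2 of Section 6. The natural attempted repair within your scheme --- defining $u$ as the $L^1$-limit of $u^n$ (Cauchy by your first two estimates) and passing to the limit $n\to\infty$ in the entropy inequalities satisfied by the $u^n$ --- runs into exactly the same obstruction in the boundary term. Until this hyperbolic stability estimate is actually proved, the splitting does not close, so the proposal as written is incomplete precisely at the step it treats as citable.
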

In Theorem \ref{theorem2}, we dealt with initial data with compact essential support in $\Omega$. But in the following result, we are able to capture the unique entropy solution of IBVP for conservation laws \eqref{ivp.cl} as the 
{\it a.e.} limit of quasilinear viscous approximations with initial data in $H^{1}_{0}(\Omega)\cap C(\overline{\Omega})$. 
\begin{theorem}\label{theorem3}
Let $f,\,u_{0},\, B$ satisfy { Hypothesis C}. Then the sequence of solutions to \eqref{regularized.IBVP.Compact} converges {\it a.e.} 
to the unique entropy solution of \eqref{ivp.cl} as $\varepsilon\rightarrow0$.
\end{theorem}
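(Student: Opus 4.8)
The plan is to combine uniform a priori estimates, compactness, identification of the limit as the Bardos--Leroux--Nedelec (BLN) entropy solution, and uniqueness. Since each $u_{0\varepsilon}$ lies in $\mathcal{D}(\Omega)\subset W^{1,\infty}_c(\Omega)$, the regularized problem \eqref{regularized.IBVP.Compact} is covered by the existence theory already developed for Theorem~\ref{theorem2}, so for every $\varepsilon>0$ there is a solution $u^\varepsilon$. The whole issue is to pass to the limit $\varepsilon\to0$ when the viscosity and the data are tied to the same parameter, and when the initial datum is no longer compactly supported.

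First I would record the uniform bounds. The maximum principle applied to \eqref{regularized.IBVP.Compact}, together with the zero boundary data and $\|u_{0\varepsilon}\|_{L^\infty(\Omega)}\le A$, gives $\|u^\varepsilon\|_{L^\infty(\Omega_T)}\le A$ uniformly in $\varepsilon$, so all the $u^\varepsilon$ take values in $I=[-A,A]$. Next I would control the time derivative in $L^1$. Differentiating \eqref{regularized.IBVP.Compact} in $t$ shows that $w^\varepsilon:=u^\varepsilon_t$ solves a linear parabolic equation, with $w^\varepsilon=0$ on $\partial\Omega$, for which the spatial $L^1$ norm is nonincreasing in time, whence
\[
\sup_{0<t<T}\|u^\varepsilon_t(\cdot,t)\|_{L^1(\Omega)}\le \|u^\varepsilon_t(\cdot,0)\|_{L^1(\Omega)}.
\]
Reading $u^\varepsilon_t(\cdot,0)$ off the equation as $-\nabla\cdot f(u_{0\varepsilon})+\varepsilon\,\nabla\cdot\big(B(u_{0\varepsilon})\nabla u_{0\varepsilon}\big)$ and expanding the divergence, its $L^1$ norm is bounded by $\|f'\|_\infty\|\nabla u_{0\varepsilon}\|_{L^1}+\varepsilon\big(\|B'\|_\infty\|\nabla u_{0\varepsilon}\|_{L^2}^2+\|B\|_\infty\|\Delta u_{0\varepsilon}\|_{L^1}\big)$. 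This is exactly where the hypotheses bite: $u_{0\varepsilon}\to u_0$ in $H^1_0(\Omega)$ bounds the first two terms, and the assumption $\|\Delta u_{0\varepsilon}\|_{L^1(\Omega)}\le C$ controls the last, so $\|u^\varepsilon_t\|_{L^\infty(0,T;L^1(\Omega))}$ is bounded uniformly in $\varepsilon$.

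With the uniform $L^\infty$ bound and the uniform time-derivative estimate in hand, I would extract a subsequence converging a.e. and in $L^1(\Omega_T)$. The cleanest route is to upgrade the time-derivative control to a uniform spatial modulus of continuity (a uniform $BV$ or $L^1$-translation estimate in $x$, obtained from the equation together with the uniform $L^\infty$ and $L^1_t$ bounds), and then invoke a Kolmogorov--Riesz / Helly-type compactness argument. Call the limit $u$.

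It remains to identify $u$ and to upgrade to full convergence. I would multiply the viscous equation by the derivative of a smooth convex entropy, use the sign $B\ge r>0$ to discard the resulting nonnegative viscous dissipation, and pass to the limit to obtain the Kruzhkov entropy inequalities for $u$ in the interior; the initial datum is recovered as $u_0$ because $u_{0\varepsilon}\to u_0$ in $H^1_0(\Omega)\hookrightarrow L^1(\Omega)$. The delicate point, and the main obstacle, is the boundary: without compact support the trace of $u^\varepsilon$ on $\partial\Omega\times(0,T)$ must be analysed through the vanishing boundary layer to produce the BLN boundary entropy condition in the limit, and the uniform estimates above must be strong enough up to $\partial\Omega$ to justify this. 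Once $u$ is shown to be a BLN entropy solution of \eqref{ivp.cl} with datum $u_0$, its uniqueness forces the entire family $u^\varepsilon$ (not merely a subsequence) to converge a.e. to $u$. An equivalent packaging, which isolates the boundary analysis, is to let $v^\varepsilon$ be the entropy solution with datum $u_{0\varepsilon}$ (available from Theorem~\ref{theorem2}); the $L^1$-contraction of BLN solutions gives $\|v^\varepsilon-u\|_{L^1}\le\|u_{0\varepsilon}-u_0\|_{L^1}\to0$, and it then suffices to prove a viscous-convergence estimate $\|u^\varepsilon-v^\varepsilon\|_{L^1}\to0$ that is uniform over the data family, again powered by the uniform $BV$-type bounds coming from $\|\Delta u_{0\varepsilon}\|_{L^1}\le C$.
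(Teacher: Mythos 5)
Your skeleton coincides with the paper's: a uniform $L^\infty$ bound by $A$ from the maximum principle, a uniform $L^1(\Omega)$ bound on $u^\varepsilon_t$ obtained by differentiating in time and reading $u^\varepsilon_t(\cdot,0)$ off the equation (where $\|\Delta u_{0\varepsilon}\|_{L^1}\leq C$ and the $H^1_0$-convergence of the data absorb the dangerous terms), BV-type compactness, identification of the a.e.\ limit as a BLN entropy solution, and uniqueness to promote subsequential convergence to convergence of the whole family. This is exactly what Lemma~\ref{regularized.removecompact.thm2} and the two-step proof of Theorem~\ref{theorem3} do. However, two of your steps have genuine gaps.

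First, the spatial compactness. You claim a uniform $BV$/$L^1$-translation estimate in $x$ can be ``obtained from the equation together with the uniform $L^\infty$ and $L^1_t$ bounds.'' That is not true: the equation expresses $u^\varepsilon_t$ in terms of $\nabla\cdot f_\varepsilon(u^\varepsilon)$ and an $\varepsilon$-weighted second-order term, and no $L^1$ bound on $\nabla u^\varepsilon$ follows from bounds on $u^\varepsilon_t$ and $\|u^\varepsilon\|_{L^\infty}$. The paper gets the missing estimate by a separate multiplier argument (Step 2 of the proof of Theorem~\ref{BVEstimate.thm1}, reused in Lemma~\ref{regularized.removecompact.thm2}): differentiate the equation in each $x_i$, multiply by $sg_n\left(\frac{\partial u^{\varepsilon}}{\partial x_{i}}\right)$, and let $n\to\infty$ to conclude $TV_\Omega(u^\varepsilon(\cdot,t))\leq TV_\Omega(u_{0\varepsilon})$; the right-hand side is then uniformly bounded because $u_{0\varepsilon}\to u_0$ in $H^1_0(\Omega)$ gives $\sup_\varepsilon\|\nabla u_{0\varepsilon}\|_{L^2(\Omega)}<\infty$, hence $\|\nabla u_{0\varepsilon}\|_{L^1(\Omega)}\leq \mbox{Vol}(\Omega)^{1/2}\,\|\nabla u_{0\varepsilon}\|_{L^2(\Omega)}$. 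Without this step your Kolmogorov--Riesz/Helly argument has no input.

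Second, the boundary. You correctly flag the BLN boundary condition as ``the delicate point,'' but you leave it unresolved, and it is the substantive content of the theorem rather than a remark. The paper carries it out by multiplying the equation by $sg_n(u^\varepsilon-k)\,\phi$, integrating by parts so that an explicit boundary term $\varepsilon\int_0^T\int_{\partial\Omega}B(0)\,\nabla u^\varepsilon\cdot\sigma\,sg_n(k)\,\phi\,d\sigma\,dt$ appears, and then evaluating its $\varepsilon\to 0$ limit using Kruzhkov's cutoff functions $\rho_\delta$ concentrated near $\partial\Omega$; the uniform gradient bound kills the $\varepsilon$-weighted interior contributions, and the limit is $\int_0^T\int_{\partial\Omega}\phi\,\left(f(0)-f(\gamma(u))\right)\cdot\sigma\,d\sigma\,dt$, which is what produces the boundary term in the entropy inequality \eqref{B.entropysolution.eqn1}. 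Your alternative ``packaging'' does not avoid this difficulty: proving $\|u^\varepsilon-v^\varepsilon\|_{L^1}\to 0$ uniformly over the data family would require a quantitative (Kuznetsov-type) rate of viscous convergence for the BLN problem, which is not available in the paper and is at least as hard as the direct boundary analysis it is meant to replace.
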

Problems of the form \eqref{ibvp.parab} are of interest in many physical situations. For instance, they appear in the viscous shallow water problem \cite{MR3000715}, and also in the equations of gas dynamics for viscous heat conducting fluid in eulerian coordinates \cite[p.256]{MR1301779}. Apart from the physical point of view, the study of viscous IBVP along with the convergence results play an important role in the numerical analysis of hyperbolic conservation laws. For instance, Kurganov and Liu \cite{MR2979844} proposed a finite volume method for solving general multi-dimentional system of conservation laws, in which they introduce an adaptive way of adding viscosity in the shock regions in order to capture  numerically stable entropy solution of hyperbolic conservation laws. As a result, the coefficient of the added numerical viscosity appears as a function of the solution and hence the convergence of the scheme to the entropy solution is similar to the problem addressed in this article.

When $\Omega=\mathbb{R}$ and $B\equiv1$, Oleinik  \cite{MR0151737} established the convergence of viscous approximations to the entropy solution of \eqref{ivp.cl}. This result is generalized to $\Omega=\mathbb{R}^{d}$ by Kruzhkov \cite{MR0255253}. For a special class of $2\times 2$ systems of hyperbolic conservations laws, DiPerna \cite{MR684413} showed the convergence of viscous approximations to a weak solution of the corresponding conservation laws. Bianchini and Bressan  \cite{MR2150387} established the BV estimates of viscous approximations (with the viscosity coefficient being the identity matrix) and showed that the {\it a.e.} limit of the viscous approximations is the unique entropy solution of the corresponding system of conservation laws.

For hyperbolic problems posed on a bounded domain, it is incorrect to impose boundary conditions as information from initial conditions propagates throughout the domain, and the information reaching the points on the boundary may not coincide with the prescribed boundary conditions. In other words IBVP for hyperbolic problems are ill-posed in general \cite{MR542510}. Thus we need to give meaning to the way boundary conditions are realized. The entropy condition of Bardos-Leroux-Nedelec (BLN entropy condition) introduced in \cite{MR542510} gives a way to interpret the boundary conditions.  Since the BLN entropy condition  is introduced for solutions which are of bounded variation as these functions have a well-defined trace on the boundary, it can not be extended to a problem where the solutions are not of bounded variation. For entropy conditions we refer the reader to Otto \cite{MR1387428}, Carrillo \cite{MR1709116}, Vallet \cite{MR1794912} in the $L^{\infty}$-setting,  and to Porretta and Vovelle \cite{MR1974461}, Ammar {\it et al.} \cite{MR2254186} in $L^{1}$- setting. Dubois and Le Floch \cite{MR922200} derived a boundary entropy condition satisfied by the limit of viscous approximations for a system of conservation laws in the domain $\left\{(x,t):\,\,x>0,\,t>0\right\}$, assuming that   the sequence of quasilinear viscous approximations $\left\{u^{\varepsilon}\right\}$ is bounded in $W^{1,1}_{loc}(\mathbb{R}^{+}\times\mathbb{R}^{+},\mathbb{R}^{d})$ and converges in $L^{1}_{loc}(\mathbb{R}^{+}\times\mathbb{R}^{+})$ to a limit function $u$.

Theorem~\ref{theorem1} and Theorem~\ref{theorem2} are proved by showing that the sequence $\left\{u^{\varepsilon}\right\}$ has an {\it a.e.} convergent subsequence, and its limit is an entropy solution of \eqref{ivp.cl}. Since entropy solution is unique \cite{MR542510}, we conclude that the entire sequence of viscous approximations $\left\{u^{\varepsilon}\right\}$ converges {\it a.e.} to the unique entropy solution. Existence of an {\it a.e.} convergent subsequence of $\left\{u^{\varepsilon}\right\}$ is proved by establishing  a uniform BV-estimate on the sequence $u^{\varepsilon}$.  When $B\equiv1$, Bardos {\it et al.} \cite{MR542510} established such a BV-estimate by employing clever multipliers. Their technique is used to obtain estimates on time derivative of $u^{\varepsilon}$, and the same technique does not yield  $L^{1}(\Omega_{T})$ estimates on the gradient of $u^{\varepsilon}$. However, using a different multiplier, we establish a bound on gradients of $u^{\varepsilon}$.  
 
The plan of the paper is as follows. In section 2, we show the well-posedness of solution to the IBVP \eqref{ibvp.parab}. In section 3, we prove the higher regularity of solutions of  \eqref{ibvp.parab} using an existence-cum-regularity result in H\"{o}lder spaces.
In section 4, we prove the BV estimates on solutions to \eqref{ibvp.parab}. In section 5, we prove Theorem \ref{theorem1}, 
in section 6, we prove Theorem \ref{theorem2} and finally in section 7, we prove Theorem \ref{theorem3}.

\section{Well-posedness of generalized viscosity problem}\label{section.wpgvp}
Even though we assume higher smoothness on initial condition $u_0$ and viscosity coefficient $B$ in  Theorem~\ref{theorem1} and Theorem~\ref{theorem2}, for the purposes of this section it is enough to assume that $u_0\in L^2(\Omega)$, and $B$ is continuous on $\R$ and bounded. 
\subsection{Existence}
We show the existence of a weak solution $u^\varepsilon$ to the IBVP \eqref{ibvp.parab} in the standard function space $W(0,T)$ defined by
\begin{eqnarray*}
W(0,T):=\left\{u\in L^{2}(0,T; H^{1}_{0}(\Omega))\,\,:\,\,u_{t}\in L^{2}(0,T\,;\,H^{-1}(\Omega))\right\}, 
\end{eqnarray*}
in the following sense: 
 For a.e. $0\leq t \leq T$, and for every $v\in H_{0}^{1}(\Omega)$ the folllowing equalities hold:
 \begin{subequations}\label{ibvp.parab.weak}
 \begin{eqnarray}
  \langle u_{t},v \rangle + \epsilon\hspace{0.1cm}\displaystyle\sum_{j=1}^{d}\int_{\Omega}\hspace{0.1cm}B(u)\hspace{0.1cm}\frac{\partial u}{\partial x_{j}}\hspace{0.1cm}\frac{\partial v}{\partial x_{j}}\hspace{0.1cm}dx + \displaystyle\sum_{j=1}^{d}\int_{\Omega}\hspace{0.1cm}f_{j}^{'}(u)\hspace{0.1cm}\frac{\partial u}{\partial x_{j}}\hspace{0.1cm}v\hspace{0.1cm}dx =0,\label{ibvp.parab.weak.a}\\
u(0)=u_{0}(x),\label{ibvp.parab.weak.b}
 \end{eqnarray}
\end{subequations}
where $\langle\cdot,\cdot\rangle$ denotes the  duality pairing between $H^{-1}(\Omega)$ and $H^{1}_{0}(\Omega)$.
Note that the condition \eqref{ibvp.parab.weak.b} is meaningful since the space $W(0,T)$ can be identified with a subset of $C([0,T];L^2(\Omega))$.
Since $\varepsilon$ remains fixed throughout this section, we write $u$ instead of $u^\varepsilon$.

Existence of a solution to the problem  \eqref{ibvp.parab} is obtained as a limit of a sequence of approximations $u_m$  given by Galerkin method. In Galerkin method, we take a sequence of functions $(w_{k})$ which is an orthogonal basis for the space $H_{0}^{1}(\Omega)$, and an orthonormal basis of $L^{2}(\Omega)$.  For each $m\in \mathbb{N}$, let $V_{m}$ denote the subspace of $L^{2}(\Omega)$ spanned by the first $m$ functions, namely, $w_{1},w_{2},\cdots,w_{m}$ of the sequence $(w_{k})$. The Galerkin approximation $u_m\in V_m$ is required to satisfy the equation \eqref{ibvp.parab.weak.a} with $u=u_m$ for every $v\in V_m$ along with the initial condition
\begin{equation}\label{galerk.approx.initial}
    (u_{m}(0),w_{k}) = (u_{0},w_{k}), \hspace{0.2cm}k=1,2,\cdots,m.
 \end{equation}
Thus $u_m$ satisfies for $k=1,2,\cdots,m$ and for {\em a.e.} $t\in [0,T]$ 
\begin{equation}\label{eqnchap2114}
 \langle u_{m}^{\prime},w_{k}\rangle + \epsilon\displaystyle\sum_{j=1}^{d}\int\limits_{\Omega}\hspace{0.1cm}B(u_{m})\hspace{0.1cm}\frac{\partial u_{m}}{\partial x_{j}}\frac{\partial w_{k}}{\partial x_{j}}dx + \displaystyle\sum_{j=1}^{d}\int\limits_{\Omega}\hspace{0.1cm}f_{j}^{'}(u_{m})\frac{\partial u_{m}}{\partial x_{j}}w_{k}dx =0,~k=1,2,\ldots,m 
  \end{equation}
 and the initial condition \eqref{galerk.approx.initial}. Since $u_{m}:[0,T]\to H_{0}^{1}(\Omega)$, we may write
\begin{equation}\label{galerk.approx}
 u_{m}(t)=\displaystyle\sum_{k=1}^{m}c_{m}^{k}(t)\hspace{0.1cm}w_{k}.
\end{equation}
Substituting the expression \eqref{galerk.approx} in \eqref{eqnchap2114} and \eqref{galerk.approx.initial} yields the following initial value problem for the system of ordinary differential equations  satisfied by the coefficients $c_m^k$ in  \eqref{galerk.approx} given by
\begin{subequations}\label{ode.system}
 \begin{eqnarray}
 {c_{m}^{k}}^{\prime} + \epsilon\displaystyle\sum_{j=1}^{d}\left(B(u_{m})~\frac{\partial u_{m}}{\partial x_{j}}\hspace{0.1cm},~ \frac{\partial w_{k}}{\partial x_{j}}\right) + \displaystyle\sum_{j=1}^{d}\left(f_{j}'(u_{m})~\frac{\partial u_{m}}{\partial x_{j}} ,w_{k}\right) =0,\\
c_{m}^{k}(0) = (u_{0},w_{k}), 
 \end{eqnarray}
\end{subequations}
using the fact that the duality product $ \langle u_{m}^{\prime},w_{k}\rangle$ reduces to the $L^2(\Omega)$ inner product of 
$u_{m}^{\prime},\,\,w_{k}\in L^{2}(\Omega)$.

Setting $\bfm{c}_m(t) = (c_m^1(t), c_m^2(t),\cdots,c_m^m(t))^T$, and $\bfm{g} = (g_1, g_2,\cdots,g_m)^T$, the system of equations \eqref{ode.system} may be written as ${\bfm{c}_m}' = \bfm{g}(\bfm{c}_m)$, where
\begin{multline}\label{ode.rhs}
g_i(\bfm{c}_m) = - \epsilon\hspace{0.1cm}\displaystyle\sum_{j=1}^{d}\hspace{0.1cm}\left(B\left(\displaystyle\sum_{k=1}^{m}c_{m}^{k}\hspace{0.1cm}w_{k}\right)\hspace{0.1cm}\displaystyle\sum_{k=1}^{m}c_{m}^{k}\hspace{0.1cm}\frac{\partial w_{k}}{\partial x_{j}},\frac{\partial w_{i}}{\partial x_{j}}\right) \\
+ \displaystyle\sum_{j=1}^{d}\left(f_{j}{'}\left(\displaystyle\sum_{k=1}^{m}c_{m}^{k}(t)\hspace{0.1cm}w_{k}\right)\displaystyle\sum_{k=1}^{m}c_{m}^{k}(t)\hspace{0.1cm}\frac{\partial w_{k}}{\partial x_{j}}, w_{i}\right),
\end{multline}
for $i=1,2,\cdots,m$.  It is a simple matter to check that the following inequality holds, in view of the expressions \eqref{ode.rhs}:
\begin{equation}\label{eqnchap12}
 \|\bfm{g}(\bfm{c})\|_2\leq P\hspace{0.1cm}\|\bfm{c}\|_2 \hspace{0.5cm}\mbox{for all } \bfm{c}\in\mathbb{R}^{m},
\end{equation}
where $\|\cdot\|_2$ denotes the Euclidean norm in $\R^m$, and $P$ is given by
$$P= \sqrt{\displaystyle\sum_{i=1}^{d}M_{g_{i}}^{2}},$$
where
\begin{multline*}
M_{g_{i}} = \left\{\varepsilon \|B\|_{L^{\infty}(\mathbb{R})}\displaystyle\sum_{j=1}^{d}\hspace{0.1cm}\left(\displaystyle\sum_{k=1}^{m}\Big\|\frac{\partial w_{k}}{\partial x_{j}}\Big\|_{L^{2}(\Omega)}^{2}\Big\| \frac{\partial w_{i}}{\partial x_{j}}\Big\|_{L^{2}(\Omega)}^{2}\right)^{\frac{1}{2}}\right.\\
 \left . +M\displaystyle\sum_{j=1}^{d}\hspace{0.1cm}\left(\displaystyle\sum_{k=1}^{m}\Big\|\frac{\partial w_{k}}{\partial x_{j}}\Big\|_{L^{2}(\Omega)}^{2} \Big\| w_{i}\Big\|_{L^{2}(\Omega)}^{2}\right)^{\frac{1}{2}}\right\},
 \end{multline*}
\bea\label{bound.fdash}
M =\displaystyle\max_{1\leq j\leq d}\left(\sup\left\{\big|f_{j}^{'}(y)\big|\hspace{0.2cm}:\hspace{0.2cm}y\in \mathbb{R}\right\}\right).
\eea

Since $g$ is continuous, it follows from a result in the theory of ordinary differential equations \cite[p.73]{vra_04a} that the initial value problem \eqref{ode.system} has a global solution. Thus each of the Galerkin approximations $u_m$ are defined on the interval $[0,T]$.

The following result gives an a priori bound on the Galerkin approximations $u_{m}$ and their derivatives $u_m'$, which helps in extracting a subsequence of the sequence $(u_m)$ that serves our purpose (of establishing the existence of solutions to IBVP for generalized viscosity problem).  Since its proof is on the lines of \cite[p.354]{eva_98a}, we omit the same. The only difference in its proof when compared to  \cite[p.354]{eva_98a} is the presence of terms involving the nonlinearity $f_j^\prime(.)$, $B(.)$ which is estimated by its $L^\infty(\mathbb{R})$ norm.

\begin{theorem}(Energy Estimate)\label{chap2thm2}
There exists a constant $C > 0$  such that for every $m\in\mathbb{N}$, 
\begin{equation}\label{eqnchap29}
 \max_{0\leq t\leq T}\|u_{m}\|_{L^{2}(\Omega)} + \|u_{m}\|_{L^{2}(0,T;H^{1}_{0}(\Omega))} + \|u_{m}'\|_{L^{2}(0,T;H^{-1}(\Omega))} \leq  C\|u_{0}\|_{L^{2}(\Omega)}.
  \end{equation}
\end{theorem}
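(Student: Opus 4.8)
The plan is to derive the standard energy estimate by testing the Galerkin equations against the solution itself, integrating in time, and absorbing the problematic first-order term using the coercivity coming from the viscosity $B\ge r>0$. First I would multiply the $k$-th Galerkin equation \eqref{eqnchap2114} by $c_m^k(t)$ and sum over $k=1,\ldots,m$. Using the expansion \eqref{galerk.approx}, the sum reconstructs $u_m$ in every slot, so the duality term becomes $\langle u_m',u_m\rangle=\tfrac{1}{2}\tfrac{d}{dt}\|u_m\|_{L^2(\Omega)}^2$, the viscous term becomes $\varepsilon\sum_{j=1}^d\int_\Omega B(u_m)\bigl(\tfrac{\partial u_m}{\partial x_j}\bigr)^2\,dx$, and the flux term becomes $\sum_{j=1}^d\int_\Omega f_j'(u_m)\tfrac{\partial u_m}{\partial x_j}u_m\,dx$. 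This yields the identity
\begin{equation*}
\frac{1}{2}\frac{d}{dt}\|u_m\|_{L^2(\Omega)}^2+\varepsilon\sum_{j=1}^d\int_\Omega B(u_m)\Big(\frac{\partial u_m}{\partial x_j}\Big)^2\,dx=-\sum_{j=1}^d\int_\Omega f_j'(u_m)\frac{\partial u_m}{\partial x_j}u_m\,dx.
\end{equation*}

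Next I would bound the two nontrivial terms. On the left, the lower bound $B\ge r$ gives coercivity, $\varepsilon\sum_j\int_\Omega B(u_m)(\partial_{x_j}u_m)^2\,dx\ge \varepsilon r\,\|\nabla u_m\|_{L^2(\Omega)}^2$, which is what will control the $H_0^1$ seminorm. On the right, I would estimate $|f_j'(u_m)|\le M$ (the bound \eqref{bound.fdash}), apply Cauchy--Schwarz and then Young's inequality with a small parameter to split $\|\nabla u_m\|\,\|u_m\|$ so that a fraction of $\varepsilon r\|\nabla u_m\|^2$ is absorbed into the left-hand side, leaving a remainder of the form $C\|u_m\|_{L^2(\Omega)}^2$. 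After absorption this produces a differential inequality $\tfrac{d}{dt}\|u_m\|_{L^2}^2+\varepsilon r\|\nabla u_m\|_{L^2}^2\le C\|u_m\|_{L^2}^2$. Gronwall's lemma then controls $\max_{0\le t\le T}\|u_m\|_{L^2(\Omega)}$ in terms of $\|u_m(0)\|_{L^2(\Omega)}$, and since the initial condition \eqref{galerk.approx.initial} is the $L^2$-projection of $u_0$ onto $V_m$ we have $\|u_m(0)\|_{L^2}\le\|u_0\|_{L^2}$; integrating the inequality in time over $[0,T]$ then bounds $\|u_m\|_{L^2(0,T;H_0^1(\Omega))}$, giving the first two terms in \eqref{eqnchap29}.

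For the third term, I would estimate the dual norm of $u_m'$ by testing \eqref{ibvp.parab.weak.a} against an arbitrary $v\in H_0^1(\Omega)$ with $\|v\|_{H_0^1}\le 1$, splitting $v=v_1+v_2$ with $v_1\in V_m$ and $v_2\perp V_m$; only the $V_m$-component contributes. Bounding the viscous term by $\varepsilon\|B\|_{L^\infty}\|\nabla u_m\|_{L^2}\|\nabla v\|_{L^2}$ and the flux term by $M\|\nabla u_m\|_{L^2}\|v\|_{L^2}$ gives $\|u_m'\|_{H^{-1}}\le C\|\nabla u_m\|_{L^2}$ pointwise in $t$; squaring and integrating then yields $\|u_m'\|_{L^2(0,T;H^{-1}(\Omega))}\le C\|u_m\|_{L^2(0,T;H_0^1(\Omega))}$, which is already controlled. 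Summing the three bounds gives \eqref{eqnchap29}. The main obstacle, as the excerpt itself flags, is the first-order flux term: unlike the pure heat-equation case in \cite[p.354]{eva_98a}, it genuinely involves $\nabla u_m$, so the argument must use the strict positivity of $B$ to generate an $\varepsilon r\|\nabla u_m\|^2$ reserve large enough to absorb it via Young's inequality, and one should track that the constant $C$ is independent of $m$ (though it may depend on $\varepsilon$, which is fixed throughout this section).
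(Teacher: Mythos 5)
Your proposal is correct and coincides with the proof the paper intends: the paper omits the argument, citing the standard Galerkin energy estimate in \cite[p.354]{eva_98a} and remarking that the only change is that the nonlinear terms involving $f_j^{\prime}(\cdot)$ and $B(\cdot)$ are estimated by their $L^{\infty}$ norms. Your reconstruction (testing with $u_m$, coercivity from $B\ge r$, Young/Gronwall absorption of the flux term with a constant depending on the fixed $\varepsilon$, $L^2$-projection bound $\|u_m(0)\|_{L^2}\le\|u_0\|_{L^2}$, and the $V_m$-splitting for the $H^{-1}$ bound on $u_m'$) is exactly that argument.
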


The next result asserts the existence of convergent subsequence of Galerkin approximations.
\begin{theorem}\label{chap2prop5}
There exists a $u\in L^{2}(0,T;L^{2}(\Omega))$ and a subsequence of the sequence of Galerkin approximations $(u_{m})$ that converges to $u$  in $L^{2}(0,T;L^{2}(\Omega))$ as $n\to\infty$. Consequently, a further subsequence of it converges pointwise for a.e. $t\in [0,T]$ and a.e. $x\in \Omega$.
\end{theorem}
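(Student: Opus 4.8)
The plan is to extract strong $L^{2}$ compactness from the uniform bounds already furnished by Theorem~\ref{chap2thm2}, and then to pass to a further subsequence to produce almost everywhere convergence. The main tool is the Aubin--Lions--Simon compactness lemma applied to the Gelfand triple associated with the domain $\Omega$.

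First I would isolate the two consequences of the energy estimate \eqref{eqnchap29} that matter here: the sequence $(u_{m})$ is bounded in $L^{2}(0,T;H^{1}_{0}(\Omega))$, and the sequence of time derivatives $(u_{m}')$ is bounded in $L^{2}(0,T;H^{-1}(\Omega))$. Both bounds are uniform in $m$, since the right-hand side $C\|u_{0}\|_{L^{2}(\Omega)}$ of \eqref{eqnchap29} does not depend on $m$.

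Next I would set up the chain of embeddings $H^{1}_{0}(\Omega)\hookrightarrow L^{2}(\Omega)\hookrightarrow H^{-1}(\Omega)$. Because $\Omega$ is a bounded domain with smooth boundary, the Rellich--Kondrachov theorem guarantees that the first embedding $H^{1}_{0}(\Omega)\hookrightarrow L^{2}(\Omega)$ is compact, while the second embedding $L^{2}(\Omega)\hookrightarrow H^{-1}(\Omega)$ is continuous. With these ingredients in place, the Aubin--Lions--Simon lemma applies to the space $\left\{u\in L^{2}(0,T;H^{1}_{0}(\Omega)):u'\in L^{2}(0,T;H^{-1}(\Omega))\right\}$ and shows that it embeds compactly into $L^{2}(0,T;L^{2}(\Omega))$. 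Consequently the bounded sequence $(u_{m})$ admits a subsequence, still denoted $(u_{m})$, that converges strongly in $L^{2}(0,T;L^{2}(\Omega))$ to some limit $u$, which thereby lies in $L^{2}(0,T;L^{2}(\Omega))$.

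Finally, since $L^{2}(0,T;L^{2}(\Omega))$ is isometric to $L^{2}(\Omega_{T})$, strong convergence of the subsequence implies convergence in measure on $\Omega_{T}$, and a standard measure-theoretic result then yields a further subsequence converging pointwise for a.e. $(x,t)\in\Omega_{T}$, which is precisely the claimed a.e. convergence in $t\in[0,T]$ and $x\in\Omega$. The only step requiring genuine care is the invocation of Aubin--Lions: one must check that the time interval $(0,T)$ is finite and that the integrability exponent (here $p=2$) falls within the admissible range of the lemma, and that the compact embedding $H^{1}_{0}(\Omega)\hookrightarrow L^{2}(\Omega)$ indeed holds for the given $\Omega$. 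All of these are satisfied, so the remainder of the argument is routine.
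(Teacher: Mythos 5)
Your proposal is correct, but it takes a genuinely different route from the paper. You invoke the Aubin--Lions--Simon compactness lemma for the Gelfand triple $H^{1}_{0}(\Omega)\hookrightarrow L^{2}(\Omega)\hookrightarrow H^{-1}(\Omega)$, using both halves of the energy estimate \eqref{eqnchap29} (the $L^{2}(0,T;H^{1}_{0}(\Omega))$ bound on $u_{m}$ and the $L^{2}(0,T;H^{-1}(\Omega))$ bound on $u_{m}'$), together with Rellich--Kondrachov; this is valid since $\Omega$ is bounded, $p=q=2$ is admissible, and a.e.\ convergence of a further subsequence follows from strong $L^{2}(\Omega_{T})$ convergence in the standard way. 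The paper instead follows the classical argument of Lady\v{z}enskaja et al.: it applies Ascoli--Arzel\`a with a diagonal extraction to the Galerkin coefficients $c_{m}^{k}(t)=(u_{m}(t),w_{k})$ (whose uniform boundedness and equicontinuity come from the energy estimate and the weak formulation \eqref{eqnchap2114}), then uses the Friedrichs-type inequality of Lemma~\ref{chap2prop3} to upgrade uniform convergence of finitely many coefficients plus the uniform $H^{1}_{0}$ bound into the Cauchy property of $(u_{m})$ in $L^{2}(0,T;L^{2}(\Omega))$. Your approach is shorter and leans on a single well-known black-box theorem; the paper's approach is more elementary and self-contained, constructing the limit by a direct Cauchy-sequence argument without appealing to Aubin--Lions, which fits its stated aim of following \cite{lad-etal_68a} closely. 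Both are complete proofs of the statement.
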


We prove Theorem~\ref{chap2prop5} by closely following the arguments from  \cite[p.469]{lad-etal_68a}. However for the sake of clarity, we provide all the details.

\begin{lemma}\cite[p.469]{lad-etal_68a}\label{sivalemma} For each $k\in \mathbb{N}$ and $m\geq k$ , let $c_m^k$ be the coefficients defined by \eqref{galerk.approx}. Then
 \begin{enumerate}
  \item[(i).] there exists a subsequence $(m^{(1)},m^{(2)},\cdots, m^{(n)}, \cdots)$ of  indices $m$, and a sequence $(c^k)$ of functions in $C[0,T]$ such that for each fixed $k\in \mathbb{N}$ the sequence $(c_{m^{(n)}}^k)_{m^{(n)}\geq k}$ converges to  $c^k$ in $C[0,T]$.
\item[(ii).] for each fixed $t\in[0,T]$, the sequence $(c^k)$ obtained in $(i)$ belongs to $\ell^{2}$.
 \end{enumerate}
\end{lemma}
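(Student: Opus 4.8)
The goal is to extract, via a diagonal argument, a single subsequence of indices along which every coordinate function $c_m^k$ converges in $C[0,T]$, and then to check that the limiting coefficient sequence $(c^k)$ is square-summable for each fixed $t$. The natural engine for the first part is the Arzel\`a--Ascoli theorem applied coordinate by coordinate, so the crux is to produce, for each fixed $k$, a uniform bound and an equicontinuity estimate on the family $(c_m^k)_{m\ge k}$.

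\textbf{Step 1: uniform boundedness of $c_m^k$.} For fixed $k$ and $m\ge k$, I would recover $c_m^k(t)$ from the Galerkin approximation by testing against $w_k$: since $(w_k)$ is orthonormal in $L^2(\Omega)$ and $u_m(t)=\sum_{\ell=1}^m c_m^\ell(t)\,w_\ell$, we have $c_m^k(t)=(u_m(t),w_k)$, whence $|c_m^k(t)|\le \|u_m(t)\|_{L^2(\Omega)}\,\|w_k\|_{L^2(\Omega)}=\|u_m(t)\|_{L^2(\Omega)}$. The energy estimate of Theorem~\ref{chap2thm2} gives $\max_{0\le t\le T}\|u_m(t)\|_{L^2(\Omega)}\le C\|u_0\|_{L^2(\Omega)}$ uniformly in $m$, so $(c_m^k)_{m\ge k}$ is uniformly bounded in $C[0,T]$ by a constant independent of $m$ (and, for this coordinate, of $k$).

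\textbf{Step 2: equicontinuity of $c_m^k$.} Here I would use the ODE itself. From \eqref{ode.system}, ${c_m^k}'(t)=g_k(\bfm{c}_m(t))$, and the bound \eqref{eqnchap12} gives $|{c_m^k}'(t)|\le \|\bfm{g}(\bfm{c}_m(t))\|_2\le P\,\|\bfm{c}_m(t)\|_2=P\,\|u_m(t)\|_{L^2(\Omega)}\le P\,C\|u_0\|_{L^2(\Omega)}$, again uniformly in $m$. A uniform bound on the derivative yields a uniform Lipschitz constant, hence equicontinuity of $(c_m^k)_{m\ge k}$ on $[0,T]$. (One subtlety: the constant $P$ in \eqref{eqnchap12} depends on $m$ through the number of basis functions, so strictly speaking I would instead bound ${c_m^k}'$ directly from \eqref{ode.system}, using $|({c_m^k})'|\le \varepsilon\|B\|_\infty\sum_j\|\partial_{x_j}u_m\|_{L^2}\|\partial_{x_j}w_k\|_{L^2}+M\sum_j\|\partial_{x_j}u_m\|_{L^2}\|w_k\|_{L^2}$ and controlling $\|\nabla u_m\|_{L^2(0,T;L^2)}$ by Theorem~\ref{chap2thm2}; this gives an $L^2$-in-time bound on ${c_m^k}'$, from which equicontinuity follows since $|c_m^k(t)-c_m^k(s)|\le |t-s|^{1/2}\|{c_m^k}'\|_{L^2(0,T)}$.)

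\textbf{Step 3: diagonal extraction.} With uniform boundedness and equicontinuity in hand for each fixed $k$, Arzel\`a--Ascoli yields a subsequence along which $c_m^1$ converges in $C[0,T]$; passing to a further subsequence makes $c_m^2$ converge, and so on. A Cantor diagonal argument then produces a single index subsequence $(m^{(n)})$ along which $c_{m^{(n)}}^k\to c^k$ in $C[0,T]$ for every fixed $k$, with each $c^k\in C[0,T]$. This proves part (i).

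\textbf{Step 4: $\ell^2$-summability of the limit (part (ii)).} Fix $t\in[0,T]$. By orthonormality, $\sum_{k=1}^N |c_{m^{(n)}}^k(t)|^2\le \|u_{m^{(n)}}(t)\|_{L^2(\Omega)}^2\le (C\|u_0\|_{L^2(\Omega)})^2$ for every $N\le m^{(n)}$. Letting $n\to\infty$ (the sum over $k\le N$ is finite, so the pointwise convergence $c_{m^{(n)}}^k(t)\to c^k(t)$ passes through) gives $\sum_{k=1}^N |c^k(t)|^2\le (C\|u_0\|_{L^2(\Omega)})^2$, uniformly in $N$; letting $N\to\infty$ shows $(c^k(t))\in\ell^2$ with $\|(c^k(t))\|_{\ell^2}\le C\|u_0\|_{L^2(\Omega)}$.

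I expect the main obstacle to be the equicontinuity estimate in Step~2, because of the $m$-dependence of the constant $P$ in \eqref{eqnchap12}: one must be careful to extract from the ODE a bound on ${c_m^k}'$ that is genuinely uniform in $m$, which is why I would route it through the $L^2(0,T;H^1_0)$ control on $u_m$ supplied by the energy estimate rather than through \eqref{eqnchap12} verbatim. The boundedness, diagonalization, and $\ell^2$ steps are then routine consequences of orthonormality together with Theorem~\ref{chap2thm2}.
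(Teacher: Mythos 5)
Your proposal is correct and takes essentially the same approach as the paper: uniform boundedness of $c_m^k(t)=(u_m(t),w_k)$ from the energy estimate, equicontinuity with a $\sqrt{h}$ modulus uniform in $m$ obtained by integrating the weak formulation and applying Cauchy--Schwarz/H\"older, Ascoli--Arzel\`a plus a diagonal extraction for part (i), and Bessel's inequality with the energy estimate for part (ii). Your observation that the constant $P$ in \eqref{eqnchap12} depends on $m$, and your fix of bounding $({c_m^k})'$ directly through the $L^{2}(0,T;H^{1}_{0}(\Omega))$ control on $u_m$, is precisely the estimate the paper carries out.
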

\begin{proof}
Recall that for each $k\in \mathbb{N}$ and every $m\geq k$,  the function  $c_{m}^k:[0,T]\to \mathbb{R}$ is given by
\begin{equation}\label{l.weak.solution}
c_m^k(t)= \left(u_{m}(\cdot,t),w_{k}\right).
\end{equation}
Uniform boundedness of the sequence $(c_m^k)_{m\geq k}$ is a direct consequence of the energy estimate (see Theorem \ref{chap2thm2}). By a simple calculation involving the weak formulation \eqref{eqnchap2114} and using the Cauchy-Schwarz and H\"{o}lder inequalities, we arrive at
\begin{equation*}
 \Big| c_m^k(t+h)-c_m^k(t)\Big|\leq \|u_{m}\|_{L^{2}(0,T;H^{1}_{0}(\Omega))}\hspace{0.1cm}\left(\epsilon \displaystyle\sum_{j=1}^{d}\|\frac{\partial w_{k}}{\partial x_{j}}\|_{L^{2}(\Omega)}+ \displaystyle\sum_{j=1}^{d}\|f_{j}'\|_{\infty}\right)\hspace{0.1cm}\sqrt{h}.
\end{equation*}
Using the energy estimate  and the last inequality, equicontinuity of the sequence $(c_m^k)_{m\geq k}$ follows. Applying Ascoli-Arzela theorem with $k=1$, we conclude the existence of a subsequence $(m_1^{(1)},m_1^{(2)},\cdots, m_1^{(n)}, \cdots)$ of  indices $m$, and a $c^1\in C[0,T]$ such that  the sequence $(c_{m_1^{(n)}}^1)$ converges to  $c^1$ uniformly on $[0,T]$.  Applying Ascoli-Arzela theorem with $k=2$ for the sequence $(c_{m_1^{(n)}}^2)$, we get a further subsequence of the subsequence $m_1^{(n)}$ denoted by $m_2^{(n)}$, and  a $c^2\in C[0,T]$ such that  the sequence $(c_{m_2^{(n)}}^2)$ converges to  $c^2$ uniformly on $[0,T]$. Continuing in this fashion, we get a sequence $(c^k)$ of functions in $C[0,T]$. By a diagonal argument we obtain the subsequence $(m_n^{(n)})$ for which the assertion $(i)$ of the lemma holds. For reasons of notational convenience we still index this subsequence using $m\in\mathbb{N}$.

Let us turn to the proof of $(ii)$ now. For each $t\in[0,T]$ we have
\begin{equation*}\label{eqnchap2204}
 \Big|c^{k}(t)\Big|= \displaystyle\lim_{m\to\infty}\Big|\left(u_{m}(t),w_{k}\right)\Big|,
\end{equation*}
and hence we get
\begin{eqnarray*}
 \displaystyle\sum_{k=1}^{\infty}\Big|c^{k}(t)\Big|^{2} &=& \displaystyle\sum_{k=1}^{\infty}\left(\displaystyle\lim_{m\to\infty}\Big|\left(u_{m}(t),w_{k}\right)\Big|\right)^{2}\nonumber\\
  &=& \displaystyle\lim_{p\to\infty}\left(\displaystyle\lim_{m\to\infty}\left(\displaystyle\sum_{k=1}^{p}\Big|\left(u_{m}(t),w_{k}\right)\Big|^{2}\right)\right).
\end{eqnarray*}
Using Bessel's inequality, the last equation yields  
\begin{eqnarray*}
 \displaystyle\sum_{k=1}^{\infty}\Big|c^{k}(t)\Big|^{2} \le \displaystyle\limsup_{m\to\infty}\|u_{m}\|_{L^{2}(\Omega)}^{2}.
 \end{eqnarray*}
Using the energy estimate again, we get $(c^k)\in \ell^{2}$. This completes the proof of the lemma.
\end{proof}

\noindent In the proof of Theorem~\ref{chap2prop5} the following lemma from \cite{lad-etal_68a} will be used.
\begin{lemma}\cite[p.72]{lad-etal_68a}\label{chap2prop3}
Let $\left\{\psi_{k}\right\}_{k=1}^{\infty}$ be an orthonormal basis for $L^{2}(\Omega)$. Then for any $\nu > 0$, there exists a number $N_{\nu}$ such that for all $v\in H^{1}(\Omega)$ the inequality 
 \begin{equation}\label{eqnchap247}
  \|v\|_{L^{2}(\Omega)}\leq \left(\displaystyle\sum_{k=1}^{N_{\nu}}(v,\psi_{k})^{2}\right)^{\frac{1}{2}} + \nu \|v\|_{H^{1}(\Omega)}.
 \end{equation}
holds. 
\end{lemma}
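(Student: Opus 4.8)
The plan is to recast the desired inequality as a statement about the tails of the Fourier expansion of $v$ in the orthonormal basis $\{\psi_k\}$, and then to make those tails uniformly small by exploiting the compact embedding of $H^1(\Omega)$ into $L^2(\Omega)$. First I would introduce the orthogonal projection $P_N v = \sum_{k=1}^{N}(v,\psi_k)\,\psi_k$ onto the span of $\psi_1,\dots,\psi_N$. Since $\{\psi_k\}$ is orthonormal, Parseval's identity gives the orthogonal decomposition $\|v\|_{L^2(\Omega)}^2 = \|P_N v\|_{L^2(\Omega)}^2 + \|(I-P_N)v\|_{L^2(\Omega)}^2$, in which $\|P_N v\|_{L^2(\Omega)}^2 = \sum_{k=1}^{N}(v,\psi_k)^2$. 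Using the elementary bound $\sqrt{a^2+b^2}\le a+b$ for $a,b\ge 0$, the claimed inequality \eqref{eqnchap247} follows once I produce, for a given $\nu>0$, an index $N_\nu$ such that
\[ \|(I-P_{N_\nu})v\|_{L^2(\Omega)} \le \nu\,\|v\|_{H^1(\Omega)} \quad\text{for all } v\in H^1(\Omega). \]
By homogeneity in $v$, this is equivalent to showing $\sup\{\|(I-P_N)v\|_{L^2(\Omega)} : \|v\|_{H^1(\Omega)}\le 1\}\to 0$ as $N\to\infty$.

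The key tool is the Rellich--Kondrachov theorem: since $\Omega$ is bounded with smooth boundary, the closed unit ball $K$ of $H^1(\Omega)$ is precompact in $L^2(\Omega)$. I would then argue by contradiction. If the supremum failed to tend to zero, there would exist $\nu_0>0$, indices $N_j\to\infty$, and functions $v_j$ with $\|v_j\|_{H^1(\Omega)}\le 1$ and $\|(I-P_{N_j})v_j\|_{L^2(\Omega)}>\nu_0$. By precompactness, a subsequence of $(v_j)$ converges in $L^2(\Omega)$ to some $v_\ast$. Splitting
\[ \|(I-P_{N_j})v_j\|_{L^2(\Omega)} \le \|(I-P_{N_j})(v_j - v_\ast)\|_{L^2(\Omega)} + \|(I-P_{N_j})v_\ast\|_{L^2(\Omega)}, \]
the first term is at most $\|v_j-v_\ast\|_{L^2(\Omega)}\to 0$, because $I-P_{N_j}$ is a projection and hence has operator norm at most one, while the second term equals $\bigl(\sum_{k>N_j}(v_\ast,\psi_k)^2\bigr)^{1/2}$, the tail of a convergent series, and therefore also tends to zero. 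Thus $\|(I-P_{N_j})v_j\|_{L^2(\Omega)}\to 0$, contradicting the lower bound $\nu_0$. Consequently the supremum tends to zero, and I may choose $N_\nu$ so that it is at most $\nu$.

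The only genuine obstacle is the uniformity over the unit ball of $H^1(\Omega)$: for a single fixed $v$ the tail estimate $\sum_{k>N}(v,\psi_k)^2\to 0$ is immediate from Parseval, but the lemma demands one value $N_\nu$ that works for all $v$ simultaneously. This uniformity is exactly what compactness supplies and cannot be bypassed—the unit ball of $H^1(\Omega)$ is not compact in $H^1(\Omega)$, and for an arbitrary orthonormal basis no a priori rate of tail decay is available; it is the precompactness of $K$ in $L^2(\Omega)$, i.e. the boundedness of $\Omega$ entering through Rellich--Kondrachov, that forces equismall tails over $K$. Hence the compactness argument is essential rather than merely convenient.
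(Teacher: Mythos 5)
Your proof is correct, and there is nothing in the paper to compare it against: the paper states this lemma as an imported result, citing \cite[p.72]{lad-etal_68a}, and never proves it, using it as a black box in the proof of Theorem~\ref{chap2prop5}. Your argument supplies exactly the missing proof in the standard way: Parseval plus $\sqrt{a^{2}+b^{2}}\le a+b$ reduces the claim to showing that the tail norms $\|(I-P_{N})v\|_{L^{2}(\Omega)}$ are uniformly small over the unit ball of $H^{1}(\Omega)$, and this uniformity is extracted from the Rellich--Kondrachov compact embedding $H^{1}(\Omega)\hookrightarrow\hookrightarrow L^{2}(\Omega)$ by a contradiction/subsequence argument, using that $I-P_{N}$ has operator norm at most one and that the tail of a convergent series vanishes. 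This is essentially the argument one finds in the cited source, and your closing remark correctly identifies compactness as the indispensable ingredient (for a single $v$ the tail estimate is trivial; the point is one $N_{\nu}$ for all $v$). Two minor points you use implicitly and could state: the supremum $\sup\{\|(I-P_{N})v\|_{L^{2}(\Omega)}:\|v\|_{H^{1}(\Omega)}\le 1\}$ is nonincreasing in $N$, which is why ``the supremum tends to zero'' is equivalent to the quantified statement of the lemma; and Rellich--Kondrachov requires some boundary regularity, which holds here since the paper assumes $\partial\Omega$ smooth and $\Omega$ bounded.
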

{\bf Proof of Theorem~\ref{chap2prop5}:} Let the subsequence of indices asserted by Lemma~\ref{sivalemma} still be denoted by $m$ by dropping the superscript. For proving Theorem~\ref{chap2prop5}, it is enough to show that the sequence $u_m$   is a Cauchy sequence in $L^{2}(0,T;L^{2}(\Omega))$. Let $\nu>0$ be given. Let $m_1, m_2$ be any two natural numbers satisfying $m_1\geq m_{2}$. Applying Lemma~\ref{chap2prop3} with the given $\nu>0$, we get a $N_{\nu}$ such that the inequality  \eqref{eqnchap247} holds for $v= u_{m_1}-u_{m_{2}}$. Thus we have
\begin{equation}\label{eqnchap261}
 \|u_{m_1}-u_{m_{2}}\|_{L^{2}(\Omega)}\leq \left(\displaystyle\sum_{k=1}^{N_{\nu}}(u_{m_1}-u_{m_{2}}, w_{k})^{2}\right)^{\frac{1}{2}} + \nu \|u_{m_1}-u_{m_{2}}\|_{H^{1}_{0}(\Omega)}.
\end{equation}
Squaring the above inequality and using the inequality $2ab \leq a^{2} + b^{2}$ which is valid for any $a,b\in \mathbb{R}$, we get
\begin{equation}\label{ladylemmaconsequence}
 \|u_{m_1}-u_{m_{2}}\|_{L^{2}(\Omega)}^{2}\leq (1+ \nu)\hspace{0.1cm}\displaystyle\sum_{k=1}^{N_{\nu}}(u_{m_1}-u_{m_{2}},w_{k})^{2} + (\nu^{2} + \nu) \|u_{m_1}-u_{m_{2}}\|_{H^{1}_{0}(\Omega)}^{2}.
\end{equation}

On the other hand, assertion $(ii)$ of  Lemma~\ref{sivalemma} says that the function 
$$  q(x,t): =\displaystyle\sum_{k=1}^{\infty}c^{k}(t)w_{k}$$
is such that $q(.,t)\in L^2(\Omega)$. Note that $  \left(u_{m}-q, w_{j}\right)= \left(u_{m}, w_{j}\right) -c^{j}(t). $
 In view of  \eqref{l.weak.solution}, we have $ \left(u_{m}-q, w_{j}\right)\rightarrow 0$ uniformly in  $[0,T]$. Hence  $ \left(u_{m}-q, w_{j}\right)$ is a Cauchy sequence in $C[0,T]$. Thus, for the given $\nu>0$, and the integer $N_{\nu}>0$, there exists an $N\in\mathbb{N}$ such that for  $m_1\geq m_{2}\geq N$ we get 
 \begin{equation}\label{eqnchap264}
\sup\limits_{t\in [0,T]}\left\{ \displaystyle\sum_{k=1}^{N_{\nu}}\big|(u_{m_1}-u_{m_{2}},w_{k})\big|^{2}\right\} < \nu.
 \end{equation}

Integrating w.r.t. $t$ variable on both sides of the  inequality \eqref{ladylemmaconsequence}, and using \eqref{eqnchap264}, we obtain
\begin{eqnarray}\label{eqnchap265}
 \int_{0}^{T}\|u_{m_1}-u_{m_{2}}\|_{L^{2}(\Omega)}^{2}\,dt &\leq&  (1+ \nu)T\hspace{0.1cm}\nu + (\nu^{2} + \nu)\|u_{m_1}-u_{m_{2}}\|_{L^{2}(0,T;H^{1}_{0}(\Omega))}^{2}
\end{eqnarray}
for all $m_{1}\geq m_{2}\geq N$. In view of the inequality  
\begin{eqnarray*}\label{eqnchap266}
 \|u_{m_1}-u_{m_{2}}\|_{L^{2}(0,T;H^{1}_{0}(\Omega))}^{2} 
 &\leq& 2\left(\|u_{m_1}\|_{L^{2}(0,T;H^{1}_{0}(\Omega))}^{2} + \|u_{m_{2}}\|_{L^{2}(0,T;H^{1}_{0}(\Omega))}^{2}\right),
\end{eqnarray*}
using energy estimate, the inequality \eqref{eqnchap265} becomes
\begin{equation}\label{eqnchap267}
 \|u_{m_1}-u_{m_{2}}\|_{L^{2}(0,T;H^{1}_{0}(\Omega))}^{2} \leq (1+ \nu)T\hspace{0.1cm}\nu + 4 C^{2}(\nu^{2}+ \nu)\|u_{0}\|_{L^{2}(\Omega)}^{2}.\nonumber
\end{equation}
This shows that the sequence $(u_m)$ is Cauchy in $L^2(0,T;L^2(\Omega))$.  This completes the proof of the theorem.

We now prove the existence of a weak solution to the problem \eqref{ibvp.parab}.
\begin{theorem}[Existence of a weak solution]\label{chap2thm3}
There exists a weak solution of the generalized viscosity problem \eqref{ibvp.parab}.
\end{theorem}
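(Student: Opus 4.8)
The plan is to pass to the limit in the Galerkin equations \eqref{eqnchap2114} using the compactness already established. By Theorem~\ref{chap2prop5} we have a subsequence (still denoted $u_m$) converging to some $u$ in $L^2(0,T;L^2(\Omega))$ and pointwise a.e. in $\Omega_T$. The energy estimate of Theorem~\ref{chap2thm2} gives uniform bounds on $u_m$ in $L^2(0,T;H^1_0(\Omega))$ and on $u_m'$ in $L^2(0,T;H^{-1}(\Omega))$; since these spaces are reflexive, I would extract (a further subsequence so that) $u_m\rightharpoonup u$ weakly in $L^2(0,T;H^1_0(\Omega))$ and $u_m'\rightharpoonup \chi$ weakly in $L^2(0,T;H^{-1}(\Omega))$. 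A standard argument identifies $\chi=u_t$, so in particular $u\in W(0,T)$, which also justifies the initial condition via the embedding $W(0,T)\hookrightarrow C([0,T];L^2(\Omega))$ noted in the text.

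Next I would fix $k$ and a test function $\phi\in C^1[0,T]$ with $\phi(T)=0$, multiply \eqref{eqnchap2114} by $\phi(t)$, integrate over $(0,T)$, and integrate the first term by parts in $t$ to move the derivative onto $\phi$. Then I would let $m\to\infty$ term by term. The linear terms (the time-derivative term against $\phi'$ and the viscous term, which is linear in $\nabla u_m$ once the coefficient is fixed) pass to the limit by weak convergence in $L^2(0,T;H^1_0(\Omega))$. The genuinely nonlinear terms are $B(u_m)\,\partial_{x_j}u_m$ and $f_j'(u_m)\,\partial_{x_j}u_m$: here I would combine the a.e. pointwise convergence $u_m\to u$ with continuity and boundedness of $B$ and of $f_j'$ to get $B(u_m)\to B(u)$ and $f_j'(u_m)\to f_j'(u)$ a.e. and boundedly, hence strongly in $L^2(\Omega_T)$ by dominated convergence, while $\partial_{x_j}u_m\rightharpoonup \partial_{x_j}u$ weakly in $L^2(\Omega_T)$; the product of a strongly convergent and a weakly convergent $L^2$ sequence converges to the product of the limits. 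This yields \eqref{ibvp.parab.weak.a} tested against $w_k\,\phi(t)$ for every $k$, and by density of the span of the $w_k$ (and of such products) in $L^2(0,T;H^1_0(\Omega))$, the weak formulation holds for all admissible test functions, and then for a.e.\ $t$ against every $v\in H^1_0(\Omega)$.

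Finally I would verify the initial condition \eqref{ibvp.parab.weak.b}: the integration-by-parts identity, compared with the limiting equation and a choice of $\phi$ with $\phi(0)\neq0$, produces $u(0)=\lim u_m(0)=u_0$ in $L^2(\Omega)$, using \eqref{galerk.approx.initial} and the fact that $(u_0,w_k)\to$ the Fourier coefficients of $u_0$.

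The main obstacle is the passage to the limit in the two nonlinear terms, since weak $L^2$ convergence of $\nabla u_m$ alone is not enough to control products. The resolution rests on having both the a.e.\ pointwise convergence from Theorem~\ref{chap2prop5} and the $L^\infty$ bounds on $B$ and $f'$ from the hypotheses of Section~\ref{section.wpgvp}, which together upgrade $B(u_m)$ and $f_j'(u_m)$ to strong $L^2(\Omega_T)$ convergence and thereby permit taking the limit of each strong-times-weak product. A minor point to handle carefully is that the diagonal subsequence from Lemma~\ref{sivalemma} and the weak-compactness subsequence must be reconciled, but since $u$ is identified as the unique $L^2$ limit this causes no difficulty.
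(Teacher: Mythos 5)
Your proposal is correct and follows essentially the same route as the paper: both pass to the limit in the Galerkin equations \eqref{eqnchap2114} using the strong $L^{2}$/a.e.\ convergence from Theorem~\ref{chap2prop5}, the energy estimate of Theorem~\ref{chap2thm2} for weak convergence of $\nabla u_m$ and $u_m'$, the strong-times-weak argument (dominated convergence for $B(u_m)$, $f_j'(u_m)$ against fixed test data paired with weak convergence of the gradients) for the nonlinear terms, density of the span of the $w_k$, and an integration by parts in $t$ compared with \eqref{galerk.approx.initial} to recover $u(0)=u_0$. The only cosmetic difference is that you move the time derivative onto the test function from the outset, whereas the paper keeps $\langle u_m',v\rangle$ and invokes weak convergence of $u_m'$, reserving the integration-by-parts comparison for the initial-condition step.
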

\begin{proof} Let $u$ be as asserted by Theorem~\ref{chap2prop5}. We need to show that the function $u$ satisfies \eqref{ibvp.parab.weak.a} and the initial condition \eqref{ibvp.parab.weak.b}.
Note that
$$S = \left\{\displaystyle\sum_{k=1}^{N}d^{k}(t)\hspace{0.1cm}w_{k}\hspace{0.1cm}:\hspace{0.1cm}d^{k}\in C^{1}([0,T]),\, N\in\N\right\}$$
is a dense subspace of $L^{2}(0,T; H^{1}_{0}(\Omega))$. Let $v\in S$ be chosen such that $N\le m$.  Then by definition, we have the set of $N$ $C^1$ functions $\{d^k\}$. Multiplying the equation \eqref{eqnchap2114} by $d^k$, summing over  $k=1,2,\ldots,N$, and integrating over the interval $[0,T]$, we get
\begin{equation}\label{eqnchap2999}
 \int_{0}^{T}\hspace{0.1cm}\langle u_{m}^{'}, v \rangle\hspace{0.1cm}dt + \epsilon \displaystyle\sum_{j=1}^{d}\int_{0}^{T}\left(\frac{\partial u_{m}}{\partial x_{j}},B(u_{m})\frac{\partial v}{\partial x_{j}}\right)\hspace{0.1cm}dt + \displaystyle\sum_{j=1}^{d}\int_{0}^{T}\left(\frac{\partial u_{m}}{\partial x_{j}},f'_{j}(u_{m})v\right)\hspace{0.1cm}dt =0. 
\end{equation}
As a consequence of the energy estimate, Theorem~\ref{chap2prop5}, and using Banach-Alaoglu theorem, there exists a subsequence $\left\{u'_{m_{l}}\right\}_{l=1}^{\infty}$ of $\left\{u'_{m}\right\}_{m=1}^{\infty}$ (as usual, we drop the index $l$) such that $u'_{m}\rightharpoonup u'$  in $L^{2}(0,T;H^{-1}(\Omega))$. Therefore, we have
\begin{equation}\label{weaksolution.convergence.eqn1}
 \int_{0}^{T}\hspace{0.1cm}\langle u'_{m}, v\rangle\hspace{0.1cm}dt \to \int_{0}^{T}\hspace{0.1cm} \langle u', v\rangle \hspace{0.1cm}dt, \,\, \forall v\in L^{2}(0,T; H^{1}_{0}(\Omega)).
\end{equation}
For $j=1,2,\ldots,d$, since $f_j$  are $C^1$ functions and $B$ is a continuous function, by Theorem~\ref{chap2prop5},  for a.e. $(x, t)\in \Omega_T$ we have the following convergences as $m\to\infty$ :
 \begin{eqnarray}\label{eqnchap292}
   f'_{j}(u_{m}) \to f'_{j}(u),\,\,\,   B(u_{m}) \to  B(u),\nonumber
  \end{eqnarray}
As a consequence, we have
\begin{equation}\label{eqnchap293}
\left .\begin{array}{rl}
 f'_{j}(u_{m})v \to  f'_{j}(u)v\hspace{0.2cm}&\mbox{in}\hspace{0.2cm}L^{2}(0,T;L^{2}(\Omega)),\quad\\
 B(u_{m})\frac{\partial v}{\partial x_j} \to  B(u)\frac{\partial v}{\partial x_j}\hspace{0.2cm}&\mbox{in}\hspace{0.2cm}L^{2}(0,T;L^{2}(\Omega))
 \end{array}\right\}
\end{equation}
By energy estimate \eqref{eqnchap29}, we have
\begin{eqnarray}\label{weaksolution.convergence.eqn2}
\frac{\partial u_{m}}{\partial x_{j}}\rightharpoonup \frac{\partial u}{\partial x_{j}}
\mbox{ in } L^2(0,T;L^2(\Omega)).
\end{eqnarray}
Using the information \eqref{weaksolution.convergence.eqn1}-\eqref{weaksolution.convergence.eqn2} in the equation \eqref{eqnchap2999}, we get for each $v\in S$ with $N\leq m$,
\begin{equation*}\label{eqnchap295}
 \int_{0}^{T}\hspace{0.1cm}\langle u', v\rangle\hspace{0.1cm}dt + \epsilon\displaystyle\sum_{j=1}^{d}\int_{0}^{T}\left(\frac{\partial u}{\partial x_{j}},B(u)\frac{\partial v}{\partial x_{j}}\right)\hspace{0.1cm}dt + \displaystyle\sum_{j=1}^{d}\int_{0}^{T}\left(\frac{\partial u}{\partial x_{j}},f'_{j}(u)v\right)\hspace{0.1cm}dt =0. 
\end{equation*}
Since $S$ is dense in $L^{2}(0,T;H^{1}_{0}(\Omega))$, the above equation holds for all $v\in L^{2}(0,T;H^{1}_{0}(\Omega))$ and can be written as
\begin{equation}\label{eqnchap297}
\int_{0}^{T}\hspace{0.1cm}\langle u', v\rangle\hspace{0.1cm}dt + \epsilon\displaystyle\sum_{j=1}^{d}\int_{0}^{T}\left(B(u)\frac{\partial u}{\partial x_{j}},\frac{\partial v}{\partial x_{j}}\right)\hspace{0.1cm}dt + \displaystyle\sum_{j=1}^{d}\int_{0}^{T}\left(f'_{j}(u)\frac{\partial u}{\partial x_{j}},v\right)\hspace{0.1cm}dt =0. 
\end{equation}
By choosing the test functions $v(x,t)=\varphi(t) w(x)$ where $\varphi\in {\cal D}(0,T)$, $w\in H^1_0(\Omega)$, we conclude that $u$ satisfies \eqref{ibvp.parab.weak.a}. 

We now turn to the proof of \eqref{ibvp.parab.weak.b}. We know that $u\in L^{2}(0,T;H^{1}_{0}(\Omega))$ and $u{'}\in L^{2}(0,T;H^{-1}(\Omega))$.  This implies that $u\in C([0,T];L^{2}(\Omega))$ and therefore $u(0)\in L^2(\Omega)$. 

Choosing $v=c(t)w_k$ where $v\in C^1[0,T]$ is such that $c(0)=1, c(T)=1$ in the equation \eqref{eqnchap297},  integration by parts yields 
 \begin{equation}\label{eqnchap299}
 \int_{0}^{T}\hspace{0.1cm}-(u, v')\hspace{0.1cm}dt + \epsilon\displaystyle\sum_{j=1}^{d}\int_{0}^{T}\left(B(u)\frac{\partial u}{\partial x_{j}},\frac{\partial v}{\partial x_{j}}\right)\hspace{0.1cm}dt + \displaystyle\sum_{j=1}^{d}\int_{0}^{T}\left(f_{j}'(u)\frac{\partial u}{\partial x_{j}},v\right)\hspace{0.1cm}dt = (u(0),v(0)). 
\end{equation}
Again from \eqref{eqnchap2999}, get
\begin{multline}\label{eqnchap2100}
 \int_{0}^{T}\hspace{0.1cm}-(u_{m}, v')\hspace{0.1cm}dt +\epsilon \displaystyle\sum_{j=1}^{d}\int_{0}^{T}\left(B(u_{m})\frac{\partial u_{m}}{\partial x_{j}},\frac{\partial v}{\partial x_{j}}\right)\hspace{0.1cm}dt\\
  + \displaystyle\sum_{j=1}^{d}\int_{0}^{T}\left(f_{j}'(u_{m})\frac{\partial u_{m}}{\partial x_{j}},v\right)\hspace{0.1cm}dt = (u_{m}(0),v(0)). 
\end{multline}
Now passing to the limit in \eqref{eqnchap2100} as $m\to \infty$, we get
\begin{equation}\label{eqnchap2101}
 \int_{0}^{T}\hspace{0.1cm}-(u, v')\hspace{0.1cm}dt + \epsilon\displaystyle\sum_{j=1}^{d}\int_{0}^{T}\left(B(u)\frac{\partial u}{\partial x_{j}},\frac{\partial v}{\partial x_{j}}\right)\hspace{0.1cm}dt + \displaystyle\sum_{j=1}^{d}\int_{0}^{T}\left(f_{j}'(u)\frac{\partial u}{\partial x_{j}},v\right)\hspace{0.1cm}dt = (u_{0},v(0)). 
\end{equation}
Since $(w_k)$ is an orthonormal for $L^2(\Omega)$,  the equations \eqref{eqnchap299} and \eqref{eqnchap2101} yield $u(0)=u_0$.  This completes the proof of the theorem.
\end{proof}

\subsection{Uniqueness}

Any solution of generalized viscosity problem \eqref{ibvp.parab} satisfies the following maximum principle \cite[p.60]{MR1304494}:
\begin{theorem}[Maximum principle]\label{chap3thm1}
Let $f:\mathbb{R}\to\mathbb{R}^{d}$ be a $C^{1}$ function and $u_{0}\in L^{\infty}(\Omega)$. Then any solution $u$ of generalized viscosity problem \eqref{ibvp.parab} in $W(0,T)$  satisfies the bound
\begin{equation}\label{eqnchap303}
||u^{\varepsilon}(\cdot,t)||_{L^{\infty}(\Omega)}\hspace{0.1cm}\leq\hspace{0.1cm}||u_{0}||_{L^{\infty}(\Omega)}\hspace{0.1cm}a.e.\,\,t\in(0,T).
\end{equation}
\end{theorem}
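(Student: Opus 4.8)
The plan is to establish the $L^\infty$ bound by a truncation (Stampacchia) argument, testing the weak formulation against the positive part of $u^\varepsilon$ shifted by the constant $M:=\|u_0\|_{L^\infty(\Omega)}$. Set $M=\|u_0\|_{L^\infty(\Omega)}$ and define $w:=(u-M)^+=\max(u-M,0)$. Since $u\in L^2(0,T;H^1_0(\Omega))$ with $u_t\in L^2(0,T;H^{-1}(\Omega))$, the function $w$ lies in $L^2(0,T;H^1_0(\Omega))$ and is an admissible test function in \eqref{ibvp.parab.weak.a} for a.e.\ $t$; moreover $\nabla w=\nabla u\cdot\mathbf{1}_{\{u>M\}}$. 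The key structural identity is the chain rule for the duality pairing, $\langle u_t,w\rangle=\frac{1}{2}\frac{d}{dt}\|w\|_{L^2(\Omega)}^2$, which is justified by the standard regularization lemma for functions in $W(0,T)$ (see the identification $W(0,T)\hookrightarrow C([0,T];L^2(\Omega))$ already invoked in the existence proof).

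First I would insert $v=w$ into \eqref{ibvp.parab.weak.a} and analyze the three resulting terms on the set $\{u>M\}$. The time term gives $\frac12\frac{d}{dt}\|w(t)\|_{L^2(\Omega)}^2$. The diffusion term becomes
\begin{equation*}
\varepsilon\sum_{j=1}^d\int_\Omega B(u)\,\frac{\partial u}{\partial x_j}\,\frac{\partial w}{\partial x_j}\,dx
=\varepsilon\int_{\{u>M\}}B(u)\,|\nabla w|^2\,dx\;\geq\; \varepsilon\,r\,\|\nabla w\|_{L^2(\Omega)}^2\;\geq\;0,
\end{equation*}
using $B\geq r>0$ from the hypotheses; this term has a favorable sign and may be discarded. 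The convective term requires the observation that, on $\{u>M\}$, one has $\sum_j f_j'(u)\frac{\partial u}{\partial x_j}\,w=\nabla\cdot\Phi(u)\cdot(\text{indicator})$ where $\Phi'=f'$; the crucial point is that this term integrates to zero. Writing $F_j(s):=\int_M^s f_j'(\tau)\,\mathbf{1}_{\{\tau>M\}}\,d\tau$, the convective contribution equals $\sum_j\int_\Omega \frac{\partial}{\partial x_j}G_j(u)\,dx$ for a suitable antiderivative $G_j$ vanishing where $u\le M$, and since $w\in H^1_0(\Omega)$ has zero trace, integration by parts (or the divergence theorem) makes this boundary-free integral vanish.

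Putting these together yields $\frac{d}{dt}\|w(t)\|_{L^2(\Omega)}^2\leq 0$ for a.e.\ $t$, and since $w(0)=(u_0-M)^+=0$ a.e.\ by the definition of $M$, Gronwall (or direct integration) forces $\|w(t)\|_{L^2(\Omega)}=0$, i.e.\ $u\leq M$ a.e. The symmetric argument with $v=(u+M)^-=\min(u+M,0)$ gives $u\geq -M$ a.e., establishing \eqref{eqnchap303}. The main obstacle I anticipate is the rigorous justification of the convective term vanishing and of the chain rule $\langle u_t,(u-M)^+\rangle=\frac12\frac{d}{dt}\|(u-M)^+\|_{L^2}^2$ at the level of $W(0,T)$ regularity, since neither $u_t$ nor $\nabla u$ is pointwise defined; the clean way around this is to first prove the estimate for the smooth Galerkin approximations $u_m$ (where all manipulations are classical) and then pass to the limit using the a.e.\ convergence and weak convergences already assembled in Theorem~\ref{chap2prop5}, or equivalently to mollify $u$ in time and pass to the limit in the regularization. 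Either route keeps the argument self-contained given the results established earlier in this section.
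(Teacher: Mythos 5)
Your argument is essentially correct, but it cannot be compared with "the paper's proof" because the paper offers none: Theorem~\ref{chap3thm1} is stated with a bare citation to \cite[p.60]{MR1304494}, where the maximum principle is established for the constant-viscosity regularization $B\equiv 1$. Your Stampacchia truncation proof is therefore a genuinely self-contained alternative, and it buys something real: after testing \eqref{ibvp.parab.weak.a} (equivalently \eqref{eqnchap297}, which holds for every $v\in L^{2}(0,T;H^{1}_{0}(\Omega))$) with $w=(u-M)^{+}$, the quasilinear coefficient $B$ enters only through the discarded nonnegative term $\varepsilon\int_{\Omega}B(u)|\nabla w|^{2}\,dx$, so only the lower bound $B\geq r>0$ matters and the quasilinearity is seen to be harmless; moreover the argument is tailored to the Dirichlet initial-boundary value problem at hand rather than imported from the $B\equiv1$ setting.

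Two caveats. First, the vanishing of the convective term should be stated with the correct antiderivative: with $w=(u-M)^{+}$ one has, for each $j$,
\begin{equation*}
\int_{\Omega}f_{j}^{\prime}(u)\,\frac{\partial u}{\partial x_{j}}\,w\,dx
=\int_{\Omega}\frac{\partial}{\partial x_{j}}\bigl[H_{j}(w)\bigr]\,dx,
\qquad H_{j}(s):=\int_{0}^{s}f_{j}^{\prime}(\sigma+M)\,\sigma\,d\sigma,
\end{equation*}
and the chain rule together with the zero-trace argument (so that $H_{j}(w)\in W^{1,1}(\Omega)$ with zero trace and the integral vanishes) uses $f^{\prime}\in\left(L^{\infty}(\mathbb{R})\right)^{d}$; this is part of Hypotheses A--C and is already needed for the weak formulation itself to make sense, but it is not implied by the bare assumption $f\in C^{1}$ in the theorem statement, so it should be invoked explicitly. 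Second, your first fallback for the time-derivative chain rule does not work: $(u_{m}-M)^{+}$ is not an element of the finite-dimensional space $V_{m}$, hence it is not an admissible test function in the Galerkin equation \eqref{eqnchap2114}, and Galerkin approximations do not in general obey a maximum principle. Your second fallback is the right one: the standard regularization lemma asserting that, for $u\in W(0,T)$, the map $t\mapsto\frac{1}{2}\|(u(t)-M)^{+}\|_{L^{2}(\Omega)}^{2}$ is absolutely continuous with derivative $\langle u_{t},(u-M)^{+}\rangle$. With that lemma in place, the rest of your proof, including the symmetric step with $(u+M)^{-}$ and the observation $(u_{0}-M)^{+}=0=(u_{0}+M)^{-}$, goes through and yields \eqref{eqnchap303}.
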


We adapt the proof of a result in \cite[p.150]{lad-etal_68a} concerning linear equations to our situation, and obtain the following uniqueness theorem.
\begin{theorem}\textbf{$\left(\mbox{Uniqueness theorem}\right)$}\cite[p.150]{lad-etal_68a}\label{chap10thm1}
Let  $f\in \left(C^{1}(\mathbb{R})\right)^{d}, B\in C^{1}(\mathbb{R})\cap L^{\infty}(\mathbb{R}), u_{0}\in L^{\infty}(\Omega)$. Then the generalized viscosity problem \eqref{ibvp.parab} has at most one solution in $W(0,T)$.
 \end{theorem}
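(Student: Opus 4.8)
The plan is to run an energy estimate on the difference of two solutions, but measured in the $H^{-1}(\Omega)$ norm rather than the $L^2(\Omega)$ norm, because the quasilinear structure of the diffusion term obstructs the naive $L^2$ argument. Suppose $u_1,u_2\in W(0,T)$ both solve \eqref{ibvp.parab}. By the maximum principle (Theorem~\ref{chap3thm1}) both satisfy $\|u_i(\cdot,t)\|_{L^\infty(\Omega)}\le M:=\|u_0\|_{L^\infty(\Omega)}$, so their values lie in $I=[-M,M]$; on this compact interval each $f_j$ is Lipschitz with a constant $L_f$, and the primitive $\Phi(s):=\int_0^s B(\tau)\,d\tau$ satisfies $\Phi'=B$, which together with the standing positivity $B\ge r$ gives $B\in[r,\|B\|_{L^\infty(\R)}]$ on $I$. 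Writing $w=u_1-u_2$, we have $w\in W(0,T)$ with $w(0)=0$; moreover the diffusion flux can be recast via the Kirchhoff transform as $B(u_i)\nabla u_i=\nabla\Phi(u_i)$, while $f_j'(u_i)\,\partial_{x_j}u_i=\partial_{x_j}f_j(u_i)$ may be integrated by parts against any $H^1_0(\Omega)$ test function. Subtracting the two weak formulations \eqref{ibvp.parab.weak.a} therefore yields, for a.e.\ $t$ and every $v\in H^1_0(\Omega)$,
\begin{equation*}
\langle w_t,v\rangle+\varepsilon\int_\Omega\nabla\big(\Phi(u_1)-\Phi(u_2)\big)\cdot\nabla v\,dx-\sum_{j=1}^d\int_\Omega\big(f_j(u_1)-f_j(u_2)\big)\frac{\partial v}{\partial x_j}\,dx=0.
\end{equation*}

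The decisive step is the choice of multiplier $v=\psi:=(-\Delta_D)^{-1}w\in H^1_0(\Omega)$, the solution of $-\Delta\psi=w$ with homogeneous Dirichlet data, which is well defined since $w(\cdot,t)\in L^2(\Omega)\subset H^{-1}(\Omega)$. Three simplifications then occur at once. First, by the standard identity for the dual norm, $\langle w_t,\psi\rangle=\tfrac12\frac{d}{dt}\|w\|_{H^{-1}(\Omega)}^2$, where $\|w\|_{H^{-1}(\Omega)}^2=\|\nabla\psi\|_{L^2(\Omega)}^2$. Second, since $-\Delta\psi=w$, the diffusion term collapses to $\varepsilon\int_\Omega(\Phi(u_1)-\Phi(u_2))\,w\,dx$, and by the mean value theorem together with $B\ge r$ this is bounded below by $\varepsilon r\|w\|_{L^2(\Omega)}^2$; crucially, the troublesome mismatch term $\varepsilon\int_\Omega[B(u_1)-B(u_2)]\nabla u_2\cdot\nabla w\,dx$, which would appear if one tested with $w$ itself, never arises. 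Third, the convection term is controlled by $\sqrt d\,L_f\|w\|_{L^2(\Omega)}\|\nabla\psi\|_{L^2(\Omega)}=\sqrt d\,L_f\|w\|_{L^2(\Omega)}\|w\|_{H^{-1}(\Omega)}$.

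Combining these and absorbing $\|w\|_{L^2(\Omega)}$ via Young's inequality, I would obtain
\begin{equation*}
\tfrac12\frac{d}{dt}\|w\|_{H^{-1}(\Omega)}^2+\varepsilon r\|w\|_{L^2(\Omega)}^2\le \varepsilon r\|w\|_{L^2(\Omega)}^2+\frac{d\,L_f^2}{4\varepsilon r}\|w\|_{H^{-1}(\Omega)}^2,
\end{equation*}
so that $\frac{d}{dt}\|w\|_{H^{-1}(\Omega)}^2\le \frac{d\,L_f^2}{2\varepsilon r}\|w\|_{H^{-1}(\Omega)}^2$. Since $w(0)=0$ gives $\|w(0)\|_{H^{-1}(\Omega)}=0$, Gronwall's inequality forces $\|w(t)\|_{H^{-1}(\Omega)}=0$ for a.e.\ $t$, whence $u_1=u_2$.

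I expect the main obstacle to be technical justification rather than strategy. The two points requiring care are: verifying the time-derivative identity $\langle w_t,(-\Delta_D)^{-1}w\rangle=\tfrac12\frac{d}{dt}\|w\|_{H^{-1}(\Omega)}^2$ for $w\in W(0,T)$, using that $(-\Delta_D)^{-1}$ is the time-independent Riesz isomorphism $H^{-1}(\Omega)\to H^1_0(\Omega)$ and that $w\in C([0,T];L^2(\Omega))$; and confirming that $\Phi(u_i)\in L^2(0,T;H^1_0(\Omega))$, so that the Kirchhoff rewriting and the integration by parts in the flux term are legitimate. The conceptual heart is that testing in $H^{-1}(\Omega)$ replaces the diffusion contribution by the monotone quantity $\int_\Omega(\Phi(u_1)-\Phi(u_2))\,w\,dx\ge r\|w\|_{L^2(\Omega)}^2$, which simultaneously supplies coercivity and sidesteps the quasilinear mismatch term; the lower bound $B\ge r$ is precisely what is needed to close the estimate.
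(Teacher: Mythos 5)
Your argument is correct, and it takes a genuinely different route from the paper's. The paper proves uniqueness by duality: it recasts the equation for $w=u_1-u_2$ as the linear problem \eqref{eqn11chap10} with coefficients $\tilde a,\tilde b_j,\tilde c_j$ built from $u_1,u_2$ by the fundamental theorem of calculus, mollifies these coefficients, solves the backward adjoint problems \eqref{eqn33chap10} classically via the H\"older theory of Theorem~\ref{thm2chap10}, proves the uniform bounds of Theorem~\ref{unique.maximum}, and passes to the limit $m\to\infty$ in \eqref{eqn58chap10AB} to get $\int_{\Omega_T}w\,h\,dx\,dt=0$ for all admissible $h$. You instead run a direct energy estimate in the dual norm: the Kirchhoff primitive $\Phi(s)=\int_0^s B(\tau)\,d\tau$ turns the diffusion into $\Delta\Phi(u)$, and the multiplier $(-\Delta_D)^{-1}w$ converts it into the monotone term $\int_\Omega(\Phi(u_1)-\Phi(u_2))\,w\,dx\ge r\|w\|_{L^2(\Omega)}^2$, after which Young and Gronwall finish. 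Your route is markedly more elementary (no parabolic Schauder theory, no mollified coefficients, no limiting argument) and gives more, namely a quantitative $H^{-1}$ continuous-dependence bound with constant $\exp\bigl(d\,L_f^2\,T/(2\varepsilon r)\bigr)$; the two facts it rests on are standard and you identify them correctly: $\Phi(u_i)\in L^2(0,T;H^1_0(\Omega))$ holds because $\Phi$ is Lipschitz with $\Phi(0)=0$ and $u_i\in L^2(0,T;H^1_0(\Omega))\cap L^\infty(\Omega_T)$, while the identity $\langle w_t,(-\Delta_D)^{-1}w\rangle=\tfrac12\tfrac{d}{dt}\|w\|_{H^{-1}(\Omega)}^2$ holds because $\psi=(-\Delta_D)^{-1}w$ lies in $H^1(0,T;H^1_0(\Omega))$ whenever $w\in W(0,T)$. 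What the paper's duality method buys in exchange is indifference to the gradient structure: it uses only that $w$ solves a linear equation with bounded (respectively square-integrable) coefficients, so it would survive diffusion terms not expressible as $\Delta\Phi(u)$, to which your collapse of the diffusion term is tied. Note finally that both proofs invoke assumptions not restated in the theorem: the standing ellipticity $B\ge r$ (the paper through $a^m\ge r$ in \eqref{eqn54chap10}, you through the lower bound $r\|w\|_{L^2(\Omega)}^2$) and the maximum principle of Theorem~\ref{chap3thm1}, which confines $u_1,u_2$ to the compact interval $I$ on which $f$ is Lipschitz.
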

Let $u_1$ and $u_2$ be solutions of \eqref{ibvp.parab} in $W(0,T)$ and denote $w := u_1-u_2$. In view of \eqref{eqnchap297}, the function $w$   satisfies the following equation for all $\eta\in L^{2}(0,T;H^{1}_{0}(\Omega))$:  
\begin{eqnarray}\label{uniqueeqn3chap10}
 \int_{0}^{T}{_{_{H^{-1}(\Omega)}}}\!\langle w_{t},\eta\rangle\!{_{_{H^{1}_{0}(\Omega)}}}\hspace{0.1cm}dt + \varepsilon\displaystyle\sum_{j=1}^{d}\int_{0}^{T}\int_{\Omega}\left(B(u_1)\frac{\partial u_1}{\partial x_{j}}-B(u_2)\frac{\partial u_2}{\partial x_{j}}\right)\frac{\partial \eta}{\partial x_{j}}\hspace{0.1cm}dx\hspace{0.1cm}dt\nonumber\\
 - \displaystyle\sum_{j=1}^{d}\int_{0}^{T}\int_{\Omega}\left(f_{j}(u_1)-f_{j}(u_2)\right)\hspace{0.1cm}\frac{\partial \eta}{\partial x_{j}}\hspace{0.1cm}dx\hspace{0.1cm}dt &=0.\hspace*{0.2in} 
\end{eqnarray}
Using fundamental theorem of calculus, we may write
 \begin{eqnarray*}
B(u_1)\frac{\partial u_1}{\partial x_{j}}-B(u_2)\frac{\partial u_2}{\partial x_{j}}  &=&w \int_{0}^{1} B^{\prime}(\tau u_1 +(1-\tau)u_2)\Big[\tau (u_1)_{x_{j}}+ (1-\tau)(u_2)_{x_{j}}\Big]d\tau\nonumber\\
&&\hspace*{2cm}+ \frac{\partial w}{\partial x_{j}}\int_{0}^{1} B(\tau u_1 + (1-\tau)u_2)\hspace{0.1cm}d\tau,\\
f_{j}(u_1)-f_{j}(u_2)&=& w(x,t)\int_{0}^{1} f_{j}^{\prime}(\tau u_1 + (1-\tau)u_2)\hspace{0.1cm}d\tau,\,\,j=1,2,\cdots,d.
 \end{eqnarray*}

Thus the function $w$ satisfies, in view of the equation \eqref{uniqueeqn3chap10}, for all $ \eta\in H^{1}(\Omega\times(0,T))$ such that $\eta(x,T)=0$ on $\Omega$ and $\eta=0$ on $\partial\Omega\times (0,T)$,
\begin{eqnarray}\label{eqn11chap10}
 \int_{0}^{T}{_{_{H^{-1}(\Omega)}}}\!\langle w_{t},\eta\rangle\!{_{_{H^{1}_{0}(\Omega)}}}\hspace{0.1cm}dt + \varepsilon\displaystyle\sum_{j=1}^{d}\int_{0}^{T}\int_{\Omega}\left(\tilde{a}(x,t)\frac{\partial w}{\partial x_{j}} +\tilde{b_{j}}(x,t) w\right)\frac{\partial\eta}{\partial x_{j}}\hspace{0.1cm}dx\hspace{0.1cm}dt\nonumber \\ -\displaystyle\sum_{j=1}^{d}\int_{0}^{T}\int_{\Omega}\tilde{c_{j}}(x,t)w\hspace{0.1cm}\frac{\partial \eta}{\partial x_{j}}\hspace{0.1cm}dx\hspace{0.1cm}dt &=0  
  \end{eqnarray} 
 where
\begin{subequations}\label{conv.eqn1}
\begin{eqnarray}
 \tilde{a}(x,t) :&=& \int_{0}^{1}B(\tau u_1 + (1-\tau)u_2)\hspace{0.1cm}d\tau,\\
 \tilde{b_{j}}(x,t) :&=& \int_{0}^{1}B^{\prime}(\tau u_1 + (1-\tau)u_2)\left[\tau (u_1)_{x_{j}} + (1-\tau)(u_2)_{x_{j}}\right]\hspace{0.1cm}d\tau,\\
 \tilde{c_{j}}(x,t) :&=& \int_{0}^{1} f_{j}^{\prime}(\tau u_1 + (1-\tau)u_2)\hspace{0.1cm}d\tau.
\end{eqnarray}
\end{subequations}

Thus $w$ is a weak solution of
\begin{eqnarray*}\label{ibvp.w.formal}
 \mathcal{L}w\equiv w_{t} -\varepsilon\sum_{j=1}^d\frac{\partial}{\partial x_j}\left(\tilde{a}(x,t)w_{x_j}\right) -\varepsilon\sum_{j=1}^d\frac{\partial}{\partial x_j}\left(\tilde{b_j}(x,t)w\right)+\sum_{j=1}^d\frac{\partial}{\partial x_j}\left(\tilde{c_j}(x,t)w\right)=0&\mbox{in }\Omega_{T}
\end{eqnarray*}
satisfying $w(x,t)= 0$ for $(x,t)\in \partial \Omega\times (0,T)$, and $w(x,0) = 0$ for $x\in \Omega$.
Let us introduce a sequence of auxiliary IBVPs corresponding to operators $ \mathcal{L}_{m}$, which are  obtained by regularizing the coefficients of the formal adjoint to $\mathcal{L}$, defined by the IBVP
\begin{subequations}\label{eqn33chap10}
\begin{eqnarray}
\mathcal{L}_{m}(\eta) =h(x,t)   &\mbox{in }\Omega_{T}, \\
\eta(x,t)= 0&\mbox{on }\partial\Omega\times[0,T],\\
\eta(x,T) = 0& \mbox{for } x\in \Omega,
\end{eqnarray}
\end{subequations}
where
\begin{eqnarray}
 \mathcal{L}_{m}(\eta):= -\eta_{t} -\varepsilon\displaystyle\sum_{j=1}^{d}\frac{\partial}{\partial x_{j}}\left(a^{m}(x,t)\eta_{x_{j}}\right) +\varepsilon\displaystyle\sum_{j=1}^{d}b_{j}^{m}(x,t)\eta_{x_{j}}-\displaystyle\sum_{j=1}^{d}c_{j}^{m}(x,t)\eta_{x_{j}},
\end{eqnarray}
where the coefficient functions $a^{m}, b_{j}^{m}, c_{j}^{m}$ are obtained by first extending the functions $\tilde{a},\tilde{b}_{j}$ and $\tilde{c}_{j}$ by zero outside $\Omega\times(0,T)$ and then regularizing the resultant functions using a sequence $\rho_{\frac{1}{m}}$, and then restricting the regularized functions to $\Omega_T$. Here we use standard sequence of mollifiers, but defined for $(x,t)\in\R^{d+1}$.

Study of the auxiliary problems \eqref{eqn33chap10} is based on the following existence-cum-higher regularity theorem for linear parabolic problems:
\begin{theorem}\cite[p.320]{lad-etal_68a}\label{thm2chap10}
 Let $l> 0$ be a real number such that $l\notin\mathbb{N}$ . Let the coefficients of the differential operator appearing in the IBVP
\begin{subequations}\label{eqn34chap10}
 \begin{eqnarray*}
  \frac{\partial u}{\partial t}-\displaystyle\sum_{i,j=1}^{d}a_{ij}(x,t)\frac{\partial^{2}u}{\partial x_{i}\partial x_{j}} + \displaystyle\sum_{i=1}^{d}a_{i}(x,t)\frac{\partial u}{\partial x_{j}}+ a(x,t)u &=& h(x,t)\hspace{0.5cm}\mbox{in}\hspace{0.1cm}\Omega_{T} \quad\\
 u&=&\Phi\hspace{1.2cm}\mbox{on}\hspace{0.1cm}\partial \Omega\times[0,T]\quad\\
  u(x,0) &=& u_{0}(x)\hspace{0.7cm}\mbox{on}\hspace{0.1cm}\Omega\times\left\{t=0\right\}
 \end{eqnarray*}
\end{subequations} belong to the class $C^{l,l/2}(\overline{\Omega_{T}})$ and the boundary $\partial\Omega$ belongs to the class $C^{l+2}$. Then for any $h\in C^{l,l/2}(\overline{\Omega_{T}})$, $u_{0}\in C^{l+2}(\overline{\Omega})$, $\Phi\in C^{l+2,l/2 +1}(\overline{\partial\Omega\times[0,T]})$ satisfying the compatibility condition
 of order $[l/2] +1$ {\it i.e.,} for $k=0,1,\cdots,[l/2] +1$ and $x\in\partial\Omega$, 
 \begin{equation}\label{compatibility.condns}
  \frac{\partial^{k} u}{\partial t^{k}}(x,t)\Big|_{t=0} =\frac{\partial^{k}\Phi}{\partial t^{k}}(x,t)\Big|_{t=0}
  \end{equation}
 holds. Then the problem \eqref{eqn34chap10} has a unique solution in $ C^{l+2,l/2 +1}(\overline{\Omega_{T}})$. Further $u$ satisfies
 $$|u|_{l+2;\Omega_{T}}\leq C\left(|h|_{l;\Omega_{T}} + |u_{0}|_{l+2;\Omega} +|\Phi|_{l+2;\Omega_{T}}\right).$$
\end{theorem}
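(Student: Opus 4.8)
Since this is the classical parabolic Schauder theorem of Ladyzhenskaya et al., the plan is to prove it by the two standard pillars of linear parabolic theory: an \emph{a priori} Schauder estimate, and the method of continuity. First I would show that \emph{any} solution $u\in C^{l+2,l/2+1}(\overline{\Omega_{T}})$ of the IBVP automatically obeys the claimed bound $|u|_{l+2;\Omega_{T}}\leq C\big(|h|_{l;\Omega_{T}}+|u_{0}|_{l+2;\Omega}+|\Phi|_{l+2;\Omega_{T}}\big)$, with $C$ depending only on the ellipticity constant, the H\"{o}lder norms of the coefficients, $\partial\Omega$, $T$ and $d$. Once this estimate is in hand, uniqueness is immediate: the difference $w$ of two solutions solves the homogeneous problem ($h=u_0=\Phi=0$), so $|w|_{l+2;\Omega_T}=0$.

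The a priori estimate is the technical heart, and I would build it from constant-coefficient model problems by the freezing-of-coefficients method. The first ingredient is the sharp H\"{o}lder estimate for the model operator $\partial_t-\sum_{i,j}a_{ij}(x_0,t_0)\partial_{x_i}\partial_{x_j}$ with frozen coefficients on the whole space, obtained from the explicit Gaussian fundamental solution together with the potential-theoretic (singular-integral) bounds for its second space derivatives and first time derivative. Using a partition of unity I would localize $u$ to small parabolic cylinders: on each cylinder the H\"{o}lder continuity of $a_{ij},a_i,a$ makes the actual operator a small perturbation of its frozen counterpart, so that the lower-order and coefficient-variation terms can be absorbed into the left-hand side after shrinking the cylinder radius. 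Near $\partial\Omega$ I would flatten the boundary by a $C^{l+2}$ diffeomorphism (which preserves the $C^{l,l/2}$ class of the coefficients) and repeat the argument for the half-space model problem with the homogeneous Dirichlet condition, whose kernel estimates are again explicit.

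The delicate remaining pieces are the initial hyperplane $\{t=0\}$ and, above all, the space-time corner $\partial\Omega\times\{t=0\}$: this is where the compatibility conditions \eqref{compatibility.condns} of order $[l/2]+1$ enter, and I expect this to be the main obstacle. They are exactly the relations forcing the prescribed data $u_0$, $\Phi$ and $h$ to agree, together with all the time derivatives generated by the equation, up to the order permitted by the H\"{o}lder exponent; without them the solution would develop a corner singularity and fail to lie in $C^{l+2,l/2+1}$. Verifying that the local estimates survive the patching at the corner, with the compatibility conditions guaranteeing that the localized model data is itself admissible, is the most intricate bookkeeping. Finally, with the global a priori estimate established, existence follows by the method of continuity: I would join $\mathcal{L}$ to the heat operator through the family $\mathcal{L}_\sigma=(1-\sigma)(\partial_t-\Delta)+\sigma\mathcal{L}$, $\sigma\in[0,1]$, all of whose members satisfy the same a priori bound; solvability holds at $\sigma=0$, and the uniform estimate renders the set of solvable $\sigma$ both open and closed in $[0,1]$, so that solvability propagates to $\sigma=1$, which is the assertion of the theorem.
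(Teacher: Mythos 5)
This statement is not proved in the paper at all: it is imported verbatim from Ladyzhenskaya--Solonnikov--Ural'ceva \cite[p.320]{lad-etal_68a} and used as a black box (in Theorem~\ref{unique.maximum} and in the higher-regularity argument of Section~\ref{section.higherregularity}), so there is no in-paper proof to compare against. Your outline --- a priori Schauder estimates via frozen-coefficient model problems and potential theory, boundary flattening, compatibility conditions controlling the corner $\partial\Omega\times\{t=0\}$, then uniqueness from the estimate and existence by the method of continuity starting from the heat operator --- is precisely the classical strategy carried out in the cited reference, and it is correct as a roadmap; the only caveat worth noting is that solvability of the $\sigma=0$ endpoint (the heat equation with the given nonhomogeneous, compatible data) is itself a nontrivial ingredient that must be established by the same potential-theoretic machinery before the continuity argument can run.
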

We have the following result concerning the auxiliary problems \eqref{eqn33chap10}:
\begin{theorem}\label{unique.maximum}
Let $m\in\mathbb{N}$, $0<l<1$. Let $h$ be in $C^{l,\frac{l}{2}}(\overline{\Omega_{T}})$ such that $h(.,T)=0$. Then the  IBVP   \eqref{eqn33chap10}has a unique classical solution $\eta^{m}$. Further
\begin{enumerate}
\item for all $m\in\mathbb{N}$ and  $(x,t)\in \overline{\Omega_{T}}$, we have
\begin{equation}\label{uniquenessequation3}
\Big|\eta^{m}(x,t)\Big|\leq e^{T}\displaystyle\max_{(x,t)\in\Omega_{T}}\Big|h(x,t)\Big|. 
\end{equation}
\item there exists a $C> 0$ such that for all $m\in\mathbb{N}$, we have
\begin{equation}\label{Uniquenessequation4}
\|\nabla\eta^{m}\|_{L^{2}(\Omega_{T})}\leq C. 
\end{equation}
\end{enumerate}
\end{theorem}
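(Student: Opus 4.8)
The plan is to treat the three assertions in turn, exploiting the backward-in-time structure of \eqref{eqn33chap10} to reduce to a standard forward parabolic problem. \textbf{Existence and uniqueness.} First I would reverse time by setting $s=T-t$ and $\zeta(x,s)=\eta(x,T-s)$, which turns $\mathcal{L}_m\eta=h$ into a \emph{forward} uniformly parabolic equation for $\zeta$ with $\zeta(\cdot,0)=0$ and $\zeta=0$ on the lateral boundary. Expanding the divergence-form principal part, $-\varepsilon\sum_j\partial_{x_j}(a^m\zeta_{x_j})=-\varepsilon a^m\Delta\zeta-\varepsilon\sum_j(a^m)_{x_j}\zeta_{x_j}$, puts the operator into the non-divergence form of Theorem~\ref{thm2chap10}, with principal coefficient matrix $\varepsilon a^m\,\delta_{ij}$ and no zeroth-order term. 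Since $a^m,b_j^m,c_j^m$ are mollifications they are $C^\infty$, hence in $C^{l,l/2}(\overline{\Omega_T})$; $\partial\Omega$ is smooth; and for fixed $m$ the function $a^m$ is continuous and strictly positive on the compact set $\overline{\Omega_T}$ (it is a convolution of $\rho_{1/m}>0$ with the zero-extension of $\tilde a$, and $\tilde a\ge r$ on $\Omega_T$ while every ball of radius $1/m$ centred in $\overline{\Omega_T}$ meets $\Omega_T$ in positive measure), so the problem is uniformly parabolic for \emph{each} fixed $m$. The compatibility condition of order $[l/2]+1=1$ holds trivially: at $s=0$ the data and all its spatial derivatives vanish, and $\partial_s\zeta|_{s=0}$ is forced by the equation to equal $h(\cdot,T)=0$ — this is precisely why the hypothesis $h(\cdot,T)=0$ is imposed. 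Theorem~\ref{thm2chap10} then yields a unique classical solution, and undoing the time reversal gives $\eta^m$.

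\textbf{Maximum principle.} For \eqref{uniquenessequation3} I would write $\zeta=e^{s}\psi$; since $\mathcal{L}_m$ carries no zeroth-order term, the equation for $\psi$ acquires the favourable potential $+\psi$, namely $\psi_s+\psi-\varepsilon a^m\Delta\psi+(\text{first order})\cdot\nabla\psi=e^{-s}\tilde h$, where $\tilde h$ is the time-reversed $h$. At an interior space–time maximum of $\psi$ one has $\psi_s\ge0$, $\nabla\psi=0$ and $\Delta\psi\le0$, and since $a^m\ge0$ everywhere (mollification of a nonnegative function) the second-order term has the right sign; hence $\psi\le e^{-s}\tilde h\le\max|h|$ there. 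Comparing with the vanishing initial and lateral boundary values, the weak maximum principle gives $\psi\le\max|h|$, and the same argument for $-\psi$ gives $|\psi|\le\max|h|$; thus $|\eta^m|=e^{s}|\psi|\le e^{T}\max|h|$, which is \eqref{uniquenessequation3}.

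\textbf{Uniform gradient bound.} This is the crux. I would test the equation with $\eta^m$ itself: multiplying $\mathcal{L}_m\eta^m=h$ by $\eta^m$, integrating over $\Omega_T$, and integrating by parts (boundary terms vanish since $\eta^m(\cdot,T)=0$ and $\eta^m|_{\partial\Omega}=0$) produces
\begin{equation*}
\tfrac12\|\eta^m(\cdot,0)\|_{L^2(\Omega)}^2+\varepsilon\int_{\Omega_T}a^m|\nabla\eta^m|^2 =\int_{\Omega_T}h\,\eta^m-\varepsilon\sum_j\int_{\Omega_T}b_j^m\,\eta^m_{x_j}\eta^m+\sum_j\int_{\Omega_T}c_j^m\,\eta^m_{x_j}\eta^m .
\end{equation*}
The point is that the right-hand side is bounded independently of $m$. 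Indeed $\|c_j^m\|_{L^\infty}\le\|f_j'\|_\infty$, because by \eqref{conv.eqn1} $\tilde c_j=\int_0^1 f_j'(\cdots)\,d\tau$ is dominated by $\|f_j'\|_\infty$ and mollification does not increase the sup-norm; and $\|b_j^m\|_{L^2(\Omega_T)}\le\|\tilde b_j\|_{L^2(\Omega_T)}$, with $\tilde b_j\in L^2(\Omega_T)$ since by \eqref{conv.eqn1} it is $B'$ (bounded) times the $L^2$ gradients of $u_1,u_2$ supplied by the energy estimate. Using $\|\eta^m\|_{L^\infty}\le e^T\max|h|$ from \eqref{uniquenessequation3}, Cauchy–Schwarz bounds the first-order terms by $\|\eta^m\|_\infty\|b_j^m\|_{L^2}\|\nabla\eta^m\|_{L^2}$ and $\|c_j^m\|_\infty\|\nabla\eta^m\|_{L^2}\|\eta^m\|_{L^2}$, after which Young's inequality absorbs a small multiple of $\|\nabla\eta^m\|_{L^2}^2$ into the left-hand side, leaving an $m$-independent remainder.

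\textbf{The main obstacle} is the coercivity of the left-hand side. The weight $a^m$ satisfies $a^m\ge r$ only in the interior, because the zero-extension of $\tilde a$ degrades the lower bound in an $O(1/m)$ boundary layer; consequently $\varepsilon\int_{\Omega_T}a^m|\nabla\eta^m|^2$ does not immediately dominate $\varepsilon r\|\nabla\eta^m\|_{L^2(\Omega_T)}^2$. To close the estimate I would either confine the degeneracy to the shrinking boundary layer and control its contribution separately, or — more cleanly — replace the zero-extension of $\tilde a$ by one bounded below by $r$ (which affects no other part of the construction), thereby preserving $a^m\ge r$ throughout $\overline{\Omega_T}$ and making the absorption argument uniform in $m$. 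This coercivity issue, rather than the energy identity itself, is where the real work lies.
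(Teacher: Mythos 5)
Your proposal is correct and follows essentially the same route as the paper: time reversal plus Theorem~\ref{thm2chap10} for existence and uniqueness, a maximum principle for \eqref{uniquenessequation3} (the paper simply cites Theorem~2.1 of \cite[p.13]{lad-etal_68a} rather than reproving it via your $e^{s}$ substitution), and the same multiply-by-$\eta^{m}$ energy identity with Cauchy--Schwarz and Young absorption for \eqref{Uniquenessequation4}. The coercivity caveat you flag is genuine: the paper asserts $a^{m}\geq r$ outright, which fails for the mollification of the zero-extension in an $O(1/m)$ neighbourhood of the boundary of $\Omega_{T}$, and your remedy of extending $\tilde{a}$ by a function bounded below by $r$ (which leaves unchanged the convergences $a^{m}\to\tilde{a}$ in $L^{2}(\Omega_{T})$ and a.e.\ that the uniqueness argument relies on) is the correct repair.
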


\noindent {\bf Proof of Theorem~\ref{unique.maximum}}
Introducing the following change of variables
\begin{equation}\label{uniquenessequation7}
 \xi(x,t)=x,\,\,\tau(x,t)= T-t,
\end{equation}
and setting
\begin{equation}\label{eqn902chap10}
\eta(x,t):=U(\xi(x,t),\tau(x,t)),
\end{equation}
the backward parabolic equation satisfied by $\eta$ in the IBVP \eqref{eqn33chap10} is transformed into a forward parabolic equation that $U$ satisfies and the  IBVP satisfied by $U$ is given by  
\begin{subequations}\label{eqn35chap10}
 \begin{eqnarray}
  U_{\tau}(\xi,\tau) -\varepsilon\displaystyle\sum_{j=1}^{d}\alpha^{m}(\xi,\tau)U_{\xi_{j}\xi_{j}}(\xi,\tau) -\displaystyle\sum_{j=1}^{d}\beta_{j}^{m}(\xi,\tau) U_{\xi_{j}}(\xi,\tau) &=&\overline{h}(\xi,\tau)\hspace{0.1cm}\mbox{in}\hspace{0.1cm}\Omega_{T},\quad\\
U(\xi,\tau) &=&0\hspace{.2cm}\mbox{on}\hspace{0.2cm}\partial \Omega\times[0,T],\quad\quad\quad{}\\
U(\xi,0) &= &0\hspace{0.2cm}\mbox{on}\hspace{0.1cm}\Omega,
 \end{eqnarray}
\end{subequations}
where
\begin{eqnarray}\label{eqn905chap10}
 \alpha^{m}(\xi,\tau) &=& a^{m}(\xi,T-\tau),\nonumber\\
 \beta_{j}^{m}(\xi,\tau) &=& \varepsilon\frac{\partial a^{m}}{\partial \xi_{j}}(\xi,T-\tau) -\varepsilon b^{m}_{j}(\xi,T-\tau) + c_{j}^{m}(\xi,T-\tau),\nonumber\\
 \overline{h}(\xi,\tau) &=& h(\xi, T-\tau).
\end{eqnarray}
On applying Theorem \ref{thm2chap10} to the IBVP \eqref{eqn35chap10}, in view of \eqref{eqn902chap10}, we get the existence of a unique classical solution $\eta^{m}$ to the IBVP \eqref{eqn33chap10}. 

The estimate \eqref{uniquenessequation3} follows by applying Theorem~2.1 of \cite[p.13]{lad-etal_68a} to the IBVP \eqref{eqn35chap10}, and using equation \eqref{eqn902chap10}.

We now turn to the proof of the estimate \eqref{Uniquenessequation4}. Note that 
\begin{eqnarray}\label{eqn46chap10}
 \int_{0}^{T}\int_{\Omega}h\hspace{0.1cm}\eta^{m}\hspace{0.1cm}dx dt &=& \int_{0}^{T}\int_{\Omega}\mathcal{L}_{m}(\eta^{m})\hspace{0.1cm}\eta^{m}\hspace{0.1cm}dx\hspace{0.1cm}dt\nonumber\\
 &=& -\int_{0}^{T}{_{_{H^{-1}(\Omega)}}}\!\langle \eta^{m}_{t},\eta^{m}\rangle\!{_{_{H^{1}_{0}(\Omega)}}}\hspace{0.1cm}dt + \varepsilon\displaystyle\sum_{j=1}^{d}\int_{0}^{T}\int_{\Omega}a^{m}(x,t)\left(\eta^{m}_{x_{j}}\right)^{2} dx\hspace{0.1cm}dt\nonumber\\
 &&+ \varepsilon\displaystyle\sum_{j=1}^{d}\int_{0}^{T}\int_{\Omega}b_{j}^{m}(x,t)\eta^{m}_{x_{j}}\hspace{0.1cm}\eta^{m}-\displaystyle\sum_{j=1}^{d}\int_{0}^{T}\int_{\Omega}c_{j}^{m}(x,t)\eta^{m}_{x_{j}}\hspace{0.1cm}\eta^{m}dx\hspace{0.1cm}dt.\nonumber\\
 {}
\end{eqnarray}
Using $a^{m}\geq r$, $\eta^{m}(x,T)=0$ and $\frac{d}{dt}\langle\eta^{m}_{t},\eta^{m}\rangle = \frac{1}{2}\frac{d}{dt}\|\eta^{m}\|^{2}_{L^{2}(\Omega)}$ in equation \eqref{eqn46chap10}, we arrive at

\begin{eqnarray}\label{eqn54chap10}
  \varepsilon r\displaystyle\sum_{j=1}^{d}\int_{0}^{T}\int_{\Omega}\left(\eta^{m}_{x_{j}}\right)^{2} dx\hspace{0.1cm}dt + \frac{1}{2}\|\eta^{m}(0)\|^{2}_{L^{2}(\Omega)}  \hspace{8cm}  \nonumber\\
 \leq \|h\|_{L^{\infty}(\Omega_{T})}\|\eta^{m}\|_{L^{\infty}(\Omega_{T})}\mbox{Vol}(\Omega_{T}){} + \,\, \varepsilon\|\eta^{m}\|_{L^{\infty}(\Omega_{T})}\displaystyle\sum_{j=1}^{d}\int_{0}^{T}\int_{\Omega}\Big|b_{j}^{m}(x,t)\Big|\hspace{0.1cm}\Big|\eta_{x_{j}}^{m}\Big|\hspace{0.1cm}dx\,dt\nonumber\\
  +\, \, \|\eta^{m}\|_{L^{\infty}(\Omega_{T})}\displaystyle\sum_{j=1}^{d}\int_{0}^{T}\int_{\Omega}\Big|c_{j}^{m}(x,t)\Big|\hspace{0.1cm}\Big| \eta^{m}_{x_{j}}\Big|\hspace{0.1cm}dx\hspace{0.1cm}dt.\nonumber\\
 {}
\end{eqnarray}

Let $\varepsilon < 1$. Using H\"{o}lder inequality, the inequality \eqref{eqn54chap10} yields
\begin{equation}\label{uniquenessequation33}
 \varepsilon r\displaystyle\sum_{j=1}^{d}\|\eta^{m}_{x_{j}}\|^{2}_{L^{2}(\Omega_{T})} \leq \|h\|_{L^{\infty}(\Omega_{T})}\|\eta^{m}\|_{L^{\infty}(\Omega_{T})}\mbox{Vol}(\Omega_{T}) +\displaystyle\sum_{j=1}^{d}\gamma_{j}\hspace{0.1cm}\|\eta^{m}_{x_{j}}\|_{L^{2}(\Omega_{T})},
\end{equation}
where 
$$ \gamma_{j}:= \|\eta^{m}\|_{L^{\infty}(\Omega_{T})}\|b_{j}^{m}\|_{L^{2}(\Omega_{T})} + \|\eta^{m}\|_{L^{\infty}(\Omega_{T})}\|c_{j}^{m}\|_{L^{2}(\Omega_{T})}.$$
Since $b_{j}^{m} \to \hat{b}_{j}$ and  $c_{j}^{m} \to \hat{c}_{j}$ in $L^{2}(\Omega_{T})$, 
there exist $R_{j} > 0$ such that for $m\in\mathbb{N}$
\begin{equation}\label{uniquenessequation18}
\|b_{j}^{m}\|_{L^{2}(\Omega_{T})}\leq R_{j},\,\, \|c_{j}^{m}\|_{L^{2}(\Omega_{T})}\leq R_{j}
\end{equation}
 
Using \eqref{uniquenessequation3} and inequalities \eqref{uniquenessequation18} in \eqref{uniquenessequation33}, we obtain
\begin{equation}\label{uniquenessequation21}
 \varepsilon r\displaystyle\sum_{j=1}^{d}\|\eta^{m}_{x_{j}}\|^{2}_{L^{2}(\Omega_{T})} \leq \|h\|^2_{L^{\infty}(\Omega_{T})}\mbox{Vol}(\Omega_{T})+2e^{T}\|h\|_{L^{\infty}(\Omega_{T})}\displaystyle\sum_{j=1}^{d}R_{j}\hspace{0.1cm}\|\eta^{m}_{x_{j}}\|_{L^{2}(\Omega_{T})}.
\end{equation}
From \eqref{uniquenessequation21}, we get for any $\alpha>0$
\begin{eqnarray}\label{uniquenessequation23}
 \varepsilon r\displaystyle\sum_{j=1}^{d}\|\eta^{m}_{x_{j}}\|^{2}_{L^{2}(\Omega_{T})} &\leq& \|h\|^2_{L^{\infty}(\Omega_{T})}\mbox{Vol}(\Omega_{T})
 +\displaystyle\sum_{j=1}^{d}\alpha \|\eta^{m}_{x_{j}}\|^{2}_{L^{2}(\Omega_{T})}
 +\frac{e^{2T}\|h\|^2_{L^{\infty}(\Omega_{T})}}{\alpha}\displaystyle\sum_{j=1}^{d}R^{2}_{j}.\nonumber\\
 {}
\end{eqnarray}
By choosing an $\alpha$ satisfying $0< \alpha < \frac{\varepsilon r}{2}$, the inequality \eqref{uniquenessequation23} yields
\begin{equation}\label{uniquenessequation26}
 \frac{\varepsilon r}{2}\displaystyle\sum_{j=1}^{d}\|\eta^{m}_{x_{j}}\|^{2}_{L^{2}(\Omega_{T})}\leq e^{T} \|h\|^{2}_{L^{\infty}(\Omega_{T})}\mbox{Vol}(\Omega_{T}) + \frac{e^{2T}\|h\|^2_{L^{\infty}(\Omega_{T})}}{\alpha}\displaystyle\sum_{j=1}^{d}R^{2}_{j}.
\end{equation}
This proves \eqref{Uniquenessequation4}, and completes the proof of Theorem~\ref{unique.maximum}.

\noindent\textbf{Proof of Theorem \ref{chap10thm1}:}\\
Recall that $w := u-v$, where $u$ and $v$ are solutions to \eqref{ibvp.parab} in $W(0,T)$. We want to show that $w=0$ $\it{a.e.}$ $(x,t)\in\Omega_{T}$. For that, it is enough to show that for all $ h\in C^{l,\frac{l}{2}}(\overline{\Omega_{T}})$ with $h(.,T)=0$ the folllowing equality holds:
\begin{equation}\label{paper.uniqueness.eqn1}
 \int_{\Omega_{T}}w\,h\,dx\,dt =0.
\end{equation}
For a given $ h\in C^{l,\frac{l}{2}}(\overline{\Omega_{T}})$ with $h(.,T)=0$, let $\eta^m$ denote solution to the auxiliary IBVP \eqref{eqn33chap10} corresponding to the operator $\mathcal{L}_{m}$. Note that
\begin{eqnarray}\label{paper.uniqueness.eqn2}
  \int_{\Omega_{T}}w\,h\,dx\,dt &=& \int_{\Omega_{T}}w \mathcal{L}_{m}(\eta^{m})\hspace{0.1cm}dx\hspace{0.1cm}dt\nonumber\\
  &=&-\int_{0}^{T}{_{_{H^{-1}(\Omega)}}}\!\langle \eta^{m}_{t}, w\rangle\!{_{_{H^{1}_{0}(\Omega)}}}\hspace{0.1cm}dt + \varepsilon \displaystyle\sum_{j=1}^{d}\int_{\Omega_{T}}a^{m}(x,t)\eta^{m}_{x_{j}}w_{x_{j}}\hspace{0.1cm}dx \hspace{0.1cm}dt \nonumber\\
&&+ \varepsilon \displaystyle\sum_{j=1}^{d}\int_{\Omega_{T}}b_{j}^{m}(x,t)\eta^{m}_{x_{j}}w\hspace{0.1cm}dx \hspace{0.1cm}dt -  \displaystyle\sum_{j=1}^{d}\int_{\Omega_{T}}c_{j}^{m}(x,t)\eta^{m}_{x_{j}}w\hspace{0.1cm}dx \hspace{0.1cm}dt.\nonumber\\
{}
 \end{eqnarray}
Applying integration by parts formula  in $W(0,T)$ \cite[p.427]{sal_08a} and using \eqref{eqn11chap10} in \eqref{paper.uniqueness.eqn2}, we get 
\begin{eqnarray}\label{eqn58chap10AB}
\int_{\Omega_{T}}w\,h\hspace{0.1cm}dx\hspace{0.1cm}dt &=& \varepsilon \displaystyle\sum_{j=1}^{d}\int_{\Omega_{T}}\Big[a^{m}(x,t)-\hat{a}(x,t)\Big]\eta^{m}_{x_{j}}w_{x_{j}}\hspace{0.1cm}dx \hspace{0.1cm}dt\nonumber\\
&&+ \varepsilon \displaystyle\sum_{j=1}^{d}\int_{\Omega_{T}}\Big[b_{j}^{m}(x,t)-\hat{b}_{j}(x,t)\Big]\eta^{m}_{x_{j}}w\hspace{0.1cm}dx \hspace{0.1cm}dt\nonumber\\
&&-\,\,\displaystyle\sum_{j=1}^{d}\int_{\Omega_{T}}\Big[c_{j}^{m}(x,t)-\hat{c}_{j}(x,t)\Big]\eta^{m}_{x_{j}}w\hspace{0.1cm}dx \hspace{0.1cm}dt.
\end{eqnarray}
Since $a^{m}\to\hat{a}$ in $L^{2}(\Omega_{T})$, by passing to a subsequence, we may also assume that $a^{m}\to\hat{a}$ {\it a.e.} $\Omega_{T}$. We will continue to index the subsequence by $m$ itself.  By a similar argument, we obtain pointwise convergences for the other sequences $b_{j}^{m}, c_{j}^{m}$. Thus in each of the three integral terms on RHS of \eqref{eqn58chap10AB}, we have a sequence that goes to zero pointwise. It follows easily from the bounds given by Theorem~\ref{unique.maximum}  that these integrands also satisfy hypothesis of dominated convergence theorem. Thus we conclude that the RHS of \eqref{eqn58chap10AB} converges to zero as $m\to\infty$, and thereby completing the proof of the theorem. 

\subsection{Continuous dependence}

Note that we have shown the  IBVP \eqref{ibvp.parab} admits a unique solution in the space $W(0,T)$. This solution also turns out to be a classical 
solution {\it i.e.,} it belongs to the space $C^{2+\beta,\frac{2+\beta}{2}}(\overline{\Omega_T})$. The following theorem follows by applying a result from \cite[p.452]{lad-etal_68a}.

\begin{theorem}\label{regularity.theorem1}
Let $f$, $B$, $u_0$ satisfy Hypothesis A. Then there exists a unique solution $u^\varepsilon$ of \eqref{ibvp.parab} in the space
$C^{2+\beta,\frac{2+\beta}{2}}(\overline{\Omega_T})$. Further, for each $i=1,2,\cdots,d$ the second order partial derivatives $u^\varepsilon_{x_i t}$ belong to $L^2(\Omega_T)$.  
\end{theorem}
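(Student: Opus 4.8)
The plan is to establish Theorem~\ref{regularity.theorem1} by invoking the existence-cum-regularity result for quasilinear parabolic equations from \cite{lad-etal_68a}, which applies precisely because Hypothesis~A supplies sufficiently smooth data. Under Hypothesis~A we have $f\in (C^4(\R))^d$, $B\in C^3(\R)$ with $B\geq r>0$, and $u_0\in C^{4+\beta}_c(\Omega)$. First I would recast \eqref{ibvp.parab.a} in nondivergence (quasilinear) form by expanding the diffusion term: writing $\varepsilon\,\nabla\cdot(B(u)\nabla u)=\varepsilon B(u)\Delta u+\varepsilon B'(u)|\nabla u|^2$, the equation becomes a quasilinear parabolic PDE whose leading coefficient $\varepsilon B(u)$ is bounded below by $\varepsilon r>0$, so the equation is uniformly parabolic on the relevant range $I=[-\|u_0\|_\infty,\|u_0\|_\infty]$ of $u$ (the range being controlled by the maximum principle, Theorem~\ref{chap3thm1}). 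The coefficients, viewed as functions of $(u,\nabla u)$, inherit the $C^3$ (respectively $C^4$) smoothness of $B$ and $f$, which is exactly the regularity needed to place the problem in the hypotheses of the H\"older-space theory of \cite{lad-etal_68a}.

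The key steps, in order, are as follows. I would verify that the initial datum $u_0\in C^{4+\beta}_c(\Omega)$ and the homogeneous boundary condition satisfy the required compatibility conditions on $\partial\Omega\times\{t=0\}$ of the appropriate order; since $u_0$ has compact support in $\Omega$, it vanishes together with all its derivatives near $\partial\Omega$, so the compatibility conditions (analogous to \eqref{compatibility.condns}) hold trivially. Next I would apply the quasilinear existence-and-regularity theorem from \cite[p.452]{lad-etal_68a} to obtain a solution in the H\"older class $C^{2+\beta,\frac{2+\beta}{2}}(\overline{\Omega_T})$. Because this classical solution is in particular a $W(0,T)$ solution, the uniqueness already established in Theorem~\ref{chap10thm1} identifies it with the weak solution $u^\varepsilon$ constructed earlier, so the full regularity transfers to $u^\varepsilon$.

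It remains to establish the extra claim that the mixed second derivatives $u^\varepsilon_{x_it}\in L^2(\Omega_T)$ for each $i$. Here I would exploit the higher smoothness of the data more carefully: since the data are $C^{4+\beta}$-regular, the Schauder theory actually yields more time-regularity than $C^{2+\beta,\frac{2+\beta}{2}}$ alone, and in particular $u^\varepsilon_t$ is itself a solution of the linearized (parabolic) equation obtained by formally differentiating \eqref{ibvp.parab.a} in $t$, with coefficients that remain H\"older-continuous. Applying the linear Schauder/$L^2$ parabolic estimates to $u^\varepsilon_t$ gives $\nabla u^\varepsilon_t\in L^2(\Omega_T)$, which is the assertion. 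Alternatively, one can differentiate the equation, test with suitable multipliers and integrate by parts to bound $\|\nabla u^\varepsilon_t\|_{L^2(\Omega_T)}$ directly in terms of the H\"older norms already controlled.

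The main obstacle I anticipate is the careful verification that the quasilinear equation, after reduction to nondivergence form, genuinely satisfies all the structural and growth hypotheses of the cited theorem in \cite{lad-etal_68a} on the bounded invariant range $I$ determined by the maximum principle---in particular, confirming uniform parabolicity and the requisite H\"older regularity of the composite coefficient functions $B(u)$, $B'(u)|\nabla u|^2$, and $f'_j(u)$ simultaneously, and checking the order of the compatibility conditions demanded by the theorem. The $u^\varepsilon_{x_it}\in L^2(\Omega_T)$ claim, while plausible from the smoothness of the data, requires the additional differentiation argument and is the step where one must be most attentive to whether the available regularity is enough to justify differentiating the equation in time.
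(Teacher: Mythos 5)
Your proposal is correct and follows essentially the same route as the paper: the paper's entire proof consists of citing the quasilinear existence-cum-regularity result of \cite[p.452]{lad-etal_68a}, which is exactly the theorem you invoke, with the nondivergence rewriting, uniform parabolicity via $B\geq r$, trivially satisfied compatibility conditions (compact support of $u_0$), and identification with the $W(0,T)$ solution via Theorem~\ref{chap10thm1} being precisely the verifications the paper leaves implicit. Your extra discussion of the $u^{\varepsilon}_{x_i t}\in L^2(\Omega_T)$ claim is also consistent with the paper, which asserts this as part of the conclusion of the cited result without further argument.
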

 
Let $u_{0}, v_{0}$ satisfy Hypothesis A. Whenever $u,v$  are solutions to the IBVP \eqref{ibvp.parab} belonging to $C^{2+\beta,\frac{2+\beta}{2}}(\overline{\Omega_T})$ satisfying the 
 initial conditions $u_{0}, v_{0}$   respectively, the function $w:= u-v$ satisfies a linear parabolic equation with variable coefficients that depend on 
$u$ and $v$. Applying the maximum principle for linear parabolic equations  \cite[p.13]{lad-etal_68a} yields the following continuous dependence result.
 
\begin{theorem}[Continuous Dependence]\label{Continuous.Dependence}
 Let $f, B, u_0,v_0$   satisfy Hypothesis A. Let $u,v$  be classical solutions to the IBVP \eqref{ibvp.parab} belonging to $C^{2+\beta,\frac{2+\beta}{2}}(\overline{\Omega_T})$ satisfying the 
 initial conditions $u_{0}, v_{0}$ respectively. Then there exists a constant $C$ depending only on $T$ such that
 \begin{equation}\label{continuous.dependence.eqn1}
  \|u-v\|_{L^{\infty}(\Omega_{T})}\leq C \|u_{0}-v_{0}\|_{L^{\infty}(\Omega)}.\nonumber
 \end{equation}
\end{theorem}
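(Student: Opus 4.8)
The plan is to derive a linear parabolic equation for the difference $w := u - v$ and then invoke the maximum principle for linear parabolic operators stated in \cite[p.13]{lad-etal_68a}. First I would subtract the two strong formulations of \eqref{ibvp.parab} satisfied by $u$ and $v$. Since both are classical solutions in $C^{2+\beta,\frac{2+\beta}{2}}(\overline{\Omega_T})$ by Theorem~\ref{regularity.theorem1}, the equation can be manipulated pointwise rather than weakly. The key algebraic step is to rewrite the differences $B(u)\nabla u - B(v)\nabla v$ and $f_j(u)-f_j(v)$ using the fundamental theorem of calculus, exactly as was done in the uniqueness proof (see \eqref{conv.eqn1}). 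This expresses
\begin{equation*}
B(u)\frac{\partial u}{\partial x_j} - B(v)\frac{\partial v}{\partial x_j} = \tilde{a}(x,t)\frac{\partial w}{\partial x_j} + \tilde{b}_j(x,t)\,w,
\qquad
f_j(u)-f_j(v) = \tilde{c}_j(x,t)\,w,
\end{equation*}
with $\tilde{a},\tilde{b}_j,\tilde{c}_j$ bounded measurable coefficients depending on $u,v$ and their first-order spatial derivatives. Consequently $w$ solves a second-order linear parabolic equation $\partial_t w = \varepsilon\,\nabla\cdot(\tilde a\,\nabla w) + (\text{lower order terms in } w, \nabla w)$ with homogeneous Dirichlet boundary data and initial datum $w(\cdot,0) = u_0 - v_0$.

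Next I would verify that this linear operator falls within the scope of the maximum principle of \cite[p.13]{lad-etal_68a}. The coefficients are bounded because $u,v \in C^{2+\beta,\frac{2+\beta}{2}}(\overline{\Omega_T})$, so their gradients are continuous on the compact set $\overline{\Omega_T}$; combined with $B \in C^3(\R)\cap L^\infty(\R)$, $B \geq r > 0$, and $f \in (C^4(\R))^d$ with $f' \in (L^\infty(\R))^d$, the principal coefficient $\tilde a$ is uniformly elliptic (bounded below by $r$) and all coefficients are bounded. The maximum principle then produces an estimate of the form $\|w\|_{L^\infty(\Omega_T)} \leq C\,\|w(\cdot,0)\|_{L^\infty(\Omega)}$, where $C$ depends on $T$ and on the (uniform) bounds of the lower-order coefficients, which is precisely the asserted inequality once we recall $w(\cdot,0) = u_0 - v_0$.

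The main subtlety I anticipate is ensuring that the constant $C$ depends only on $T$, as claimed, rather than on $\varepsilon$ or on the specific solutions $u,v$. The maximum principle in its standard form yields a constant of the form $e^{\lambda T}$ where $\lambda$ is controlled by the $L^\infty$ norms of the zeroth-order coefficient and, after completing squares, by the first-order coefficients divided by the ellipticity constant $\varepsilon r$. Here the lower-order terms $\tilde b_j$ and $\tilde c_j$ are bounded uniformly in terms of $\|f'\|_{(L^\infty)^d}$, $\|B'\|_\infty$, and the gradient bounds of $u,v$, which by the a priori regularity estimates are controlled independently of the particular initial data within Hypothesis~A (through the maximum principle bound $\|u\|_\infty,\|v\|_\infty \leq \max(\|u_0\|_\infty,\|v_0\|_\infty)$ of Theorem~\ref{chap3thm1}). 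I would therefore carefully track these dependencies to confirm the stated $T$-only dependence, treating any residual dependence on $\varepsilon$ (should it arise through the ellipticity constant) as the one point requiring genuine care. With that bookkeeping in place, the inequality \eqref{continuous.dependence.eqn1} follows directly.
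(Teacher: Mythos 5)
Your proposal follows exactly the paper's own route: the paper forms $w:=u-v$, observes (via the same fundamental-theorem-of-calculus decomposition \eqref{conv.eqn1} used in the uniqueness argument) that $w$ solves a linear parabolic equation with variable coefficients depending on $u$ and $v$, and then invokes the maximum principle of \cite[p.13]{lad-etal_68a} — indeed the paper's entire proof is just this two-sentence sketch. The dependence-of-the-constant issue you flag is a genuine one that the paper glosses over: after rewriting the equation in non-divergence form the zeroth-order coefficient involves $\nabla u,\nabla v$ (through $f''$) and, with the factor $\varepsilon$, second derivatives of $u,v$, so the exponent in the maximum-principle bound is controlled by norms of the solutions rather than by $T$ alone; your account is therefore, if anything, more careful than the original.
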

Thus we conclude the well-posedness of the IBVP \eqref{ibvp.parab}.

 \section{Higher regularity result for viscous approximations}\label{section.higherregularity}
In this section we establish the following result which shows that the classical solutions to the IBVP  \eqref{ibvp.parab} possess higher regularity when the initial conditions are more regular.
\begin{theorem}[higher regularity]\label{chapHR85thm5}
Let $f,B,u_0$ satisfy Hypothesis A. 
Then the solutions of the IBVP \eqref{ibvp.parab} belong to the space $C^{4+\beta,\frac{4 + \beta}{2}}(\overline{\Omega_{T}})$. Further,  $u_{tt}^{\varepsilon}\in C(\overline{\Omega_{T}})$.
\end{theorem}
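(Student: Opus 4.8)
The plan is to bootstrap regularity by differentiating the equation and applying the linear Schauder theory (Theorem~\ref{thm2chap10}) repeatedly, exactly as in the passage from $C^{2+\beta}$ to $C^{4+\beta}$ for quasilinear parabolic problems. We already know from Theorem~\ref{regularity.theorem1} that $u^\varepsilon \in C^{2+\beta,\frac{2+\beta}{2}}(\overline{\Omega_T})$, so all the coefficients built from $u^\varepsilon$ via the smooth functions $f_j', B, B'$ are at least $C^{\beta,\frac{\beta}{2}}$ in $(x,t)$. The strategy is first to view $u^\varepsilon$ as a solution of a \emph{linear} parabolic equation whose coefficients are frozen at the known solution, then to use the smoothness of $f\in (C^4)^d$ and $B\in C^3$ together with the initial regularity $u_0\in C^{4+\beta}_c(\Omega)$ to upgrade in two stages.

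\emph{Step 1 (linearize and apply Schauder once more).} I would rewrite \eqref{ibvp.parab.a} in nondivergence form
\begin{equation*}
 u_t - \varepsilon B(u)\,\Delta u = \varepsilon B'(u)\,|\nabla u|^2 - f'(u)\cdot\nabla u,
\end{equation*}
and regard this as a linear equation $u_t - \varepsilon\,a(x,t)\Delta u + \sum_i b_i(x,t)u_{x_i} = h(x,t)$ with $a(x,t)=B(u^\varepsilon)$, suitable first-order coefficients, and right-hand side already known to lie in $C^{\beta,\frac{\beta}{2}}(\overline{\Omega_T})$ once $u^\varepsilon\in C^{2+\beta}$ is inserted. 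Since $a\geq \varepsilon r>0$, the uniform parabolicity needed by Theorem~\ref{thm2chap10} holds. With $l=\beta$, $u_0\in C^{2+\beta}(\overline\Omega)$, homogeneous boundary data $\Phi\equiv 0$, and the compatibility conditions automatically satisfied because $u_0$ has compact support in $\Omega$ (so all time-derivatives at $t=0$ vanish on $\partial\Omega$), Theorem~\ref{thm2chap10} gives no new information directly; the gain comes from reinserting the improved coefficient regularity. Concretely, once the coefficients $B(u^\varepsilon),B'(u^\varepsilon),f'(u^\varepsilon)$ are seen to be $C^{2+\beta}$ in $(x,t)$—which follows from $u^\varepsilon\in C^{2+\beta}$ composed with $C^3$ (resp.\ $C^4$) functions—the right-hand side $h$ and the coefficients lie in $C^{2+\beta,\frac{2+\beta}{2}}$, so applying Theorem~\ref{thm2chap10} with $l=2+\beta$ yields $u^\varepsilon\in C^{4+\beta,\frac{4+\beta}{2}}(\overline{\Omega_T})$.

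\emph{Step 2 (verify coefficient regularity and compatibility).} The routine but essential bookkeeping is to check that the nonlinear compositions respect the Hölder classes: since $f\in(C^4(\R))^d$ and $B\in C^3(\R)$, and $u^\varepsilon$ takes values in the compact interval $I$, the chain rule gives $f'(u^\varepsilon)\in C^{2+\beta}$ provided $u^\varepsilon\in C^{2+\beta}$, and similarly $B(u^\varepsilon),B'(u^\varepsilon)\in C^{2+\beta}$; the quadratic term $|\nabla u^\varepsilon|^2$ lies in $C^{1+\beta}$, which is what feeds the $l=2+\beta$ application after the first bootstrap raises $\nabla u^\varepsilon$ to $C^{2+\beta}$. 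The compatibility conditions \eqref{compatibility.condns} of order $[l/2]+1$ hold trivially because $u_0$ and all its spatial derivatives vanish near $\partial\Omega$, forcing every $\partial_t^k u^\varepsilon(\cdot,0)$ to vanish on the boundary.

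\emph{Step 3 (the final claim $u^\varepsilon_{tt}\in C(\overline{\Omega_T})$).} Having $u^\varepsilon\in C^{4+\beta,\frac{4+\beta}{2}}$ gives $u^\varepsilon_{tt}\in C^{\beta,\frac{\beta}{2}}(\overline{\Omega_T})$ directly from the definition of the parabolic Hölder norm, since $\frac{4+\beta}{2}=2+\frac{\beta}{2}>2$ guarantees two continuous time derivatives; in particular $u^\varepsilon_{tt}$ is continuous on $\overline{\Omega_T}$. I expect the main obstacle to be not any single estimate but the careful verification that the \emph{parabolic} Hölder exponents match at each bootstrap step—that is, ensuring the anisotropic scaling between $x$ and $t$ is respected when composing $u^\varepsilon$ with the nonlinearities and when reading off time-regularity from the space $C^{l+2,\frac{l+2}{2}}$. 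Once that accounting is done correctly, the two successive applications of Theorem~\ref{thm2chap10} close the argument.
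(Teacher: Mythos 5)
Your overall strategy coincides with the paper's: freeze the coefficients at the known solution $u^{\varepsilon}\in C^{2+\beta,\frac{2+\beta}{2}}(\overline{\Omega_{T}})$ from Theorem~\ref{regularity.theorem1}, regard \eqref{ibvp.parab.a} as a linear uniformly parabolic IBVP, verify the compatibility conditions \eqref{compatibility.condns} via the compact support of $u_0$, and apply the linear Schauder result (Theorem~\ref{thm2chap10}) in a bootstrap. Your treatment of $u^{\varepsilon}_{tt}$ is also acceptable: the paper obtains it by differentiating the equation in $t$ and inspecting the resulting expression, while you read it off the definition of $C^{4+\beta,\frac{4+\beta}{2}}(\overline{\Omega_{T}})$; both are valid.

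However, Step 1 as written contains a claim that would fail. You assert that, knowing only $u^{\varepsilon}\in C^{2+\beta,\frac{2+\beta}{2}}$, the coefficients and right-hand side already lie in $C^{2+\beta,\frac{2+\beta}{2}}$, so that a single application of Theorem~\ref{thm2chap10} with $l=2+\beta$ jumps directly to $C^{4+\beta,\frac{4+\beta}{2}}$. This is false for the gradient-bearing terms: $B^{\prime}(u^{\varepsilon})\,u^{\varepsilon}_{x_i}$ (equivalently, your right-hand side $h$ containing $|\nabla u^{\varepsilon}|^{2}$ and $f^{\prime}(u^{\varepsilon})\cdot\nabla u^{\varepsilon}$) involves one spatial derivative of $u^{\varepsilon}$ and is therefore only in $C^{1+\beta,\frac{1+\beta}{2}}$ at this stage. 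Consequently the first application of Theorem~\ref{thm2chap10} can only be made with $l=1+\beta$, yielding $u^{\varepsilon}\in C^{3+\beta,\frac{3+\beta}{2}}(\overline{\Omega_{T}})$; only then are all coefficients in $C^{2+\beta,\frac{2+\beta}{2}}$, so that a second application with $l=2+\beta$ gives $C^{4+\beta,\frac{4+\beta}{2}}$. This two-stage bootstrap ($2+\beta\to 3+\beta\to 4+\beta$) is exactly what the paper carries out, and your own Step 2 (where you correctly note $|\nabla u^{\varepsilon}|^{2}\in C^{1+\beta}$ and speak of ``the first bootstrap'') contradicts your Step 1, so the argument must be reorganized with the intermediate $C^{3+\beta}$ stage made explicit. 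Two further points you gloss over as routine but the paper treats with care: the H\"{o}lder estimates for the compositions use the mean value theorem, which requires first extending $u^{\varepsilon}$, with the same regularity, to a larger convex domain (since $\Omega$ need not be convex); and the conclusion that $u^{\varepsilon}$ itself has the higher regularity rests on identifying $u^{\varepsilon}$ with the unique solution $v^{\varepsilon}$ of the linearized IBVP produced by Theorem~\ref{thm2chap10}.
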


Due to the presence of a  quasilinear higher order term in the equation \eqref{ibvp.parab.a}, the standard methods of inductively proving higher 
regularity results by differentiating the equation as many times as needed fail. This is becuase the form of the equation obtained after 
differentiating the equation \eqref{ibvp.parab.a} each differentiation is different. Since we know that $u^\varepsilon$ are classical solutions 
to \eqref{ibvp.parab}, we apply  the higher regularity result Theorem~\ref{thm2chap10} for linear parabolic equations repeatedly, after deriving a linear parabolic equation satisfied by the solution to \eqref{ibvp.parab}. 

\noindent{\bf Proof of Theorem~\ref{chapHR85thm5}:}

Since $u^{\varepsilon}$ is a classical solution to the IBVP \eqref{ibvp.parab}, we have
\begin{subequations}\label{Regularlineareqn1}
\begin{eqnarray}
 u^{\varepsilon}_{t}-\displaystyle\sum_{j=1}^{d}\varepsilon\,B(u^{\varepsilon})\,\frac{\partial^{2} u^{\varepsilon}}{\partial x_{j}^{2}}+ \displaystyle\sum_{i=1}^{d}\left(f_{i}^{\prime}(u^{\varepsilon})-\varepsilon\, B^{'}(u^{\varepsilon})\frac{\partial u^{\varepsilon}}{\partial x_{i}}\right)\,\frac{\partial u^{\varepsilon}}{\partial x_{i}}=0&\mbox{in }\Omega_{T},\label{Regularlineareqn1.a} \\
    u^{\varepsilon}(x,t)= 0&\,\,\,\,\mbox{on}\,\, \partial \Omega\times(0,T),\hspace{1cm}\\
u^{\varepsilon}(x,0) = u_0(x)& x\in \Omega.
\end{eqnarray}
\end{subequations}
Thus we may view $u^\varepsilon$ as a solution of the following IBVP for a linear parabolic equation
\begin{subequations}\label{Regularlineareqn2}
\begin{eqnarray}
v^{\varepsilon}_{t}-\displaystyle\sum_{j=1}^{d}\varepsilon\,B(u^{\varepsilon})\,\frac{\partial^{2} v^{\varepsilon}}{\partial x_{j}^{2}}+ \displaystyle\sum_{i=1}^{d}\left(f_{i}^{\prime}(u^{\varepsilon})-\varepsilon\, B^{'}(u^{\varepsilon})\frac{\partial u^{\varepsilon}}{\partial x_{i}}\right)\,\frac{\partial v^{\varepsilon}}{\partial x_{i}} =0&\mbox{in }\Omega_{T},\label{Regularlineareqn2.a} \\
    v^{\varepsilon}(x,t)= 0&\,\,\,\,\mbox{on}\,\, \partial \Omega\times(0,T),\hspace{1cm}\\
v^{\varepsilon}(x,0) = u_0(x)& x\in \Omega.
\end{eqnarray}
\end{subequations}
Thus we are in a position to utilize Theorem~\ref{thm2chap10} after duly checking the relevant hypotheses on the coefficient functions $$ B(u^{\varepsilon}),\,\, f_{i}^{\prime}(u^{\varepsilon}),\,\,B^{\prime}(u^{\varepsilon})\,u^{\varepsilon}_{x_{i}}\,\,(i=1,2,\cdots, d),$$
and initial-boundary data of the problem \eqref{Regularlineareqn2}.  Hypotheses on coefficient functions are checked by first writing out the relevant H\"{o}lder difference quotient, and estimating the same using mean value theorem, and bounding it from above by using 
the boundedness of the domain $\Omega$, and of the partial derivatives of $u^\varepsilon$, regularity of $B,f$.  Hence we give only a sample of these proofs. Strictly speaking we cannot apply mean value theorem since the domain $\Omega$ may not be convex. However it turns out that  H\"{o}lder continuous functions can always be extended to bigger convex domains with the same H\"{o}lder regularity, and preserving H\"{o}lder norms, see for example \cite[p.297]{lad-etal_68a}. Thus we may extend $u^{\varepsilon}\in C^{2+\beta,\frac{2+\beta}{2}}(\overline{\Omega_{T}})$ (by Theorem~\ref{regularity.theorem1}) to a bigger convex domain, and we still use $u^{\varepsilon}$ to denote the extended function. Thus we are in a position to apply mean value theorem using the extended functions.

{\bf Step 1:}  We show that the coefficient functions have the property
$$ B(u^{\varepsilon}),\,\, f_{i}^{\prime}(u^{\varepsilon}),\,\,B^{\prime}(u^{\varepsilon})\,u^{\varepsilon}_{x_{i}}\in C^{1+\beta,\frac{1+\beta}{2}}(\overline{\Omega_{T}}).$$

\noindent Note that for $k\in\left\{1,2,\cdots,d\right\}$, 
\begin{eqnarray}\label{Regularitylinear13}
 \left[\frac{\partial B(u^{\varepsilon})}{\partial x_{k}}\right]_{x,\beta,\overline{\Omega_{T}}}&=&\displaystyle\sup_{(x,t),(x^{'},t)\in\overline{\Omega_{T}},x\neq x^{'}} \frac{\Big|B^{\prime}(u^{\varepsilon}(x,t))u^{\varepsilon}_{x_{k}}(x,t)-B^{\prime}(u^{\varepsilon}(x^{'},t))u^{\varepsilon}_{x_{k}}(x^{'},t)\Big|}{|x-x^{'}|^{\beta}},\nonumber\\
 &\leq& \displaystyle\sup_{(x,t),(x^{'},t)\in\overline{\Omega_{T}},x\neq x^{'}} \frac{\Big|\left(B^{\prime}(u^{\varepsilon}(x,t))u^{\varepsilon}_{x_{k}}(x,t)-B^{\prime}(u^{\varepsilon}(x^{'},t))u^{\varepsilon}_{x_{k}}(x,t)\right)\Big|}{|x-x^{'}|^{\beta}}\nonumber\\
 &&+\, \displaystyle\sup_{(x,t),(x^{'},t)\in\overline{\Omega_{T}},x\neq x^{'}} \frac{\Big|\left(B^{\prime}(u^{\varepsilon}(x^{'},t))u^{\varepsilon}_{x_{k}}(x,t)-B^{\prime}(u^{\varepsilon}(x^{'},t))u^{\varepsilon}_{x_{k}}(x^{'},t) \right)\Big|}{|x-x^{'}|^{\beta}}.\nonumber\\
\end{eqnarray}
Applying mean value theorem, the inequality \eqref{Regularitylinear13} yields
\begin{eqnarray}\label{Regularitylinear15}
 \left[\frac{\partial B(u^{\varepsilon})}{\partial x_{k}}\right]_{x,\beta,\overline{\Omega_{T}}} &\leq&  \,\displaystyle\sup_{(x,t)\in\overline{\Omega_{T}}}\Big|\frac{\partial u^{\varepsilon}}{\partial x_{k}}\Big|\,\displaystyle\sup_{(x,t),(x^{'},t)\in\overline{\Omega_{T}},x\neq x^{'}}\Big|\nabla_{x}\left(B^{'}(u^{\varepsilon})\right)(\xi_{(x,t),(x^{'},t)})\Big||x-x^{'}|^{1-\beta}\nonumber\\
 &&+ \,\|B^{'}\|_{L^{\infty}(I)}\,\left[\frac{\partial u^{\varepsilon}}{\partial x_{k}}\right]_{x,\beta;\overline{\Omega_{T}}}.
\end{eqnarray}
Applying similar arguments, we get
\begin{eqnarray}\label{Regularitylinear18}
 \left[\frac{\partial B(u^{\varepsilon})}{\partial x_{k}}\right]_{t,\frac{\beta}{2},\overline{\Omega_{T}}}&\leq& \,\displaystyle\sup_{(x,t)\in\overline{\Omega_{T}}}\Big|\frac{\partial u^{\varepsilon}}{\partial x_{k}}\Big|\,\displaystyle\sup_{(x,t),(x,t^{'})\in\overline{\Omega_{T}},t\neq t^{'}}\Big|\frac{\partial}{\partial t}\left(B^{'}(u^{\varepsilon})\right)(\xi_{(x,t),(x,t^{'})})\Big||t-t^{'}|^{1-\beta}\nonumber\\
 && +  \,\|B^{'}\|_{L^{\infty}(I)}\,\left[\frac{\partial u^{\varepsilon}}{\partial x_{k}}\right]_{t,\frac{\beta}{2};\overline{\Omega_{T}}}.
\end{eqnarray}
Note that the quantities
\begin{eqnarray}\label{Regularitylinear25}
  \left[\frac{\partial u^{\varepsilon}}{\partial x_{k}}\right]_{x,\beta;\overline{\Omega_{T}}},\,\,\left[\frac{\partial u^{\varepsilon}}{\partial x_{k}}\right]_{t,\frac{\beta}{2};\overline{\Omega_{T}}}  
 \end{eqnarray}
are finite due to the fact that $u^{\varepsilon}\in C^{2+\beta,\frac{2+\beta}{2}}(\overline{\Omega_{T}})$, and also since it was extended with the same regularity to a bigger convex domain. In view of this observation, and the fact that $\mbox{diam}(\Omega)<\infty$, $B\in C^{3}(\mathbb{R})$, therefore RHS of \eqref{Regularitylinear15} and  \eqref{Regularitylinear18} are finite.

Let us now show that $f_{i}^{\prime}(u^{\varepsilon})\in C^{1+\beta,\frac{1+\beta}{2}}(\overline{\Omega_{T}}).$
Note that
\begin{eqnarray}\label{Regularitylinear7a}
 \left[\frac{\partial f_{i}^{\prime}(u^{\varepsilon})}{\partial x_{k}}\right]_{x,\beta,\overline{\Omega_{T}}} &=&\displaystyle\sup_{(x,t),(x,t^{'})\in\overline{\Omega_{T}},x\neq x^{'}} \frac{\Big|f_i^{\prime\prime}(u^{\varepsilon}(x,t))u^{\varepsilon}_{x_{k}}(x,t)-f_i^{\prime\prime}(u^{\varepsilon}(x^{'},t))u^{\varepsilon}_{x_{k}}(x^{'},t)\Big|}{|x-x'|^{\beta}}\nonumber\\
 &\leq&\displaystyle\sup_{(x,t),(x,t^{'})\in\overline{\Omega_{T}},x\neq x^{'}}\frac{\Big|\left(f_i^{\prime\prime}(u^{\varepsilon}(x,t))-f_i^{\prime\prime}(u^{\varepsilon}(x^{'},t))\right)u^{\varepsilon}_{x_{k}}(x,t)\Big|}{|x-x'|^{\beta}}\nonumber\\
 &&+ \displaystyle\sup_{(x,t),(x,t^{'})\in\overline{\Omega_{T}},x\neq x^{'}}\frac{\Big| f_i^{''}(u^{\varepsilon}(x^{'},t))\left(u^{\varepsilon}_{x_{k}}(x,t)-u^{\varepsilon}_{x_{k}}(x^{'},t)\right)\Big|}{|x-x'|^{\beta}}.
\end{eqnarray}
Using mean value theorem in \eqref{Regularitylinear7a}, we obtain
\begin{eqnarray}\label{Regularitylinear9a}
\left[\frac{\partial f_{i}^{\prime}(u^{\varepsilon})}{\partial x_{k}}\right]_{x,\beta,\overline{\Omega_{T}}} &\leq&\displaystyle\sup_{(x,t),(x,t^{'})\in\overline{\Omega_{T}},x\neq x^{'}} \Big|\nabla_{x}\left(f_i^{\prime\prime}(u^{\varepsilon})\right)(\xi_{(x,t),(x^{'},t)})\Big|\,\Big|u_{x_{k}}^{\varepsilon}\Big|\,|x-x^{'}|^{1-\beta}\nonumber\\
 && + \|f_i^{\prime\prime}\|_{L^{\infty}(I)}\displaystyle\sup_{(x,t),(x,t^{'})\in\overline{\Omega_{T}},x\neq x^{'}}\Big|\nabla_{x}\left(u^{\varepsilon}_{x_{k}}\right)(\eta_{(x,t),(x^{'},t})\Big|.|x-x^{'}|^{1-\beta}.\nonumber\\
{} 
\end{eqnarray}
 Since $u^{\varepsilon}\in C^{2,1}(\overline{\Omega_{T}}),\,\, f\in C^{3}(\mathbb{R})$ and $\mbox{diam}(\Omega)<\infty$, therefore the RHS of \eqref{Regularitylinear9a} is finite.

We will now show that $\left[\frac{\partial f_{i}^{\prime}(u^{\varepsilon})}{\partial x_{k}}\right]_{t,\frac{\beta}{2};\overline{\Omega_{T}}}<\infty$. Note that
\begin{eqnarray*}\label{Regularitylinear11a}
 \left[\frac{\partial f_{i}^{\prime}(u^{\varepsilon})}{\partial x_{k}}\right]_{t,\frac{\beta}{2};\overline{\Omega_{T}}} &=& \displaystyle\sup_{(x,t),(x,t^{'})\in\overline{\Omega_{T}},t\neq t^{'}} \frac{\Big|f_i^{\prime\prime}(u^{\varepsilon}(x,t))u^{\varepsilon}_{x_{k}}(x,t)-f_i^{\prime\prime}(u^{\varepsilon}(x,t^\prime))u^{\varepsilon}_{x_{k}}(x,t^{'})\Big|}{|t-t'|^{\frac{\beta}{2}}}. \nonumber \\
  &\leq&  \displaystyle\sup_{(x,t),(x,t^{'})\in\overline{\Omega_{T}},t\neq t^{'}}\frac{\Big|\left(f_i^{''}(u^{\varepsilon}(x,t))-f_i^{''}(u^{\varepsilon}(x,t^{'}))\right)\Big|}{|t-t^{'}|^{\frac{\beta}{2}}}\,|u^{\varepsilon}_{x_{k}}|\nonumber\\
 &&+ \|f_i^{''}\|_{L^{\infty}(I)}  \displaystyle\sup_{(x,t),(x,t^{'})\in\overline{\Omega_{T}},t\neq t^{'}}\frac{\Big|\left(u^{\varepsilon}_{x_{k}}(x,t)-u^{\varepsilon}_{x_{k}}(x,t^{'})\right)\Big|}{|t-t^{'}|^{\frac{\beta}{2}}}
\end{eqnarray*}

Applying mean value theorem, we get
\begin{eqnarray}\label{Regularitylinear12Ba}
 \left[\frac{\partial f_{i}^{\prime}(u^{\varepsilon})}{\partial x_{k}}\right]_{t,\frac{\beta}{2};\overline{\Omega_{T}}} &\leq& T^{1-\frac{\beta}{2}}\, \displaystyle\sup_{(x,t),(x,t^{'})\in\overline{\Omega_{T}},t\neq t^{'}}\Big|\frac{\partial}{\partial t}\left(f_i^{''}(u^{\varepsilon})\right)(\xi_{(x,t),(x,t^{'})})\Big|\,\displaystyle\sup_{(x,t)\in\overline{\Omega_{T}}}|\nabla u^{\varepsilon}|\nonumber\\
 &&\,\,+\, \|f_i^{''}\|_{L^{\infty}(I)}\,[u^{\varepsilon}_{x_{k}}]_{t,\frac{\beta}{2};\overline{\Omega_{T}}}.
\end{eqnarray}
Since $u^{\varepsilon}\in C^{2,1}(\overline\Omega_{T})$, the RHS of \eqref{Regularitylinear12Ba} is finite. 

Checking of $B^{\prime}(u^{\varepsilon})\,u^{\varepsilon}_{x_{i}}\in C^{1+\beta,\frac{1+\beta}{2}}(\overline{\Omega_{T}})$ follows on similar lines, and we shall omit the computations.

Since $u_0$ has compact support in $\Omega$, it satisfes the compatibility conditions \eqref{compatibility.condns}, an application of Theorem \ref{thm2chap10} asserts the existence of a unique solution $v^{\varepsilon}$ to the IBVP \eqref{Regularlineareqn2} in $C^{3+\beta,\frac{3+\beta}{2}}(\overline{\Omega_{T}})$. Since 
$C^{3+\beta,\frac{3+\beta}{2}}(\overline{\Omega_{T}})$ is a subset of $C^{2+\beta,\frac{2+\beta}{2}}(\overline{\Omega_{T}})$, it follows that $u^{\varepsilon}=v^{\varepsilon}$. Thus $u^{\varepsilon}\in C^{3+\beta,\frac{3+\beta}{2}}(\overline{\Omega_{T}})$.

{\bf Step 2:} By following the procedure outlined and implemented in Step 1, it can be shown that the coefficient functions belong to the space $C^{2+\beta,\frac{2+\beta}{2}}(\overline{\Omega_{T}})$, and by using the information which is coming from Steps 1, namely, $u^{\varepsilon}\in C^{3+\beta,\frac{3+\beta}{2}}(\overline{\Omega_{T}})$. Since $u_0$ has compact support in $\Omega$, it satisfies the compatibility conditions \eqref{compatibility.condns}, an application of Theorem \ref{thm2chap10} asserts the existence of a unique solution $z^{\varepsilon}$ to the IBVP \eqref{Regularlineareqn2} in $C^{4+\beta,\frac{4+\beta}{2}}(\overline{\Omega_{T}})$. Since 
$C^{4+\beta,\frac{4+\beta}{2}}(\overline{\Omega_{T}})$ is a subset of $C^{2+\beta,\frac{2+\beta}{2}}(\overline{\Omega_{T}})$, it follows that $u^{\varepsilon}=z^{\varepsilon}$. Thus $u^{\varepsilon}\in C^{4+\beta,\frac{4+\beta}{2}}(\overline{\Omega_{T}})$.

In order to show that $u^{\varepsilon}_{tt}\in C(\overline{\Omega_{T}})$, we write the generalized viscosity problem \eqref{ibvp.parab} in nondivergence form 
\begin{eqnarray}\label{Regularitylinear26}
u^{\varepsilon}_{t}=-\displaystyle\sum_{j=1}^{d}f^{\prime}_{j}(u^{\varepsilon})\frac{\partial u^{\varepsilon}}{\partial x_{j}} + \varepsilon\displaystyle\sum_{j=1}^{d}\left(B^{\prime}(u^{\varepsilon})\left(\frac{\partial u^{\varepsilon}}{\partial x_{j}}\right)^{2} + B(u^{\varepsilon})\frac{\partial^{2}u^{\varepsilon}}{\partial x_{j}^{2}}\right).
\end{eqnarray}
Differentiating the RHS of \eqref{Regularitylinear26} with respect to $t$, we get  
\begin{eqnarray}\label{Regularitylinear27AB}
 u^{\varepsilon}_{tt}=\varepsilon\displaystyle\sum_{j=1}^{d}\Big[B^{'}(u^{\varepsilon})\,\frac{\partial u^{\varepsilon}}{\partial t}\,\frac{\partial^{2}u^{\varepsilon}}{\partial x_{j}^{2}} + B(u^{\varepsilon})\frac{\partial^{3}u^{\varepsilon}}{\partial t\partial x_{j}^{2}}\Big]- \displaystyle\sum_{i=1}^{d}\left(f_{i}^{'}(u^{\varepsilon})-\varepsilon B^{'}(u^{\varepsilon})\frac{\partial u^{\varepsilon}}{\partial x_{i}}\right)\frac{\partial^{2}u^{\varepsilon}}{\partial t\partial x_{i}}\nonumber\\ - \displaystyle\sum_{i=1}^{d}\Big[f_{i}^{''}(u^{\varepsilon})\,\frac{\partial u^{\varepsilon}}{\partial t} -\varepsilon B^{''}(u^{\varepsilon})\,\frac{\partial u^{\varepsilon}}{\partial t}\,\frac{\partial u^{\varepsilon}}{\partial x_{i}}-\varepsilon B^{'}(u^{\varepsilon})\,\frac{\partial^{2}u^{\varepsilon}}{\partial t\partial x_{i}}\Big]\,\frac{\partial u^{\varepsilon}}{\partial x_{i}}.\hspace*{0.5in}
\end{eqnarray}
Since $u^{\varepsilon}\in C^{4+\beta,\frac{4+\beta}{2}}(\overline{\Omega_{T}})$, \eqref{Regularitylinear27AB} belongs to $C(\overline{\Omega_{T}})$. Thus we have $u_{tt}^{\varepsilon}\in C(\overline{\Omega_{T}})$. This completes the proof of Theorem \ref{chapHR85thm5}.

\section{BV estimates}\label{section.bvestimates}
In this section we prove the following result concerning BV estimates on the sequence of viscous approximations $(u^\varepsilon)$. For definition of BV functions, we refer the reader to \cite{MR1304494}.
\begin{theorem}[BV estimate]\label{BVEstimate.thm1}
 Let $f,\,\,B,\,\,u_{0}$ satisfy Hypothesis A. Let $(u^{\varepsilon})$ be the sequence of solutions to the IBVP \eqref{ibvp.parab}. Then there exists a $C > 0$ such that for all $\varepsilon > 0$, the following estimate holds:
 \begin{equation}\label{BVestimateA.eqn1}
  \left\|\frac{\partial u^{\varepsilon}}{\partial t}\right\|_{L^{1}(\Omega_{T})} +  \|\nabla u^{\varepsilon}\|_{\left(L^{1}(\Omega_{T})\right)^{d}}\leq C.
 \end{equation}
 Further there exists a subsequence $(u^{\varepsilon_{k}})$ of $(u^{\varepsilon})$, and a function $u\in L^{1}(\Omega_{T})$ such that 
\begin{eqnarray}\label{BVestimateeqn25}
u^{\varepsilon_{k}}\to u\,\,\mbox{in}\,L^1(\Omega_T),\\
 u^{\varepsilon_{k}}\to u\,\,{\it a.e.}\,(x,t)\in \Omega_{T}
\end{eqnarray}
as $k\to\infty$.
\end{theorem}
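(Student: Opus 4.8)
The plan is to prove the two uniform $L^1(\Omega_T)$ bounds separately by sign‑type multiplier arguments applied to differentiated versions of \eqref{ibvp.parab.a}, and then to deduce \eqref{BVestimateeqn25} from the compactness of $BV$ in $L^1$. Throughout I write $u=u^\varepsilon$ and use Theorem~\ref{chapHR85thm5}, which guarantees $u\in C^{4+\beta,\frac{4+\beta}{2}}(\overline{\Omega_T})$ with $u_{tt}\in C(\overline{\Omega_T})$; this is exactly the regularity needed to differentiate \eqref{ibvp.parab.a} once in $t$ and once in each $x_k$ and to justify every integration by parts classically. I fix a smooth nondecreasing odd approximation $\mathrm{sgn}_\delta$ of the signum function with $\mathrm{sgn}_\delta'\ge 0$, $\mathrm{sgn}_\delta(s)\to\mathrm{sgn}(s)$ and $s\,\mathrm{sgn}_\delta'(s)\to 0$ pointwise as $\delta\to 0$; all computations are performed with $\mathrm{sgn}_\delta$ and the limit $\delta\to 0$ is taken at the end.

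For the time‑derivative bound I differentiate \eqref{ibvp.parab.a} in $t$ and set $v:=u_t$, which satisfies
\begin{equation}
v_t + \nabla\cdot\big(f'(u)\,v\big) = \varepsilon\,\nabla\cdot\big(B(u)\nabla v\big) + \varepsilon\,\nabla\cdot\big(B'(u)\,v\,\nabla u\big) \qquad\text{in }\Omega_T. \nonumber
\end{equation}
Multiplying by $\mathrm{sgn}_\delta(v)$ and integrating over $\Omega$, the key point is that $u\equiv 0$ on $\partial\Omega$ for all $t$ forces $v=u_t=0$ on $\partial\Omega$, so every boundary integral carries a factor $\mathrm{sgn}_\delta(v)=\mathrm{sgn}_\delta(0)=0$ and drops out. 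The principal diffusion term contributes $+\varepsilon\int_\Omega B(u)\,|\nabla v|^2\,\mathrm{sgn}_\delta'(v)\,dx\ge 0$ since $B\ge r>0$ and $\mathrm{sgn}_\delta'\ge 0$, while the convective and lower‑order diffusion terms contain a factor $v\,\mathrm{sgn}_\delta'(v)$ and vanish as $\delta\to 0$ by dominated convergence. Passing to the limit yields $\frac{d}{dt}\int_\Omega|u_t(\cdot,t)|\,dx\le 0$, hence $\int_\Omega|u_t(\cdot,t)|\,dx\le\int_\Omega|u_t(\cdot,0)|\,dx$. Evaluating \eqref{ibvp.parab.a} at $t=0$ gives $u_t(\cdot,0)=-\nabla\cdot f(u_0)+\varepsilon\,\nabla\cdot(B(u_0)\nabla u_0)$; since $u_0\in C^{4+\beta}_c(\Omega)$ by Hypothesis A, both terms are bounded with compact support, and the explicit factor $\varepsilon$ on the second term makes $\int_\Omega|u_t(\cdot,0)|\,dx$ bounded uniformly for $\varepsilon\le 1$. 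Integrating in $t$ then gives the first bound in \eqref{BVestimateA.eqn1}.

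The gradient bound is the genuinely delicate part. Differentiating \eqref{ibvp.parab.a} in $x_k$ and setting $w:=u_{x_k}$ gives
\begin{equation}
w_t + \nabla\cdot\big(f'(u)\,w\big) = \varepsilon\,\nabla\cdot\big(B(u)\nabla w\big) + \varepsilon\,\nabla\cdot\big(B'(u)\,w\,\nabla u\big) \qquad\text{in }\Omega_T, \nonumber
\end{equation}
and I multiply by $\mathrm{sgn}_\delta(w)$ and integrate over $\Omega_T$. The interior terms are handled exactly as before, so after $\delta\to 0$ one is left with $\int_\Omega|w(\cdot,T)|-\int_\Omega|w(\cdot,0)|$, a nonnegative interior diffusion contribution, and boundary integrals over $\partial\Omega\times(0,T)$. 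Here lies the main obstacle: unlike $u_t$, the quantity $w=u_{x_k}$ does not vanish on $\partial\Omega$, and since a viscous boundary layer is expected the surface integrals of normal derivatives are individually of size $O(1/\varepsilon)$. The resolution exploits the structure on the boundary: because $u\equiv 0$ on $\partial\Omega$ the tangential derivatives vanish there, so $\nabla u=(\partial_n u)\,n$ on $\partial\Omega$, and restricting \eqref{ibvp.parab.a} to $\partial\Omega$ (where $u_t=0$) produces an algebraic relation between $(f'(0)\cdot n)\,\partial_n u$ and the second‑order boundary terms. This is precisely the role of the \emph{different} multiplier: it is chosen so that, after summation over $k$ and use of this boundary relation, the $O(1/\varepsilon)$ surface contributions cancel, leaving a quantity controlled uniformly in $\varepsilon$ (together with $\int_\Omega|\nabla u_0|\,dx$, finite by Hypothesis A). This yields the second bound in \eqref{BVestimateA.eqn1}.

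With both estimates established, the conclusion is standard. The maximum principle (Theorem~\ref{chap3thm1}) gives $\|u^\varepsilon\|_{L^\infty(\Omega_T)}\le\|u_0\|_\infty$ and hence $\|u^\varepsilon\|_{L^1(\Omega_T)}\le|\Omega_T|\,\|u_0\|_\infty$, so \eqref{BVestimateA.eqn1} shows that $(u^\varepsilon)$ is bounded in $BV(\Omega_T)$ and in $L^\infty(\Omega_T)$, uniformly in $\varepsilon$. The compact embedding of $BV(\Omega_T)$ into $L^1(\Omega_T)$ then produces a subsequence $(u^{\varepsilon_k})$ converging in $L^1(\Omega_T)$ to some $u\in L^1(\Omega_T)$, and a further subsequence converges a.e.\ in $\Omega_T$, which is \eqref{BVestimateeqn25}. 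The step I expect to absorb essentially all of the work is the cancellation of the boundary terms in the gradient estimate; the time‑derivative estimate and the final compactness argument are comparatively routine.
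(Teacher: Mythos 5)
Your Step 1 (the bound on $\|u^\varepsilon_t\|_{L^1}$) and Step 3 (compactness of $BV\cap L^1$ in $L^1$) are correct and essentially identical to the paper's proof; the only cosmetic difference is that you kill the $\mathrm{sgn}_\delta'$-terms by dominated convergence using $s\,\mathrm{sgn}_\delta'(s)\to 0$, where the paper invokes Lemma~\ref{compactness12.lem1}. The genuine gap is in the gradient estimate, exactly the step you defer (``the step I expect to absorb essentially all of the work''). The cancellation you assert does not follow from the setup you chose. You expand the differentiated viscous term as $\varepsilon\,\nabla\cdot\bigl(B(u)\nabla w\bigr)+\varepsilon\,\nabla\cdot\bigl(B'(u)\,w\,\nabla u\bigr)$ with $w=u_{x_k}$ and integrate each divergence by parts against $\mathrm{sgn}_\delta(w)$; the resulting boundary integrals then contain $\partial_n w=\sum_j n_j\,u_{x_j x_k}$, i.e.\ normal derivatives of the individual gradient components. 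But the trace of the PDE on $\partial\Omega$ (where $u=0$ and $u_t=0$) only yields the single scalar relation
\begin{equation}
\varepsilon\Bigl[B'(0)\,(\partial_n u)^2+B(0)\,\Delta u\Bigr]=(f'(0)\cdot n)\,\partial_n u \quad\mbox{on }\partial\Omega,\nonumber
\end{equation}
and says nothing about the mixed entries $u_{x_jx_k}$ separately. Decomposing the Hessian on a curved boundary, your boundary integrand differs from the one this relation controls by terms proportional to $\partial_\tau(\partial_n u)$ and to $\kappa\,\partial_n u$ (curvature terms); after multiplying by $\varepsilon$ these are generically $O(1)$, not $o(1)$, once the boundary layer $|\partial_n u|\sim 1/\varepsilon$ is taken into account. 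So with your decomposition the surface contributions do not cancel, and no uniform bound follows.

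The paper's proof works because it integrates by parts differently: it differentiates the non-divergence form in $x_i$, multiplies by $sg_n(u_{x_i})$, and moves the \emph{outer} derivative $\partial_{x_i}$ onto the multiplier, keeping the second-order bracket intact. The boundary integrand is then exactly $\sum_j\bigl(B'(0)u_{x_j}^2+B(0)u_{x_jx_j}\bigr)$ times $sg(u_{x_i})\sigma_i$, which the displayed trace identity converts into $\tfrac1\varepsilon\sum_j f_j'(0)u_{x_j}$; the same integration by parts applied to the flux term produces literally the same boundary integral, so the two cancel exactly and one gets the conservation law $\frac{d}{dt}\|\nabla u^\varepsilon(\cdot,t)\|_{(L^1(\Omega))^d}=0$, not merely a bound. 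To repair your argument, keep the viscous term as $\partial_{x_k}\bigl(\nabla\cdot(B(u)\nabla u)\bigr)$ and integrate the $\partial_{x_k}$ by parts, instead of expanding into the two divergence-form pieces. Be aware of the trade-off you then inherit: in that arrangement the coercive term $-\varepsilon\int_\Omega B(u)|\nabla w|^2\mathrm{sgn}_\delta'(w)\,dx$ is no longer available, and the interior terms $\int_\Omega sg_n'(u_{x_i})\,u_{x_ix_i}\,(\cdots)\,dx$ have no sign; the paper needs a separate argument (Stampacchia's theorem on the level set $\{u_{x_i}=0\}$ plus a dominated-convergence construction) to show they vanish as $n\to\infty$, and your proposal currently contains nothing playing that role either.
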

Bardos {\it et. al.} \cite{MR542510} established uniform $L^1$-estimates on first order derivatives of $u^\varepsilon$, when $B(u)\equiv 1$, using suitable multipliers. When $B$ is a non-constant function, we are unable to  estimate the first order spatial derivatives of $u^\varepsilon$ using their multipliers, and we use a different multiplier for this purpose.

Let $sg_{n}$ ($n\in\mathbb{N}$) be the sequence of functions which converges pointwise to the signum function $sg$ which are defined for $s\in \mathbb{R}$ by
$$
sg_{n}(s)=
       \begin{cases}
       1 & \,\mbox{if}\, s >\frac{1}{n},\\
       ns & \,\mbox{if}\, |s|\leq\frac{1}{n},\\
       -1 & \,\mbox{if}\, s < -\frac{1}{n},
       \end{cases},\,\,sg(s)=
       \begin{cases}
       1 & \,\mbox{if}\, s > 0,\\
       0 & \,\mbox{if}\, s=0,\\
       -1 & \,\mbox{if}\, s < 0.
       \end{cases}
$$
The following result will be used in proving the required BV estimate.
\begin{lemma}\cite[p.1020]{MR542510}\label{compactness12.lem1}
 Let $v\in C^{1}(\overline{\Omega})$. Then 
 $$\displaystyle\lim_{n\to\infty}\int_{\left\{x\in\Omega\,;\,|v(x)|<\frac{1}{n}\right\}}\,\left|\nabla v\right|\,dx=0.$$
\end{lemma}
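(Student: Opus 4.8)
The plan is to rewrite the integral as one over a decreasing family of sets, pass to the limit by dominated convergence, and then invoke the fact that the gradient of a $C^1$ function vanishes almost everywhere on any of its level sets.

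First I would set $A_n := \{x \in \Omega : |v(x)| < 1/n\}$ and observe that $(A_n)$ is a decreasing sequence of open subsets of $\Omega$ whose characteristic functions satisfy $\chi_{A_n}(x) \to \chi_{\{v=0\}}(x)$ for every $x \in \Omega$: indeed, $x$ lies in $A_n$ for all large $n$ precisely when $v(x) = 0$, while if $v(x)\neq 0$ then $x\notin A_n$ once $1/n < |v(x)|$. Since $v \in C^1(\overline{\Omega})$ and $\Omega$ is bounded, $|\nabla v|$ is continuous on the compact set $\overline{\Omega}$, hence bounded, so $|\nabla v| \in L^1(\Omega)$. This furnishes an integrable dominating function for the sequence $\chi_{A_n}|\nabla v|$, and the dominated convergence theorem gives
$$\lim_{n\to\infty}\int_{A_n}|\nabla v|\,dx = \int_{\{v=0\}}|\nabla v|\,dx.$$

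It then remains to show that the right-hand side vanishes, \emph{i.e.} that $\nabla v = 0$ almost everywhere on the zero set $Z := \{x \in \Omega : v(x) = 0\}$. To see this I would consider $E := \{x \in Z : \nabla v(x) \neq 0\}$ and argue that $|E| = 0$: around each point $x_0 \in E$ some partial derivative of $v$ is nonzero, so the implicit function theorem (applicable since $v$ is $C^1$) yields a neighborhood $U_{x_0}$ in which $\{v = 0\}$ is a $C^1$ hypersurface and hence Lebesgue-null; covering $E$ by countably many such neighborhoods, using second countability of $\mathbb{R}^{d}$, exhibits $E$ as a countable union of null sets. Consequently $\nabla v = 0$ a.e. on $Z$, the displayed limit is $0$, and the lemma follows.

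The main obstacle is precisely this last step, establishing that the gradient vanishes almost everywhere on the zero level set; the preceding part is routine measure theory. This fact is genuinely needed, since $Z$ may have positive Lebesgue measure, for instance when $v$ vanishes on an open subset of $\Omega$, so one cannot merely argue that $|A_n| \to 0$. If one prefers to bypass the implicit function theorem argument, this step may instead be quoted as the standard property that $\nabla w = 0$ a.e. on $\{w = c\}$ for any $w \in W^{1,1}_{\mathrm{loc}}$ and any constant $c$.
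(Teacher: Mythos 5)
Your proof is correct. Note that the paper does not actually supply a proof of this lemma; it is quoted verbatim from Bardos--Leroux--N\'ed\'elec \cite[p.1020]{MR542510}, so there is no internal argument to compare against. Your route is the standard one: the sets $A_n=\{x\in\Omega : |v(x)|<1/n\}$ decrease, dominated convergence (with the dominating function $|\nabla v|\in L^1(\Omega)$, available since $v\in C^1(\overline{\Omega})$ and $\Omega$ is bounded) reduces the limit to $\int_{\{v=0\}}|\nabla v|\,dx$, and one then needs $\nabla v=0$ almost everywhere on the zero set. Your implicit-function-theorem covering argument for that last fact is sound: at any zero of $v$ where $\nabla v\neq 0$, the level set is locally a $C^1$ graph, hence null, and a Lindel\"of subcover exhibits the exceptional set as a countable union of null sets. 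This is a self-contained elementary proof of exactly the Stampacchia-type property that the paper itself invokes elsewhere without proof (in Step 2 of the proof of Theorem \ref{BVEstimate.thm1}, where $\nabla\bigl(\partial u^{\varepsilon}/\partial x_{i}\bigr)=0$ a.e.\ on $A^{\varepsilon}_{i}$ is deduced from Stampacchia's theorem, citing \cite{kesavan}), and which you correctly identify as the alternative way to close the argument.
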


\noindent{\bf Proof of Theorem~\ref{BVEstimate.thm1}:}
We prove Theorem \ref{BVEstimate.thm1} in three steps. The estimate \eqref{BVestimateA.eqn1} will be established in the first two steps, and in the third step we deduce \eqref{BVestimateeqn25}.

\noindent\textbf{Step 1:} We show the existence of constant $C_{1}> 0$ such that for every $\varepsilon>0$,
\begin{eqnarray}\label{B.BVestimate26}
 \left\|\frac{\partial u^{\varepsilon}}{\partial t}\right\|_{L^{1}(\Omega_{T})}\leq C_{1}.
\end{eqnarray}
Differentiating the equation  \eqref{ibvp.parab.a} with respect to $t$, multiplying by $sg_{n}\left(\frac{\partial u^{\varepsilon}}{\partial t}\right)$ and integrating over $\Omega$, we get
\begin{eqnarray}
 \int_{\Omega} u^\varepsilon_{tt}\,sg_{n}(u^\varepsilon_{t})\,dx + \displaystyle\sum_{j=1}^{d}\int_{\Omega}\Big[\frac{\partial}{\partial x_{j}}\left(f_{j}^{\prime}(u^{\varepsilon})\,\frac{\partial u^{\varepsilon}}{\partial t}\right)\Big]sg_{n}\left(\frac{\partial u^{\varepsilon}}{\partial t}\right)\,dx \hspace*{1in}\nonumber\\=\varepsilon\displaystyle\sum_{j=1}^{d}\int_{\Omega}\,sg_{n}\left(\frac{\partial u^{\varepsilon}}{\partial t}\right)\,\frac{\partial}{\partial x_{j}}\Big(B^{'}(u^{\varepsilon})\,\frac{\partial u^{\varepsilon}}{\partial t}\,\frac{\partial u^{\varepsilon}}{\partial x_{j}} + B(u^{\varepsilon})\,\frac{\partial}{\partial x_{j}}\left(\frac{\partial u^{\varepsilon}}{\partial t}\right)\Big)\,dx.\label{compactness12.eqn3}
 \end{eqnarray}
Using integration by parts in \eqref{compactness12.eqn3} and using $sg_{n}\left(\frac{\partial u^{\varepsilon}}{\partial t}\right)=0\,\,\mbox{on}\,\,\partial\Omega\times(0,T)$, we have
\begin{eqnarray}
  \int_{\Omega} u^\varepsilon_{tt}\,sg_{n}(u^\varepsilon_{t})\,dx = \displaystyle\sum_{j=1}^{d}\int_{\Omega}f_{j}^{\prime}(u^{\varepsilon})\,\frac{\partial u^{\varepsilon}}{\partial t}\,sg^{'}_{n}\left(\frac{\partial u^{\varepsilon}}{\partial t}\right)\,\frac{\partial}{\partial x_{j}}\left(\frac{\partial u^{\varepsilon}}{\partial t}\right)\,dx \hspace*{1in}\nonumber\\
-\varepsilon\displaystyle\sum_{j=1}^{d}\int_{\Omega}B^{'}(u^{\varepsilon})\,\frac{\partial u^{\varepsilon}}{\partial t}\,\frac{\partial u^{\varepsilon}}{\partial x_{j}}\,sg^{\prime}_{n}\left(\frac{\partial u^{\varepsilon}}{\partial t}\right)\,\frac{\partial}{\partial x_{j}}\left(\frac{\partial u^{\varepsilon}}{\partial t}\right)\,dx\nonumber\\
  -\varepsilon\displaystyle\sum_{j=1}^{d}\int_{\Omega}B(u^{\varepsilon})\,\left(\frac{\partial}{\partial x_{j}}\left(\frac{\partial u^{\varepsilon}}{\partial t}\right)\right)^{2}\,sg^{\prime}_{n}\left(\frac{\partial u^{\varepsilon}}{\partial t}\right)\,dx.  \label{compactness12.eqn11}
\end{eqnarray}
We now prove that first two terms on the RHS of \eqref{compactness12.eqn11} tend to zero as $\varepsilon\to 0$. That is,
\begin{eqnarray}
\displaystyle\lim_{n\to\infty}\displaystyle\sum_{j=1}^{d}\int_{\Omega}f_{j}^{\prime}(u^{\varepsilon})\,\frac{\partial u^{\varepsilon}}{\partial t}\,sg^{'}_{n}\left(\frac{\partial u^{\varepsilon}}{\partial t}\right)\,\frac{\partial}{\partial x_{j}}\left(\frac{\partial u^{\varepsilon}}{\partial t}\right)\,dx =0,\label{compactness12.eqn6}\\
\displaystyle\lim_{n\to\infty}\varepsilon\displaystyle\sum_{j=1}^{d}\int_{\Omega}B^{'}(u^{\varepsilon})\,\frac{\partial u^{\varepsilon}}{\partial t}\,\frac{\partial u^{\varepsilon}}{\partial x_{j}}\,sg^{\prime}_{n}\left(\frac{\partial u^{\varepsilon}}{\partial t}\right)\,\frac{\partial}{\partial x_{j}}\left(\frac{\partial u^{\varepsilon}}{\partial t}\right)\,dx=0.\label{compactness12.eqn9}
\end{eqnarray}
\noindent{\bf Proof of \eqref{compactness12.eqn6}:}  
Since $\Big|\frac{\partial u^{\varepsilon}}{\partial t}\Big|\,sg^{'}_{n}\left(\frac{\partial u^{\varepsilon}}{\partial t}\right) < 1$, note that
\begin{eqnarray}\label{compactness12.eqn5B}
\left|\displaystyle\sum_{j=1}^{d}\int_{\left\{x\in\Omega\,:\,|\frac{\partial u^{\varepsilon}}{\partial t}|<\frac{1}{n}\right\}}\frac{\partial u^{\varepsilon}}{\partial t}sg^{'}_{n}\left(\frac{\partial u^{\varepsilon}}{\partial t}\right)\,\left(f_{1}^{\prime}(u^{\varepsilon}),\cdots,f_{d}^{\prime}(u^{\varepsilon})\right).\nabla\left(\frac{\partial u^{\varepsilon}}{\partial t}\right)\,dx\right|\hspace*{0.5cm}\nonumber\\
 \leq \sqrt{d}\,\displaystyle\max_{1\leq j\leq d}\left(\displaystyle\sup_{y\in I}\left|f^{'}_{j}(y)\right|\right)\,\int_{\left\{x\in\Omega\,:\,|\frac{\partial u^{\varepsilon}}{\partial t}|<\frac{1}{n}\right\}}\left|\nabla\left(\frac{\partial u^{\varepsilon}}{\partial t}\right) \right|\,dx.\hspace*{1cm}
\end{eqnarray}
 
Applying Lemma \ref{compactness12.lem1} with $v=\frac{\partial u^{\varepsilon}}{\partial t}$, the inequality \eqref{compactness12.eqn5B} gives \eqref{compactness12.eqn6}. 

\noindent{\bf Proof of \eqref{compactness12.eqn9}:}
Observe that
\begin{eqnarray}\label{compactness12.eqn8B}
 \left|\varepsilon\displaystyle\sum_{j=1}^{d}\int_{\Omega}B^{'}(u^{\varepsilon})\,\frac{\partial u^{\varepsilon}}{\partial t}\,\frac{\partial u^{\varepsilon}}{\partial x_{j}}\,sg^{\prime}_{n}\left(\frac{\partial u^{\varepsilon}}{\partial t}\right)\,\frac{\partial}{\partial x_{j}}\left(\frac{\partial u^{\varepsilon}}{\partial t}\right)\,dx \right|\hspace*{3cm}\nonumber\\ \leq \varepsilon\,\sqrt{d}\|B^{'}\|_{L^{\infty}(I)}\,\displaystyle\max_{1\leq j\leq d}\left(\left\|\frac{\partial u^{\varepsilon}}{\partial x_{j}}\right\|_{L^{\infty}(\Omega_{T})}\right) \int_{\left\{ x\in\Omega\,:\,\left|\frac{\partial u^{\varepsilon}}{\partial t}\right|<\frac{1}{n}\right\}}\left|\nabla\left(\frac{\partial u^{\varepsilon}}{\partial t}\right)\right|\,dx.
\end{eqnarray}

Applying Lemma \ref{compactness12.lem1} with $v=\frac{\partial u^{\varepsilon}}{\partial t}$, the inequality \eqref{compactness12.eqn8B} yields \eqref{compactness12.eqn9}. 

\noindent Since the third term on RHS of \eqref{compactness12.eqn13} is non-positive for every $\varepsilon>0$, on taking limit supremum on both sides of \eqref{compactness12.eqn11} yields
\begin{eqnarray}\label{compactness12.eqn13}
 \displaystyle\limsup_{n\to\infty}\,\int_{\Omega} u^\varepsilon_{tt}\,\,sg_{n}(u^\varepsilon_{t})\,dx &\leq& 0
\end{eqnarray}
in view of \eqref{compactness12.eqn6} and \eqref{compactness12.eqn9}. Note that the limit supremum in \eqref{compactness12.eqn13} is actually a limit, and as a consequence we get 
\begin{eqnarray}\label{compactness12.eqn14}
 \displaystyle\,\int_{\Omega} \frac{\partial}{\partial t}\left|u^\varepsilon_{t}\right|\,dx &\leq& 0
\end{eqnarray}
Integrating w.r.t. $t$ on both sides of \eqref{compactness12.eqn14}, and applying Fubini's theorem yields
\begin{eqnarray}\label{compactness12.eqn141}
 \int_{\Omega}\int_0^t \frac{\partial}{\partial t}\left|u^\varepsilon_{t}\right|\,d\tau\,dx &\leq& 0
\end{eqnarray}
Thus we get
\begin{eqnarray}\label{compactness12.eqn142}
 \int_{\Omega} \left(\left|u^\varepsilon_{t}(x,t)\right|-\left|u^\varepsilon_{t}(x,0)\right|\right)\,dx &\leq& 0
\end{eqnarray}
From the equation \eqref{ibvp.parab.a}, we get 
\begin{eqnarray}\label{compactness12.eqn16}
 \frac{\partial u^{\varepsilon}}{\partial t}(x,0) &=& \varepsilon\displaystyle\sum_{j=1}^{d}\Big[B(u_{0})\,\frac{\partial^{2}u^{\varepsilon}}{\partial x_{j}^{2}}(x,0) + B^{'}(u_{0})\left(\frac{\partial u^{\varepsilon}}{\partial x_{j}}\right)^{2}(x,0)\Big]-\displaystyle\sum_{j=1}^{d} f_{j}^{'}(u_{0})\frac{\partial u^{\varepsilon}}{\partial x_{j}}(x,0)\nonumber\\
 &=&\varepsilon\displaystyle\sum_{j=1}^{d}\Big[B(u_{0})\,\frac{\partial^{2}u_{0}}{\partial x_{j}^{2}} + B^{'}(u_{0})\left(\frac{\partial u_{0}}{\partial x_{j}}\right)^{2}\Big]-\displaystyle\sum_{j=1}^{d} f_{j}^{'}(u_{0})\frac{\partial u_{0}}{\partial x_{j}}.
\end{eqnarray}
Since $u_{0}\in C^{4+\beta}(\overline{\Omega})$, taking 
\begin{eqnarray}\label{compactness12.eqn18}
 \frac{C_{1}}{T\,d\,\mbox{Vol}(\Omega)}= \|B\|_{L^{\infty}(I)}\,\displaystyle\max_{1\leq j\leq d}\left(\displaystyle\sup_{x\in\overline{\Omega}}\Big|\frac{\partial^{2}u_{0}}{\partial x_{j}^{2}}\Big|\right) + \|B^{'}\|_{L^{\infty}(I)}\,\displaystyle\max_{1\leq j\leq d}\left(\displaystyle\sup_{x\in\overline{\Omega}}\Big|\frac{\partial u_{0}}{\partial x_{j}}\Big|^{2}\right)\nonumber \\+ \|f^{'}\|_{\left(L^\infty(I)\right)^d}\|\nabla u_0\|_{\left(L^\infty(\Omega)\right)^d}\,,
\end{eqnarray}
we get \eqref{B.BVestimate26}. 
\vspace{0.2cm}\\
\textbf{Step 2:} In Step 2, we show that
\begin{eqnarray}\label{B.BVestimate27}
 \left\|\nabla u^{\varepsilon}\right\|_{\left(L^{1}(\Omega_{T})\right)^{d}}= \|\nabla u_{0}\|_{\left(L^{1}(\Omega)\right)^{d}}.
\end{eqnarray}
Generalized viscosity problem \eqref{ibvp.parab} can be written in the following non-divergence form
\begin{subequations}\label{eqnchap21A}
 \begin{eqnarray}
   u^{\varepsilon}_{t} + \nabla\cdot f(u^{\varepsilon}) = \varepsilon\,\displaystyle\sum_{j=1}^{d}\left(B^{\prime}(u^{\varepsilon})\left(\frac{\partial u^{\varepsilon}}{\partial x_{j}}\right)^{2}+ B(u^{\varepsilon})\,\frac{\partial^{2}u^{\varepsilon}}{\partial x_{j}^{2}}\right) &\mbox{in }\Omega_{T},\label{eqnchap21A.a} \\
    u^{\varepsilon}(x,t)= 0&\,\,\,\,\mbox{on}\,\, \partial \Omega\times(0,T),\\
u^{\varepsilon}(x,0) = u_0(x)& x\in \Omega,
 \end{eqnarray}

\end{subequations}

For $t\in(0,T)$ and $i\in\left\{1,2,\cdots,d\right\}$, differentiating the equation  \eqref{eqnchap21A.a} with respect to $x_{i}$, multiplying by $sg_{n}\left(\frac{\partial u^{\varepsilon}}{\partial x_{i}}\right)$ and using $\frac{\partial u^{\varepsilon}}{\partial x_{i}}\in H^{1}(\Omega\times (0,T))$, we get
\begin{eqnarray}\label{BVestimateeqn2}
 \frac{\partial}{\partial t}\left(\frac{\partial u^{\varepsilon}}{\partial x_{i}}\right)\,sg_{n}\left(\frac{\partial u^{\varepsilon}}{\partial x_{i}}\right) + \displaystyle\sum_{j=1}^{d}sg_{n}\left(\frac{\partial u^{\varepsilon}}{\partial x_{i}}\right)\frac{\partial^{2}}{\partial x_{i}\partial x_{j}}f_{j}(u^{\varepsilon})\hspace{2in} \nonumber\\
 =\varepsilon\,\displaystyle\sum_{j=1}^{d}sg_{n}\left(\frac{\partial u^{\varepsilon}}{\partial x_{i}}\right)\frac{\partial}{\partial x_{i}}\left(B^{\prime}(u^{\varepsilon})\left(\frac{\partial u^{\varepsilon}}{\partial x_{j}}\right)^{2}
 + B(u^{\varepsilon})\,\frac{\partial^{2}u^{\varepsilon}}{\partial x_{j}^{2}}\right).
\end{eqnarray}
Summing \eqref{BVestimateeqn2}  over $i=1,2,\cdots,d$ and integrating over $\Omega$, we obtain
\begin{eqnarray}\label{BVestimateeqn4}
 \displaystyle\sum_{i=1}^{d}\int_{\Omega}\frac{\partial}{\partial t}\left(\frac{\partial u^{\varepsilon}}{\partial x_{i}}\right)\,sg_{n}\left(\frac{\partial u^{\varepsilon}}{\partial x_{i}}\right)\, dx + \displaystyle\sum_{i,j=1}^{d}\int_{\Omega}sg_{n}\left(\frac{\partial u^{\varepsilon}}{\partial x_{i}}\right)\frac{\partial}{\partial x_{i}}\left(f_{j}^{\prime}(u^{\varepsilon})\,\frac{\partial u^{\varepsilon}}{\partial x_{j}}\right)\,dx\nonumber\\
 =\varepsilon\,\displaystyle\sum_{i,j=1}^{d}\int_{\Omega}sg_{n}\left(\frac{\partial u^{\varepsilon}}{\partial x_{i}}\right)
 \frac{\partial}{\partial x_{i}}\Big(B^{\prime}(u^{\varepsilon})\left(\frac{\partial u^{\varepsilon}}{\partial x_{j}}\right)^{2}
 + B(u^{\varepsilon})\,\frac{\partial^{2}u^{\varepsilon}}{\partial x_{j}^{2}}\Big)\,dx
\end{eqnarray}
We pass to limit as $n\to\infty$ in each of the terms in \eqref{BVestimateeqn4} below.
\begin{enumerate}
\item[(i)]{Passage to  limit on RHS of \eqref{BVestimateeqn4} as $n\to\infty$ :}\\
Using integration by parts on RHS of \eqref{BVestimateeqn4}, we get
\begin{eqnarray}\label{BVestimateeqn5}
\varepsilon\displaystyle\sum_{i,j=1}^{d}\int_{\Omega}\,sg_{n}\left(\frac{\partial u^{\varepsilon}}{\partial x_{i}}\right)\frac{\partial}{\partial x_{i}}\Big(B^{\prime}(u^{\varepsilon})\left(\frac{\partial u^{\varepsilon}}{\partial x_{j}}\right)^{2}
 + B(u^{\varepsilon})\,\frac{\partial^{2}u^{\varepsilon}}{\partial x_{j}^{2}}\Big)\,dx\hspace*{0.7in}\nonumber\\
 = -\varepsilon\displaystyle\sum_{i,j=1}^{d}\int_{\Omega}\,sg_{n}^{\prime}\left(\frac{\partial u^{\varepsilon}}{\partial x_{i}}\right)\frac{\partial^{2} u^{\varepsilon}}{\partial x_{i}^{2}}
 \left(B^{\prime}(u^{\varepsilon})\left(\frac{\partial u^{\varepsilon}}{\partial x_{j}}\right)^{2}
+ B(u^{\varepsilon})\,\frac{\partial^{2}u^{\varepsilon}}{\partial x_{j}^{2}}\right)\,dx \nonumber\\  
+\varepsilon\displaystyle\sum_{i,j=1}^{d}\int_{\partial\Omega}\,sg_{n}\left(\frac{\partial u^{\varepsilon}}{\partial x_{i}}\right)
  \left(B(0)\,\frac{\partial^{2}u^{\varepsilon}}{\partial x_{j}^{2}}+B^\prime(0)\left(\frac{\partial u^{\varepsilon}}{\partial x_{j}}\right)^{2}\right)\,\sigma_{i}\,d\sigma.
\end{eqnarray}

Firstly, we want to show that
\begin{eqnarray}\label{BVestimateeqn6}
\displaystyle\lim_{n\to\infty}\displaystyle\sum_{i,j=1}^{d}\int_{\Omega}\,sg_{n}^{\prime}\left(\frac{\partial u^{\varepsilon}}{\partial x_{i}}\right)\frac{\partial^{2} u^{\varepsilon}}{\partial x_{i}^{2}}
 \left(B^{\prime}(u^{\varepsilon})\left(\frac{\partial u^{\varepsilon}}{\partial x_{j}}\right)^{2}
+ B(u^{\varepsilon})\,\frac{\partial^{2}u^{\varepsilon}}{\partial x_{j}^{2}}\right)\,dx=0.
\end{eqnarray}
For $i\in\left\{1,2,\cdots,d\right\}$, denote 
$$A^{\varepsilon}_{i}:=\left\{x\in\Omega\,\,:\,\,\frac{\partial u^{\varepsilon}}{\partial x_{i}}=0\right\}.$$
Since $\frac{\partial u^{\varepsilon}}{\partial x_{i}}\in C^{1}(\overline{\Omega})$, Stampacchia's theorem (see \cite{kesavan}) gives  $\nabla \left(\frac{\partial u^{\varepsilon}}{\partial x_{i}}\right)=0\,\,{\it a.e.}\,\,x\in A^{\varepsilon}_{i}$. In particular, $\frac{\partial^{2}u^{\varepsilon}}{\partial x_{i}^{2}}=0\,\,{\it a.e.}\,\,x\in A^{\varepsilon}_{i}$. 
For $i\in\left\{1,2,\cdots,d\right\}$, if $\Omega\smallsetminus A_{i}^{\varepsilon}=\varnothing$, then \eqref{BVestimateeqn6} follows trivially. Assuming that $\Omega\smallsetminus A_{i}^{\varepsilon}\neq\varnothing$, for each $i,j\in\{1,2,\cdots,d\}$ and for each $x\in\Omega\smallsetminus A^{\varepsilon}_{i}$, we have
\begin{eqnarray}\label{mollifier.bv.eqn10}
sg_{n}^{\prime}\left(\frac{\partial u^{\varepsilon}}{\partial x_{i}}\right)\frac{\partial^{2} u^{\varepsilon}}{\partial x_{i}^{2}}
 \Big(B^{\prime}(u^{\varepsilon})\left(\frac{\partial u^{\varepsilon}}{\partial x_{j}}\right)^{2}
+ B(u^{\varepsilon})\,\frac{\partial^{2}u^{\varepsilon}}{\partial x_{j}^{2}}\Big)\to 0\,\,\mbox{as}\,\,n\to\infty.\hspace*{0.2in}
\end{eqnarray}
Note that 
\begin{eqnarray}\label{mollifier.bv.eqn1}
 \left|-\displaystyle\sum_{i,j=1}^{d}\int_{\Omega\smallsetminus A_{i}^{\varepsilon}}\,sg_{n}^{\prime}\left(\frac{\partial u^{\varepsilon}}{\partial x_{i}}\right)\frac{\partial^{2} u^{\varepsilon}}{\partial x_{i}^{2}}
 \Big(B^{\prime}(u^{\varepsilon})\left(\frac{\partial u^{\varepsilon}}{\partial x_{j}}\right)^{2}
+ B(u^{\varepsilon})\,\frac{\partial^{2}u^{\varepsilon}}{\partial x_{j}^{2}}\Big)\,dx\right|\hspace*{0.5in}\nonumber\\ \leq\displaystyle\sum_{i,j=1}^{d}\int_{\Omega\smallsetminus A_{i}^{\varepsilon}}\,sg_{n}^{\prime}\left(\frac{\partial u^{\varepsilon}}{\partial x_{i}}\right)\left|\frac{\partial^{2} u^{\varepsilon}}{\partial x_{i}^{2}}
 \Big(B^{\prime}(u^{\varepsilon})\left(\frac{\partial u^{\varepsilon}}{\partial x_{j}}\right)^{2}
+ B(u^{\varepsilon})\,\frac{\partial^{2}u^{\varepsilon}}{\partial x_{j}^{2}}\Big)\right|\,dx.\nonumber\\
{}
\end{eqnarray}
Let $t\in (0,T)$ be fixed. For $i\in\left\{1,2,\cdots,d\right\}$, let $x_{i0}^{\varepsilon}\in A_{i}^{\varepsilon}$. Since $\Omega$ is bounded, there exists $R^{\prime}> 0$ such that $\Omega\subset B(x_{i0}^{\varepsilon},\,R^{\prime})$, where $B(x_{i0}^{\varepsilon},\,R^{\prime})$ denotes the open ball with center at $x_{i0}^{\varepsilon}$ and having radius $R'$. Consequently, $\Omega\subset B(x_{i0}^{\varepsilon},\,2R^{\prime})$. 

For each $n\in\N$ and $i\in\{1,2,\cdots,d\}$, denote 
\begin{eqnarray}\label{mollifier.bv.eqn2}
 C_{n,i}^{\varepsilon}:=\left\{x\in\Omega\smallsetminus A_{i}^{\varepsilon}:\,\,0<\left|\frac{\partial u^{\varepsilon}}{\partial x_{i}}(x,t)\right|\leq\frac{1}{n}\right\}.
\end{eqnarray}
Note that for each $n\in\mathbb{N}$,   we have $C_{(n+1),i}^{\varepsilon}\subseteq C_{n,i}^{\varepsilon}$.
Let $n_{0}\in\mathbb{N}$ be such that for all $n\geq n_{0}$, the following inclusion holds:
$$C_{n,i}^{\varepsilon}\subset B(x_{i0}^{\varepsilon},\frac{n}{2}).$$ 
Define a function $\rho\in C^{\infty}(\mathbb{R}^{d})$   by
\begin{eqnarray}\label{mpllifier.bv.eqn11}
\rho(x) := \left\{\def\arraystretch{1.2}%
\begin{array}{@{}c@{\quad}l@{}}
  k_\varepsilon\,\exp\left(-\frac{1}{1-|x-x_{i0}^{\varepsilon}|^{2}}\right), & \text{if $x\in B(x_{i0}^{\varepsilon},\,1)$}\\
  0,\,\, & \text{if\,\,$x\notin B(x_{i0}^{\varepsilon},\,1)$},
\end{array}\right.
\end{eqnarray}
where the constant $k_\varepsilon$ is chosen so that
\begin{eqnarray}\label{mollifier.bv.eqn4}
\int_{\mathbb{R}^{d}}\rho(x)\,\,dx=1.
\end{eqnarray}
Denote the sequence of mollifiers $\rho_{n,i}:\mathbb{R}^{d}\to\mathbb{R}$ by 
\begin{eqnarray}\label{mollifier.bv.eqn5}
\rho_{n,i}(x) := \left\{\def\arraystretch{1.2}%
\begin{array}{@{}c@{\quad}l@{}}
  k_\varepsilon\,n^{d}\,\exp\left(-\frac{n^{2}}{n^{2}-|x-x_{i0}^{\varepsilon}|^{2}}\right), & \text{if $x\in B(x_{i0}^{\varepsilon},\,n)$}\\
  0,\,\, & \text{if\,\,$x\notin B(x_{i0}^{\varepsilon},\,n)$}
\end{array}\right.
\end{eqnarray}
Since for each $n\geq n_{0}$, 
\begin{eqnarray}\label{mollifier.bv.eqn6}
 sg_{n}^{\prime}\left(\frac{\partial u^{\varepsilon}}{\partial x_{i}}\right):= \left\{\def\arraystretch{1.2}%
  \begin{array}{@{}c@{\quad}l@{}}
   n & \text{if $0\leq\left|\frac{\partial u^{\varepsilon}}{\partial x_{i}}\right|\leq\frac{1}{n}$},\\
    0\,\, & \text{if\,\,$\left|\frac{\partial u^{\varepsilon}}{\partial x_{i}}\right|>\frac{1}{n},$}
    \end{array}\right.
\end{eqnarray}
the RHS of \eqref{mollifier.bv.eqn1} can be rewritten as
\begin{eqnarray}\label{mollifier.bv.eqn7}
 \displaystyle\sum_{i,j=1}^{d}\int_{\Omega\smallsetminus A_{i}^{\varepsilon}}\,sg_{n}^{\prime}\left(\frac{\partial u^{\varepsilon}}{\partial x_{i}}\right)\left|\frac{\partial^{2} u^{\varepsilon}}{\partial x_{i}^{2}}
 \Big(B^{\prime}(u^{\varepsilon})\left(\frac{\partial u^{\varepsilon}}{\partial x_{j}}\right)^{2}
+ B(u^{\varepsilon})\,\frac{\partial^{2}u^{\varepsilon}}{\partial x_{j}^{2}}\Big)\right|\,dx\hspace*{1.7in}\nonumber\\
= \displaystyle\sum_{i,j=1}^{d}\int_{B(x_{i0}^{\varepsilon},\frac{n}{2})}\chi_{\Omega\smallsetminus A_{i}^{\varepsilon}}(x)\,sg_{n}^{\prime}\left(\frac{\partial u^{\varepsilon}}{\partial x_{i}}\right)\left|\frac{\partial^{2} u^{\varepsilon}}{\partial x_{i}^{2}}
 \Big(B^{\prime}(u^{\varepsilon})\left(\frac{\partial u^{\varepsilon}}{\partial x_{j}}\right)^{2}
+ B(u^{\varepsilon})\,\frac{\partial^{2}u^{\varepsilon}}{\partial x_{j}^{2}}\Big)\right|\,dx\hspace*{0.4in}\nonumber\\
= \displaystyle\sum_{i,j=1}^{d}\int_{B(x_{i0}^{\varepsilon},\frac{n}{2})}\chi_{\Omega\smallsetminus A_{i}^{\varepsilon}}(x)\,\frac{sg_{n}^{\prime}\left(\frac{\partial u^{\varepsilon}}{\partial x_{i}}\right)}{\rho_{n,i}(x)}\rho_{n,i}(x)\left|\frac{\partial^{2} u^{\varepsilon}}{\partial x_{i}^{2}}
 \Big(B^{\prime}(u^{\varepsilon})\left(\frac{\partial u^{\varepsilon}}{\partial x_{j}}\right)^{2}
+ B(u^{\varepsilon})\,\frac{\partial^{2}u^{\varepsilon}}{\partial x_{j}^{2}}\Big)\right|\,dx.\nonumber\\
{}
\end{eqnarray}
For all $n\geq n_{0}$ and for $x\in B(x_{i0}^{\varepsilon},\,\frac{n}{2})$, we have
\begin{eqnarray}\label{mollifier.bv.eqn8}
 \frac{sg_{n}^{\prime}\left(\frac{\partial u^{\varepsilon}}{\partial x_{i}}\right)}{\rho_{n,i}(x)}:= \left\{\def\arraystretch{1.2}%
  \begin{array}{@{}c@{\quad}l@{}}
   \frac{1}{k_\varepsilon\,n^{d-1}}\,e^{\frac{n^{2}}{n^{2}-|x-x_{i0}^{\varepsilon}|^{2}}} & \text{if $x\in B(x_{i0}^{\varepsilon},\,\frac{n}{2})\cap C_{n,i}^{\varepsilon}$},\\
    0\,\, & \text{if\,\,$x\notin B(x_{i0}^{\varepsilon},\,\frac{n}{2})\cap C_{n,i}^{\varepsilon}.$}
    \end{array}\right.
\end{eqnarray}
For $x\in  B(x_{i0}^{\varepsilon},\,\frac{n}{2})$, we have
$$\left| \frac{sg_{n}^{\prime}\left(\frac{\partial u^{\varepsilon}}{\partial x_{i}}\right)}{\rho_{n,i}(x)}\right|\leq\frac{1}{k_\varepsilon\,n^{d-1}}\,e^{\frac{4}{3}}.$$
\vspace{0.1cm}\\
Since $n\in\mathbb{N}$, the integrand on the last line of  \eqref{mollifier.bv.eqn7} is dominated by 
$$\frac{1}{k_\varepsilon}\,e^{\frac{4}{3}}\,\rho_{n,i}\,\left|\frac{\partial^{2} u^{\varepsilon}}{\partial x_{i}^{2}}
 \Big(B^{\prime}(u^{\varepsilon})\left(\frac{\partial u^{\varepsilon}}{\partial x_{j}}\right)^{2}
+ B(u^{\varepsilon})\,\frac{\partial^{2}u^{\varepsilon}}{\partial x_{j}^{2}}\Big)\right|,$$
which is integrable as $u^{\varepsilon}\in C^{4+\beta,\frac{4+\beta}{2}}(\overline{\Omega_{T}})$ and 
$$\int_{\mathbb{R}}\rho_{n,i}(x)\,dx=1.$$ 
Applying dominated convergence theorem on RHS of \eqref{mollifier.bv.eqn7}, we conclude \eqref{BVestimateeqn6}. 
\vspace{0.2cm}\\

Next, we want to show that 
\begin{eqnarray}\label{BVestimateeqn8C}
 \displaystyle\lim_{n\to\infty}\displaystyle\sum_{i,j=1}^{d}\int_{\partial\Omega}\,sg_{n}\left(\frac{\partial u^{\varepsilon}}{\partial x_{i}}\right)
  \left(B(0)\,\frac{\partial^{2}u^{\varepsilon}}{\partial x_{j}^{2}}+B^\prime(0)\left(\frac{\partial u^{\varepsilon}}{\partial x_{j}}\right)^{2}\right)\,\sigma_{i}\,d\sigma\nonumber\\
  =  \sum_{i,j=1}^{d}\int_{\partial\Omega}\,sg\left(\frac{\partial u^{\varepsilon}}{\partial x_{i}}\right)
  \left(B(0)\,\frac{\partial^{2}u^{\varepsilon}}{\partial x_{j}^{2}}+B^\prime(0)\left(\frac{\partial u^{\varepsilon}}{\partial x_{j}}\right)^{2}\right)\,\sigma_{i}\,d\sigma.
\end{eqnarray}
Note that \eqref{BVestimateeqn8C} follows by applying dominated convergence theorem, on noting that the sequence of integrands converges to the required limit pointwise, and is pointwise bounded by 
$$B(0)\left|\frac{\partial^{2}u^{\varepsilon}}{\partial x_{j}^{2}}\right|+|B^\prime(0)|\left(\frac{\partial u^{\varepsilon}}{\partial x_{j}}\right)^{2},$$
which is an integrable function as $u^{\varepsilon}\in C^{4+\beta,\frac{4+\beta}{2}}(\overline{\Omega_{T}})$ and $\mbox{surface measure}(\partial\Omega)<\infty$. \\

Using \eqref{BVestimateeqn6},  \eqref{BVestimateeqn8C} in \eqref{BVestimateeqn5}, we get 
\begin{eqnarray}\label{BVestimateeqn10}
 \varepsilon\displaystyle\lim_{n\to\infty}\displaystyle\sum_{i,j=1}^{d}\int_{\Omega}\,sg_{n}\left(\frac{\partial u^{\varepsilon}}{\partial x_{i}}\right)\frac{\partial}{\partial x_{i}}\Big(B^{\prime}(u^{\varepsilon})\left(\frac{\partial u^{\varepsilon}}{\partial x_{j}}\right)^{2}
 + B(u^{\varepsilon})\,\frac{\partial^{2}u^{\varepsilon}}{\partial x_{j}^{2}}\Big)\,dx\nonumber\\
 = \varepsilon\sum_{i,j=1}^{d}\int_{\partial\Omega}\,sg\left(\frac{\partial u^{\varepsilon}}{\partial x_{i}}\right)
  \left(B(0)\,\frac{\partial^{2}u^{\varepsilon}}{\partial x_{j}^{2}}+B^\prime(0)\left(\frac{\partial u^{\varepsilon}}{\partial x_{j}}\right)^{2}\right)\,\sigma_{i}\,d\sigma.
\end{eqnarray}
From the equation \eqref{ibvp.parab.a}, we have
\begin{eqnarray}\label{BVestimateeqn11}
 \varepsilon\displaystyle\sum_{j=1}^{d}\Big(B^{\prime}(u^{\varepsilon})\left(\frac{\partial u^{\varepsilon}}{\partial x_{j}}\right)^{2} + B(u^{\varepsilon})\,\frac{\partial^{2}u^{\varepsilon}}{\partial x_{j}^{2}}\Big) = u_{t}^{\varepsilon} + \displaystyle\sum_{j=1}^{d}f_{j}^{\prime}(u^{\varepsilon})\,u^{\varepsilon}_{x_{j}}\,\,\mbox{in}\,\,\overline{\Omega_{T}}.
\end{eqnarray}
Using \eqref{BVestimateeqn11} on RHS of \eqref{BVestimateeqn10}, we get
\begin{eqnarray}\label{BVestimateeqn12}
 \varepsilon\displaystyle\lim_{n\to\infty}\displaystyle\sum_{i,j=1}^{d}\int_{\Omega}\,sg_{n}\left(\frac{\partial u^{\varepsilon}}{\partial x_{i}}\right)\frac{\partial}{\partial x_{i}}\Big(B^{\prime}(u^{\varepsilon})\left(\frac{\partial u^{\varepsilon}}{\partial x_{j}}\right)^{2}
 + B(u^{\varepsilon})\,\frac{\partial^{2}u^{\varepsilon}}{\partial x_{j}^{2}}\Big)\,dx\nonumber\\
 = \sum_{i,j=1}^{d}\int_{\partial\Omega}\,sg\left(\frac{\partial u^{\varepsilon}}{\partial x_{i}}\right)
  f_j^\prime(0)\frac{\partial u^{\varepsilon}}{\partial x_{j}}\,\sigma_{i}\,d\sigma.
\end{eqnarray}

\item[(ii)]{Passage to  limit in the second term on LHS of \eqref{BVestimateeqn4} as $n\to\infty$ :}\\
Using integration by parts, we have
\begin{eqnarray}\label{BVestimateeqn14}
  \displaystyle\sum_{i,j=1}^{d}\int_{\Omega}sg_{n}\left(\frac{\partial u^{\varepsilon}}{\partial x_{i}}\right)\,\,\frac{\partial}{\partial x_{i}}\left(f_{j}^{\prime}(u^{\varepsilon})\frac{\partial u^{\varepsilon}}{\partial x_{j}}\right)\,dx\hspace*{3in}\nonumber\\
  = -\displaystyle\sum_{i,j=1}^{d}\int_{\Omega}sg_{n}^{\prime}\left(\frac{\partial u^{\varepsilon}}{\partial x_{i}}\right)\,\frac{\partial^{2} u^{\varepsilon}}{\partial x_{i}^{2}}\,\,f_{j}^{\prime}(u^{\varepsilon})\frac{\partial u^{\varepsilon}}{\partial x_{j}}\,dx + \displaystyle\sum_{i,j=1}^{d}\int_{\partial\Omega}f_{j}^{\prime}(0)\,\frac{\partial u^{\varepsilon}}{\partial x_{j}}\,sg_{n}\left(\frac{\partial u^{
\varepsilon}}{\partial x_{i}}\right)\,\,\sigma_
{i}\,d\sigma\nonumber\\
  {}
\end{eqnarray}
Using the arguments given to prove  assertions \eqref{BVestimateeqn6} and \eqref{BVestimateeqn8C}, we obtain
\begin{eqnarray} 
 \hspace*{0.2in}\displaystyle\lim_{n\to\infty}\displaystyle\sum_{i,j=1}^{d}\int_{\Omega}sg_{n}^{\prime}\left(\frac{\partial u^{\varepsilon}}{\partial x_{i}}\right)\,\frac{\partial^{2} u^{\varepsilon}}{\partial x_{i}^{2}}\,\,f_{j}^{\prime}(u^{\varepsilon})\frac{\partial u^{\varepsilon}}{\partial x_{j}}\,dx =0,\hspace*{2in}\label{BVestimateeqn15}\\
  \displaystyle\lim_{n\to\infty}\displaystyle\sum_{i,j=1}^{d}\int_{\partial\Omega}f_{j}^{\prime}(u^{\varepsilon})\,\frac{\partial u^{\varepsilon}}{\partial x_{j}}\,sg_{n}\left(\frac{\partial u^{\varepsilon}}{\partial x_{i}}\right)\,\,\sigma_{i}\,d\sigma =\displaystyle\sum_{i,j=1}^{d}\int_{\partial\Omega}f_{j}^{\prime}(0)\,\frac{\partial u^{\varepsilon}}{\partial x_{j}}\,sg\left(\frac{\partial u^{\varepsilon}}{\partial x_{i}}\right)\,\,\sigma_{i}\,d\sigma.\nonumber\\
  {}\label{BVestimateeqn16A}
\end{eqnarray}
 
\end{enumerate}
On using equations \eqref{BVestimateeqn12}, \eqref{BVestimateeqn15}, and \eqref{BVestimateeqn16A}, the equation \eqref{BVestimateeqn4} yields
\begin{eqnarray}\label{BVestimateeqn21}
  \displaystyle\sum_{i=1}^{d}\int_{\Omega}\frac{\partial}{\partial t}\left(\frac{\partial u^{\varepsilon}}{\partial x_{i}}\right)\,sg\left(\frac{\partial u^{\varepsilon}}{\partial x_{i}}\right)\, dx= \frac{\partial}{\partial t}\left(\displaystyle\sum_{i=1}^{d}\int_{\Omega}\Big|\frac{\partial u^{\varepsilon}}{\partial x_{i}}\Big|\,\,dx\right) =0.
\end{eqnarray}
The last equation implies that 
\begin{eqnarray}\label{BVestimateeqn24}
 \|\nabla u^{\varepsilon}\|_{\left(L^{1}(\Omega_{T})\right)^{d}}= \|\nabla u_{0}\|_{\left(L^{1}(\Omega)\right)^{d}},\nonumber  
\end{eqnarray}
which is nothing but \eqref{B.BVestimate27}. Equations \eqref{B.BVestimate26} and \eqref{B.BVestimate27} together give \eqref{BVestimateA.eqn1}.\\
\vspace{0.2cm}\\
\textbf{Step 3:} Denote the total variation of $u^{\varepsilon}$ by $TV_{\Omega_{T}}(u^{\varepsilon})$. Since for each $\varepsilon > 0$, $u^{\varepsilon}\in H^{1}(\Omega_{T})$, the total variation of $u^{\varepsilon}$ is given by \eqref{BVestimateA.eqn1},{\it i.e.,}
\begin{eqnarray}\label{B.BVestimate28}
 TV_{\Omega_{T}}(u^{\varepsilon})= \left\|\frac{\partial u^{\varepsilon}}{\partial t}\right\|_{L^{1}(\Omega_{T})} +  \|\nabla u^{\varepsilon}\|_{\left(L^{1}(\Omega_{T})\right)^{d}}.
\end{eqnarray}
Equation \eqref{BVestimateA.eqn1} shows that $\left\{TV_{\Omega_{T}}(u^{\varepsilon})\right\}_{\varepsilon\geq 0}$ is bounded.
Since $BV(\Omega_{T})\cap L^{1}(\Omega_{T})$ is compactly imbedded in $L^{1}(\Omega_{T})$ \cite{MR1304494}, therefore there exists a subsequence $(u^{\varepsilon_{k}})$ and a function $u\in L^{1}(\Omega_{T})$ such that $u^{\varepsilon_{k}}\to u$ in $L^{1}(\Omega_{T})$ as $k\to\infty$. We still denote the subsequence by $(u^{\varepsilon})$ and since $u^{\varepsilon}\to u$ in $L^{1}(\Omega_{T})$ as $\varepsilon\to 0$, there exists a further subsequence $(u^{\varepsilon_{k}})$ such that we have \eqref{BVestimateeqn25}.

\begin{theorem}\label{Compactness.lemma.1}
Let $f,\,\,B,\,\,u_{0}$ satisfy Hypothesis A. Let $u^{\varepsilon}$ be the unique solution to generalized viscosity problem \eqref{ibvp.parab}. Then 
 \begin{eqnarray}\label{uniformnot.compactness.eqn1a}
 \displaystyle\sum_{j=1}^{d} \hspace{0.1cm}\left(\sqrt{\varepsilon}\Big\| \frac{\partial u^{\varepsilon}}{\partial x_{j}}\Big\|_{L^{2}(\Omega_{T})}\right)^{2} \leq\frac{1}{2r}\|u_{0}\|^{2}_{L^{2}(\Omega)}  
 \end{eqnarray}
\end{theorem}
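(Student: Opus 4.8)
The plan is to use the classical energy method: multiply the viscous equation \eqref{ibvp.parab.a} by the solution $u^{\varepsilon}$ itself and integrate over $\Omega$. Since $f,B,u_0$ satisfy Hypothesis A, Theorem~\ref{chapHR85thm5} guarantees that $u^{\varepsilon}$ is a classical solution in $C^{4+\beta,\frac{4+\beta}{2}}(\overline{\Omega_{T}})$, so every integration by parts used below is fully justified and no mollification or weak-formulation bookkeeping is required.

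First I would evaluate the three resulting terms for a fixed $t\in(0,T)$. The time-derivative term gives $\int_{\Omega} u^{\varepsilon}_t\,u^{\varepsilon}\,dx = \tfrac12\frac{d}{dt}\|u^{\varepsilon}(\cdot,t)\|_{L^{2}(\Omega)}^{2}$. For the viscous term, integrating by parts and using $u^{\varepsilon}=0$ on $\partial\Omega$ removes the boundary contribution and leaves $\varepsilon\int_{\Omega}\nabla\cdot\bigl(B(u^{\varepsilon})\nabla u^{\varepsilon}\bigr)u^{\varepsilon}\,dx = -\varepsilon\int_{\Omega}B(u^{\varepsilon})\,|\nabla u^{\varepsilon}|^{2}\,dx$, which is precisely the dissipative quantity we wish to control.

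The key step is to show that the convective term $\int_{\Omega}\bigl(\nabla\cdot f(u^{\varepsilon})\bigr)u^{\varepsilon}\,dx = \sum_{j=1}^{d}\int_{\Omega}f_{j}'(u^{\varepsilon})\,u^{\varepsilon}\,\frac{\partial u^{\varepsilon}}{\partial x_{j}}\,dx$ vanishes. For each $j$ I would define the antiderivative $F_{j}(s):=\int_{0}^{s}\tau\,f_{j}'(\tau)\,d\tau$, so that $\frac{\partial}{\partial x_{j}}F_{j}(u^{\varepsilon})=f_{j}'(u^{\varepsilon})\,u^{\varepsilon}\,\frac{\partial u^{\varepsilon}}{\partial x_{j}}$. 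Each summand then equals $\int_{\Omega}\frac{\partial}{\partial x_{j}}F_{j}(u^{\varepsilon})\,dx = \int_{\partial\Omega}F_{j}(u^{\varepsilon})\,\nu_{j}\,d\sigma$, and since $F_{j}(0)=0$ together with $u^{\varepsilon}=0$ on $\partial\Omega$, this boundary integral is zero. This is exactly the point where the homogeneous Dirichlet condition is essential, and it is the only genuinely nontrivial observation in the argument; I expect it to be the main obstacle, in the sense that one must spot the total-derivative structure rather than attempt a direct estimate.

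Combining the three evaluations gives the identity $\tfrac12\frac{d}{dt}\|u^{\varepsilon}(\cdot,t)\|_{L^{2}(\Omega)}^{2} = -\varepsilon\int_{\Omega}B(u^{\varepsilon})\,|\nabla u^{\varepsilon}|^{2}\,dx$. Integrating in $t$ over $[0,T]$ and discarding the nonnegative term $\tfrac12\|u^{\varepsilon}(\cdot,T)\|_{L^{2}(\Omega)}^{2}$ yields $\varepsilon\int_{0}^{T}\!\!\int_{\Omega}B(u^{\varepsilon})\,|\nabla u^{\varepsilon}|^{2}\,dx\,dt \le \tfrac12\|u_{0}\|_{L^{2}(\Omega)}^{2}$. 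Finally, invoking the lower bound $B\ge r$ from Hypothesis A gives $\varepsilon\, r\sum_{j=1}^{d}\|\partial u^{\varepsilon}/\partial x_{j}\|_{L^{2}(\Omega_{T})}^{2}\le \tfrac12\|u_{0}\|_{L^{2}(\Omega)}^{2}$; dividing by $r$ and noting that $\sum_{j}\bigl(\sqrt{\varepsilon}\,\|\partial u^{\varepsilon}/\partial x_{j}\|_{L^{2}(\Omega_{T})}\bigr)^{2}=\varepsilon\sum_{j}\|\partial u^{\varepsilon}/\partial x_{j}\|_{L^{2}(\Omega_{T})}^{2}$ gives precisely \eqref{uniformnot.compactness.eqn1a}. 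The remaining steps after the convective cancellation are routine.
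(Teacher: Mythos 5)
Your proposal is correct and follows essentially the same route as the paper: test the equation with $u^{\varepsilon}$ itself, observe that the convective term is the integral of a divergence of a quantity vanishing on $\partial\Omega$, discard $\tfrac12\|u^{\varepsilon}(\cdot,T)\|_{L^{2}(\Omega)}^{2}$, and use $B\geq r$. The only (cosmetic) difference is that the paper kills the flux term in two steps, an integration by parts followed by the antiderivative $g_{j}(s)=\int_{0}^{s}f_{j}(\lambda)\,d\lambda$ with $g_{j}(0)=0$, whereas you absorb both steps into the single antiderivative $F_{j}(s)=\int_{0}^{s}\tau f_{j}'(\tau)\,d\tau$.
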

\begin{proof}
 Substituing $v=u^{\varepsilon}$ in the equation \eqref{ibvp.parab.weak.a} of generalized viscosity problem \eqref{ibvp.parab} and using chain rule and then integration by parts formula, for {\it a.e.} $t\in(0,T)$, we obtain
\begin{equation}\label{chap9eqn7}
 \begin{split}
 \langle u^{\varepsilon}_{t},u^{\varepsilon} \rangle + \varepsilon\hspace{0.1cm}\displaystyle\sum_{j=1}^{d}\int_{\Omega}\hspace{0.1cm}B(u^{\varepsilon})\hspace{0.1cm}\frac{\partial u^{\varepsilon}}{\partial x_{j}}\hspace{0.1cm}\frac{\partial u^{\varepsilon}}{\partial x_{j}}\hspace{0.1cm}dx - \displaystyle\sum_{j=1}^{d}\int_{\Omega}\hspace{0.1cm}f_{j}(u^{\varepsilon})\hspace{0.1cm}\frac{\partial u^{\varepsilon}}{\partial x_{j}}\hspace{0.1cm}dx =0.
 \end{split}
 \end{equation}
 Integrating \eqref{chap9eqn7} with respect to $t$ over the interval $(0,T)$ and using the formula
$$<u^{\varepsilon}_{t}, u^{\varepsilon}> =\frac{1}{2}\hspace{0.1cm}\frac{d}{dt}\|u^{\varepsilon}(t)\|^{2}_{L^{2}(\Omega)},$$ we get
\begin{equation}\label{chap9eqn8}
 \frac{1}{2}\|u(T)\|^{2}_{L^{2}(\Omega)} + \varepsilon r \displaystyle\sum_{j=1}^{d} \int_{0}^{T}\int_{\Omega}\hspace{0.1cm}\left(\frac{\partial u^{\varepsilon}}{\partial x_{j}}\right)^{2}\hspace{0.1cm}dx\hspace{0.1cm}dt = \frac{1}{2}\|u_{0}\|^{2}_{L^{2}(\Omega)} + \displaystyle\sum_{j=1}^{d}\int_{0}^{T}\int_{\Omega}f_{j}(u^{\varepsilon})\hspace{0.1cm}\frac{\partial u^{\varepsilon}}{\partial x_{j}}\hspace{0.1cm}dx\hspace{0.1cm}dt.
\end{equation}
From equation \eqref{chap9eqn8}, we obtain 
\begin{equation}\label{compactnessequation13}
 \varepsilon r \displaystyle\sum_{j=1}^{d} \int_{0}^{T}\int_{\Omega}\hspace{0.1cm}\left(\frac{\partial u^{\varepsilon}}{\partial x_{j}}\right)^{2}\hspace{0.1cm}dx\hspace{0.1cm}dt \leq \frac{1}{2}\|u_{0}\|^{2}_{L^{2}(\Omega)} + \displaystyle\sum_{j=1}^{d}\int_{0}^{T}\int_{\Omega}f_{j}(u^{\varepsilon})\hspace{0.1cm}\frac{\partial u^{\varepsilon}}{\partial x_{j}}\hspace{0.1cm}dx\hspace{0.1cm}dt.
\end{equation}
For $j=1,2\cdots,d$, denote
\begin{equation}\label{chap9eqn20}
 g_{j}(s):= \int_{0}^{s} f_{j}(\lambda)\hspace{0.1cm}d\lambda.\nonumber
\end{equation}
Since for $j=1,2\cdots,d$, each $f_{j}:\mathbb{R}\to\mathbb{R}$ is continuous, in view of fundamental theorem of calculus, we have
\begin{equation}\label{chap9eqn22}
 g_{j}^{\prime}(u^{\varepsilon})= f_{j}(u^{\varepsilon}),
\end{equation}
for {\it a.e.} $(x,t)\in\Omega_{T}$. Substituting $ g_{j}^{\prime}(u^{\varepsilon})= f_{j}(u^{\varepsilon})$ and using chain rule in inequality \eqref{compactnessequation13}, we obtain
\begin{equation}\label{chap9eqn24}
 \varepsilon r \displaystyle\sum_{j=1}^{d} \int_{0}^{T}\int_{\Omega}\hspace{0.1cm}\left(\frac{\partial u^{\varepsilon}}{\partial x_{j}}\right)^{2}\hspace{0.1cm}dx\hspace{0.1cm}dt \leq\frac{1}{2}\|u_{0}\|^{2}_{L^{2}(\Omega)} + \displaystyle\sum_{j=1}^{d}\int_{0}^{T}\int_{\Omega}\frac{\partial g_{j}(u^{\varepsilon})}{\partial x_{j}}\hspace{0.1cm}dx\hspace{0.1cm}dt.
\end{equation}
Using integration by parts formula and $u^{\varepsilon}=0$ for {\it a.e.} $(x,t)\in\partial\Omega\times (0,T)$ in inequality \eqref{chap9eqn24}, we get
\begin{equation}\label{compactnessequation8}
 \varepsilon r \displaystyle\sum_{j=1}^{d} \int_{0}^{T}\int_{\Omega}\hspace{0.1cm}\left(\frac{\partial u^{\varepsilon}}{\partial x_{j}}\right)^{2}\hspace{0.1cm}dx\hspace{0.1cm}dt \leq \frac{1}{2}\|u_{0}\|^{2}_{L^{2}(\Omega)} + \displaystyle\sum_{j=1}^{d}\int_{0}^{T}\int_{\partial\Omega}g_{j}(0)\nu_{j}\hspace{0.1cm}d\nu.
\end{equation}
Since $g_j(0)=0$, the inequality \eqref{compactnessequation8} reduces to \eqref{uniformnot.compactness.eqn1a}.
\end{proof}

\section{Proof of Theorem~\ref{theorem1}}\label{section.prooftheorem1}

In this section, we want to show that the {\it a.e.} limit of the subsequence of solutions to generalized viscosity problem \eqref{ibvp.parab} obtained in Theorem~\ref{BVEstimate.thm1} is an entropy solution to scalar conservation 
laws \eqref{ivp.cl}. We do not prove the uniqueness of the entropy solution as it was already established in \cite{MR542510}. The notion of entropy employed here was introduced by Bardos-Leroux-Nedelec \cite{MR542510} for IBVPs for conservation laws, which is defined below.  
\begin{definition}
Let $u\in BV(\Omega\times(0,T))$ and  $\gamma(u)$ be the trace of $u$ on  $\partial\Omega$. The function $u$ is said to be an entropy solution of IBVP \eqref{ivp.cl} if for all $k\in\mathbb{R}$ and for all nonnegative $\phi\in C^{2}(\overline{\Omega}\times(0,T))$ with compact support in $\overline{\Omega}\times(0,T),$ the inequality
  \begin{eqnarray}\label{B.entropysolution.eqn1}
   \int_{0}^{T}\int_{\Omega}\left\{\left|u-k\right|\,\frac{\partial\phi}{\partial t} + sg(u-k)\,\left(f(u)-f(k)\right)\cdot \nabla\phi \right\}\,dx\,dt \nonumber\\+  \int_{0}^{T}\int_{\partial\Omega}sg(k)\left(f(\gamma(u))-f(k)\right)\cdot\sigma\,\phi\,d\sigma\,dt\geq 0
  \end{eqnarray}
holds, and $u$ satisfies the initial condition \eqref{ivp.cl.c} almost everywhere in $\Omega$.
\end{definition}
We once again drop the subscript $k$ in the subsequence $u^{\varepsilon_k}$. Since uniqueness of an entropy solution is already established by Bardos {\it et.al.} in \cite{MR542510}, we conclude that the full sequence $(u^\varepsilon)$ converges to the unique entropy solution to \eqref{ivp.cl}.

\noindent{\bf Proof of Theorem~\ref{theorem1}:}

We prove Theorem~\ref{theorem1} in two steps. In Step 1, we show that $u$ is a weak solution to IBVP \eqref{ivp.cl} and in Step 2, we show that weak solution $u$ is an entropy solution to IBVP \eqref{ivp.cl}

\noindent \textbf{Step 1:} Let $\phi \in \mathcal{D}(\Omega\times [0, T))$. Multiplying the equation \eqref{ibvp.parab.a} by $\phi$, integrating over $\Omega_{T}$, and using integration by parts formula, we get
 \begin{equation}\label{eqnchap502}
\begin{split}
 \int_{0}^{T}\int_{\Omega}u^{\varepsilon}\hspace{0.1cm}\frac{\partial \phi}{\partial t}\hspace{0.1cm}dx\hspace{0.1cm}dt + \varepsilon\hspace{0.1cm}\displaystyle\sum_{j=1}^{d}\int_{0}^{T}\hspace{0.1cm}\int_{\Omega}\hspace{0.1cm}B(u^{\varepsilon})\hspace{0.1cm}\frac{\partial u^{\varepsilon}}{\partial x_{j}}\hspace{0.1cm}\frac{\partial \phi}{\partial x_{j}}\hspace{0.1cm}dx\hspace{0.1cm}dt + \displaystyle\sum_{j=1}^{d}\int_{0}^{T}\hspace{0.1cm}\int_{\Omega}\hspace{0.1cm}f_{j}(u^{\varepsilon})\hspace{0.1cm}\frac{\partial \phi}{\partial x_{j}}\hspace{0.1cm}dx\hspace{0.1cm}dt\\ =\int_{\Omega}u^{\varepsilon}(x,0)\hspace{0.1cm}\phi(x,0)\hspace{0.1cm}dx\hspace{0.1cm}dt.
 \end{split}
\end{equation}
We pass to the limit as $\varepsilon\to 0$ in \eqref{eqnchap502} in four steps.\\
\vspace{0.1cm}\\
\textbf{Step 1A:}$\left(\mbox{Passage to limit in  the first term on LHS of \eqref{eqnchap502} } \right)$\\ 
Since for {\it a.e.} $(x,t)\in\Omega_{T}$, 
$$u^{\varepsilon}\,\frac{\partial \phi}{\partial t}\to u\,\frac{\partial \phi}{\partial t}\,\,\mbox{as}\,\,\varepsilon\to 0,$$
and the quantity $u^{\varepsilon}\,\frac{\partial \phi}{\partial t}$ is uniformly bounded by $\|u_{0}\|_{L^{\infty}(\Omega)}\Big|\frac{\partial \phi}{\partial t}\Big|$,  bounded convergence theorem yields
\begin{equation}\label{weaksolution.eqn1}
 \displaystyle\lim_{\varepsilon\to 0}\int_{0}^{T}\hspace{0.1cm}\int_{\Omega}u^{\varepsilon}\hspace{0.1cm}\frac{\partial \phi}{\partial t}\hspace{0.1cm}dx\hspace{0.1cm}dt =\int_{0}^{T}\hspace{0.1cm}\int_{\Omega}u\hspace{0.1cm}\frac{\partial \phi}{\partial t}\hspace{0.1cm}dx\hspace{0.1cm}dt.
\end{equation}
\textbf{Step 1B:}$\left(\mbox{Passage to limit in  the  second term on LHS of \eqref{eqnchap502} }\right)$\\
Note that for each $j\in\left\{1,2,\cdots,d\right\}$,
\begin{eqnarray}\label{chap5eqn13}
\Big|\varepsilon\hspace{0.1cm}\int_{0}^{T}\hspace{0.1cm}\int_{\Omega}\hspace{0.1cm}B(u^{\varepsilon})\hspace{0.1cm}\frac{\partial u^{\varepsilon}}{\partial x_{j}}\hspace{0.1cm}\frac{\partial \phi}{\partial x_{j}}\hspace{0.1cm}dx\hspace{0.1cm}dt\Big|&\leq& \|B\|_{\infty}\,\varepsilon\hspace{0.1cm}\int_{0}^{T}\int_{\Omega}\Big|\frac{\partial u^{\varepsilon}}{\partial x_{j}}\Big|\hspace{0.1cm}\Big|\frac{\partial\phi}{\partial x_{j}}\Big|\hspace{0.1cm}dx\hspace{0.1cm}dt\nonumber\\
&\leq& \|B\|_{\infty}\sqrt{\varepsilon}\left(\sqrt{\varepsilon}\Big\|\frac{\partial u^{\varepsilon}}{\partial x_{j}}\Big\|_{L^{2}(\Omega_{T})}\right)\Big\|\frac{\partial\phi}{\partial x_{j}}\Big\|_{L^{2}(\Omega_{T})}.
\end{eqnarray}
In view of \eqref{uniformnot.compactness.eqn1a}, the sequence of numbers $\left\{\sqrt{\varepsilon}\Big\|\frac{\partial u^{\varepsilon}}{\partial x_{j}}\Big\|_{L^{2}(\Omega_{T})}\right\}_{\varepsilon\geq 0}$ is bounded, and using sandwich theorem the inequality \eqref{chap5eqn13} gives
\begin{equation}\label{eqnchap504}
 \displaystyle\lim_{\varepsilon\to 0}\,\varepsilon\hspace{0.1cm}\displaystyle\sum_{j=1}^{d}\int_{0}^{T}\hspace{0.1cm}\int_{\Omega}\hspace{0.1cm}B(u^{\varepsilon})\hspace{0.1cm}\frac{\partial u^{\varepsilon}}{\partial x_{j}}\hspace{0.1cm}\frac{\partial \phi}{\partial x_{j}}\hspace{0.1cm}dx\hspace{0.1cm}dt = 0.
\end{equation}
\textbf{Step 1C:}$\left(\mbox{Passage to limit in  the  third term on LHS of \eqref{eqnchap502}}\right)$\\
In this step, we show that
\begin{equation}\label{weaksolution.eqn2}
  \displaystyle\lim_{\varepsilon\to 0}\displaystyle\sum_{j=1}^{d}\int_{0}^{T}\hspace{0.1cm}\int_{\Omega}\hspace{0.1cm}f_{j}(u^{\varepsilon})\hspace{0.1cm}\frac{\partial \phi}{\partial x_{j}}\hspace{0.1cm}dx\hspace{0.1cm}dt = \int_{0}^{T}\hspace{0.1cm}\int_{\Omega}\hspace{0.1cm} f_{j}(u)\hspace{0.1cm}\frac{\partial \phi}{\partial x_{j}}\hspace{0.1cm}dx\hspace{0.1cm}dt.
\end{equation}
For each $j\in\left\{1,2,\cdots,d\right\}$ and {\it a.e.} $(x,t)\in\Omega_{T}$, we have 
$$f_{j}(u^{\varepsilon})\hspace{0.1cm}\frac{\partial \phi}{\partial x_{j}}\to f_{j}(u)\hspace{0.1cm}\frac{\partial \phi}{\partial x_{j}}\,\,\mbox{as}\,\,\varepsilon\to 0.$$
For each  $j\in\left\{1,2,\cdots,d\right\}$, using continuity of $f_{j}:\mathbb{R}\to \mathbb{R}$ and maximum principle of sequence of solutions to generalized viscosity problem \eqref{ibvp.parab}, we observe that the integrand on LHS of  \eqref{weaksolution.eqn2} is pointwise bounded by 
$$\,\displaystyle\max_{1\leq j\leq d}\left\{\left|f_{j}(x)\right|\hspace{0.2cm}\big|\hspace{0.2cm} x\in I \right\} \left|\frac{\partial\phi}{\partial x_{j}}\right|$$
which is integrable as $\mbox{Vol}(\Omega_{T})< \infty$.\\
Applying dominated convergence theorem on LHS of \eqref{weaksolution.eqn2} , we have \eqref{weaksolution.eqn2}.\\
\vspace{0.2cm}\\
 \textbf{Step 1D:}$\left(\mbox{Passage to limit in  the RHS of \eqref{eqnchap502}}\right)$
 \vspace{0.2cm}\\
 In this step, we prove that
 \begin{equation}\label{weaksolution.eqn4}
 \displaystyle\lim_{\varepsilon\to 0}\int_{0}^{T}\int_{\Omega}u^{\varepsilon}(x,0)\hspace{0.1cm}\phi(x,0)\hspace{0.1cm}dx\hspace{0.1cm}dt= \int_{0}^{T}\int_{\Omega} u(x,0)\hspace{0.1cm}\phi(x,0)\hspace{0.1cm}dx\hspace{0.1cm}dt.
 \end{equation}
 For {\it a.e.} $x\in \Omega$, we have
 $$u^{\varepsilon}(x,0)\hspace{0.1cm}\phi(x,0)\to u^{\varepsilon}(x,0)\hspace{0.1cm}\phi(x,0)\,\,\mbox{as}\,\,\varepsilon\to 0.$$
 The integrand on LHS of \eqref{weaksolution.eqn4} is {\it a.e.} bounded by $\|u_{0}\|_{L^{\infty}(\Omega)}\left|\phi(x,0)\right|$ which is integrable as $\mbox{Vol}(\Omega_{T})<\infty$. \\
 Applying dominated convergence theorem on LHS of \eqref{weaksolution.eqn4}, we have \eqref{weaksolution.eqn4}.\\
 Using equations \eqref{weaksolution.eqn1},\eqref{eqnchap504},\eqref{weaksolution.eqn2},\eqref{weaksolution.eqn4} in \eqref{eqnchap502}, we have 
 \begin{equation}\label{eqnchap512}
 \int_{0}^{T}\hspace{0.1cm}\int_{\Omega}u\hspace{0.1cm}\frac{\partial \phi}{\partial t}\hspace{0.1cm}dx\hspace{0.1cm}dt + \displaystyle\sum_{j=1}^{d}\int_{0}^{T}\hspace{0.1cm}\int_{\Omega} f_{j}(u)\hspace{0.1cm}\frac{\partial \phi}{\partial x_{j}}\hspace{0.1cm}dx \hspace{0.1cm}dt =\int_{\Omega}u(x,0)\hspace{0.1cm}\phi(x,0)\hspace{0.1cm}dx\hspace{0.1cm}dt.
\end{equation}
In view of a result \cite[p.1026]{MR542510} that $u(x,t)=u_{0}(x)$ {\it a.e.} $x\in\Omega$, from \eqref{eqnchap512}, we have
\begin{equation}\nonumber\\
 \int_{0}^{T}\hspace{0.1cm}\int_{\Omega}u\hspace{0.1cm}\frac{\partial \phi}{\partial t}\hspace{0.1cm}dx\hspace{0.1cm}dt + \displaystyle\sum_{j=1}^{d}\int_{0}^{T}\hspace{0.1cm}\int_{\Omega} f_{j}(u)\hspace{0.1cm}\frac{\partial \phi}{\partial x_{j}}\hspace{0.1cm}dx \hspace{0.1cm}dt =\int_{\Omega}u_{0}(x)\hspace{0.1cm}\phi(x,0)\hspace{0.1cm}dx\hspace{0.1cm}dt.\nonumber\\
\end{equation}
Therefore $u$ is a weak solution of IBVP for conservation law \eqref{ivp.cl}.\\
\vspace{0.1cm}\\
\textbf{Step 2:} We want to show that weak solution $u$ of IBVP \eqref{ivp.cl} belongs to $BV(\Omega_T)$, and satisfies inequality \eqref{B.entropysolution.eqn1}. Since $u$ is the $L^1(\Omega_T)$ limit of a sequence of BV functions, it follows from \cite[p.1021]{MR542510} that $u\in BV(\Omega_T)$. Let $k\in\mathbb{R}$ and $\phi\in C^{2}(\overline{\Omega}\times(0,T))$ such that $\phi\geq 0$ and has compact support in $\overline{\Omega}\times (0,T)$. Multiplying the first equation of IBVP \eqref{ibvp.parab} by $sg_{n}(u^{\varepsilon}-k)\,\phi$ and integrating over $\Omega\times(0,T)$, we get
 \begin{eqnarray}\label{B.entropysolution.eqn2}
  \int_{0}^{T}\int_{\Omega}\frac{\partial u^{\varepsilon}}{\partial t}\,sg_{n}(u^{\varepsilon}-k)\,\phi\,dx\,dt + \displaystyle\sum_{j=1}^{d}\int_{0}^{T}\int_{\Omega}\,\frac{\partial}{\partial x_{j}}\left(f_{j}(u^{\varepsilon})\right)\,sg_{n}(u^{\varepsilon}-k)\,\phi\,dx\,dt \nonumber\\=\varepsilon\displaystyle\sum_{j=1}^{d}\int_{0}^{T}\int_{\Omega}\frac{\partial}{\partial x_{j}}\left(B(u^{\varepsilon})\,\frac{\partial u^{\varepsilon}}{\partial x_{j}}\right)\,sg_{n}(u^{\varepsilon}-k)\,\phi\,dx\,dt.
 \end{eqnarray}
Using integration by parts in \eqref{B.entropysolution.eqn2}, we arrive at
\begin{eqnarray}\label{B.entropysolution.eqn8}
 \int_{0}^{T}\int_{\Omega}\left\{\int_{k}^{u^{\varepsilon}}sg_{n}(y-k)\,dy\right\}\,\frac{\partial\phi}{\partial t}\,dx\,dt +\int_{0}^{T}\int_{\Omega}\,\left(f(u^{\varepsilon})-f(k)\right)\cdot\nabla\phi\,\,sg_{n}(u^{\varepsilon}-k)\,dx\,dt \nonumber\\ + \int_{0}^{T}\int_{\Omega}\,\left(f(u^{\varepsilon})-f(k)\right)\cdot\nabla u^{\varepsilon}\,sg_{n}^{'}(u^{\varepsilon}-k)\,\phi\,dx\,dt = \varepsilon\int_{0}^{T}\int_{\Omega}B(u^{\varepsilon})\,\left(\nabla u^{\varepsilon}\cdot\nabla\phi\right)\,sg_{n}(u^{\varepsilon}-k)\,dx\,dt\nonumber\\+ \varepsilon\int_{0}^{T}\int_{\Omega}B(u^{\varepsilon})\,\left|\nabla u^{\varepsilon}\right|^{2}\,sg_{n}^{'}(u^{\varepsilon}-k)\,\phi\,dx\,dt + \varepsilon\int_{0}^{T}\int_{\partial\Omega}\,B(0)\,\nabla u^{\varepsilon}\cdot\sigma\, sg_{n}(k)\phi\,d\sigma\,dt\nonumber\\ -\int_{0}^{T}\int_{\partial\Omega}\,\left(f(0)-f(k)\right)\cdot\sigma\,sg_{n}(k)\,\phi\,d\sigma\,dt.\nonumber\\
 {}
\end{eqnarray}
We want to prove the entropy inequality \eqref{B.entropysolution.eqn1} by passing to the limit in \eqref{B.entropysolution.eqn8} in two steps. In Step 2A, we pass to the limit in \eqref{B.entropysolution.eqn8} as $n\to\infty$ and then in Step 2B, we pass to the limit  as $\varepsilon\to 0$ in the resultant from Step 2A.
\vspace{0.2cm}\\
\textbf{Step 2A:}
\begin{enumerate}
 \item[(i)]{(Passage to the limit in the first term on LHS of \eqref{B.entropysolution.eqn8} as $n\to\infty$):}\\ 
 Note that, for {\it a.e.} $(x,t)\in\Omega_{T}$, we have 
 $$\left(\int_{k}^{u^{\varepsilon}}sg_{n}(y-k)\,dy\right)\frac{\partial\phi}{\partial t}\to \left(\int_{k}^{u^{\varepsilon}}sg(y-k)\,dy\right)\,\frac{\partial\phi}{\partial t}\,\,\mbox{as}\,\,n\to\infty.$$
 Observe that
 \begin{eqnarray}\label{B.entropysolution.eqn7a}
 \left|\left(\int_{k}^{u^{\varepsilon}}sg_{n}(y-k)\,dy\right)\frac{\partial\phi}{\partial t}\right|\leq |u^{\varepsilon}-k|\,\left|\frac{\partial\phi}{\partial t}\right|.
 \end{eqnarray}
 Since $\mbox{Vol}(\Omega_{T})<\infty$, the RHS of \eqref{B.entropysolution.eqn7a} is integrable. An application of dominated convergence theorem gives
 \begin{eqnarray}\label{B.entropysolution.eqn9}
  \displaystyle\lim_{n\to\infty}\int_{0}^{T}\int_{\Omega}\left(\int_{k}^{u^{\varepsilon}}sg_{n}(y-k)\,dy\right)\frac{\partial\phi}{\partial t}dx\,dt= \int_{0}^{T}\int_{\Omega}\left(\int_{k}^{u^{\varepsilon}}sg(y-k)\,dy\right)\,\frac{\partial\phi}{\partial t}\,\,dx\,dt.\nonumber\\
  {}
 \end{eqnarray}
 \item[(ii)]{(Passage to the limit in the second term on LHS of \eqref{B.entropysolution.eqn8} as $n\to\infty$):}
For {\it a.e.} $(x,t)\in\Omega_{T}$, we have 
$$\left(f(u^{\varepsilon})-f(k)\right)\cdot\nabla\phi\,\,sg_{n}(u^{\varepsilon}-k)\to \left(f(u^{\varepsilon})-f(k)\right)\cdot\nabla\phi\,\,sg(u^{\varepsilon}-k)\,\,\mbox{as}\,\,n\to\infty.$$
Note that 
\begin{eqnarray}\label{B.entropysolution.eqn10}
\left|\left(f(u^{\varepsilon})-f(k)\right)\cdot\nabla\phi\,\,sg_{n}(u^{\varepsilon}-k)\right|&\leq& d\Big[\displaystyle\max_{1\leq j\leq d}\left(\displaystyle\sup_{y\in I}\left|f_{j}(y)\right|\right) \nonumber\\
&&+ \displaystyle\max_{1\leq j\leq d}\left(\left|f_{j}(k)\right|\right)\Big]\displaystyle\max_{1\leq j\leq d}\left(\displaystyle\sup_{(x,t)\in\overline{\Omega_{T}}}\left|\frac{\partial\phi}{\partial x_{j}}\right|\right)\nonumber\\
{}
\end{eqnarray}
Since $\mbox{Vol}(\Omega_{T})<\infty$, we have the RHS of \eqref{B.entropysolution.eqn10} is integrable. An application of dominated convergence theorem gives
\begin{eqnarray}\label{B.entropysolution.eqn11}
 \displaystyle\lim_{n\to\infty}\int_{0}^{T}\left(f(u^{\varepsilon})-f(k)\right)\cdot\nabla\phi\,\,sg_{n}(u^{\varepsilon}-k)\,dx\,dt= \int_{0}^{T}\left(f(u^{\varepsilon})-f(k)\right)\cdot\nabla\phi\,\,sg(u^{\varepsilon}-k)\,dx\,dt.\nonumber\\
 {}
\end{eqnarray}
\item[(iii)]{(Passage to the limit in the third term on LHS of \eqref{B.entropysolution.eqn8} as $n\to\infty$):} For $j=1,2,\cdots,d$, applying mean value theorem to $f_{j}:\mathbb{R}\to\mathbb{R}$, we get
\begin{eqnarray}\label{B.entropysolution.eqn12}
 f_{j}(u^{\varepsilon})-f_{j}(k)= f_{j}^{'}(\xi^{j}_{u^{\varepsilon},k})(u^{\varepsilon}-k).
\end{eqnarray}
Denote
$$f^{'}(\xi_{u^{\varepsilon},k}):=\left(f_{1}^{'}(\xi^{1}_{u^{\varepsilon},k}),f_{2}^{'}(\xi^{2}_{u^{\varepsilon},k}),\cdots,f_{d}^{'}(\xi^{d}_{u^{\varepsilon},k})\right).$$
Applying \eqref{B.entropysolution.eqn12} and  $\left|(u^{\varepsilon}-k)sg_{n}^{'}(u^{\varepsilon}-k)\right|\leq 1$, we have 
\begin{eqnarray}\label{B.entropysolution.eqn13}
 \left|\int_{0}^{T}\int_{\Omega}\left(f(u^{\varepsilon})-f(k)\right)\cdot\nabla u^{\varepsilon}\,sg_{n}^{'}(u^{\varepsilon}-k)\,\phi\,dx\,dt\right|\nonumber\\
 \leq \int_{0}^{T}\int_{\left\{x\in\Omega\,:\,|u^{\varepsilon}-k|<\frac{1}{n}\right\}}\left|\left(f^{'}(\xi_{u^{\varepsilon},k})\right)\right|\,\left|\nabla \left(u^{\varepsilon}-k\right)\right|\,\left|\phi\right|dx\,dt,\nonumber\\
 \leq \|\phi\|_{L^{\infty}(\Omega_{T})}\,\sqrt{d}\displaystyle\max_{1\leq j\leq d}\left(\displaystyle\sup_{y\in I}\left|f_{j}^{'}(y)\right|\right)\,\int_{0}^{T}\int_{\left\{x\in\Omega\,:\,|u^{\varepsilon}-k|<\frac{1}{n}\right\}}\left|\nabla \left(u^{\varepsilon}-k\right)\right|\,dx\,dt.
\end{eqnarray}
Denote
$$g_{n}(t):=\int_{\left\{x\in\Omega\,:\,|u^{\varepsilon}-k|<\frac{1}{n}\right\}}\left|\nabla \left(u^{\varepsilon}-k\right)\right|\,dx.$$
Applying Lemma \ref{compactness12.lem1} with $v=\left(u^{\varepsilon}-k\right)$, we get
\begin{eqnarray}\label{B.entropysolution.eqn13A}
 \displaystyle\lim_{n\to\infty}g_{n}(t)=0.
\end{eqnarray}
For all $t\in (0,T)$ and $n\in\mathbb{N}$, we have $g_{n}(t)\leq\int_{\Omega}\left|\nabla \left(u^{\varepsilon}-k\right)\right|\,dx $. Since $u^{\varepsilon}\in C^{4+\beta,\frac{4+\beta}{2}}(\overline{\Omega_{T}})$, 
$$\int_{0}^{T}\int_{\Omega}\left|\nabla \left(u^{\varepsilon}-k\right)\right|\,dx\,dt<\infty.$$ 
An application of dominated convergence theorem yields
\begin{eqnarray}\label{B.entropysolution.eqn14}
 \displaystyle\lim_{n\to\infty}\left(\int_{0}^{T}\int_{\Omega}\left(f(u^{\varepsilon})-f(k)\right)\cdot\nabla u^{\varepsilon}\,sg_{n}^{'}(u^{\varepsilon}-k)\,\phi\,dx\,dt\right)&=&\int_{0}^{T}\displaystyle\lim_{n\to\infty}g_{n}(t)dt\nonumber\\
 &&=0
\end{eqnarray}
\vspace{0.1cm}\\
\item[(iv)]{(Passage to the limit in the first term on RHS of \eqref{B.entropysolution.eqn8} as $n\to\infty$):}
For {\it a.e.} $(x,t)\in \Omega_{T}$, 
$$B(u^{\varepsilon})\left(\nabla u^{\varepsilon}\cdot\nabla\phi\right)\,sg_{n}(u^{\varepsilon}-k)\to B(u^{\varepsilon})\left(\nabla u^{\varepsilon}\cdot\nabla\phi\right)\,sg(u^{\varepsilon}-k)\,\,\mbox{as}\,n\to\infty.$$
Note that 
\begin{eqnarray}\label{B.entropysolution.eqn16}
 \left|B(u^{\varepsilon})\left(\nabla u^{\varepsilon}\cdot\nabla\phi\right)\,sg_{n}(u^{\varepsilon}-k)\right|\leq \|B\|_{L^{\infty}(I)}\,\|\nabla u^{\varepsilon}\|_{\left(L^{\infty}(\Omega_{T})\right)^{d}}\,\|\nabla\phi\|_{\left(L^{\infty}(\Omega_{T})\right)^{d}}.
\end{eqnarray}
Since $\mbox{Vol}(\Omega_{T})<\infty$, the RHS of \eqref{B.entropysolution.eqn16} is integrable and 
applying dominated convergence theorem, we conclude
\begin{eqnarray}\label{B.entropysolution.eqn17}
 \displaystyle\lim_{n\to\infty}\int_{0}^{T}\int_{\Omega}B(u^{\varepsilon})\left(\nabla u^{\varepsilon}\cdot\nabla\phi\right)\,sg_{n}(u^{\varepsilon}-k)\,dx\,dt=\int_{0}^{T}\int_{\Omega}B(u^{\varepsilon})\left(\nabla u^{\varepsilon}\cdot\nabla\phi\right)\,sg(u^{\varepsilon}-k)\,dx\,dt.\nonumber\\
 {}
\end{eqnarray}
\item[(v)]{(Passage to the limit in the third term on RHS of \eqref{B.entropysolution.eqn8} as $n\to\infty$):} 
For {\it a.e.} $(x,t)\in\partial\Omega\times(0,T)$, we have 
$$B(0)\frac{\partial u^{\varepsilon}}{\partial\sigma}\,sg_{n}(k)\phi\to B(0)\frac{\partial u^{\varepsilon}}{\partial\sigma}\,sg(k)\phi\,\,\mbox{as}\,\,n\to\infty.$$
Since $\left|sg_{n}(k)\right|\leq 1$, we have
\begin{eqnarray}\label{eqnarray}\label{B.entropysolution.eqn18}
\left|B(0)\frac{\partial u^{\varepsilon}}{\partial\sigma}\,sg_{n}(k)\phi\right|\leq B(0)\displaystyle\max_{1\leq j\leq d}\left(\displaystyle\sup_{(x,t)\in\overline{\Omega_{T}}}\left|\frac{\partial u^{\varepsilon}}{\partial x_{j}}\right|\right)\left|\phi\right|,
\end{eqnarray}
the RHS of \eqref{B.entropysolution.eqn18} is integrable as $\mbox{surface measure}(\partial\Omega\times (0,T))<\infty.$ An application of dominated convergence theorem gives
\begin{eqnarray}\label{B.entropysolution.eqn19}
 \displaystyle\lim_{n\to\infty}\int_{0}^{T}\int_{\partial\Omega}B(0)\frac{\partial u^{\varepsilon}}{\partial\sigma}\,sg_{n}(k)\phi\,dx\,dt =\int_{0}^{T}\int_{\partial\Omega}B(0)\frac{\partial u^{\varepsilon}}{\partial\sigma}\,sg(k)\phi\,dx\,dt.
\end{eqnarray}
\item[(vi)] {(Passage to the limit in the fourth term on RHS of \eqref{B.entropysolution.eqn8} as $n\to\infty$):}
The proof of 
\begin{eqnarray}\label{B.entropysolution.eqn20}
 \displaystyle\lim_{n\to\infty}\int_{0}^{T}\int_{\partial\Omega}sg_{n}(k)\left(f(0)-f(k)\right)\cdot\sigma\,\phi\,d\sigma\,dt=\int_{0}^{T}\int_{\partial\Omega}sg(k)\left(f(0)-f(k)\right)\cdot\sigma\,\phi\,d\sigma\,dt.\nonumber\\
 {}
\end{eqnarray}
is similar to the proof of \eqref{B.entropysolution.eqn19}. 
\end{enumerate}
Using \eqref{B.entropysolution.eqn9},\eqref{B.entropysolution.eqn11},\eqref{B.entropysolution.eqn14},\eqref{B.entropysolution.eqn17},\eqref{B.entropysolution.eqn19},\eqref{B.entropysolution.eqn20} and 
$$\varepsilon\int_{0}^{T}\int_{\Omega}B(u^{\varepsilon})\,\left|\nabla u^{\varepsilon}\right|^{2}\,sg_{n}^{'}(u^{\varepsilon}-k)\,\phi\,dx\,dt\geq 0,$$
in \eqref{B.entropysolution.eqn8}, we get
\begin{eqnarray}\label{B.entropysolution.eqn22}
 \int_{0}^{T}\int_{\Omega}\left\{\int_{k}^{u^{\varepsilon}}sg(y-k)\,dy\right\}\,\frac{\partial\phi}{\partial t}\,dx\,dt +\int_{0}^{T}\int_{\Omega}\,\left(f(u^{\varepsilon})-f(k)\right)\cdot\nabla\phi\,\,sg(u^{\varepsilon}-k)\,dx\,dt \nonumber\\ \geq \varepsilon\int_{0}^{T}\int_{\Omega}B(u^{\varepsilon})\,\left(\nabla u^{\varepsilon}\cdot\nabla\phi\right)\,sg(u^{\varepsilon}-k)\,dx\,dt + \varepsilon\int_{0}^{T}\int_{\partial\Omega}\,B(0)\,\nabla u^{\varepsilon}\cdot\sigma\, sg^{'}(k)\phi\,d\sigma\,dt \nonumber\\-\int_{0}^{T}\int_{\partial\Omega}\,\left(f(0)-f(k)\right)\cdot\sigma\,sg(k)\,\phi\,dx\,dt.\nonumber\\
 {}
\end{eqnarray}
\vspace{0.2cm}\\
\textbf{Step 2B:} 
 \begin{enumerate}
 \item[(i)]{(Passage to the limit in the first term on LHS of \eqref{B.entropysolution.eqn22} as $\varepsilon\to 0$:)}
 Denote
 $$g(z):=\int_{0}^{z}sg(y-k)\,dy.$$
 Since $g$ is an absolutely continuous function, for {\it a.e.} $(x,t)\in\Omega_{T}$, we have 
 \begin{eqnarray}\label{B.entropysolution.eqn23}
  \displaystyle\lim_{\varepsilon\to 0}g(u^{\varepsilon})= g(u).\nonumber
 \end{eqnarray}
Therefore, as $\varepsilon\to 0$, for {\it a.e.} $(x,t)\in\Omega_{T}$, we obtain
\begin{eqnarray}\label{B.entropysolution.eqn24}
 g(u^{\varepsilon})\,\frac{\partial\phi}{\partial t}\to g(u)\,\frac{\partial\phi}{\partial t} \nonumber
\end{eqnarray}
Observe that
\begin{eqnarray}\label{B.entropysolution.eqn25}
 \left|\left(\int_{k}^{u^{\varepsilon}}sg(y-k)\,dy\right)\frac{\partial\phi}{\partial t}\right|\leq \left|u^{\varepsilon}-k\right|\,\left|\frac{\partial\phi}{\partial t}\right|\leq \left(\|u_{0}\|_{L^{\infty}(\Omega)}+|k|\right)\,\left|\frac{\partial\phi}{\partial t}\right|.
\end{eqnarray}
Since $\mbox{Vol}(\Omega_{T})<\infty$, the RHS of \eqref{B.entropysolution.eqn25} is integrable. Applying dominated convergence theorem, we get
\begin{eqnarray}\label{B.entropysolution.eqn26}
 \displaystyle\lim_{\varepsilon\to 0}\int_{0}^{T}\int_{\Omega}\left\{\int_{k}^{u^{\varepsilon}}sg(y-k)\,dy\right\}\,\frac{\partial\phi}{\partial t}\,dx\,dt &=& \int_{0}^{T}\int_{\Omega}\left|u-k\right|\frac{\partial\phi}{\partial t}\,dx\,dt\nonumber\\
 {}
\end{eqnarray}
\item[(ii)]{(Passage to the limit in the second term on LHS of \eqref{B.entropysolution.eqn22} as $\varepsilon\to 0$:)} We know that for {\it a.e.} $(x,t)\in\Omega_{T}$, 
$$sg(u^{\varepsilon}-k)\to sg(u-k)\,\,\mbox{as}\,\varepsilon\to 0.$$
Therefore for {\it a.e.} $(x,t)\in\Omega_{T}$, we have 
$$\left(\left(f(u^{\varepsilon})-f(k)\right)\cdot\nabla\phi\right)\,sg(u^{\varepsilon}-k)\to\left(\left(f(u)-f(k)\right)\cdot\nabla\phi\right)\,sg(u-k)\,\,\mbox{as}\,\varepsilon\to 0.$$
Observe that 
\begin{eqnarray}\label{B.entropysolution.eqn27}
 \left|\left(\left(f(u^{\varepsilon})-f(k)\right)\cdot\nabla\phi\right)\,sg(u^{\varepsilon}-k)\right| &\leq& d\left[\displaystyle\max_{1\leq j\leq d}\left(\displaystyle\sup_{y\in I}\left|f_{j}(y)\right|\right) + \displaystyle\max_{1\leq j\leq d}\left|f_{j}(k)\right|\right]\times\nonumber\\ &&\displaystyle\max_{1\leq j\leq d}\left(\displaystyle\sup_{(x,t)\in\overline{\Omega_{T}}}\left|\frac{\partial\phi}{\partial x_{j}}\right|\right).
\end{eqnarray}
Since $\mbox{Vol}(\Omega_{T})<\infty$, therefore the RHS of \eqref{B.entropysolution.eqn27} is integrable. An application of dominated convergence theorem yields
\begin{eqnarray}\label{B.entropysolution.eqn28}
 \displaystyle\lim_{\varepsilon\to 0}\int_{0}^{T}\int_{\Omega}\left(f(u^{\varepsilon})-f(k)\right)\cdot\nabla\phi\,\,sg(u^{\varepsilon}-k)\,dx\,dt=\int_{0}^{T}\int_{\Omega}\left(f(u)-f(k)\right)\cdot\nabla\phi\,\,sg(u-k)\,dx\,dt.\nonumber\\
 {}
\end{eqnarray}
\item[(iii)]{(Passage to the limit in the first term on RHS of \eqref{B.entropysolution.eqn22} as $\varepsilon\to 0$:)} Using $$\|\nabla u^{\varepsilon}\|_{\left(L^{1}(\Omega_{T})\right)^{d}}\leq \|\nabla u_{0}\|_{\left(L^{1}(\Omega_{T})\right)^{d}},$$ we observe that 
\begin{eqnarray}\label{B.entropysolution.eqn29}
 \left|\varepsilon\,\int_{0}^{T}\int_{\Omega}B(u^{\varepsilon})\,\nabla u^{\varepsilon}\cdot\nabla\phi\,\,sg(u^{\varepsilon}-k)\,dx\,dt\right|\hspace{2in}\nonumber\\
 \leq \varepsilon\,\|B\|_{L^{\infty}(I)}\displaystyle\max_{1\leq j\leq d}\left(\displaystyle\sup_{(x,t)\in \overline{\Omega_{T}}}\left|\frac{\partial\phi}{\partial x_{i}}\right|\right)\|\nabla u_{0}\|_{\left(L^{1}(\Omega_{T})\right)^{d}}.\nonumber\\
 {}
\end{eqnarray}
Passing to the limit as $\varepsilon\to 0$ in \eqref{B.entropysolution.eqn29}, we conclude
\begin{eqnarray}\label{B.entropysolution.eqn30}
 \displaystyle\lim_{\varepsilon\to 0} \varepsilon\,\int_{0}^{T}\int_{\Omega}B(u^{\varepsilon})\,\nabla u^{\varepsilon}\cdot\nabla\phi\,\,sg(u^{\varepsilon}-k)\,dx\,dt =0.
\end{eqnarray}
\item[(iv)]{(Passage to the limit in the second term on RHS of \eqref{B.entropysolution.eqn22} as $\varepsilon\to 0$:)} We want to compute 
$$\displaystyle\lim_{\varepsilon\to 0}\left\{\varepsilon\int_{0}^{T}\int_{\partial\Omega}B(0)\phi\,\frac{\partial u^{\varepsilon}}{\partial\sigma}\,d\sigma\,dt\right\}.$$
For $\delta > 0$, let $\rho_{\delta}\in C^{2}(\overline{\Omega})$ be functions introduced by Kruzhkov (see \cite{MR0255253}, \cite{MR542510}) having the properties 
 \begin{equation}\label{B.entropysolution.eqn31}\nonumber
\left.
\begin{aligned}
    \rho_{\delta} &= 1\,\,\mbox{on}\,\,\partial\Omega \quad\\ 
    \rho_{\delta} &=0\,\,\mbox{on}\,\,\left\{x\in\Omega\,\,:\,\,\mbox{dist}(x,\partial\Omega)\geq\delta\right\}\quad\\
    0 &\leq \rho_{\delta}\leq 1\,\,\mbox{on}\,\,\Omega\quad\\
    \left|\nabla\rho_{\delta}\right|&\leq\frac{C}{\delta},
\end{aligned}
\right\}
\end{equation}
where $C$ is independent of $\delta$.
Observe that 
\begin{eqnarray}\label{B.entropysolution.eqn31AB}
 \varepsilon\int_{0}^{T}\int_{\partial\Omega}B(0)\phi\,\frac{\partial u^{\varepsilon}}{\partial\sigma}\,d\sigma\,dt &=& \varepsilon\int_{0}^{T}\int_{\Omega}\nabla\left(B(u^{\varepsilon})\nabla u^{\varepsilon}\right)\,\,\phi\,\rho_{\delta}\,dx\,dt\nonumber\\
 && +\varepsilon\int_{0}^{T}\int_{\Omega}B(u^{\varepsilon})\nabla u^{\varepsilon}\cdot\nabla\left(\phi\,\rho_{\delta}\right)\,dx\,dt.
\end{eqnarray}
Using the equation  \eqref{ibvp.parab.a} and integration by parts in \eqref{B.entropysolution.eqn31AB}, we get
\begin{eqnarray}\label{B.entropysolution.eqn32}
 \varepsilon\int_{0}^{T}\int_{\partial\Omega}B(0)\phi\,\frac{\partial u^{\varepsilon}}{\partial\sigma}\,d\sigma\,dt\hspace{3in} \nonumber\\
 = -\int_{0}^{T}\int_{\Omega}\left\{u^{\varepsilon}\,\frac{\partial\phi}{\partial t} + f(u^{\varepsilon})\cdot\nabla\phi\right\}\,\rho_{\delta}\,dx\,dt -\int_{0}^{T}\int_{\Omega}\phi\,f(u^{\varepsilon})\cdot\nabla\rho_{\delta}\,dx\,dt\nonumber\\
 +\varepsilon\int_{0}^{T}\int_{\Omega}\left[B(u^{\varepsilon})\left\{\left(\nabla u^{\varepsilon}\cdot\nabla\phi\right)\rho_{\delta} +\left(\nabla u^{\varepsilon}\cdot\nabla\rho_{\delta}\right)\phi\right\}\right]dx\,dt\nonumber\\
 +\int_{0}^{T}\int_{\partial\Omega}\phi\,f(0)\cdot\sigma\,d\sigma\,dt.\nonumber
\end{eqnarray}
Using property $\left|\nabla\rho_{\delta}\right|\leq\frac{C}{\delta}$ and $\|\nabla u^{\varepsilon}\|_{\left(L^{1}(\Omega_{T})\right)^{d}}\leq \|\nabla u_{0}\|_{\left(L^{1}(\Omega_{T})\right)^{d}}$, we observe that
\begin{eqnarray}\label{B.entropysolution.eqn32A}
 \left|\varepsilon B(u^{\varepsilon})\left\{\left(\nabla u^{\varepsilon}\cdot\nabla\phi\right)\rho_{\delta} +\left(\nabla u^{\varepsilon}\cdot\nabla\rho_{\delta}\right)\phi\right\}\right|\hspace{2.2in}\nonumber\\ \leq\varepsilon\left(\displaystyle\max_{1\leq j\leq d}\left(\displaystyle\sup_{(x,t)\in\Omega_{T}}\left|\frac{\partial\phi}{\partial x_{j}}\right|\right) + \frac{C}{\delta}\|\phi\|_{L^{\infty}(\Omega_{T})}\right) \|B\|_{L^{\infty}(I)}\,\|\nabla u^{\varepsilon}\|_{\left(L^{1}(\Omega_{T})\right)^{d}}\nonumber\\
 \leq \varepsilon\left(\displaystyle\max_{1\leq j\leq d}\left(\displaystyle\sup_{(x,t)\in\Omega_{T}}\left|\frac{\partial\phi}{\partial x_{j}}\right|\right) + \frac{C}{\delta}\|\phi\|_{L^{\infty}(\Omega_{T})}\right)
 \|B\|_{L^{\infty}(I)}\, \|\nabla u_{0}\|_{\left(L^{1}(\Omega_{T})\right)^{d}}.
\end{eqnarray}
Passing to the limit as $\varepsilon\to 0$ in \eqref{B.entropysolution.eqn32A}, we get
\begin{eqnarray}\label{B.entropysolution.eqn33}
 \displaystyle\lim_{\varepsilon\to 0}\varepsilon\int_{0}^{T}\int_{\Omega} B(u^{\varepsilon})\left\{\left(\nabla u^{\varepsilon}\cdot\nabla\phi\right)\rho_{\delta} +\left(\nabla u^{\varepsilon}\cdot\nabla\rho_{\delta}\right)\phi\right\}\,dx\,dt=0.
\end{eqnarray}
Applying integration by parts in the second term on RHS of \eqref{B.entropysolution.eqn32}, we have
\begin{eqnarray}\label{B.entropysolution.eqn34}
 \displaystyle\lim_{\varepsilon\to 0}\left\{\varepsilon\int_{0}^{T}\int_{\partial\Omega}B(0)\phi\,\frac{\partial u^{\varepsilon}}{\partial\sigma}\,d\sigma\,dt\right\} &=& -\int_{0}^{T}\int_{\Omega}\left\{u\,\frac{\partial\phi}{\partial t} + f(u)\cdot\nabla\phi\right\}\,\rho_{\delta}\,dx\,dt\nonumber\\ &&-\int_{0}^{T}\int_{\Omega}\nabla\cdot\left(\phi\,f(u)\right)\rho_{\delta}\,dx\,dt \nonumber\\
&&+\int_{0}^{T}\int_{\partial\Omega}\phi\,\left(f(0)-f(\gamma(u))\right)\cdot\sigma\,d\sigma\,dt.\nonumber\\
{}
\end{eqnarray}
Passing to the limit as $\delta\to 0$ on RHS of \eqref{B.entropysolution.eqn34}, we get
\begin{eqnarray}\label{B.entropysolution.eqn35}
 \displaystyle\lim_{\varepsilon\to 0}\left\{\varepsilon\int_{0}^{T}\int_{\partial\Omega}B(0)\phi\,\frac{\partial u^{\varepsilon}}{\partial\sigma}\,d\sigma\,dt\right\}=\int_{0}^{T}\int_{\partial\Omega}\phi\,\left(f(0)-f(\gamma(u))\right)\cdot\sigma\,d\sigma\,dt.
\end{eqnarray}
Using \eqref{B.entropysolution.eqn26},\eqref{B.entropysolution.eqn28},\eqref{B.entropysolution.eqn30},\eqref{B.entropysolution.eqn35} in \eqref{B.entropysolution.eqn22}, we get the required entropy inequality \eqref{B.entropysolution.eqn1}.\\
\end{enumerate}

\section{Proof of Theorem~\ref{theorem2}}\label{section.prooftheorem2}

Denote by $f_{\varepsilon}$ and $u_{0\varepsilon}$, the regularizations of the flux function $f=(f_{1},f_{2},\cdots,f_{d})$ and initial condition $u_{0}$ of IBVP \eqref{ibvp.parab}, using the standard sequence of mollifiers $\tilde\rho_{\varepsilon}$ defined on $\R$, and $\rho_{\varepsilon}$ defined on $\R^d$, respectively. They are given by $f_{\varepsilon} := \left(f_{1\varepsilon},f_{2\varepsilon},\cdots,f_{d\varepsilon}\right)$ where
\begin{eqnarray*}\label{regularized.eqn2}
 f_{j\varepsilon}:= f_{j}\ast\tilde\rho_{\varepsilon} \, (j=1,2,\cdots,d),\,\,\,\mbox{and}\,\,
  u_{0\varepsilon} := u_{0}\ast\rho_{\varepsilon}.\nonumber
\end{eqnarray*}

Consider the IBVP for regularized generalized viscosity problem 
\begin{subequations}\label{regularized.IBVP}
\begin{eqnarray}
 u^\varepsilon_{t} + \nabla \cdot f_{\varepsilon}(u^{\varepsilon}) = \varepsilon\,\nabla\cdot\left(B(u^\varepsilon)\,\nabla u^\varepsilon\right)&\mbox{in }\Omega_{T},\label{regularized.IBVP.a} \\
    u^\varepsilon(x,t)= 0&\,\,\,\,\mbox{on}\,\, \partial \Omega\times(0,T),\label{regularized.IBVP.b}\\
u^{\varepsilon}(x,0) = u_{0\varepsilon}(x)& x\in \Omega,\label{regularized.IBVP.c}
\end{eqnarray}
\end{subequations}

\begin{lemma}\label{regularized.thm2}
Let $f, B, u_{0}$ satisfy Hypothesis B. Then there exists a unique solution of \eqref{regularized.IBVP} in $C^{4+\beta,\frac{4+\beta}{2}}(\overline{\Omega_{T}})$, for every $0<\beta<1$ and the following estimates hold:
  \begin{eqnarray}
  \|u^{\varepsilon}\|_{L^{\infty}(\Omega)}&\leq& \|u_{0}\|_{L^{\infty}(\Omega)},  \label{regularized.eqn19}\\[2mm]
   TV_{\Omega_T}(u^{\varepsilon})&\leq& T\, TV_\Omega(u_{0}). \label{regularized.eqn19aa}
  \end{eqnarray}
Also, there exists a constant $C> 0$ such that
\begin{eqnarray}\label{regularized.eqn26}
 \left\|\frac{\partial u^{\varepsilon}}{\partial t}\right\|_{L^{1}(\Omega_{T})}\leq CT
 \|B\|_{L^{\infty}(I)}\mbox{Vol}\left(\Omega\right)\, TV_{\Omega}(u_{0}) +T \mbox{Vol}\left(\Omega\right) \displaystyle\sum_{j=1}^{d}\|B^{\prime}\|_{L^{\infty}(I)}
 \left\|\frac{\partial u_{0}}{\partial x_{j}}\right\|_{L^{\infty}(\Omega)}^{2} \nonumber\\ +T\|f^{\prime}\|_{L^{\infty}(I)}\displaystyle\sum_{j=1}^{d} 
 \left\|\frac{\partial u_{0}}{\partial x_{j}}\right\|_{L^{\infty}(\Omega)}.\nonumber\\
 {}
\end{eqnarray}
  
Furthermore, there exists a subsequence $\left\{u^{\varepsilon_{k}}\right\}_{k=1}^{\infty}$ of $u^{\varepsilon}$ and a function $u$ in $L^{1}(\Omega_T)$
such that $ u^{\varepsilon_{k}}\to u$ {\it a.e.} in $\Omega_T$, and also in $L^{1}(\Omega_T)$ as $k\to\infty$.
\end{lemma}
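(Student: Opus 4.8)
The plan is to deduce every assertion from the already-established Hypothesis~A theory, now applied to the \emph{regularized} problem \eqref{regularized.IBVP}, and then to track the dependence of each estimate on $\varepsilon$ so as to obtain bounds that are uniform as $\varepsilon\to 0$. The first observation is that the triple $f_\varepsilon, B, u_{0\varepsilon}$ satisfies Hypothesis~A: mollification renders $f_\varepsilon\in(C^4(\R))^d$ and (for $\varepsilon$ small, so that the support stays compact in $\Omega$) $u_{0\varepsilon}\in\mathcal{D}(\Omega)\subset C^{4+\beta}_c(\Omega)$, while $\|f_\varepsilon'\|_{(L^\infty(\R))^d}\le\|f'\|_{(L^\infty(\R))^d}$ and $B$ is untouched. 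Hence Theorem~\ref{regularity.theorem1} together with Theorem~\ref{chapHR85thm5} yields existence, uniqueness and the regularity $u^\varepsilon\in C^{4+\beta,\frac{4+\beta}{2}}(\overline{\Omega_T})$, settling the first assertion.

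For \eqref{regularized.eqn19} I would invoke the maximum principle (Theorem~\ref{chap3thm1}) for the regularized problem, giving $\|u^\varepsilon(\cdot,t)\|_{L^\infty(\Omega)}\le\|u_{0\varepsilon}\|_{L^\infty(\Omega)}$, and then use $\|u_{0\varepsilon}\|_{L^\infty(\Omega)}\le\|u_0\|_{L^\infty(\Omega)}$, since convolution with a probability mollifier does not increase the $L^\infty$ norm. The spatial part of the BV bound comes from the conservation identity \eqref{B.BVestimate27} established inside Theorem~\ref{BVEstimate.thm1}: that argument shows $\sum_i\int_\Omega|\partial_{x_i}u^\varepsilon(\cdot,t)|\,dx$ is constant in $t$, so integrating in time gives $\|\nabla u^\varepsilon\|_{(L^1(\Omega_T))^d}=T\,\|\nabla u_{0\varepsilon}\|_{(L^1(\Omega))^d}$. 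As $\nabla u_{0\varepsilon}=(\nabla u_0)\ast\rho_\varepsilon$ and $u_0\in W^{1,\infty}_c(\Omega)\subset W^{1,1}(\Omega)$, Young's inequality gives $\|\nabla u_{0\varepsilon}\|_{L^1}\le\|\nabla u_0\|_{L^1}=TV_\Omega(u_0)$, a bound finite under Hypothesis~B and independent of $\varepsilon$; this is the source of \eqref{regularized.eqn19aa}.

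The crux, and the step I expect to be the real obstacle, is the uniform-in-$\varepsilon$ bound \eqref{regularized.eqn26} on $\partial_t u^\varepsilon$. Repeating Step~1 of Theorem~\ref{BVEstimate.thm1} for the regularized problem reproduces the monotonicity \eqref{compactness12.eqn142}, so that $\int_\Omega|\partial_t u^\varepsilon(\cdot,t)|\,dx\le\int_\Omega|\partial_t u^\varepsilon(\cdot,0)|\,dx$ for a.e.\ $t$, whence $\|\partial_t u^\varepsilon\|_{L^1(\Omega_T)}\le T\int_\Omega|\partial_t u^\varepsilon(\cdot,0)|\,dx$. Reading $\partial_t u^\varepsilon(\cdot,0)$ off the equation as in \eqref{compactness12.eqn16} produces three contributions; the term with $(\partial_{x_j}u_{0\varepsilon})^2$ and the first-order flux term are controlled by $\|\partial_{x_j}u_{0\varepsilon}\|_{L^\infty}\le\|\partial_{x_j}u_0\|_{L^\infty}$. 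The dangerous term is $\varepsilon\sum_j\int_\Omega B(u_{0\varepsilon})\,\partial^2_{x_j}u_{0\varepsilon}\,dx$, because for $u_0$ merely in $W^{1,\infty}_c$ the second derivatives $\partial^2_{x_j}u_{0\varepsilon}$ blow up like $1/\varepsilon$, so the naive Hypothesis~A estimate \eqref{compactness12.eqn18} is useless. The resolution is to move one derivative onto the mollifier, writing $\partial^2_{x_j}u_{0\varepsilon}=(\partial_{x_j}\rho_\varepsilon)\ast(\partial_{x_j}u_0)$, and to apply Young's inequality: $\int_\Omega|\partial^2_{x_j}u_{0\varepsilon}|\,dx\le\|\partial_{x_j}\rho_\varepsilon\|_{L^1(\R^d)}\,\|\partial_{x_j}u_0\|_{L^1(\Omega)}$. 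Since $\|\partial_{x_j}\rho_\varepsilon\|_{L^1(\R^d)}=C/\varepsilon$, the prefactor $\varepsilon$ is exactly absorbed, leaving a bound proportional to $TV_\Omega(u_0)$ that is independent of $\varepsilon$; this is the origin of the first term on the right of \eqref{regularized.eqn26}, and discarding the remaining factors of $\varepsilon<1$ yields the stated inequality.

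Finally, the uniform bounds \eqref{regularized.eqn19}, \eqref{regularized.eqn19aa}, \eqref{regularized.eqn26} show that $\{u^\varepsilon\}$ is bounded in $BV(\Omega_T)$ and, since $\mathrm{Vol}(\Omega_T)<\infty$, in $L^1(\Omega_T)$. Exactly as in Step~3 of Theorem~\ref{BVEstimate.thm1}, the compact embedding of $BV(\Omega_T)\cap L^1(\Omega_T)$ into $L^1(\Omega_T)$ extracts a subsequence $u^{\varepsilon_k}\to u$ in $L^1(\Omega_T)$, and a further subsequence converging a.e.\ in $\Omega_T$, which is the last assertion of the lemma.
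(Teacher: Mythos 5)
Your proposal is correct and follows essentially the same route as the paper: apply the Hypothesis~A machinery (Theorem~\ref{chapHR85thm5}, the maximum principle, and Steps~1--2 of Theorem~\ref{BVEstimate.thm1}) to the regularized problem, and observe that the prefactor $\varepsilon$ exactly absorbs the $C/\varepsilon$ blow-up of $\|\Delta u_{0\varepsilon}\|_{L^1(\Omega)}$. The only difference is presentational: where you derive the mollifier bounds $\|\nabla u_{0\varepsilon}\|_{(L^1(\Omega))^d}\le TV_\Omega(u_0)$ and $\|\Delta u_{0\varepsilon}\|_{L^1(\Omega)}\le C/\varepsilon\,TV_\Omega(u_0)$ directly via Young's inequality, the paper packages them as Lemma~\ref{regularized.lem1} and cites \cite{MR1304494} for the proof.
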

The following result follows from \cite[p.67]{MR1304494} which is useful in proving Theorem \ref{regularized.thm2}, and we omit its proof. 
\begin{lemma}\label{regularized.lem1}
  Let $u_{0}\in W^{1,\infty}_{c}(\Omega)$. Then $u_{0\varepsilon}$ satisfies the following bounds
  \begin{eqnarray}
    \|u_{0\varepsilon}\|_{L^{\infty}(\Omega)}\leq \|u_{0}\|_{L^{\infty}(\Omega)}\label{regularized.max.eqn1}\\[2mm]
     \|\nabla u_{0\varepsilon}\|_{\left(L^{1}(\Omega)\right)^{d}}\leq TV_{\Omega}(u_{0}) \label{regularized.max.eqn1a}
  \end{eqnarray}
There exists a constant $C> 0$ such that for all $\varepsilon > 0$, $u_{0\varepsilon}$ satisfies 
  \begin{eqnarray}\label{regularized.max.eqn1b}
   \|\Delta u_{0\varepsilon}\|_{L^{1}(\Omega)}\leq \frac{C}{\varepsilon}TV_{\Omega}(u_{0}).
  \end{eqnarray}
\end{lemma}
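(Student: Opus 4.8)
The plan is to treat all three estimates as consequences of standard properties of mollification, exploiting crucially that $u_{0}\in W^{1,\infty}_{c}(\Omega)$ may be extended by zero to a compactly supported Lipschitz function on $\R^{d}$ whose distributional gradient $\nabla u_{0}\in\left(L^{\infty}\cap L^{1}\right)(\R^{d})$ satisfies $TV_{\Omega}(u_{0})=\int_{\Omega}|\nabla u_{0}|\,dx$. Since $\mathrm{supp}(u_{0})$ is a compact subset of $\Omega$, there is a positive distance from it to $\partial\Omega$, so for all sufficiently small $\varepsilon$ the support of $u_{0\varepsilon}=u_{0}\ast\rho_{\varepsilon}$ stays inside $\Omega$; consequently $u_{0\varepsilon}\in\mathcal{D}(\Omega)$ and every $L^{p}(\R^{d})$ norm appearing below may be read as the corresponding norm over $\Omega$, with no boundary contributions when the convolutions are restricted back to $\Omega$.

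For \eqref{regularized.max.eqn1}, writing $u_{0\varepsilon}(x)=\int_{\R^{d}}u_{0}(x-y)\rho_{\varepsilon}(y)\,dy$ with $\rho_{\varepsilon}\geq 0$ and $\int\rho_{\varepsilon}=1$, Jensen's inequality (or Young's inequality with the probability kernel $\rho_{\varepsilon}$) gives $|u_{0\varepsilon}(x)|\leq\|u_{0}\|_{L^{\infty}(\Omega)}$ pointwise. For \eqref{regularized.max.eqn1a}, mollification commutes with differentiation, $\partial_{x_{i}}u_{0\varepsilon}=(\partial_{x_{i}}u_{0})\ast\rho_{\varepsilon}$, so Young's inequality $\|g\ast\rho_{\varepsilon}\|_{L^{1}}\leq\|g\|_{L^{1}}\|\rho_{\varepsilon}\|_{L^{1}}=\|g\|_{L^{1}}$ yields $\|\nabla u_{0\varepsilon}\|_{\left(L^{1}(\Omega)\right)^{d}}\leq\|\nabla u_{0}\|_{\left(L^{1}(\Omega)\right)^{d}}=TV_{\Omega}(u_{0})$.

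The estimate \eqref{regularized.max.eqn1b} is the only one needing care and is the heart of the matter. The naive identity $\Delta u_{0\varepsilon}=u_{0}\ast\Delta\rho_{\varepsilon}$ would cost two powers of $\varepsilon$ and only deliver the weaker bound $\varepsilon^{-2}TV_{\Omega}(u_{0})$. Instead I split the two derivatives symmetrically across the convolution, placing one on $u_{0}$ and one on the kernel: $\partial_{x_{i}}^{2}u_{0\varepsilon}=(\partial_{x_{i}}u_{0})\ast(\partial_{x_{i}}\rho_{\varepsilon})$. Since $\partial_{x_{i}}\rho_{\varepsilon}(x)=\varepsilon^{-d-1}(\partial_{i}\rho)(x/\varepsilon)$, one has $\|\partial_{x_{i}}\rho_{\varepsilon}\|_{L^{1}}=\varepsilon^{-1}\|\partial_{i}\rho\|_{L^{1}}$, so Young's inequality gives $\|\partial_{x_{i}}^{2}u_{0\varepsilon}\|_{L^{1}}\leq\varepsilon^{-1}\|\partial_{i}\rho\|_{L^{1}}\,\|\partial_{x_{i}}u_{0}\|_{L^{1}}$. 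Summing over $i$ and using the pointwise Cauchy--Schwarz bound $\sum_{i}|\partial_{x_{i}}u_{0}|\leq\sqrt{d}\,|\nabla u_{0}|$, hence $\sum_{i}\|\partial_{x_{i}}u_{0}\|_{L^{1}}\leq\sqrt{d}\,TV_{\Omega}(u_{0})$, I obtain \eqref{regularized.max.eqn1b} with $C=\sqrt{d}\,\max_{i}\|\partial_{i}\rho\|_{L^{1}}$, a constant depending only on the fixed mollifier $\rho$ and on $d$, and independent of $\varepsilon$.

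The main obstacle, and really the only subtle point, is the sharpness in the last estimate: one must distribute the Laplacian as a single derivative on $u_{0}$ composed with a single derivative on the kernel, so that exactly one factor $\varepsilon^{-1}$ appears rather than $\varepsilon^{-2}$. This requires the full $W^{1,\infty}$ regularity of $u_{0}$ (equivalently its BV character with $\nabla u_{0}\in L^{1}$) rather than mere boundedness, and the compact-support hypothesis is what guarantees both that $\nabla u_{0}\in L^{1}(\R^{d})$ and that the restriction of all convolutions to $\Omega$ introduces no error.
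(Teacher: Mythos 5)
Your proof is correct. The paper itself gives no proof of this lemma, simply citing Godlewski--Raviart \cite[p.67]{MR1304494}; your argument --- extend $u_{0}$ by zero to a compactly supported Lipschitz function on $\R^{d}$, use that convolution with the probability kernel $\rho_{\varepsilon}$ contracts $L^{\infty}$ and (componentwise, via Young's inequality) $L^{1}$, and for the Laplacian split the two derivatives as $\partial_{x_i}^{2}u_{0\varepsilon}=(\partial_{x_i}u_{0})\ast(\partial_{x_i}\rho_{\varepsilon})$ so that only one factor $\varepsilon^{-1}$ appears --- is exactly the standard mollification proof the cited reference supplies, so the two approaches coincide.
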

\noindent{\bf Proof of Lemma \ref{regularized.thm2}}: Note that $f_{\varepsilon}\in\left(C^{\infty}(\mathbb{R})\right)^{d}$ and $\|f^{\prime}_{\varepsilon}\|_{\left(L^{\infty}(\Omega)\right)^{d}}\leq\|f^{\prime}\|_{\left(L^{\infty}(\Omega)\right)^{d}}<\infty.$
Since $u_{0}\in W^{1,\infty}_{c}(\Omega)$, the function $u_{0\varepsilon}$ belongs to the space $C^{\infty}(\overline{\Omega})$ and also has compact support in $\Omega$ for sufficiently small $\varepsilon$. As a consequence the initial-boundary data of the regularized generalized viscosity problem \eqref{regularized.IBVP} satisfies compatibility conditions of orders $0,1,2$ which are required to apply Theorem  \ref{chapHR85thm5}.  Applying Theorem  \ref{chapHR85thm5}, we get the existence of a unique solution $u^{\varepsilon}$ in $C^{4+\beta,\frac{4+\beta}{2}}(\overline{\Omega_{T}})$   for regularized generalized viscosity problem \eqref{regularized.IBVP}, and  $u^{\varepsilon}_{tt}\in C(\overline{\Omega_{T}})$. \\

By maximum principle (Theorem \ref{chap3thm1}), we conclude that $u^{\varepsilon}$ satisfies 
\begin{eqnarray}\label{regularized.eqn20}
 \|u^{\varepsilon}\|_{L^{\infty}(\Omega)}\leq \|u_{0\varepsilon}\|_{L^{\infty}(\Omega)}\,\,{\it a.e.}\,\,t\in (0,T).
\end{eqnarray}
Combining \eqref{regularized.max.eqn1} with \eqref{regularized.eqn20}, we get \eqref{regularized.eqn19}.\\
\vspace{0.2cm}\\
 Using equations \eqref{compactness12.eqn14} and \eqref{compactness12.eqn16} (from  {Step 1} in the proof of Theorem \ref{BVEstimate.thm1}) with $f=f_{\varepsilon}$ and $u_{0}=u_{0\varepsilon}$, we get
\begin{eqnarray}\label{regularized.eqn21aa}
 \int_{\Omega}\,\Big|\frac{\partial u^{\varepsilon}}{\partial t}(x,t)\Big|\,dx \leq \varepsilon
 \int_{\Omega}B(u_{0\varepsilon})\,\left|\Delta u_{0\varepsilon}\right|\,dx+ \int_{\Omega}\left|B^{'}(u_{0\varepsilon})\right|
 \left(\frac{\partial u_{0\varepsilon}}{\partial x_{j}}\right)^{2}\,dx +\displaystyle\sum_{j=1}^{d}\int_{\Omega} \left|f_{j\varepsilon}^{'}(u_{0\varepsilon})\right|\,\left|\frac{\partial u_{0\varepsilon}}{\partial x_{j}}\right|\,dx.\nonumber\\
 {}
\end{eqnarray}
Since $u_{0}\in W^{1,\infty}_{c}(\Omega)$, we have
\begin{subequations}\label{regularized.eqn22a}
 \begin{eqnarray}
\|u_{0\varepsilon}\|_{L^{\infty}(\Omega)}\leq \|u_{0}\|_{L^{\infty}(\Omega)},\hspace{1in}\\[2mm]
 \left\|\frac{\partial u_{0\varepsilon}}{\partial x_{j}}\right\|_{L^{\infty}(\Omega)}=\left\|\frac{\partial u_{0}}{\partial x_{j}}\ast\rho_{\varepsilon}\right\|_{L^{\infty}(\Omega)}
 \leq \left\|\frac{\partial u_{0}}{\partial x_{j}}\right\|_{L^{\infty}(\Omega)}.
\end{eqnarray}
\end{subequations}
In view of \eqref{regularized.eqn21aa} and \eqref{regularized.eqn22a}, we obtain 
\begin{eqnarray}\label{regularized.eqn21}
 \int_{\Omega}\,\Big|\frac{\partial u^{\varepsilon}}{\partial t}(x,t)\Big|\,dx \leq \varepsilon
 \|B\|_{L^{\infty}(I)}\int_{\Omega}\,\left|\Delta u_{0\varepsilon}\right|\,dx+\varepsilon\mbox{Vol}(\Omega) \displaystyle\sum_{j=1}^{d}\|B^{\prime}\|_{L^{\infty}(I)}
 \left\|\frac{\partial u_{0}}{\partial x_{j}}\right\|_{L^{\infty}(\Omega)}^{2}+ \nonumber\\ \|f^{\prime}_{\varepsilon}\|_{L^{\infty}(I)}\displaystyle\sum_{j=1}^{d} 
 \left\|\frac{\partial u_{0}}{\partial x_{j}}\right\|_{L^{\infty}(\Omega)}.\nonumber\\
 {}
\end{eqnarray}
We may assume that $\varepsilon <1$. Using \eqref{regularized.max.eqn1b} in \eqref{regularized.eqn21} gives
\begin{eqnarray}\label{regularized.eqn22}
 \int_{\Omega}\,\Big|\frac{\partial u^{\varepsilon}}{\partial t}(x,t)\Big|\,dx \leq C
 \|B\|_{L^{\infty}(I)}\mbox{Vol}\left(\Omega\right)\, TV_{\Omega}(u_{0}) + \mbox{Vol}\left(\Omega\right) \displaystyle\sum_{j=1}^{d}\|B^{\prime}\|_{L^{\infty}(I)}
 \left\|\frac{\partial u_{0}}{\partial x_{j}}\right\|_{L^{\infty}(\Omega)}^{2}+ \nonumber\\ \|f^{\prime}\|_{L^{\infty}(I)}\displaystyle\sum_{j=1}^{d} 
 \left\|\frac{\partial u_{0}}{\partial x_{j}}\right\|_{L^{\infty}(\Omega)}.\nonumber\\
 {}
\end{eqnarray}
Integrating on both sides of the last inequality w.r.t. $t$ on the interval $[0,T]$ yields \eqref{regularized.eqn26}.\\

Applying the conclusion of  {Step 2} in the proof of Theorem \ref{BVEstimate.thm1}, namely \eqref{BVestimateeqn24}, with $f=f_{\varepsilon}$ and $u_{0}=u_{0\varepsilon}$ yields
\begin{eqnarray}\label{regularized.eqn23}
 TV_{\Omega}(u^{\varepsilon})\leq TV_{\Omega}(u_{0\varepsilon}).\nonumber
\end{eqnarray}
Applying Lemma \ref{regularized.lem1}, we get
\begin{eqnarray}\label{regularized.eqn24}
 TV_{\Omega}(u^{\varepsilon})\leq TV_{\Omega}(u_{0})
\end{eqnarray}
Once again integrating w.r.t. $t$ over the interval $[0,T]$ on both sides of the inequality \eqref{regularized.eqn24} yields \eqref{regularized.eqn19aa}.\\

Using the compact embedding of $BV(\Omega_T)\cap L^1(\Omega_T)$ in $L^1(\Omega_T)$, we conclude that there
exists a subsequence $\left\{u^{\varepsilon_{k}}\right\}_{k=1}^{\infty}$ of $u^{\varepsilon}$ and a function $u$ in $L^{1}(\Omega_T)$
such that we have $u^{\varepsilon_{k}}\to u$ in $L^{1}(\Omega_T)$ as well as pointwise {\it a.e.} $\Omega_T$, as $k\to\infty$. This completes the proof of Lemma \ref{regularized.thm2}.

We still denote the subsequence $\left\{u^{\varepsilon_{k}}\right\}_{k=1}^{\infty}$ by $\left\{u^{\varepsilon}\right\}$.

We now show that the a.e. limit of a sequence of solutions to the regularized generalized viscosity problem is a weak solution for IBVP \eqref{ivp.cl}. \\

\noindent{\bf Proof of Theorem~\ref{theorem2}:} We will show that the function $u$ whose existence is asserted by Lemma \ref{regularized.thm2} is indeed an entropy solution to \eqref{ivp.cl}. For notational convenience, we still denote the subsequence $\left\{u^{\varepsilon_{k}}\right\}_{k=1}^{\infty}$ (as asserted by Lemma \ref{regularized.thm2}) by $\left\{u^{\varepsilon}\right\}$. The proof of Theorem~\ref{theorem2} is divided into two steps. 
In the first step, we show that $u$ is a weak solution for the IBVP \eqref{ivp.cl}. In the second step, we show that $u$ satisfies an entropy inequality for the IBVP \eqref{ivp.cl} in the sense of \eqref{B.entropysolution.eqn1}. Note that $u\in BV(\Omega_T)$ as it the $L^1(\Omega_T)$ limit of a sequence of BV functions  \cite[p.1021]{MR542510}. Thus it then follows that $u$ is an entropy solution.\\
 
 \textbf{Step 1:}  Let $\phi \in \mathcal{D}(\Omega\times [0, T))$. Multiplying the first equation of  \eqref{regularized.IBVP} by $\phi$, integrating over $\Omega_{T}$ and using integration by parts, we get
 \begin{equation}\label{regularized.eqn27}
\begin{split}
 \int_{0}^{T}\hspace{0.1cm}\int_{\Omega}u^{\varepsilon}\hspace{0.1cm}\frac{\partial \phi}{\partial t}\hspace{0.1cm}dx\hspace{0.1cm}dt + \varepsilon\hspace{0.1cm}\displaystyle\sum_{j=1}^{d}\int_{0}^{T}\hspace{0.1cm}\int_{\Omega}\hspace{0.1cm}B(u^{\varepsilon})\hspace{0.1cm}\frac{\partial u^{\varepsilon}}{\partial x_{j}}\hspace{0.1cm}\frac{\partial \phi}{\partial x_{j}}\hspace{0.1cm}dx\hspace{0.1cm}dt + \displaystyle\sum_{j=1}^{d}\int_{0}^{T}\hspace{0.1cm}\int_{\Omega}\hspace{0.1cm}f_{j\varepsilon}(u^{\varepsilon})\hspace{0.1cm}\frac{\partial \phi}{\partial x_{j}}\hspace{0.1cm}dx\hspace{0.1cm}dt\\ =\int_{\Omega}u^{\varepsilon}(x,0)\hspace{0.1cm}\phi(x,0)\hspace{0.1cm}dx\hspace{0.1cm}dt.
 \end{split}
\end{equation}
We would like to pass to the limit as $\varepsilon\to 0$ in the equation \eqref{regularized.eqn27}, and obtain
\begin{equation}\label{regularized.eqn34}
\begin{split}
 \int_{0}^{T}\hspace{0.1cm}\int_{\Omega}u\hspace{0.1cm}\frac{\partial \phi}{\partial t}\hspace{0.1cm}dx\hspace{0.1cm}dt +
 \displaystyle\sum_{j=1}^{d}\int_{0}^{T}\hspace{0.1cm}\int_{\Omega}\hspace{0.1cm}f_{j}(u)\hspace{0.1cm}\frac{\partial \phi}{\partial x_{j}}\hspace{0.1cm}dx\hspace{0.1cm}dt =\int_{\Omega}u_{0}(x)\hspace{0.1cm}\phi(x,0)\hspace{0.1cm}dx\hspace{0.1cm}dt,
 \end{split}
\end{equation}
thereby we conclude that $u$ is a weak solution for the IBVP  \eqref{ivp.cl}.
Note that we have
\begin{eqnarray}
  \displaystyle\lim_{\varepsilon\to 0}\int_{0}^{T}\hspace{0.1cm}\int_{\Omega}u^{\varepsilon}\hspace{0.1cm}\frac{\partial \phi}{\partial t}\hspace{0.1cm}dx\hspace{0.1cm}dt =\int_{0}^{T}\hspace{0.1cm}\int_{\Omega}u\hspace{0.1cm}\frac{\partial \phi}{\partial t}\hspace{0.1cm}dx\hspace{0.1cm}dt,\label{regularized.eqn28}\\
  \displaystyle\lim_{\varepsilon\to 0}\varepsilon\hspace{0.1cm}\displaystyle\sum_{j=1}^{d}\int_{0}^{T}\hspace{0.1cm}\int_{\Omega}\hspace{0.1cm}B(u^{\varepsilon})\hspace{0.1cm}\frac{\partial u^{\varepsilon}}{\partial x_{j}}\hspace{0.1cm}\frac{\partial \phi}{\partial x_{j}}\hspace{0.1cm}dx\hspace{0.1cm}dt = 0,\label{regularized.eqn28a}\\
  \displaystyle\lim_{\varepsilon\to 0}\int_{0}^{T}\int_{\Omega}u^{\varepsilon}(x,0)\hspace{0.1cm}\phi(x,0)\hspace{0.1cm}dx\hspace{0.1cm}dt= \int_{0}^{T}\int_{\Omega} u_{0}(x)\hspace{0.1cm}\phi(x,0)\hspace{0.1cm}dx\hspace{0.1cm}dt,\label{regularized.eqn29}
\end{eqnarray}
whose proofs follow on similar lines as those of \eqref{weaksolution.eqn1}, \eqref{eqnchap504} and \eqref{weaksolution.eqn4} respectively.\\

 For $j=1,2,\cdots,d$, we want to show that
\begin{eqnarray}\label{regularized.eqn30}
 \displaystyle\lim_{\varepsilon\to 0}\int_{0}^{T}\int_{\Omega}f_{j\varepsilon}(u^{\varepsilon})\,\frac{\partial\phi}{\partial x_{j}}\,dx= \int_{0}^{T}\int_{\Omega}f_{j}(u)\,\frac{\partial\phi}{\partial x_{j}}\,dx.
\end{eqnarray}
Note that
\begin{eqnarray}\label{regularized.eqn31a}
 \left|\int_{0}^{T}\int_{\Omega}\left(f_{j\varepsilon}(u^{\varepsilon})-f_{j}(u)\right)\,\frac{\partial\phi}{\partial x_{j}}\,dx\right|&\leq& \int_{0}^{T}\int_{\Omega}\left|f_{j\varepsilon}(u^{\varepsilon})-f_{j}(u)\right|\,\left|\frac{\partial\phi}{\partial x_{j}}\right|\,dx,\nonumber\\
 &\leq& \int_{0}^{T}\int_{\Omega}\left|f_{j\varepsilon}(u^{\varepsilon})-f_{j}(u^{\varepsilon})\right|\,\left|\frac{\partial\phi}{\partial x_{j}}\right|\,dx\,dt \nonumber\\
 &&+ \int_{0}^{T}\int_{\Omega}\left|f_{j}(u^{\varepsilon})-f_{j}(u)\right|\,\left|\frac{\partial\phi}{\partial x_{j}}\right|\,dx\,dt.\nonumber\\
 {}
\end{eqnarray}
For $j=1,2,\cdots,d$, $f_{j}$ is continuous, therefore $f_{j\varepsilon}\to f_{j}$ uniformly on compact sets as $\varepsilon\to 0$.
Observe that
\begin{eqnarray}\label{regularized.eqn31}
  \int_{0}^{T}\int_{\Omega}\left|f_{j\varepsilon}(u^{\varepsilon})-f_{j}(u^{\varepsilon})\right|\,\left|\frac{\partial\phi}{\partial x_{j}}\right|\,dx\leq \left\|f_{j\varepsilon}-f_{j}\right\|_{L^{\infty}(I)}\,\int_{0}^{T}\int_{\Omega}\left|\frac{\partial\phi}{\partial x_{j}}\right|\,dx\,dt.
\end{eqnarray}
Using sandwich theorem in \eqref{regularized.eqn31}, and dominated convergence theorem in the second term on RHS of the inequality 
\eqref{regularized.eqn31a}, we get \eqref{regularized.eqn30}. This completes the proof of \eqref{regularized.eqn34}.\\

\noindent\textbf{Step 2:} We want to show that  $u$ satisfies entropy inequality \eqref{B.entropysolution.eqn1}. Let $k\in\mathbb{R}$ and $\phi\in C^{2}(\overline{\Omega}\times(0,T))$ such that $\phi\geq 0$ and has compact support in $\overline{\Omega}\times (0,T)$. Multiplying the equation  \eqref{regularized.IBVP.a} by $sg_{n}(u^{\varepsilon}-k)\,\phi$ and integrating over $\Omega\times(0,T)$, we get
 \begin{eqnarray}\label{regularized.eqn35}
  \int_{0}^{T}\int_{\Omega}\frac{\partial u^{\varepsilon}}{\partial t}\,sg_{n}(u^{\varepsilon}-k)\,\phi\,dx\,dt + \displaystyle\sum_{j=1}^{d}\int_{0}^{T}\int_{\Omega}\,\frac{\partial}{\partial x_{j}}\left(f_{j\varepsilon}(u^{\varepsilon})\right)\,sg_{n}(u^{\varepsilon}-k)\,\phi\,dx\,dt \nonumber\\=\varepsilon\displaystyle\sum_{j=1}^{d}\int_{0}^{T}\int_{\Omega}\frac{\partial}{\partial x_{j}}\left(B(u^{\varepsilon})\,\frac{\partial u^{\varepsilon}}{\partial x_{j}}\right)\,sg_{n}(u^{\varepsilon}-k)\,\phi\,dx\,dt.
 \end{eqnarray}
 Integrating by parts in \eqref{regularized.eqn35} yields
 \begin{eqnarray}\label{regularized.eqn36}
 \int_{0}^{T}\int_{\Omega}\left\{\int_{k}^{u^{\varepsilon}}sg_{n}(y-k)\,dy\right\}\,\frac{\partial\phi}{\partial t}\,dx\,dt +\int_{0}^{T}\int_{\Omega}\,\left(f_{\varepsilon}(u^{\varepsilon})-f_{\varepsilon}(k)\right)\cdot\nabla\phi\,\,sg_{n}(u^{\varepsilon}-k)\,dx\,dt \nonumber\\ + \int_{0}^{T}\int_{\Omega}\,\left(f_{\varepsilon}(u^{\varepsilon})-f_{\varepsilon}(k)\right)\cdot\nabla u^{\varepsilon}\,sg_{n}^{'}(u^{\varepsilon}-k)\,\phi\,dx\,dt = \varepsilon\int_{0}^{T}\int_{\Omega}B(u^{\varepsilon})\,\left(\nabla u^{\varepsilon}\cdot\nabla\phi\right)\,sg_{n}(u^{\varepsilon}-k)\,dx\,dt\nonumber\\+ \varepsilon\int_{0}^{T}\int_{\Omega}B(u^{\varepsilon})\,\left|\nabla u^{\varepsilon}\right|^{2}\,sg_{n}^{'}(u^{\varepsilon}-k)\,\phi\,dx\,dt + \varepsilon\int_{0}^{T}\int_{\partial\Omega}\,B(0)\,\nabla u^{\varepsilon}\cdot\sigma\, sg_{n}(k)\phi\,d\sigma\,dt\nonumber\\ -\int_{0}^{T}\int_{\partial\Omega}\,\left(f_{\varepsilon}(0)-f_{\varepsilon}(k)\right)\cdot\sigma\,sg_{n}(k)\,\phi\,d\sigma\,dt.\nonumber\\
 {}
\end{eqnarray}
We prove the entropy inequality \eqref{B.entropysolution.eqn1} by passing to the limit in \eqref{regularized.eqn36}, first as $n\to\infty$ (in Step 2A), and then pass to the limit as $\varepsilon\to 0$ in the equation resulting from Step 2A (in Step 2B). Here we follow the arguments of \cite{MR542510}.\\

\noindent\textbf{Step 2A:} Using the conclusion of {Step 1} of the proof of Theorem \ref{theorem1} with $f=f_{\varepsilon}$ , we pass to the limit in \eqref{regularized.eqn36} as $n\to\infty$ and obtain the following inequality
\begin{eqnarray}\label{regularized.eqn37}
 \int_{0}^{T}\int_{\Omega}\left\{\int_{k}^{u^{\varepsilon}}sg(y-k)\,dy\right\}\,\frac{\partial\phi}{\partial t}\,dx\,dt +\int_{0}^{T}\int_{\Omega}\,\left(f_{\varepsilon}(u^{\varepsilon})-f_{\varepsilon}(k)\right)\cdot\nabla\phi\,\,sg(u^{\varepsilon}-k)\,dx\,dt \nonumber\\ \geq \varepsilon\int_{0}^{T}\int_{\Omega}B(u^{\varepsilon})\,\left(\nabla u^{\varepsilon}\cdot\nabla\phi\right)\,sg(u^{\varepsilon}-k)\,dx\,dt + \varepsilon\int_{0}^{T}\int_{\partial\Omega}\,B(0)\,\nabla u^{\varepsilon}\cdot\sigma\, sg^{'}(k)\phi\,d\sigma\,dt \nonumber\\-\int_{0}^{T}\int_{\partial\Omega}\,\left(f_{\varepsilon}(0)-f_{\varepsilon}(k)\right)\cdot\sigma\,sg(k)\,\phi\,dx\,dt.\nonumber\\
 {}
\end{eqnarray}
\noindent\textbf{Step 2B:} Following the proofs of \eqref{B.entropysolution.eqn26},\eqref{B.entropysolution.eqn30}, we get
 \begin{eqnarray}\label{regularized.eqn38}
 \displaystyle\lim_{\varepsilon\to 0}\int_{0}^{T}\int_{\Omega}\left\{\int_{k}^{u^{\varepsilon}}sg(y-k)\,dy\right\}\,\frac{\partial\phi}{\partial t}\,dx\,dt &=& \int_{0}^{T}\int_{\Omega}\left|u-k\right|\frac{\partial\phi}{\partial t}\,dx\,dt,
\end{eqnarray}
\begin{eqnarray}\label{regularized.eqn46}
 \displaystyle\lim_{\varepsilon\to 0} \varepsilon\,\int_{0}^{T}\int_{\Omega}B(u^{\varepsilon})\,\nabla u^{\varepsilon}\cdot\nabla\phi\,\,sg(u^{\varepsilon}-k)\,dx\,dt =0.
\end{eqnarray}

\noindent We pass to limit as $\varepsilon\to 0$ in the second term on LHS of \eqref{regularized.eqn37}, and prove
\begin{eqnarray}\label{regularized.eqn45}
 \displaystyle\lim_{\varepsilon\to 0}\int_{0}^{T}\int_{\Omega}\,\left(f_{\varepsilon}(u^{\varepsilon})-f_{\varepsilon}(k)\right)\cdot\nabla\phi\,\,sg(u^{\varepsilon}-k)\,dx\,dt= \int_{0}^{T}\int_{\Omega}\,\left(f(u)-f(k)\right)\cdot\nabla\phi\,\,sg(u-k)\,dx\,dt,\nonumber\\
 {}
\end{eqnarray}
by an application of dominated convergence theorem. Firstly, we show that integrands on LHS converge to the integrand on the RHS of the equation \eqref{regularized.eqn45}.
Observe that
\begin{eqnarray}\label{regularized.eqn40}
 \left|f_{j\varepsilon}(u^{\varepsilon})-f_{j}(u)\right|&\leq& \left|f_{j\varepsilon}(u^{\varepsilon})-f_{j}(u^{\varepsilon})\right| + \left|f_{j}(u^{\varepsilon})-f_{j}(u)\right|,\nonumber\\
 &\leq& \left\|f_{j\varepsilon}-f_{j}\right\|_{L^{\infty}(I)} + \left|f_{j}(u^{\varepsilon})-f_{j}(u)\right|.
\end{eqnarray}
For each $j\in\left\{1,2,\cdots,d\right\}$, $f_{j}\in C(\mathbb{R})$. Therefore $f_{j\varepsilon}\to f_{j}$ uniformly on compact sets of $\mathbb{R}$. Since for {\it a.e.} $(x,t)\in\Omega_{T}$, $u^{\varepsilon}(x,t)\in I$, we have 
\begin{eqnarray}\label{regularized.eqn39}
 \|f_{j\varepsilon}-f_{j}\|_{L^{\infty}(I)}\to 0\,\,\mbox{as}\,\,\varepsilon\to 0.
\end{eqnarray}
For each $j\in\left\{1,2,\cdots,d\right\}$, since for {\it a.e.} $(x,t)\in\Omega_{T}$, $u^{\varepsilon}\to u$ as $\varepsilon\to 0$, we have
\begin{eqnarray}\label{regularized.eqn41}
 f_{j}(u^{\varepsilon})\to f_{j}(u)\,\,\mbox{as}\,\,\varepsilon\to 0.
\end{eqnarray}
Using equations \eqref{regularized.eqn39} and \eqref{regularized.eqn41} in \eqref{regularized.eqn40}, we get
\begin{eqnarray}\label{regularized.eqn42}
 f_{j\varepsilon}(u^{\varepsilon})\to f_{j}(u)\,\,{\it a.e.}\,\,(x,t)\in\Omega_{T},\,\,\mbox{as}\,\,\varepsilon\to 0.
\end{eqnarray}
As $\varepsilon\to 0$, it is easy to observe that
\begin{eqnarray}
 sg(u^{\varepsilon}-k)\to sg(u-k)\,\,\mbox{{\it a.e.} in }\,\Omega_T,\label{regularized.eqn44}\\
 f_{j\varepsilon}(k)\to f_{j}(k). \label{regularized.eqn43}
\end{eqnarray}
Using the information from  \eqref{regularized.eqn42}, \eqref{regularized.eqn44},  and \eqref{regularized.eqn43}, we conclude that for {\it a.e.} $(x,t)\in\Omega_{T}$,
\begin{eqnarray}\label{regularized.eqn38A}
 \displaystyle\lim_{\varepsilon\to 0}\left(\left(f_{\varepsilon}(u^{\varepsilon})-f_{\varepsilon}(k)\right)\cdot\nabla\phi\,\,sg(u^{\varepsilon}-k)\right)= \left(f(u)-f(k)\right)\cdot\nabla\phi\,\,sg(u-k).\nonumber
\end{eqnarray}
Since $|f_{j\varepsilon}(u^{\varepsilon})|\leq \|f_{j}\|_{L^{\infty}(I)}$ and $|f_{j\varepsilon}(k)|\leq |f_{j}(k)|$, therefore the integrand on LHS of the equation \eqref{regularized.eqn45} is bounded by
$$\displaystyle\sum_{j=1}^{d}\left(\|f_{j}\|_{L^{\infty}(I)} + |f_{j}(k)|\right)\left|\frac{\partial\phi}{\partial x_{j}}\right|,$$
which is integrable over $\Omega_{T}$ as $\phi$ has compact support in $\overline{\Omega}\times(0,T)$. Applying dominated convergence theorem, we get \eqref{regularized.eqn45}.\\
 
Following the proof of \eqref{B.entropysolution.eqn35}, we get
\begin{eqnarray}\label{regularized.eqn47}
 \displaystyle\lim_{\varepsilon\to 0}\left\{\varepsilon\int_{0}^{T}\int_{\partial\Omega}B(0)\phi\,\frac{\partial u^{\varepsilon}}{\partial\sigma}\,d\sigma\,dt\right\}=\int_{0}^{T}\int_{\partial\Omega}\phi\,\left(f(0)-f(\gamma(u))\right)\cdot\sigma\,d\sigma\,dt.
\end{eqnarray}
An application of dominated convergence theorem gives
\begin{eqnarray}\label{regularized.eqn49}
 \displaystyle\lim_{\varepsilon\to 0}\int_{0}^{T}\int_{\partial\Omega}\,\left(f_{\varepsilon}(0)-f_{\varepsilon}(k)\right)\cdot\sigma\,sg(k)\,\phi\,dx\,dt=\int_{0}^{T}\int_{\partial\Omega}\,\left(f(0)-f(k)\right)\cdot\sigma\,sg(k)\,\phi\,dx\,dt.\nonumber\\
 {}
\end{eqnarray}
Using the information from \eqref{regularized.eqn38},\eqref{regularized.eqn46},\eqref{regularized.eqn45}, 
\eqref{regularized.eqn47}, and \eqref{regularized.eqn49} in \eqref{regularized.eqn37}, we get the required entropy 
inequality \eqref{B.entropysolution.eqn1}. We do not prove the uniqueness of the entropy solution as it was already 
established in \cite{MR542510}.
\section{Proof of Theorem \ref{theorem3}:}
Let $f_{\varepsilon}$ be as defined in the regularized generalized viscosity problem \eqref{regularized.IBVP} and 
$\left(u_{0\varepsilon}\right)$ be as given in Hypotheses C. We intoduce 
the IBVP for viscosity problem
\begin{subequations}\label{regularized.IBVP.Compact}
\begin{eqnarray}
 u^\varepsilon_{t} + \nabla \cdot f_{\varepsilon}(u^{\varepsilon}) = \varepsilon\,\nabla\cdot\left(B(u^\varepsilon)\,\nabla u^\varepsilon\right)
 &\mbox{in }\Omega_{T},\label{regularized.IBVP.Compact.a} \\
    u^\varepsilon(x,t)= 0&\,\,\,\,\mbox{on}\,\, \partial \Omega\times(0,T),\label{regularized.IBVP.Compact.b}\\
u^{\varepsilon}(x,0) = u_{0\varepsilon}(x)& x\in \Omega,\label{regularized.IBVP.Compact.c}
\end{eqnarray}
\end{subequations}
The next result deals with the existence of a uniformly bounded sequence $\left(u_{0\varepsilon}\right)$ in $\mathcal{D}(\Omega)$ with all the properties as mentioned in Hypothesis C. 
\begin{lemma}\label{paper2.initialdata.lemma1}
 Let $u_{0}\in H^{1}_{0}(\Omega)\cap C(\overline{\Omega})$. Then there exists a sequuence $\left(u_{0\varepsilon}\right)$ 
 in $\mathcal{D}(\Omega)$ such that the following properties hold.
 \begin{enumerate}
  \item \begin{eqnarray}\label{initialdata.h1.eqn1}
   u_{0\varepsilon}\to u_{0}\,\,\mbox{in}\,\,H^{1}(\Omega)\,\,\mbox{as}\,\,\varepsilon\to 0.
  \end{eqnarray}
\item For all $\varepsilon >0$, there exists a constant $A> 0$ such that 
\begin{eqnarray}\label{initialdata.h1.eqn2}
 \|u_{0\varepsilon}\|_{L^{\infty}(\Omega)}\leq A.
\end{eqnarray}
\item For all $\varepsilon > 0$ small enough, there exists a constant $C> 0$ such that 
\begin{eqnarray}\label{initialdata.h1.eqn8C}
 \left\|\Delta u_{0\varepsilon}\right\|_{L^{1}(\Omega)}\leq \frac{C}{\varepsilon}.
\end{eqnarray}
 \end{enumerate}
\end{lemma}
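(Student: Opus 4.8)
The plan is to build $u_{0\varepsilon}$ in two stages: first shift the mass of $u_0$ strictly into the interior of $\Omega$ (to gain room for compact support), and then mollify at a carefully chosen scale so that the resulting function is smooth, compactly supported in $\Omega$, and obeys the three required bounds. Mollification at scale $\eta$ is exactly what produces a Laplacian bound of size $\eta^{-1}$, which matches the $C/\varepsilon$ in \eqref{initialdata.h1.eqn8C} once $\eta$ is tied to $\varepsilon$; the interior shift is what repairs the support, since a naive mollification of the zero extension of $u_0$ is generally nonzero up to $\partial\Omega$ and hence not in $\mathcal{D}(\Omega)$.

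First I would extend $u_0$ by zero to $\tilde u_0$ on $\R^d$. Since $u_0\in H^1_0(\Omega)$, the zero extension $\tilde u_0$ lies in $H^1(\R^d)$ with $\mathrm{supp}\,\tilde u_0\subseteq\overline\Omega$ and $\|\tilde u_0\|_{L^\infty(\R^d)}=\|u_0\|_{L^\infty(\Omega)}$. Because $\partial\Omega$ is smooth, I would fix a smooth, compactly supported vector field $V$ on $\R^d$ pointing strictly into $\Omega$ along $\partial\Omega$ (e.g. built from the gradient of the signed distance to $\partial\Omega$, cut off away from the boundary), and let $\Phi_t$ be its flow, a global diffeomorphism of $\R^d$. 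For $t>0$ small, $\Phi_t(\overline\Omega)$ is a compact subset of $\Omega$ at distance at least $c\,t$ from $\partial\Omega$ for some $c>0$. Setting $v_t:=\tilde u_0\circ\Phi_{-t}$, the support of $v_t$ is $\Phi_t(\mathrm{supp}\,\tilde u_0)\subseteq\Phi_t(\overline\Omega)\subset\subset\Omega$; composition with the diffeomorphism preserves the $L^\infty$ norm, the strong continuity of the flow action on $H^1(\R^d)$ gives $v_t\to\tilde u_0$ in $H^1$ as $t\to0$, and $\|\nabla v_t\|_{L^1}$ stays bounded for small $t$.

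Next I would mollify: with the standard mollifier $\rho_\eta$ at scale $\eta$, set $u_{0\varepsilon}:=v_{t(\varepsilon)}\ast\rho_{\eta(\varepsilon)}$, choosing for instance $t(\varepsilon)=\sqrt\varepsilon$ and $\eta(\varepsilon)=\varepsilon$. For $\varepsilon$ small one has $\eta(\varepsilon)<c\,t(\varepsilon)/2$, so $\mathrm{supp}\,u_{0\varepsilon}\subset\subset\Omega$ and $u_{0\varepsilon}\in\mathcal D(\Omega)$. Bound \eqref{initialdata.h1.eqn2} is immediate since mollification is an $L^\infty$-contraction, giving $\|u_{0\varepsilon}\|_{L^\infty(\Omega)}\le\|v_t\|_{L^\infty}=\|u_0\|_{L^\infty(\Omega)}=:A$. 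For \eqref{initialdata.h1.eqn8C} I would write $\Delta u_{0\varepsilon}=\sum_{j=1}^{d}(\partial_{x_j}v_t)\ast(\partial_{x_j}\rho_\eta)$ and use Young's inequality with $\|\partial_{x_j}\rho_\eta\|_{L^1}=\eta^{-1}\|\partial_{x_j}\rho\|_{L^1}$ to obtain $\|\Delta u_{0\varepsilon}\|_{L^1(\Omega)}\le\eta^{-1}\sum_{j=1}^{d}\|\partial_{x_j}v_t\|_{L^1}\,\|\partial_{x_j}\rho\|_{L^1}\le C/\varepsilon$, the constant being finite because $\|\nabla v_t\|_{L^1}$ is bounded. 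For \eqref{initialdata.h1.eqn1} I would use the splitting
\begin{equation*}
\|u_{0\varepsilon}-\tilde u_0\|_{H^1(\R^d)}\le\|(v_t-\tilde u_0)\ast\rho_\eta\|_{H^1}+\|\tilde u_0\ast\rho_\eta-\tilde u_0\|_{H^1}\le\|v_t-\tilde u_0\|_{H^1}+\|\tilde u_0\ast\rho_\eta-\tilde u_0\|_{H^1},
\end{equation*}
where the first term tends to $0$ as $t\to0$ (mollification is an $H^1$-contraction, $\|w\ast\rho_\eta\|_{H^1}\le\|w\|_{H^1}$) and the second as $\eta\to0$; since $t(\varepsilon),\eta(\varepsilon)\to0$, restriction to $\Omega$ yields $u_{0\varepsilon}\to u_0$ in $H^1(\Omega)$.

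The hard part will be the coupling of the two length scales: the $\mathcal D(\Omega)$ requirement forces the mollification radius $\eta$ to be small relative to the interior shift $t$ (so the support does not escape $\Omega$), while \eqref{initialdata.h1.eqn8C} forces $\eta\sim\varepsilon$; reconciling these with simultaneous $H^1$-convergence is what dictates a choice such as $t=\sqrt\varepsilon$, $\eta=\varepsilon$. The remaining technical point to justify carefully is the strong continuity of the flow action $t\mapsto(\,\cdot\circ\Phi_{-t}\,)$ on $H^1(\R^d)$, which follows from the chain rule, uniform bounds on $D\Phi_{-t}$ over small $t$, and a density argument.
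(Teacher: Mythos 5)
Your proof is correct, and it reaches the lemma by a genuinely different route than the paper. Both arguments rest on the same core mechanism: translate the data inward by more than the mollification radius, then mollify, so that the smoothed function remains compactly supported in $\Omega$ while the Laplacian costs exactly one inverse power of the mollification scale. The paper implements this locally: it covers $\partial\Omega$ by boundary-flattening charts $\Psi_i$, localizes $u_0$ with a partition of unity $\eta_i$, uses Proposition~\ref{paper2.initialdata.proposition1} to place each flattened piece in $H^1_0(Q^+)$, and mollifies in the flattened coordinates with a kernel offset by $2\varepsilon$ in the normal direction ($\tilde\rho_\varepsilon(y_d-z_d-2\varepsilon)$); the price is the lengthy chain-rule computation transferring the $L^1$ bound on the Laplacian back through $\Psi_i^{-1}$, with constants involving first and second derivatives of the charts. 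You instead perform the inward shift globally, via the flow $\Phi_t$ of a smooth inward-pointing vector field applied to the zero extension $\tilde u_0\in H^1(\R^d)$, and then apply one standard mollification: compact support follows from the scale separation $\eta(\varepsilon)=\varepsilon\ll t(\varepsilon)=\sqrt{\varepsilon}$, the $L^\infty$ bound is immediate, and the estimate $\|\Delta u_{0\varepsilon}\|_{L^1}\le\eta^{-1}\sum_{j}\|\partial_{x_j}v_t\|_{L^1}\,\|\partial_{x_j}\rho\|_{L^1}$ is a one-line Young inequality (note $\nabla v_t\in L^1$ since it is $L^2$ with compact support). What your approach buys is the elimination of charts, partitions of unity, and the attendant bookkeeping; what it requires in exchange are the two facts you correctly flag: that functions in $H^1_0(\Omega)$ extend by zero to $H^1(\R^d)$ (this plays the role of the paper's Proposition~\ref{paper2.initialdata.proposition1}) and the strong $H^1$-continuity of $t\mapsto\tilde u_0\circ\Phi_{-t}$, both standard and provable by the density-plus-uniform-bounds argument you sketch. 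A further cosmetic difference is that the paper keeps shift and mollification at the same scale $\varepsilon$, whereas you need two scales; this is harmless here, since the lemma only demands the $C/\varepsilon$ rate, which your choice $\eta=\varepsilon$ delivers.
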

Denote
$$Q:=\left\{y\in\mathbb{R}^{d}:\,\,\left|y_{i}\right|< 1,\,i=1,2,\cdots,d\right\},\, Q^{+}:=\left\{y\in Q:\,\,y_{d}> 0\right\},\,\,
\Gamma:=\left\{y\in Q:\,\,y_{d}=0\right\}.$$
The following result is used to prove Lemma \ref{paper2.initialdata.lemma1}.
\begin{proposition}\cite[p.31]{MR2309679}\label{paper2.initialdata.proposition1}
 Let $u\in H^{1}(Q^{+})\cap C(\overline{Q^{+}})$ and $u=0$ near $\partial Q^{+}\setminus\Gamma$. If $u=0$
 on the bottom $\Gamma$, then $u\in H^{1}_{0}(Q^{+})$.
\end{proposition}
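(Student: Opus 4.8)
The plan is to localise the difficulty to the flat bottom $\Gamma$, extend $u$ by zero across $\Gamma$, and then manufacture compactly supported approximations by an upward translation followed by mollification. First I would dispose of all faces of $\partial Q^{+}$ other than $\Gamma$ using the hypothesis that $u=0$ near $\partial Q^{+}\setminus\Gamma$, so that the only genuine boundary interaction is at the bottom. Define $\tilde u$ on $\mathbb{R}^{d}$ by $\tilde u=u$ on $Q^{+}$ and $\tilde u=0$ on $\{y_{d}\le 0\}$; the vanishing of $u$ near the side faces and the top lets this zero extension be carried out across all of $\partial Q$ as well. The first substantive step is to prove that $\tilde u\in H^{1}(\mathbb{R}^{d})$ with $\nabla\tilde u$ equal to the zero extension of $\nabla u$. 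For any $\varphi\in C_{c}^{\infty}(\mathbb{R}^{d})$ one computes $\int \tilde u\,\partial_{i}\varphi=\int_{Q^{+}}u\,\partial_{i}\varphi$ and integrates by parts over $Q^{+}$ via Green's formula for $H^{1}$ functions on the Lipschitz domain $Q^{+}$. The boundary contributions over the side faces and the top vanish because $u=0$ there, while the contribution over $\Gamma$ involves the trace $\gamma(u)|_{\Gamma}$; since $u\in C(\overline{Q^{+}})$, this trace coincides with the pointwise restriction of $u$ to $\Gamma$, which is zero by hypothesis. Hence the boundary integral vanishes and $\partial_{i}\tilde u=\widetilde{\partial_{i}u}$, so $\tilde u\in H^{1}(\mathbb{R}^{d})$.

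Next I would introduce the family of upward translates. For small $h>0$ set $u_{h}(y):=\tilde u(y',y_{d}-h)$. By continuity of translation in $L^{2}$, applied simultaneously to $\tilde u$ and to each $\partial_{i}\tilde u$, one has $u_{h}\to\tilde u$ in $H^{1}(\mathbb{R}^{d})$ as $h\to 0^{+}$, and in particular $u_{h}\to u$ in $H^{1}(Q^{+})$ upon restriction. The point of the shift is that $\operatorname{supp}u_{h}\subset\{y_{d}\ge h\}$, so $u_{h}$ is bounded away from $\Gamma$; and because $u$ vanishes in a neighbourhood of the top face $\{y_{d}=1\}$ and of the side faces $\{|y_{i}|=1\}$, for all sufficiently small $h$ the translate $u_{h}$ also vanishes near those faces. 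Consequently $\operatorname{supp}u_{h}$ is a compact subset of the open set $Q^{+}$.

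The final step is mollification. For $0<\rho<\operatorname{dist}(\operatorname{supp}u_{h},\partial Q^{+})$ the mollifications $\rho_{\rho}\ast u_{h}$ lie in $C_{c}^{\infty}(Q^{+})=\mathcal{D}(Q^{+})$ and converge to $u_{h}$ in $H^{1}$, so each $u_{h}$ belongs to $H^{1}_{0}(Q^{+})$. Since $H^{1}_{0}(Q^{+})$ is a closed subspace of $H^{1}(Q^{+})$ and $u_{h}\to u$ there, we conclude $u\in H^{1}_{0}(Q^{+})$, as claimed. I expect the main obstacle to be the first step: justifying that the zero extension across $\Gamma$ is genuinely an $H^{1}$ function, which rests on identifying the $H^{1}$ trace of $u$ on $\Gamma$ with its continuous pointwise boundary values and thus with zero; the translation-continuity and mollification steps are then routine.
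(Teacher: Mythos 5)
Your proof is correct, and it is essentially the intended argument: the paper itself only cites this proposition from \cite{MR2309679} without proof, and the zero-extension/upward-translation/mollification scheme you use is exactly the technique the paper deploys in Step 1 of the proof of Lemma \ref{paper2.initialdata.lemma1}, where your two final steps are fused into the single shifted convolution $J_{\varepsilon}\tilde{v}$ of \eqref{initialdata.h1.eqn3AA}, with the $2\varepsilon$ offset in the $y_{d}$-variable playing the role of your translation by $h$. You also correctly isolate the one substantive point — that for $u\in H^{1}(Q^{+})\cap C(\overline{Q^{+}})$ the $H^{1}$-trace on $\Gamma$ coincides with the pointwise restriction, so continuity together with $u=0$ on $\Gamma$ kills the boundary term in Green's formula and legitimizes the zero extension $\tilde{u}\in H^{1}(\mathbb{R}^{d})$.
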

\textbf{Proof of Lemma \ref{paper2.initialdata.lemma1}:}\\
 \textbf{Step 1: } The proof of \eqref{initialdata.h1.eqn1}. \\Since $\partial\Omega$ is smooth, for a given point $x_{0}\in\partial\Omega$, there exists a neighbourhood 
 $U_{0}$ of $x_{0}$ and a smooth invertible mapping $\Psi_{0}: Q\to U_{0}$ such that 
 $$\Psi_{0}\left(Q^{+}\right)= U_{0}\cap\Omega,\,\, \Psi_{0}\left(\Gamma\right)= U_{0}\cap\partial\Omega,$$
 where $Q^{+},\,\Gamma$ as defined above.
 Let $\eta_{0}\in\mathcal{D}(U_{0})$. We consider $\eta_{0} u_{0}$ instead of $u_{0}$ in the neighbourhood $U_{0}$ of $x_{0}$ . 
 Define $v:Q^{+}\to\mathbb{R}$ by
 $$v(y):=\left(\eta u_{0}\right)\left(\Psi(y)\right).$$ 
 Since $\eta_{0}\in\mathcal{D}(U_{0})$, therefore $v\equiv 0$ in a neighbourhood of the upper boundary and the lateral boundary of
 $Q^{+}$. Since $u_{0}=0$ on $\partial\Omega$, for $y\in\Gamma$, we have $v(y)=0$. Therefore an application of Proposition 
 \ref{paper2.initialdata.proposition1}, we have $v\in H^{1}_{0}(Q^{+})$. Let $\tilde{v}$ be the extension 
 of $v$ to the whole $Q$ by setting $\tilde{v}=0$ in $Q\setminus Q^{+}$. Let $\tilde{\rho}_{\varepsilon}:\mathbb{R}\to\mathbb{R}$
 be the standard sequence of mollifiers.  Define $J_{\varepsilon}\tilde{v}(y): Q^{+}\to\mathbb{R}$ by
 \begin{eqnarray}\label{initialdata.h1.eqn3AA}
  J_{\varepsilon}\tilde{v}(y) :=\int_{Q}\tilde{\rho}_{\varepsilon}(y_{1}-z_{1})\,\tilde{\rho}_{\varepsilon}(y_{2}-z_{2})\,\cdots
 \tilde{\rho}_{\varepsilon}(y_{d-1}-z_{d-1})\tilde{\rho}_{\varepsilon}(y_{d}-z_{d}-2\varepsilon)\,\tilde{v}(z)\,dz.\nonumber\\
 {}
 \end{eqnarray}
 Observe that $J_{\varepsilon}\tilde{v}\to v$ in $H^{1}_{0}(Q^{+})$. For clarity, we only show that 
 $\left(J_{\varepsilon}\tilde{v}\right)$ has compact support. Since $\tilde{v}$ is zero in a neighbourhood of the upper 
 boundary and the lateral boundary of $Q^{+}$, we only show that $J_{\varepsilon}\tilde{v}$ is zero in $\left\{y\in Q^{+};\,\, 0<y_{d}<\varepsilon\right\}$. We know that 
 $\rho_{\varepsilon}(y_{d}-z_{d}-2\varepsilon)=0$ whenever $\left|y_{d}-z_{d}-2\varepsilon\right|\geq\varepsilon$. Let us 
 compute
\begin{eqnarray}\label{initialdata.h1.eqn3}
 \left|y_{d}-z_{d}-2\varepsilon\right|&=& \left|z_{d}+\varepsilon +\left(\varepsilon-y_{d}\right)\right|,\nonumber\\
 &\geq& z_{d} +\varepsilon >\varepsilon.
\end{eqnarray}
Therefore $\left(J_{\varepsilon}\tilde{v}\right)$ has compact support in $Q^{+}$. As a result, the function 
$\left(J_{\varepsilon}\tilde{v}(\Psi^{-1}_{0}(x))\right)$ belongs to $C^{1}_{0}(\Omega\cap U_{x_{0}})$ and 
$$J_{\varepsilon}\tilde{v}(\Psi^{-1}_{0}(x))\to \eta u_{0}\,\,\,\mbox{in}\,\,\, H^{1}(\Omega).$$
Since $\partial\Omega$ is compact, there exists $x_{1}, x_{2},\cdots,x_{N}\in\partial\Omega$ and $U_{1},U_{2},\cdots,U_{N}$
such that $\partial\Omega\subset\displaystyle\cup_{i=1}^{N}U_{i}$. Choose $U_{N+1}\subset\subset\Omega$ such that 
$$\overline{\Omega}\subset\displaystyle\cup_{i=1}^{N+1}U_{i}.$$
For $i=1,2,\cdots,N,N+1$, let $\eta_{i}$ be a partition of unity associated to $U_{i}$. For $i=1,2,\cdots,N$, let 
$\left(u^{\varepsilon}_{0i}\right)$ be the sequences corresponding to $U_{i},\,\eta_{i}$ obtained as above manner,{\it i.e.,}
$u_{0i}^{\varepsilon}=J_{\varepsilon}\tilde{v_{i}}(\Psi^{-1}_{i}(x))$. Let $\rho_{\varepsilon}:\mathbb{R}^{d}\to\mathbb{R}$ be 
sequence of mollifiers. Then $u_{0N+1}^{\varepsilon}:=\left(\eta_{N+1}u_{0}\right)\ast\rho_{\varepsilon}\to \eta_{N+1}u_{0}$ uniformly on 
$\overline{U_{N+1}}$ as $\varepsilon\to 0$ and $\left(\eta_{N+1}u_{0}\right)\ast\rho_{\varepsilon}\to \eta_{N+1}u_{0}$ in 
$H^{1}(\Omega)$.\\ Denote
$$u_{0\varepsilon}(x):=\displaystyle\sum_{i=1}^{N+1}u_{0i}^{\varepsilon}(x).$$
It is clear that $u_{0\varepsilon}\to u_{0}$ in $H^{1}(\Omega)$ and we obtain \eqref{initialdata.h1.eqn1}.\\
\textbf{Step 2: } The proof of \eqref{initialdata.h1.eqn2}.\\ Applying change of variable $\frac{y-z}{\varepsilon}=p$ in 
\eqref{initialdata.h1.eqn3AA}, we get
\begin{eqnarray}\label{initialdata.h1.eqn4}
 J_{\varepsilon}^{-}\tilde{v}(y) &:=&\int_{y_{1}-\varepsilon}^{y_{1}+\varepsilon}\int_{y_{2}-\varepsilon}^{y_{2}+\varepsilon}\cdots
 \int_{y_{d}-\varepsilon}^{y_{d}+\varepsilon}\frac{1}{\varepsilon^{d}}\tilde{\rho}(\frac{y_{1}-z_{1}}{\varepsilon})\cdots
 \tilde{\rho}(\frac{y_{d-1}-z_{d-1}}{\varepsilon})\tilde{\rho}(\frac{y_{d}-z_{d}-2\varepsilon}{\varepsilon})\,\tilde{v}(z)\,dz,\nonumber\\
 &=& \int_{-1}^{1}\int_{-1}^{1}\cdots\int_{-1}^{1}\int_{3}^{1}\tilde{\rho}(p_{1})\tilde{\rho}(p_{2})\cdots\tilde{\rho}(p_{d-1})\tilde{\rho}(p_{d}-2)\,\tilde{v}(x-\varepsilon p)\,(-1)^{d}\,dp
\end{eqnarray}
Taking modulus on both sides of \eqref{initialdata.h1.eqn4}, for $i\in\left\{1,2,\cdots,N\right\}$, we get
\begin{eqnarray}\label{initialdata.h1.eqn5}
 \left\|J_{\varepsilon}\tilde{v_{i}}(\Psi^{-1}_{i})\right\|_{L^{\infty}(\Omega)}\leq 2^{d}\left(\left\|\rho\right\|_{L^{\infty}(\mathbb{R})}\right)^{d}\,
 \|\eta_{i}\,u_{0}\|_{L^{\infty}(\Omega)}.
\end{eqnarray}
In view of \eqref{initialdata.h1.eqn5}, for $i=1,2,\cdots,N$, there exist constants $C_{i}>0$ such that 
\begin{eqnarray}\label{initialdata.h1.eqn6}
 \|u_{0i}^{\varepsilon}\|_{L^{\infty}(\Omega)}\leq C_{i}
\end{eqnarray}
Again, since $u_{0}\in C(\overline{\Omega})$, then $\left\|\eta_{0} u_{0}\ast\rho_{\epsilon}\right\|_{L^{\infty}(\Omega)}\leq
\left\|\eta_{0} u_{0}\right\|_{L^{\infty}(\Omega)}$ on $U_{N+1}$.\\
Taking \\$A=\max\left\{C_{1}, C_{2},\cdots,C_{d}, \left\|\eta_{0} u_{0}\right\|_{L^{\infty}(\Omega)}\right\}$, we get \eqref{initialdata.h1.eqn2}.\\
\textbf{Step 3: } The proof \eqref{initialdata.h1.eqn8C}. \\
For $i\in\left\{1,2,\cdots,N\right\}$, denote
$$\Psi_{i}^{-1}(x):=\left(\Psi_{i1}^{-1}(x),\Psi_{i2}^{-1}(x),\cdots,\Psi_{id}^{-1}(x)\right).$$
We recall $\tilde{v_{i}}$ as given in Step 1, {\it i.e.}, for $i\in\left\{1,2,\cdots,N\right\}$, $\tilde{v_{i}}$ are given by
$\left(\eta_{i}u_{0}\right)\left(\Psi_{i}(y)\right)$ on $Q^{+}$ and zero in $Q\setminus Q^{+}$.  Then 
$u_{0i}^{\varepsilon}(x)= J_{\varepsilon}\tilde{v_{i}}\left(\Psi_{i}^{-1}(x)\right)$.  Denote 
$h_{\varepsilon}:\mathbb{R}\to\mathbb{R}$ by
$$h_{\varepsilon}(p):=\tilde{\rho}(p_{1})\tilde{\rho}(p_{2})\cdots\tilde{\rho}(p_{d-1})\tilde{\rho}(p_{d}-2\varepsilon).$$
Denote $Q_{\varepsilon}^{+}:=\left\{y\in Q^{+};\,\,\mbox{dist}\left(y,\,\partial Q^{+}\right)>\varepsilon\right\}$. Then for 
$y\in Q_{\varepsilon}^{+}$, observe that
\begin{eqnarray}\label{initialdata.h1.eqn9}
 J_{\varepsilon}\tilde{v_{i}}(y)&:=&\left(h_{\varepsilon}\ast\tilde{v_{i}}\right)(y).
\end{eqnarray}
Since $\mbox{supp}(J_{\varepsilon}\tilde{v_{i}})$ is contained in $Q_{\varepsilon}^{+}$, therefore we consider 
$J_{\varepsilon}\tilde{v_{i}}(y)$ only on $Q_{\varepsilon}^{+}$. In $Q_{\varepsilon}^{+}$, we have
\begin{eqnarray}\label{initialdata.h1.eqn10}
 \frac{\partial}{\partial x_{l}}J_{\varepsilon}\tilde{v_{i}}(y) &=& \frac{\partial}{\partial x_{l}}\left[\left(h_{\varepsilon}\ast
 \tilde{v_{i}}\right)\left(\Psi^{-1}_{i}(x)\right)\right],\nonumber\\
 &=&\displaystyle\sum_{j=1}^{d}\left(h_{\varepsilon}\ast\frac{\partial\tilde{v_{i}}}{\partial y_{j}}\right)\left(\Psi^{-1}_{i}(x)\right)\,\,\frac{\partial\Psi^{-1}_{ij}(x)}{\partial x_{l}}.
 \end{eqnarray}
Differentiating \eqref{initialdata.h1.eqn10} with respect to $x_{l}$, we get
\begin{eqnarray}\label{initialdata.h1.eqn11}
 \frac{\partial^{2}}{\partial x_{l}^{2}}J_{\varepsilon}\tilde{v_{i}}(\Psi^{-1}_{i}(x))&=&\displaystyle\sum_{j=1}^{d}\frac{\partial}{\partial x_{l}}
 \left[\left(h_{\varepsilon}\ast\frac{\partial\tilde{v_{i}}}{\partial y_{j}}(\Psi^{-1}_{i}(x))\right)\,\frac{\partial\Psi^{-1}_{ij}(x)}{\partial x_{l}}\right],\nonumber\\
 &=&\displaystyle\sum_{j,k=1}^{d}\frac{\partial}{\partial y_{k}}\left(h_{\varepsilon}\ast\frac{\partial\tilde{v_{i}}}{\partial y_{j}}\right)(\Psi^{-1}_{i}(x))
 \,\frac{\partial\Psi_{ik}^{-1}(x)}{\partial x_{l}}\,\frac{\partial\Psi_{ij}^{-1}(x)}{\partial x_{l}}\nonumber\\
 &&+\displaystyle\sum_{j=1}^{d}\left(h_{\varepsilon}\ast\frac{\partial\tilde{v_{i}}}{\partial y_{j}}\right)(\Psi^{-1}_{i}(x))\,\frac{\partial^{2}}{\partial x_{l}^{2}}\left(\Psi_{ij}^{-1}(x)\right),\nonumber\\
&=&\displaystyle\sum_{j,k=1}^{d}\left(\frac{\partial h_{\varepsilon}}{\partial y_{k}}\ast\frac{\partial\tilde{v_{i}}}{\partial y_{j}}\right)(\Psi^{-1}_{i}(x))
 \,\frac{\partial\Psi_{ik}^{-1}(x)}{\partial x_{l}}\,\frac{\partial\Psi_{ij}^{-1}(x)}{\partial x_{l}}\nonumber\\
 &&+\displaystyle\sum_{j=1}^{d}\left(h_{\varepsilon}\ast\frac{\partial\tilde{v_{i}}}{\partial y_{j}}\right)(\Psi^{-1}_{i}(x))\,\frac{\partial^{2}}{\partial x_{l}^{2}}\left(\Psi_{ij}^{-1}(x)\right).\nonumber\\
 {}
 \end{eqnarray}
Summing over $l=1,2,\cdots,d$, we get
\begin{eqnarray}\label{initialdata.h1.eqn12}
 \Delta_{x}J_{\varepsilon}\tilde{v_{i}}(\Psi^{-1}_{i}(x))&=&\displaystyle\sum_{j,k,l=1}^{d}\left(\frac{\partial h_{\varepsilon}}{\partial y_{k}}\ast\frac{\partial\tilde{v_{i}}}{\partial y_{j}}\right)(\Psi^{-1}_{i}(x))
 \,\frac{\partial\Psi_{ik}^{-1}(x)}{\partial x_{l}}\,\frac{\partial\Psi_{ij}^{-1}(x)}{\partial x_{l}}\nonumber\\
 &&+\displaystyle\sum_{j,l=1}^{d}\left(h_{\varepsilon}\ast\frac{\partial\tilde{v_{i}}}{\partial y_{j}}\right)(y)\,
 \frac{\partial^{2}}{\partial x_{l}^{2}}\left(\Psi_{ij}^{-1}(x)\right).\nonumber\\
 {}
\end{eqnarray}
For $y\in Q^{+}$, We know 
\begin{eqnarray}\label{initialdata.h1.eqn13}
 \left(h_{\varepsilon}\ast\frac{\partial\tilde{v_{i}}}{\partial y_{j}}\right)(y)=\int_{Q}h_{\varepsilon}(y-z)\,
 \frac{\partial\tilde{v_{i}}}{\partial y_{j}}(z)\,dz.
\end{eqnarray}
Using the change of variable $\frac{y-z}{\varepsilon}=r$ in \eqref{initialdata.h1.eqn13}, we get
\begin{eqnarray}\label{initialdata.h1.eqn14}
 \left(h_{\varepsilon}\ast\frac{\partial\tilde{v_{i}}}{\partial y_{j}}\right)(y)=\int_{-1}^{1}\int_{-1}^{1}\cdots\int_{-1}^{1}
 \int_{1}^{3}\tilde{\rho}(r_{1})\tilde{\rho}(r_{2})\cdots\tilde{\rho_{1}}(r_{d-1})\tilde{\rho}(r_{d}-2)\,(-1)^{d}\,
 \frac{\partial\tilde{v_{i}}}{\partial y_{j}}(y-\varepsilon r)\,dr.\nonumber\\
 {}
\end{eqnarray}
Taking modulus on both sides of \eqref{initialdata.h1.eqn14}, we have
\begin{eqnarray}\label{initialdata.h1.eqn15}
 \left|\left(h_{\varepsilon}\ast\frac{\partial\tilde{v_{i}}}{\partial y_{j}}\right)(y)\right|&\leq& \|\tilde{\rho}\|_{L^{\infty}(\mathbb{R})}^{d}
 \int_{-1}^{1}\int_{-1}^{1}\cdots\int_{-1}^{1}\int_{1}^{3}\left|\frac{\partial\tilde{v_{i}}}{\partial y_{j}}(y-\varepsilon r)\right|\,dr,\nonumber\\
 \int_{Q^{+}}\left|\left(h_{\varepsilon}\ast\frac{\partial\tilde{v_{i}}}{\partial y_{j}}\right)(y)\right|&\leq&\|\tilde{\rho}\|_{L^{\infty}(\mathbb{R})}^{d}
 \int_{-1}^{1}\int_{-1}^{1}\cdots\int_{-1}^{1}\left(\int_{1}^{3}\int_{Q^{+}}\left|\frac{\partial\tilde{v_{i}}}{\partial y_{j}}(y-\varepsilon r)\right|\,dy\right)dr,\nonumber\\
 &\leq& \|\tilde{\rho}\|_{L^{\infty}(\mathbb{R})}^{d}
 \int_{-1}^{1}\int_{-1}^{1}\cdots\int_{-1}^{1}\int_{1}^{3}\left(\int_{Q^{+}}\left|\frac{\partial\tilde{v_{i}}}{\partial y_{j}}(y)\right|\,dy\right)dr.
\end{eqnarray}
Since $\mbox{Vol}(Q)<\infty,\,\tilde{v_{i}}\in H^{1}(Q)$, therefore the RHS of \eqref{initialdata.h1.eqn15} is bounded by 
$2^{d}\|\tilde{\rho}\|_{L^{\infty}(\mathbb{R})}^{d}\int_{Q}\left|\frac{\partial\tilde{v_{i}}(y)}{\partial y_{j}}\right|\,dy$, which
is independent of $\varepsilon$. Since $\left|\left(h_{\varepsilon}\ast\frac{\partial\tilde{v_{i}}}{\partial y_{j}}\right)(y)\right|$
is bounded on $Q^{+}$, therefore $\left|\left(h_{\varepsilon}\ast\frac{\partial\tilde{v_{i}}}{\partial y_{j}}\right)(\Psi_{i}^{-1}(x))\right|$ is 
bounded on $U_{i}\cap\Omega$ by the same constant.\\
Next for $y\in Q^{+}$, we compute 
\begin{eqnarray}\label{initialdata.h1.eqn16}
 \left(\frac{\partial h_{\varepsilon}}{\partial y_{k}}\ast\frac{\partial\tilde{v_{i}}}{\partial y_{j}}\right)(y) &=&
 \int_{Q^{+}}\frac{\partial h_{\varepsilon}}{\partial y_{k}}(y-k)\,\frac{\partial\tilde{v_{i}}}{\partial y_{j}}(z)\,dz,\nonumber\\
 &=& \int_{Q}\tilde{\rho_{\varepsilon}}(y_{1}-z_{1})\tilde{\rho_{\varepsilon}}(y_{2}-z_{2})\cdots\frac{\partial}{\partial y_{k}}\left(\tilde{\rho_{\varepsilon}}(y_{k}-z_{k})\right)
\cdots\tilde{\rho_{\varepsilon}}(y_{d-1}-z_{d-1})\nonumber\\
&&\tilde{\rho_{\varepsilon}}(y_{d}-z_{d}-2\varepsilon)\,\frac{\partial\tilde{v_{i}}}{\partial y_{j}}(z)\,dz.
\end{eqnarray}
Using change of variable $\frac{y-z}{\varepsilon}=r$ and taking modulus on both sides of \eqref{initialdata.h1.eqn16}, we get
\begin{eqnarray}\label{initialdata.h1.eqn17}
 \left|\left(\frac{\partial h_{\varepsilon}}{\partial y_{k}}\ast\frac{\partial\tilde{v_{i}}}{\partial y_{j}}\right)(y)\right|&\leq&
 \frac{1}{\varepsilon}\|\tilde{\rho}\|_{L^{\infty}(\mathbb{R})}^{d-1}\,\|\tilde{\rho}^{\prime}\|_{L^{\infty}(\mathbb{R})}\int_{-1}^{1}
 \int_{-1}^{1}\cdots\int_{-1}^{1}\int_{1}^{3}\left|\frac{\partial\tilde{v_{i}}}{\partial y_{j}}(y-\varepsilon r)\right|\,dr.\nonumber\\
 {}
\end{eqnarray}
Integrating over $Q^{+}$, we arrive at
\begin{eqnarray}\label{initialdata.h1.eqn18}
 \int_{Q^{+}}\left|\left(\frac{\partial h_{\varepsilon}}{\partial y_{k}}\ast\frac{\partial\tilde{v_{i}}}{\partial y_{j}}\right)(y)\right|\,dy
 &\leq&\frac{2^{d}}{\varepsilon}\|\tilde{\rho}\|_{L^{\infty}(\mathbb{R})}^{d-1}\,\|\tilde{\rho}^{\prime}\|_{L^{\infty}(\mathbb{R})}
\int_{Q}\left|\frac{\partial\tilde{v_{i}}}{\partial y_{j}}(y)\right|\,dy.
 \end{eqnarray}
Therefore we have 
\begin{eqnarray}\label{initialdata.h1.eqn18A}
 \int_{\Omega}\left|\Delta_{x} J_{\varepsilon}^{-}\tilde{v_{i}}(\Psi^{-1}_{i}(x))\right|\,dx &\leq& \displaystyle\sum_{j,k,l=1}^{d}
 \left\|\frac{\partial\Psi_{ik}^{-1}}{\partial x_{l}}\right\|_{L^{\infty}(\Omega\cap U_{i})}\,
 \left\|\frac{\partial\Psi^{-1}_{ij}}{\partial x_{l}}\right\|_{L^{\infty}(\Omega\cap U_{i})}\,
 \int_{\Omega}\left|\left(\frac{\partial h_{\varepsilon}}{\partial y_{k}}\ast\frac{\partial\tilde{v_{i}}}{\partial y_{j}}\right)(\Psi^{-1}_{i}(x))\right|\,dx
 \nonumber\\ && +\displaystyle\sum_{j,l=1}^{d}\left\|\frac{\partial^{2}\Psi_{ij}^{-1}}{\partial x_{l}^{2}}\right\|_{L^{\infty}(\Omega\cap U_{i})}
\int_{\Omega} \left|\left(\frac{\partial h_{\varepsilon}}{\partial y_{k}}\ast\frac{\partial\tilde{v_{i}}}{\partial y_{j}}\right)(\Psi^{-1}_{i}(x))\right|\,dx
,\nonumber\\
 &=& \displaystyle\sum_{j,k,l=1}^{d}
 \left\|\frac{\partial\Psi_{ik}^{-1}}{\partial x_{l}}\right\|_{L^{\infty}(\Omega\cap U_{i})}\,
 \left\|\frac{\partial\Psi^{-1}_{ij}}{\partial x_{l}}\right\|_{L^{\infty}(\Omega\cap U_{i})}\,
 \int_{\Omega\cap U_{i}}\left|\left(\frac{\partial h_{\varepsilon}}{\partial y_{k}}\ast\frac{\partial\tilde{v_{i}}}{\partial y_{j}}\right)(\Psi^{-1}_{i}(x))\right|\,dx
 \nonumber\\ && +\displaystyle\sum_{j,l=1}^{d}\left\|\frac{\partial^{2}\Psi_{ij}^{-1}}{\partial x_{l}^{2}}\right\|_{L^{\infty}(\Omega\cap U_{i})}
\int_{\Omega\cap U_{i}} \left|\left(\frac{\partial h_{\varepsilon}}{\partial y_{k}}\ast\frac{\partial\tilde{v_{i}}}{\partial y_{j}}\right)(\Psi^{-1}_{i}(x))\right|\,dx
 ,\nonumber\\
 &=& \displaystyle\sum_{j,k,l=1}^{d}
\left\|\frac{\partial\Psi_{ik}^{-1}}{\partial x_{l}}\right\|_{L^{\infty}(\Omega\cap U_{i})}\,
 \left\|\frac{\partial\Psi^{-1}_{ij}}{\partial x_{l}}\right\|_{L^{\infty}(\Omega\cap U_{i})}\,
 \int_{Q^{+}}\left|\left(\frac{\partial h_{\varepsilon}}{\partial y_{k}}\ast\frac{\partial\tilde{v_{i}}}{\partial y_{j}}\right)(y)\right|\,dy
 \nonumber\\ && +\displaystyle\sum_{j,l=1}^{d}\left\|\frac{\partial^{2}\Psi_{ij}^{-1}}{\partial x_{l}^{2}}\right\|_{L^{\infty}(\Omega\cap U_{i})}
\int_{Q^{+}} \left|\left(\frac{\partial h_{\varepsilon}}{\partial y_{k}}\ast\frac{\partial\tilde{v_{i}}}{\partial y_{j}}\right)(y)\right|\,dy
 ,\nonumber\\
&\leq& \displaystyle\sum_{j,k,l=1}^{d}\Big(2^{d}\|\tilde{\rho}\|_{L^{\infty}(\mathbb{R})}^{d}
\left\|\frac{\partial\Psi_{ik}^{-1}}{\partial x_{l}}\right\|_{L^{\infty}(\Omega\cap U_{i})}\,
 \left\|\frac{\partial\Psi^{-1}_{ij}}{\partial x_{l}}\right\|_{L^{\infty}(\Omega\cap U_{i})}
 \nonumber\\ && + \displaystyle\sum_{j,l=1}^{d}\frac{2^{d}}{\varepsilon}\|\tilde{\rho}\|_{L^{\infty}(\mathbb{R})}^{d-1}\,
 \|\tilde{\rho}^{\prime}\|_{L^{\infty}(\mathbb{R})}\left\|\frac{\partial^{2}\Psi_{ij}^{-1}}{\partial x_{l}^{2}}\right\|_{L^{\infty}(\Omega\cap U_{i})}
 \Big)\int_{Q}\left|\frac{\partial\tilde{v_{i}}}{\partial y_{j}}(y)\right|\,dy.
\end{eqnarray}
For $i\in\left\{1,2,\cdots,N\right\}$, denote
\begin{eqnarray}\label{initialdata.h1.eqn19}\nonumber\\
C_{i}:= \displaystyle\sum_{j,k,l=1}^{d}\Big(2^{d}\|\tilde{\rho}\|_{L^{\infty}(\mathbb{R})}^{d}\left\|\frac{\partial\Psi_{ik}^{-1}}{\partial x_{l}}\right\|_{L^{\infty}(\Omega\cap U_{i})}\,
 \left\|\frac{\partial\Psi^{-1}_{ij}}{\partial x_{l}}\right\|_{L^{\infty}(\Omega\cap U_{i})} +\displaystyle\sum_{j,l=1}^{d} 2^{d}\|\tilde{\rho}\|_{L^{\infty}(\mathbb{R})}^{d-1}\,
 \|\tilde{\rho}^{\prime}\|_{L^{\infty}(\mathbb{R})}\times
 \nonumber\\ \left\|\frac{\partial^{2}\Psi_{ij}^{-1}}{\partial x_{l}^{2}}\right\|_{L^{\infty}(\Omega\cap U_{i})}
 \Big)\int_{Q}\left|\frac{\partial\tilde{v_{i}}}{\partial y_{j}}(y)\right|\,dy\nonumber
\end{eqnarray}
Since $\eta_{N+1}\in\mathcal{D}(U_{N+1})$, $\eta_{N+1}u_{0}\in H^{1}_{0}(\Omega)$. Let $0<\varepsilon <\frac{\mbox{dist}(\mbox{supp}(\eta_{N+1}u_{0}),\partial\Omega)}{2}$ 
be small enough. For $l\in\left\{1,2,\cdots,d\right\}$, on $\Omega_{\varepsilon}$, we compute
\begin{eqnarray}\label{initialdata.h1.eqn20}
 \Delta\left(\eta_{N+1}u_{0}\ast\rho_{\varepsilon}\right)(x)&:=& \displaystyle\sum_{l=1}^{d}\left(\frac{\partial}{\partial x_{l}}\left(\eta_{N+1}u_{0}\right)\ast
 \frac{\partial\rho_{\varepsilon}}{\partial x_{l}}\right)(x),\nonumber\\
 &=& \displaystyle\sum_{l=1}^{d}\int_{\Omega}\frac{\partial}{\partial x_{l}}\rho_{\varepsilon}(x-z)\,\frac{\partial}{\partial x_{l}}\left(\eta_{N+1}u_{0}\right)(z)\,dz.
\end{eqnarray}
Using change of variable $\frac{x-y}{\varepsilon}=r$ in \eqref{initialdata.h1.eqn20}, we get
\begin{eqnarray}\label{initialdata.h1.eqn20A}
 \Delta\left(\eta_{N+1}u_{0}\ast\rho_{\varepsilon}\right)(x)=\frac{1}{\varepsilon}\displaystyle\sum_{l=1}^{d}\int_{\Omega}\rho^{\prime}(r)
 \frac{\partial}{\partial x_{l}}\left(\eta_{N+1}u_{0}\right)(x-r\varepsilon)\,(-1)^{d}\,dr.
\end{eqnarray}
For $i\in\left\{1,2,\cdots,d\right\}$, since $\mbox{supp}(\frac{\partial}{\partial x_{l}}\left(\eta_{N+1}u_{0}\right))
\subset\mbox{supp}(\eta_{N+1}u_{0})$ and $\mbox{supp}(\frac{\partial}{\partial x_{l}}\left(\rho_{\varepsilon}\right))
\subset\mbox{supp}(\rho_{\varepsilon})$, $\Delta\left(\eta_{N+1}u_{0}\ast\rho_{0}\right)=0$ on $\left(\Omega_{\varepsilon}\right)^{c}$.
From equation \eqref{initialdata.h1.eqn20A}, we get
\begin{eqnarray}\label{initialdata.h1.eqn21}
 \int_{\Omega}\left|\Delta\left(\eta_{N+1}u_{0}\ast\rho_{\varepsilon}\right)\right|\,dx &\leq& \frac{1}{\varepsilon}
 \|\rho^{\prime}\|_{L^{\infty}(\mathbb{R}^{d})}\displaystyle\sum_{l=1}^{d}\int_{\Omega}\left|\frac{\partial}{\partial x_{l}}
 \left(\eta_{N+1}u_{0}\right)(x-\varepsilon r)\right|\,dr\nonumber\\
 &\leq& \frac{1}{\varepsilon}\|\rho^{\prime}\|_{L^{\infty}(\mathbb{R}^{d})}\displaystyle\sum_{l=1}^{d}\int_{\Omega}\left|\frac{\partial}{\partial x_{l}}
 \left(\eta_{N+1}u_{0}\right)(r)\right|\,dr.
\end{eqnarray}
Denote 
$$C_{N+1}:=\|\rho^{\prime}\|_{L^{\infty}(\mathbb{R}^{d})}\displaystyle\sum_{l=1}^{d}\int_{\Omega}\left|\frac{\partial}{\partial x_{l}}
 \left(\eta_{N+1}u_{0}\right)(r)\right|\,dr.$$
 Taking $C=\displaystyle\sum_{p=1}^{N+1}C_{p}$, we get \eqref{initialdata.h1.eqn8C}. This completes the proof of 
 Lemma \ref{paper2.initialdata.lemma1}.\\
\vspace{0.2cm}\\
In the next result, we show the existence of a unique solution of \eqref{regularized.IBVP.Compact} in 
$C^{4+\beta,\frac{4+\beta}{2}}(\overline{\Omega_{T}})$ and an {\it a.e.} convergent subsequence of the quasilinear viscous approximations 
$\left(u^{\varepsilon}\right)$.   
\begin{lemma}\label{regularized.removecompact.thm2}
Let $f, B, u_{0}$ satisfy Hypothesis C. Then there exists a unique solution of \eqref{regularized.IBVP.Compact} in $C^{4+\beta,\frac{4+\beta}{2}}(\overline{\Omega_{T}})$, for every $0<\beta<1$ and the following estimates hold:
  \begin{eqnarray}
  \|u^{\varepsilon}\|_{L^{\infty}(\Omega)}&\leq& A,  \label{regularized.removecompact.eqn19}\\[2mm]
   TV_{\Omega_T}(u^{\varepsilon})&\leq& T\,\left(\left(\mbox{Vol}(\Omega)\right)^{\frac{1}{2}}\displaystyle\sum_{j=1}^{d}\sqrt{C_{i}}\right). \label{regularized.removecompact.eqn19aa}
  \end{eqnarray}
Also, there exists a constant $C> 0$ such that
\begin{eqnarray}\label{regularized.removecompact.eqn26}
 \left\|\frac{\partial u^{\varepsilon}}{\partial t}\right\|_{L^{1}(\Omega_{T})}\leq T\left(C\,\|B\|_{L^{\infty}(I)}  +\|B^{\prime}\|_{L^{\infty}(I)}\,\displaystyle\sum_{j=1}^{d} C_{i} + 
 \left(\mbox{Vol}(\Omega)\right)^{\frac{1}{2}}\,\|f^{\prime}\|_{L^{\infty}(I)}\displaystyle\sum_{j=1}^{d}\sqrt{C_{i}}\right)\nonumber\\
 {}
\end{eqnarray}
  
Furthermore, there exists a subsequence $\left\{u^{\varepsilon_{k}}\right\}_{k=1}^{\infty}$ of $u^{\varepsilon}$ and a function $u$ in $L^{1}(\Omega_T)$
such that $ u^{\varepsilon_{k}}\to u$ {\it a.e.} in $\Omega_T$, and also in $L^{1}(\Omega_T)$ as $k\to\infty$.
\end{lemma}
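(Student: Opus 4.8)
The plan is to establish all five assertions by transporting, essentially verbatim, the arguments already developed for Lemma~\ref{regularized.thm2} (the Hypothesis~B case), with the $W^{1,\infty}$-bounds on the initial data replaced by the $H^1_0$-bounds together with the Laplacian bound supplied by Lemma~\ref{paper2.initialdata.lemma1}. First I would settle existence, uniqueness and regularity. Since $f_\varepsilon\in\left(C^\infty(\mathbb R)\right)^d$ with $\|f_\varepsilon'\|_{\left(L^\infty(\mathbb R)\right)^d}\le\|f'\|_{\left(L^\infty(\mathbb R)\right)^d}$, and since $u_{0\varepsilon}\in\mathcal D(\Omega)$ is smooth with compact support (so the compatibility conditions \eqref{compatibility.condns} of orders $0,1,2$ hold), Theorem~\ref{chapHR85thm5} applies directly to \eqref{regularized.IBVP.Compact} and yields a unique $u^\varepsilon\in C^{4+\beta,\frac{4+\beta}{2}}(\overline{\Omega_T})$ with $u^\varepsilon_{tt}\in C(\overline{\Omega_T})$. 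The maximum principle (Theorem~\ref{chap3thm1}) then gives $\|u^\varepsilon\|_{L^\infty(\Omega)}\le\|u_{0\varepsilon}\|_{L^\infty(\Omega)}\le A$ by \eqref{initialdata.h1.eqn2}, which is precisely \eqref{regularized.removecompact.eqn19}.

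For the time-derivative estimate \eqref{regularized.removecompact.eqn26} I would reuse Step~1 of the proof of Theorem~\ref{BVEstimate.thm1}: specializing the identities \eqref{compactness12.eqn142} and \eqref{compactness12.eqn16} to $f=f_\varepsilon$, $u_0=u_{0\varepsilon}$ and estimating each term yields
\begin{multline*}
\int_\Omega\Big|\frac{\partial u^\varepsilon}{\partial t}(x,t)\Big|\,dx \le \varepsilon\,\|B\|_{L^\infty(I)}\,\|\Delta u_{0\varepsilon}\|_{L^1(\Omega)} + \varepsilon\,\|B'\|_{L^\infty(I)}\sum_{j=1}^d\Big\|\frac{\partial u_{0\varepsilon}}{\partial x_j}\Big\|_{L^2(\Omega)}^2 \\
+ \|f'\|_{L^\infty(I)}\sum_{j=1}^d\Big\|\frac{\partial u_{0\varepsilon}}{\partial x_j}\Big\|_{L^1(\Omega)}.
\end{multline*}
Here the crucial point — and the place where Hypothesis~C genuinely differs from Hypothesis~B — is the first term: the factor $\varepsilon$ exactly absorbs the $1/\varepsilon$ blow-up $\|\Delta u_{0\varepsilon}\|_{L^1(\Omega)}\le C/\varepsilon$ of \eqref{initialdata.h1.eqn8C}, leaving the uniform bound $C\,\|B\|_{L^\infty(I)}$. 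The remaining two terms are controlled using only the $H^1_0$-boundedness of $(u_{0\varepsilon})$: because $u_{0\varepsilon}\to u_0$ in $H^1_0(\Omega)$, the quantities $\|\partial_{x_j}u_{0\varepsilon}\|_{L^2(\Omega)}^2$ stay bounded by the constants $C_i$, and a Cauchy--Schwarz inequality converts the $L^2$-bound into $\|\partial_{x_j}u_{0\varepsilon}\|_{L^1(\Omega)}\le(\mbox{Vol}(\Omega))^{1/2}\sqrt{C_i}$; taking $\varepsilon<1$ removes the $\varepsilon$ in front of the $B'$-term, and integrating in $t$ over $[0,T]$ produces \eqref{regularized.removecompact.eqn26}.

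The spatial-gradient estimate \eqref{regularized.removecompact.eqn19aa} follows from Step~2 of the proof of Theorem~\ref{BVEstimate.thm1}: equation \eqref{BVestimateeqn24} with $f=f_\varepsilon$, $u_0=u_{0\varepsilon}$ shows that $\int_\Omega|\nabla u^\varepsilon(x,t)|\,dx=\|\nabla u_{0\varepsilon}\|_{\left(L^1(\Omega)\right)^d}$ for every $t$; bounding each $\|\partial_{x_j}u_{0\varepsilon}\|_{L^1(\Omega)}\le(\mbox{Vol}(\Omega))^{1/2}\sqrt{C_i}$ by Cauchy--Schwarz and integrating over $[0,T]$ gives the stated bound. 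Finally, combining \eqref{regularized.removecompact.eqn19aa} and \eqref{regularized.removecompact.eqn26} shows that $\{TV_{\Omega_T}(u^\varepsilon)\}$ is uniformly bounded, and since $\|u^\varepsilon\|_{L^\infty}\le A$ with $\mbox{Vol}(\Omega_T)<\infty$ the family is bounded in $BV(\Omega_T)\cap L^1(\Omega_T)$; the compact embedding into $L^1(\Omega_T)$ (exactly as in Step~3 of Theorem~\ref{BVEstimate.thm1}) delivers a subsequence converging in $L^1(\Omega_T)$ and {\it a.e.} I expect the only genuinely delicate step to be the $\varepsilon\cdot(1/\varepsilon)$ balancing in the Laplacian term, which is what makes the weaker $H^1_0$ hypothesis on $u_0$ sufficient; everything else is a routine re-run of the earlier estimates with $L^\infty$-gradient bounds replaced by $L^2$-gradient bounds via Cauchy--Schwarz.
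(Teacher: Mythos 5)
Your proposal is correct and follows essentially the same route as the paper: existence and regularity via Theorem~\ref{chapHR85thm5} plus the maximum principle, the time-derivative and gradient bounds obtained by re-running Steps~1 and~2 of Theorem~\ref{BVEstimate.thm1} with $f=f_\varepsilon$, $u_0=u_{0\varepsilon}$, the $\varepsilon\cdot(C/\varepsilon)$ cancellation against the bound \eqref{initialdata.h1.eqn8C}, Cauchy--Schwarz to pass from the $H^1_0$-bounds $C_i$ to $L^1$-gradient bounds, and finally the compact embedding of $BV(\Omega_T)\cap L^1(\Omega_T)$ into $L^1(\Omega_T)$. You also correctly identified the absorption of the $1/\varepsilon$ blow-up of $\|\Delta u_{0\varepsilon}\|_{L^1(\Omega)}$ as the one step where Hypothesis~C genuinely enters, which is exactly the point of the paper's argument.
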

\begin{proof}
 Note that $f_{\varepsilon}\in\left(C^{\infty}(\mathbb{R})\right)^{d}$ and 
 $\|f^{\prime}_{\varepsilon}\|_{\left(L^{\infty}(\Omega)\right)^{d}}\leq\|f^{\prime}\|_{\left(L^{\infty}(\Omega)\right)^{d}}
 <\infty.$ Since $u_{0\varepsilon}\in \mathcal{D}(\Omega)$, the initial-boundary data of the viscosity problem 
\eqref{regularized.IBVP.Compact} satisfies compatibility conditions of orders $0,1,2$ which are required to apply 
Theorem  \ref{chapHR85thm5}.  Applying Theorem  \ref{chapHR85thm5}, we get the existence of a unique solution $u^{\varepsilon}$
in $C^{4+\beta,\frac{4+\beta}{2}}(\overline{\Omega_{T}})$ for the viscosity problem \eqref{regularized.IBVP.Compact},
and  $u^{\varepsilon}_{tt}\in C(\overline{\Omega_{T}})$. \\
\vspace{0.3cm}\\
By maximum principle (Theorem \ref{chap3thm1}), we conclude that $u^{\varepsilon}$ satisfies 
\begin{eqnarray}\label{regularized.removecompact.eqn20}
 \|u^{\varepsilon}\|_{L^{\infty}(\Omega)}\leq \|u_{0\varepsilon}\|_{L^{\infty}(\Omega)}\,\,{\it a.e.}\,\,t\in (0,T).
\end{eqnarray}
Combining \eqref{initialdata.h1.eqn2} with \eqref{regularized.removecompact.eqn20}, we get \eqref{regularized.removecompact.eqn19}.\\
Using equations \eqref{compactness12.eqn14} and \eqref{compactness12.eqn16} (from  {Step 1} in the proof of Theorem \ref{BVEstimate.thm1}) with
$f=f_{\varepsilon}$ and $u_{0}=u_{0\varepsilon}$, we arrive at
\begin{eqnarray}\label{regularized.eqn21aa}
 \int_{\Omega}\,\Big|\frac{\partial u^{\varepsilon}}{\partial t}(x,t)\Big|\,dx \leq \varepsilon
 \int_{\Omega}B(u_{0\varepsilon})\,\left|\Delta u_{0\varepsilon}\right|\,dx+ \int_{\Omega}\left|B^{'}(u_{0\varepsilon})\right|
 \left(\frac{\partial u_{0\varepsilon}}{\partial x_{j}}\right)^{2}\,dx +\displaystyle\sum_{j=1}^{d}\int_{\Omega} \left|f_{j\varepsilon}^{'}(u_{0\varepsilon})\right|\,\left|\frac{\partial u_{0\varepsilon}}{\partial x_{j}}\right|\,dx.\nonumber\\
 {}
\end{eqnarray}
Since $u_{0\varepsilon}\to u$ in $H^{1}_{0}(\Omega)$, for $i\in\left\{1,2,\cdots,d\right\}$ and for all $\varepsilon>0$, 
there exist $C_{i}>0$ such that
\begin{eqnarray}\label{regularized.removecompact.eqn22a}
\left\|\frac{\partial u_{0\varepsilon}}{\partial x_{i}}\right\|_{L^{2}(\Omega)}^{2}\leq C_{i}. 
\end{eqnarray}
Using H\"{o}lder inequality and \eqref{regularized.removecompact.eqn22a}, we have
\begin{eqnarray}\label{regularized.removecompact.eqn22abc}
 \left\|\frac{\partial u_{0\varepsilon}}{\partial x_{i}}\right\|_{L^{1}(\Omega)}&\leq&\left(\mbox{Vol}(\Omega)\right)^{\frac{1}{2}}\,
 \left\|\frac{\partial u_{0\varepsilon}}{\partial x_{i}}\right\|_{L^{2}(\Omega)},\nonumber\\
 &\leq&\left(\mbox{Vol}(\Omega)\right)^{\frac{1}{2}}\,\sqrt{C_{i}}.
\end{eqnarray}
In view of \eqref{regularized.removecompact.eqn22a}, \eqref{regularized.removecompact.eqn22abc} and \eqref{initialdata.h1.eqn8C},
we obtain 
\begin{eqnarray}\label{regularized.removecompact.eqn21}
 \int_{\Omega}\,\Big|\frac{\partial u^{\varepsilon}}{\partial t}(x,t)\Big|\,dx \leq 
 C\,\|B\|_{L^{\infty}(I)}  +\varepsilon\|B^{\prime}\|_{L^{\infty}(I)}\,\displaystyle\sum_{j=1}^{d} C_{i} + 
 \left(\mbox{Vol}(\Omega)\right)^{\frac{1}{2}}\,\|f^{\prime}_{\varepsilon}\|_{L^{\infty}(I)}\displaystyle\sum_{j=1}^{d}\sqrt{C_{i}}.\nonumber\\
 {}
\end{eqnarray}
We may assume that $\varepsilon <1$. Using $\|f^{\prime}_{\varepsilon}\|_{L^{\infty}(I)}\leq \|f^{\prime}\|_{L^{\infty}(I)}$ 
in \eqref{regularized.removecompact.eqn21}, we get
\begin{eqnarray}\label{regularized.removecompact.eqn100}
 \int_{\Omega}\,\Big|\frac{\partial u^{\varepsilon}}{\partial t}(x,t)\Big|\,dx \leq \left(C\,\|B\|_{L^{\infty}(I)}  +\|B^{\prime}\|_{L^{\infty}(I)}\,\displaystyle\sum_{j=1}^{d} C_{i} + 
 \left(\mbox{Vol}(\Omega)\right)^{\frac{1}{2}}\,\|f^{\prime}\|_{L^{\infty}(I)}\displaystyle\sum_{j=1}^{d}\sqrt{C_{i}}\right).\nonumber\\
 {}
\end{eqnarray}
Integrating on both sides of the last inequality w.r.t. $t$ on the interval $[0,T]$ yields 
\eqref{regularized.removecompact.eqn26}.\\
Using the conclusion of  {Step 2} in the proof of Theorem \ref{BVEstimate.thm1}, namely \eqref{BVestimateeqn24}, with 
$f=f_{\varepsilon}$ and $u_{0}=u_{0\varepsilon}$ yields
\begin{eqnarray}\label{regularized.removecompact.eqn23}
 TV_{\Omega}(u^{\varepsilon})\leq TV_{\Omega}(u_{0\varepsilon}).\nonumber\\
\end{eqnarray}
Applying \eqref{regularized.removecompact.eqn22abc} in \eqref{regularized.removecompact.eqn23}, we get
\begin{eqnarray}\label{regularized.removecompact.eqn24}
 TV_{\Omega}(u^{\varepsilon})\leq\left(\mbox{Vol}(\Omega)\right)^{\frac{1}{2}}\displaystyle\sum_{j=1}^{d}\sqrt{C_{i}}.
\end{eqnarray}
Once again integrating w.r.t. $t$ over the interval $[0,T]$ on both sides of the inequality 
\eqref{regularized.removecompact.eqn24} yields \eqref{regularized.removecompact.eqn19}.\\
We now use the compact embedding of $BV(\Omega_{T})\cap L^{1}(\Omega_{T})$ in $L^{1}(\Omega_{T})$ to conclude that 
there exists a subsequence $\left(u^{\varepsilon_{k}}\right)$ of $\left(u^{\varepsilon}\right)$ and a function $u$ in 
$L^{1}(\Omega_{T})$ such that $u^{\varepsilon_{k}}\to u$ as $k\to\infty$ in $L^{1}(\Omega_{T})$ as well as {\it a.e.} 
$\left(x,t\right)\in\Omega_{T}$. We still denote the subsequence $\left(u^{\varepsilon_{k}}\right)$ by 
$\left(u^{\varepsilon}\right)$.\\
\vspace{0.2cm}\\
\textbf{Proof of Theorem \ref{theorem3}:} We now show that the function $u$ as asserted in Lemma \ref{paper2.initialdata.lemma1}
is the unique entropy solution to \eqref{ivp.cl}. We prove entropy solution in two steps. In Step 1, we show that $u$ is  
a weak solution to \eqref{ivp.cl} and in Step 2, we show that weak solution $u$ is the unique entropy solution to \eqref{ivp.cl}.\\
\vspace{0.2cm}\\
\textbf{Step 1:} Let $\phi\in\mathcal{D}(\Omega\times [0,\infty)$. Multiplying \eqref{regularized.IBVP.Compact.a} by $\phi$ and using
integration by parts, we arrive at
\begin{equation}\label{regularized.removecompact.eqn27}
\begin{split}
 \int_{0}^{T}\hspace{0.1cm}\int_{\Omega}u^{\varepsilon}\hspace{0.1cm}\frac{\partial \phi}{\partial t}\hspace{0.1cm}dx\hspace{0.1cm}dt +
 \varepsilon\hspace{0.1cm}\displaystyle\sum_{j=1}^{d}\int_{0}^{T}\hspace{0.1cm}\int_{\Omega}\hspace{0.1cm}B(u^{\varepsilon})
 \hspace{0.1cm}\frac{\partial u^{\varepsilon}}{\partial x_{j}}\hspace{0.1cm}\frac{\partial \phi}{\partial x_{j}}\hspace{0.1cm}dx\hspace{0.1cm}dt 
 + \displaystyle\sum_{j=1}^{d}\int_{0}^{T}\hspace{0.1cm}\int_{\Omega}\hspace{0.1cm}f_{j\varepsilon}(u^{\varepsilon})
 \hspace{0.1cm}\frac{\partial \phi}{\partial x_{j}}\hspace{0.1cm}dx\hspace{0.1cm}dt\\ =\int_{\Omega}u_{0\varepsilon}
 \hspace{0.1cm}\phi(x,0)\hspace{0.1cm}dx\hspace{0.1cm}dt.
 \end{split}
\end{equation}
Observe that 
\begin{eqnarray}\label{regularized.removecompact.eqn30}
 \left|\int_{\Omega}\left(u_{0\varepsilon}(x)-u_{0}(x)\right)\,\phi(x,0)\,dx\right|\leq \|\phi\|_{L^{\infty}(\Omega_{T})}\|u_{0\varepsilon}-u_{0}\|_{L^{1}(\Omega)}.
\end{eqnarray}
Since $u_{0\varepsilon}\to u_{0}$ in $H^{1}_{0}(\Omega)$, therefore $u_{0\varepsilon}\to u_{0}$ in $L^{1}(\Omega)$ 
as $\varepsilon\to 0$. Using 
$$\int_{\Omega}u_{0\varepsilon}\hspace{0.1cm}\phi(x,0)\hspace{0.1cm}dx\to \int_{\Omega}u_{0}
 \hspace{0.1cm}\phi(x,0)\hspace{0.1cm}dx\,\,\mbox{as}\,\,\varepsilon\to 0$$
 and passing to the limit in \eqref{regularized.removecompact.eqn27} as $\varepsilon\to 0$ by following Step 1 in the proof of 
 Theorem \ref{theorem2}, we conclude that $u$ is a weak solution of \eqref{ivp.cl}, {\it i.e.,} for all 
 $\phi\in\mathcal{D}(\Omega\times[0,T))$, the limit $u$ satisfies
 \begin{equation}\label{regularized.removecompact.eqn34}
\begin{split}
 \int_{0}^{T}\hspace{0.1cm}\int_{\Omega}u\hspace{0.1cm}\frac{\partial \phi}{\partial t}\hspace{0.1cm}dx\hspace{0.1cm}dt +
 \displaystyle\sum_{j=1}^{d}\int_{0}^{T}\hspace{0.1cm}\int_{\Omega}\hspace{0.1cm}f_{j}(u)\hspace{0.1cm}\frac{\partial \phi}{\partial x_{j}}\hspace{0.1cm}dx\hspace{0.1cm}dt =\int_{\Omega}u_{0}(x)\hspace{0.1cm}\phi(x,0)\hspace{0.1cm}dx\hspace{0.1cm}dt,
 \end{split}
\end{equation}
\textbf{Step 2:} Following Step 2 in the proof of Theorem \ref{theorem2}, we conclude that the limit 
$u$ satisfies the entropy inequality \eqref{B.entropysolution.eqn1}. We do not prove the uniqueness of the entropy solution as it was already 
established in \cite{MR542510}. 
\end{proof}





\section{Acknowledgements}
Ramesh Mondal thanks Council for Scientific and Industrial Research (CSIR), New Delhi. A part of
this work was done when he was a CSIR-SRF at the Indian Institute of Technology Bombay.

\end{document}